\documentclass[14pt]{article}
\usepackage[utf8]{inputenc} 
\usepackage{amsfonts} 
\usepackage{amssymb,amsmath} 
\usepackage{amsthm} 
\usepackage{hyperref}
\usepackage[russian,english]{babel}
\usepackage{euscript} 
\usepackage{bm} 
\usepackage{float}

\usepackage{indentfirst}
\usepackage{pb-diagram}
\newtheorem{lemm}{Lemma}
\newtheorem{theorem}{Theorem}
\newtheorem*{theorem*}{Theorem}
\newtheorem{example}{Example}

\newtheorem{definition}{Definition}
\newtheorem{remark}{Remark}
\newtheorem{Relation}{Corollary}
\newtheorem{PREDLOZHENIE}{Proposition}
\usepackage{graphicx}
\graphicspath{{Lagunap/}}
\usepackage{epstopdf}
\usepackage{wrapfig}
\usepackage{amsmath,amscd}
\usepackage[T1,T2A]{fontenc}
\begin{document}
	
	\date{} 
	
	\title{\textbf{RAMIFICATION OF THE VOLUME FUNCTION CLOSE TO ASYMPTOTIC HYPERPLANES. MONODROMY OF BOUNDARY SINGULARITIES}}
	\author{N. M. ARTEMOV}	
	
	\maketitle
	
	\textbf{\abstractname.} It is shown that the discriminants of boundary singularities introduced by V.~I.~Arnold naturally arise in integral geometry. They represent the irregularity sets of the volume function. This function maps an affine hyperplane to the volume of the part of a domain in the Euclidean space  cut off by this hyperplane.
	For isolated boundary singularities, the role of the boundary is shown to be related to the hyperplane at infinity in the projective space. Versions of the Milnor fiber bundle adapted to the present problem are defined and studied. Analogs of the local even Petrovsky class are introduced and computed. It is proved that the volume function is locally finite-sheeted only if these classes are trivial. Such components of the complements to the discriminants (quasilacunas) are listed for simple boundary singularities --- those having no moduli in the classification. The obtained results establish new obstructions to the algebraic integrability of domains in a Euclidean space.

	\tableofcontents

	\section{Introduction}
	
	The {\em volume function} is a function defined on the space of affine hyperplanes. It is equal to the volume of the part cut off by a hyperplane from a domain with a smooth boundary in Euclidean space $\mathbb{R}^{N+1}$. If the domain is bounded, this function is two-valued. Otherwise, it is defined (and single-valued) only if one of the cut-off volumes is finite. The study of this function dates back to the works of Archimedes and Newton (\cite{Arch}, \cite{NEWTON}~Lemma~XXVIII, see also~\cite{300}, \cite{VVA}~Ch.~III). It is closely related to solutions of hyperbolic PDEs (\cite{VVA}~Ch.~V,~VI).
	
	We assume that the boundary of the domain is semi-algebraic. This condition is essential for the problem of local algebraicity of the volume function and is necessary for algebraic integrability in the large (see~\cite{V}). On the other hand, it allows us to ``go into the complex domain'' and apply the methods of Picard--Lefschetz theory. Namely, we study the finite-sheetedness in terms of the finiteness of the orbits of local integration contours under the action of the monodromy group associated with the germ of the boundary.
	
	In suitable local affine coordinates $c, x_1, \dots, x_{N}$ centered at a {\em non-degenerate} (see Sec.~\ref{Pro}) point, the (complexified) boundary hypersurface $\Xi$ is given by the germ $A$ of the graph $c=f(x)$ of a holomorphic and even algebraic function $f$. This function, called the {\em generating function} (\cite{VVA},~Ch.~III,~\S4), has a real isolated singularity (see~\cite{AVG}, Sec.~\ref{kr}) at the origin. The set of hyperplanes in the space $\mathbb{C}^{N+1}$ tangent to $A$ and close to the plane $c=0$ defines the germ of the projectively dual surface $A^*$. It is identified with the local {\em discriminant} (see Sec.~\ref{kr}) $\Sigma_{\mathbb{C}}(\Lambda_f)$ of the linear deformation of the function $f$. The real discriminant $\Sigma_{\mathbb{R}}(\Lambda_f)$ is also known as the {\em wave front} (see~\cite{AVG}, \cite{VVA}).
	
	Let $D\subset\mathbb{C}^{N+1}$ be an $\varepsilon$-ball around the point of tangency  (for sufficiently small $\varepsilon$, see~\cite{VVA},~Ch.~I,~\S3). Any real hyperplane $X$ close to the hyperplane $c=0$ is given by the equation $c=\xi_0+\sum\xi_ix_i$ for a small $\xi=(\xi_0, \xi_1, \dots, \xi_N)\in(\mathbb R^{N+1})^*$. Fix a general plane $X_0$ and equip it with a positively oriented coordinate system $x=(x_1, \dots, x_N)$. The local {\em even Petrovsky class} is the class of the cycle $\Pi=\operatorname{Re}A\cap X_0\cap D$ in the homology group $H_{N-1}(A\cap X_0,\partial D)$. This cycle is canonically cooriented by the gradient of the function $f-\xi_0-\sum\xi_ix_i$. The class $\Pi$ is defined invariantly on a local connected component of the complement to $\Sigma_{\mathbb R}(\Lambda_f)$.
	
	Under the action of the {\em reduction homomorphism} (see Sec.~\ref{kasal}) $H_{N+1}(D,A\cup X_0\cup\partial D)\to H_{N-1}(A\cap X_0,\partial D)$, the class $\Omega$ of any integration contour localized in the ball $D$ (cut off by the hyperplane $X_0$ on one side from an arbitrary connected component of the set $\mathbb{R}^{N+1}\setminus\operatorname{Re}\Xi$), taken with an appropriate orientation, maps to the class $\Pi$. Let $B$ be a $\delta$-ball $(\delta\ll\varepsilon)$ around zero in the parameter space of the deformation $\Lambda_f$. Define the germ of the volume function $V(X)=\int_{\Omega(X)}\omega$ near $X_0$, where $\omega=dc\wedge dx_1\wedge\dots\wedge dx_{N}$. The ramification of its analytic continuation is called {\em locally finite-sheeted} if the integral over all possible variations of the cycle $\Omega$ takes only a finite number of values under the action of the fundamental group $\pi_1(B\setminus\Sigma_{\mathbb C}(\Lambda_f);X_0)$. Since the reduction homomorphism commutes with monodromy, the study of the variations of the cycle $\Omega$ reduces to the identical problem for $\Pi$.
	
	\begin{theorem*}\label{osno}(see Sec.~\ref{kasal}) Let either $N$ be even, or $N$ be odd and both inertia indices of the quadratic intersection form on the group $H_{N-1}(A\cap X)$ be non-zero. Then $\Pi\neq0\in H_{N-1}(A\cap X,\partial D)$ if and only if the function $V$ is infinitely ramified in $B$.
	\end{theorem*}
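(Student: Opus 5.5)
The plan is to reduce the statement to the finiteness of the orbit of the relative class $\Pi\in H_{N-1}(A\cap X,\partial D)$ under the monodromy group of the deformation $\Lambda_f$. The key preliminary fact is the commutation of the reduction homomorphism with monodromy. Two standard ingredients then do most of the work: the Picard--Lefschetz formula for the Milnor fiber $A\cap X$ of the function $f-\xi_0-\sum\xi_i x_i$ restricted to $X$, and Leray's residue theorem, which rewrites the variation of $V$ along a loop as the integral of a residue form over the variation of $\Pi$.

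\emph{Trivial direction.} If $\Pi=0$, then the class $\Omega$ differs from a monodromy-invariant class, namely a contour not meeting $A\cap X$ near the tangency point, only by elements of the kernel of the reduction map. Such a class is fixed by every loop, since the reduction homomorphism is equivariant and the remaining boundary pieces in $\partial D$ and $X_0$ do not move. Hence all continuations of $V$ coincide up to the finitely many values coming from the choice of orientation and side, and $V$ is finitely ramified.

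\emph{Main direction.} Assume $\Pi\ne0$. By the Picard--Lefschetz formula, each simple loop around the smooth part of $\Sigma_{\mathbb C}(\Lambda_f)$ acts by
\[
\Pi\mapsto \Pi+(-1)^{N(N-1)/2}\langle \Pi,\Delta\rangle\Delta
\]
on a vanishing cycle $\Delta\in H_{N-1}(A\cap X)$. Here the intersection pairing goes between the relative and the absolute groups, and it is nondegenerate by Lefschetz duality. Since $\Pi\neq 0$ and the vanishing cycles form a distinguished basis, some $\Delta$ satisfies $\langle\Pi,\Delta\rangle\ne0$. Consequently the variation $\operatorname{Var}\Pi$ is a nonzero absolute cycle lying in the span of the vanishing cycles, and the monodromy orbit of $\Pi$ is infinite exactly when the orbit of this absolute part under the monodromy group $\Gamma$ (generated by reflections or transvections) is infinite.

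The argument now splits according to the parity of $N$.
\begin{itemize}
\item For $N$ even, the intersection form on $H_{N-1}$ is skew-symmetric. The group $\Gamma$ is then generated by symplectic transvections, and its orbits are infinite for any nonzero vector that pairs nontrivially with some vanishing cycle. Indeed, iterating $T_\Delta$ gives $\Pi+k\langle\Pi,\Delta\rangle\Delta$ for all $k\in\mathbb Z$.
\item For $N$ odd, the form is symmetric and $\Gamma$ is a reflection group. It is finite when the form is definite, which is excluded by the hypothesis. When both inertia indices are nonzero, I would show that $\Gamma$ contains an element of infinite order acting nontrivially on $\Pi$: for instance a product of two reflections in vanishing cycles $\Delta_1,\Delta_2$ with $|\langle\Delta_1,\Delta_2\rangle|\ge2$, or a transvection-like element coming from an isotropic direction. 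To produce it I would use the connectedness of the Dynkin diagram, which follows from the irreducibility of $\Sigma_{\mathbb C}$.
\end{itemize}

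The last step passes from an infinite orbit of classes to infinitely many values of the integral. For this I would use that the period map $\Pi'\mapsto\int$ (the Leray residue of $\omega$ over the corresponding $\Omega'$) is injective modulo classes whose integrals vanish identically as functions of $X$. The integrals of $\omega$ over tubes around vanishing cycles are nonconstant germs: they are branches of functions like $\xi_0^{(N+1)/2}$ near the discriminant. Hence distinct orbit elements $\Pi+k\,\mathrm{Var}$ give the distinct functions $V+k\,I_{\mathrm{Var}}$ with $I_{\mathrm{Var}}\not\equiv0$.

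I expect the main obstacle to be the symmetric case, that is, proving that an indefinite form forces an infinite orbit of the specific class $\Pi$ and not merely an infinite group. I would first try to reduce to the classical fact that an irreducible reflection group of an indefinite lattice has no finite nonzero orbits in the span of its roots. Showing $\mathrm{Var}\,\Pi$ lies in that span, and is nonzero when $\Pi\ne0$, would use the duality argument above.
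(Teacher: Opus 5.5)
Your even-$N$ argument is sound and is essentially the paper's Lemma~\ref{VV}. Iterating the loop around the critical value of a $\Delta_j$ with $\langle\Pi,\Delta_j\rangle\ne0$ adds $\pm k\langle\Pi,\Delta_j\rangle\text{\CYRSH}_j$ to $\Omega$. Moreover $\int_{\text{\CYRSH}_j}\omega\not\equiv0$, because here there is no boundary and $\text{\CYRSH}_j$ is a Lefschetz cap of nonzero Euclidean volume (Lemma~\ref{Vol1}, part 1); no tubes or Leray residues are involved.

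The gap is in the odd case, where you expected it, and your proposed workaround fails.
\begin{itemize}
\item The ``classical fact'' you want to use is false as stated. Every vector in the radical of the intersection form is fixed by all reflections. The radical is nonzero, for instance, for the hyperbolic singularities $T_{p,q,r}$, which satisfy the hypothesis since both inertia indices are nonzero.
\item $\Pi$ lives in the relative group, not in the span of the roots. Your claim that the orbit of $\Pi$ is infinite ``exactly when'' that of $\operatorname{Var}\Pi$ is remains unjustified.
\item More seriously, an infinite orbit of classes does not by itself give infinitely many branches of $V$. For odd $N$ each simple loop acts with order two, so orbit elements have the form $\Omega+\sum_k c_k\text{\CYRSH}_k$ with varying coefficients, and your ``$V+kI_{\mathrm{Var}}$'' argument no longer applies. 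You would need the orbit to stay infinite modulo the subspace $K_0\subset H_{N+1}(D,A\cup X;\mathbb R)$ of classes whose integral vanishes identically, and nothing in your plan controls $K_0$.
\item The existence of vanishing cycles with $|\langle\Delta_1,\Delta_2\rangle|\ge2$, or of a suitable isotropic transvection, is likewise not established.
\end{itemize}

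The missing idea in the paper is to stop tracking the orbit of $\Pi$ and track the orbit of one vanishing contour instead, working in $H_{N+1}(D,A\cup X;\mathbb R)/K_0$.
\begin{itemize}
\item $K_0$ is monodromy invariant and contains no $\text{\CYRSH}_j$ by Lemma~\ref{Vol1}. By the Picard--Lefschetz formula it therefore lies in the radical, so the intersection form descends to the quotient.
\item Monodromy then acts on the quotient as an irreducible group generated by reflections in the nonzero cosets $[\text{\CYRSH}_j]$. By the classification of reflection groups it is infinite unless the form is definite.
\item Hence every $[\text{\CYRSH}_j]$ has an infinite orbit, i.e.\ $\int_{\text{\CYRSH}_j}\omega$ has infinitely many branches.
\item A differences argument finishes the proof. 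If $V_i$ had finitely many branches, so would the set of differences of elements of the orbit of $\Omega_i$. But $g_*(\gamma_j)_*\Omega_i-g_*\Omega_i=\pm\langle\Pi_i,\Delta_j\rangle\,g_*\text{\CYRSH}_j$ for every $g$, so that set contains the whole orbit of $\pm\langle\Pi_i,\Delta_j\rangle\text{\CYRSH}_j$, a contradiction.
\end{itemize}
This route never needs any information about the orbit of $\Pi$ itself.

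In the converse direction, equivariance of the reduction map is not why $\Omega$ is fixed. The map $H_{N-1}(A\cap X)\to H_{N-1}(A\cap X,\partial D)$ has a kernel whenever the form is degenerate. The actual reason is that the variation of $\Omega$ along each simple loop equals $\pm\langle\Pi,\Delta_j\rangle\text{\CYRSH}_j$, which vanishes when $\Pi=0$.
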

	
	Any small perturbation (including a linear one) of the germ $f$ by a diffeomorphism of the argument space (depending on a parameter) can be reduced to a function germ contained in a fixed {\em versal} deformation (\cite{AVG},~\S8) of the germ $f$. The computation of the Petrovsky classes and the enumeration of their triviality components (called \textit{formal lacunas}) for the versal deformations of many isolated singularities were carried out by V.~A.~Vassiliev in~\cite{VVA}, Ch.~VI,~\S6. The absence of formal lacunas in the base of a versal deformation implies their absence in the base of any arbitrary deformation, which yields an obstruction (in the specified cases) to the local finite-sheetedness (and all the more so to the algebraicity) of the volume function. On the other hand, if the linear deformation is versal and there is a formal lacuna, then the analytic continuation from the side of this lacuna is locally regular.
	
	The present work is devoted to a similar study close to hyperplanes that are asymptotic with respect to the boundary of the domain. It dates back to the {\em boundary} singularities introduced by V.~I.~Arnold in\(~\cite{ARN},\) where the role of the boundary is related to the ``hyperplane at infinity'' $S$ in the projective closure $\mathbb{CP}^{N+1}$ of the space $\mathbb{C}^{N+1}$. A hyperplane $Y\subset\mathbb C^{N+1}$ is called {\em asymptotic} to an algebraic hypersurface $\Xi\subset\mathbb{C}^{N+1}$ if its projective closure is non-transversal to the part at infinity \(\overline\Xi\cap S\). First and foremost, we are interested in real asymptotic hyperplanes arising in the case of unbounded domains in the space \(\mathbb{R}^{N+1}\).
	
	As in the case of tangency, near a {\em simple} (see Sec.~\ref{Pro}) point $a \in \overline{\Xi} \cap S$, the pair $(\overline{\Xi}, \overline{\Xi} \cap S)$ is given by the germ $(A, A \cap S)$ of the graphs of the generating function $f$ and its restriction to the boundary $\hat{S} = Y \cap S$; moreover, $f$ has an isolated real boundary singularity at the point $a$. In adapted affine coordinates, the equations $c=f(x)$, $c=0$, and $x_1=0$ define $A$, $Y$, and $S$, respectively. Here, the set $\Sigma(\Lambda_f)$ has two components: the hyperplane section $A \cap X$ (for $X$ close to $Y$) can be either non-smooth or non-transversal to the boundary $\hat{S} = X \cap S$.
	
	For generic $X=X_0$ an analogous homomorphism $H_{N+1}(D\setminus S, A\cup X_0\cup\partial D\setminus S)\to H_{N-1}(A\cap X_0\setminus S, \partial D\setminus S)$ is an isomorphism (see Sec.~\ref{j}). We introduce a set of classes $\{\Omega_i\}\subset H_{N+1}(D\setminus S, A\cup X_0\cup\partial D\setminus S)$, see Fig.~\ref{F17}. Each class is represented by a relative cycle given by a connected domain in the space $\mathbb{R}^{N+1}\cap D$ that is not adjacent to $S$. This domain is bounded by the sphere $\partial D$, the hyperplane $X_0$, and the hypersurface $\operatorname{Re}A$, and it is equipped with the orientation induced by a fixed positive orientation of $\mathbb{R}^{N+1}$. Let us denote by $\Pi_i$ the image of $\Omega_i$ under the specified isomorphism. The cycle $\Pi_i$ is realized by a set of connected components of the set $\operatorname{Re}A\cap X_0$ that do not intersect the boundary $S$.
	
	Consider the set of germs of the volume functions near $X_0$ defined by the integrals $V_i(X)=\int_{\Omega_i(X)}\omega$ of a rational differential form $\omega=dc\wedge\dfrac{dx_1}{x_1^{N+2}}\wedge dx_2\wedge\dots\wedge dx_{N}$ on $\mathbb{CP}^{N+1}.$ This form is proportional to the standard holomorphic volume form in the affine chart $\mathbb{C}^{N+1}\supset\Xi$, for which the hyperplane $S:x_1=0$ is at infinity. Let $B\subset(\mathbb{CP}^{N+1})^*$ be as before a $\delta$-ball around zero in the parameter space of the deformation $\Lambda_f$.
	
	\begin{theorem*}\label{1!}(see Sec.~\ref{osn}) a) Let $N$ be odd and the germ $f$ be stable (see Sec.~\ref{Pro}). Then each function $V_i$ is infinitely ramified in $B$. b) Let $N$ be even. If $\Pi_{i}\neq 0\in H_{N-1}(A\cap X, \partial D)$, then the function $V_i$ is infinitely ramified in $B$. If the singularity $f|_{\hat S}$ is simple, the converse is also true; otherwise, the converse, generally speaking, is not true.
	\end{theorem*}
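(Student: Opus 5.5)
Since the reduction homomorphism $H_{N+1}(D\setminus S, A\cup X_0\cup\partial D\setminus S)\to H_{N-1}(A\cap X_0\setminus S,\partial D\setminus S)$ is an isomorphism commuting with monodromy, I will translate finite-sheetedness of $V_i$ into finiteness of the orbit of $\Pi_i$ under $\pi_1(B\setminus\Sigma_{\mathbb C}(\Lambda_f);X_0)$. The volume $V_i(X)$ is a linear functional of the class $\Omega_i(X)$, so infinitely many distinct classes in the orbit should give infinitely many values. To justify this, I will check that the periods of $\omega$ separate the classes, or at least that they separate the classes produced by iterating a single Picard--Lefschetz loop. Leray residue reduces $\int\omega$ to integrals of a form on $A\cap X\setminus S$, and these vary non-constantly with $\xi$. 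The monodromy is generated by two kinds of loops, one for each component of $\Sigma(\Lambda_f)$. Loops around the smooth part of the ``singular section'' component act by Picard--Lefschetz reflections in vanishing cycles $\Delta\in H_{N-1}(A\cap X\setminus S)$. Loops around the ``non-transversal to $\hat S$'' component act through the boundary vanishing cycles; in the relative/complement picture these are half-cycles or tubes over vanishing cycles of $f|_{\hat S}$. I will write down these formulas, with the sign $(-1)^{N(N-1)/2}$, as in Arnold's boundary-singularity monodromy.

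For part (b), with $N$ even, $N-1$ is odd and the intersection form on $H_{N-1}$ is skew. Each reflection $T_\Delta(\gamma)=\gamma\pm\langle\gamma,\Delta\rangle\Delta$ is then a transvection of infinite order whenever $\langle\gamma,\Delta\rangle\neq0$. The iterates $T^k\Pi_i=\Pi_i+k\langle\Pi_i,\Delta\rangle\Delta$ are pairwise distinct, and their $V_i$-values form an arithmetic progression $V_i+k\,c\int_\Delta$. So if $\Pi_i\ne0$ in $H_{N-1}(A\cap X,\partial D)$, I need some vanishing cycle (possibly boundary-type) with nonzero pairing against $\Pi_i$, and a nonzero period over it. Nonzero pairing will come from Lefschetz duality: the vanishing cycles generate $H_{N-1}(A\cap X\setminus S)$, paired with the relative group. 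Nonzero period will come from the nonvanishing of $\int_\Delta$ of the Leray residue, which follows from the asymptotic behavior near the critical value.

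For the converse when $f|_{\hat S}$ is simple, suppose $\Pi_i=0$ in $H_{N-1}(A\cap X,\partial D)$. Then $\Pi_i$ lies in the kernel, which is generated by classes supported near $S$, namely tubes over cycles in $A\cap X\cap S$. Its orbit is therefore controlled by the monodromy group of the simple singularity $f|_{\hat S}$, which acts through a finite Weyl group. The orbit is finite, so $V_i$ is finite-sheeted. For a non-simple restriction I will cite a parabolic example such as $\tilde E_6$ to show that the converse fails. For part (a), with $N$ odd, the form is symmetric. Stability of $f$ gives a vanishing cycle $\Delta$ with $\langle\Delta,\Delta\rangle=0$, or else two cycles whose reflections generate an infinite dihedral group. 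Combined with $\langle\Pi_i,\Delta\rangle\ne0$, this produces infinitely many classes; the non-emptiness of $\Pi_i$ guarantees such a pairing.

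The main obstacle is the nondegeneracy step: showing that distinct classes in the orbit actually give distinct values of $V_i$, i.e.\ that $\int_\Delta\operatorname{Res}\omega\neq0$. A second difficulty is exhibiting, in part (a), a pair of reflections generating an infinite group using only the stability hypothesis. For the first, I would try a local computation at a Morse point with explicit asymptotics of the vanishing period.
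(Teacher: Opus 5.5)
\paragraph{Part (a): the step that fails.}
Your forward direction in (b) is essentially the paper's Lemma~\ref{VV}. Note only that the duality to use is between $H_{N-1}(A\cap X,\partial D)$ and $H_{N-1}(A\cap X)$, which yields an \emph{ordinary} vanishing cycle $\Delta_s$. Drop ``possibly boundary-type'': for even $N$ one has $\gamma_t(\tau(\Delta_t'))=-\tau(\Delta_t')$, so tube loops act with order two.

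The genuine gap is in (a), at three points.
\begin{itemize}
\item The isotropic generators are not supplied by stability. They are the tubes: $[\tau(\Delta_t'),\tau(\Delta_t')]=0$ for every odd $N$ because $H_{N-2}(A\cap X\cap S)$ carries a skew form, while every $\Delta_s$ has self-intersection $\pm2$.
\item More seriously, non-emptiness of $\Pi_i$ does not make it pair with a tube. It only gives $\Pi_i\neq0$ in $H_{N-1}(A\cap X\setminus S,\partial D)$. That suffices when $\Pi_i$ vanishes in $H_{N-1}(A\cap X,\partial D)$: then $\Pi_i=\tau(\eta)$, and duality on $S$ gives some $[\Pi_i,\tau(\Delta_t')]\neq0$. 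But if $\Pi_i\neq0$ there, it may happen that $[\Pi_i,\tau(\Delta_t')]=0$ for all $t$. Then only the order-two reflections $\gamma_s$ act on $\Pi_i$ directly, and for simple $f$ these generate a finite group.
\item Stability is needed for a different reason, Lemma~\ref{Vol1}. For odd $N$ the period of $\omega$ over a tube contour is not singular at the critical value; in the model computation it is a constant independent of $X$. It is nonzero because the boundary critical point is non-parabolic ($a=2$), and it vanishes identically e.g.\ for $a=1$ or $a>N+1$. So extracting nonvanishing from asymptotics near the critical value fails for tubes, and your claim that these periods vary non-constantly with $\xi$ is false there.
\end{itemize}

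The missing idea for the second point is the indecomposability Lemma~\ref{Lyas}: in every distinguished basis some $l(\Delta_s,\Delta_t')\neq0$, proved via the codimension-two $B_2$ stratum and versality. The paper follows it with a chain argument. Take a shortest path in the connected Dynkin diagram of $f$ from some $\Delta_r$ with $[\Pi_i,\Delta_r]\neq0$ to such a $\Delta_s$. Apply the reflections along it, so that each adds the next cycle with nonzero coefficient, and only then iterate the transvection $\gamma_t$.

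\paragraph{Part (b): the converse and the counterexample.}
In the converse, writing $\Pi_i=\tau(\eta)$ with $\eta\in H_{N-2}(A\cap X\cap S,\partial D)$ and invoking the finite Weyl group is the idea of Lemma~\ref{simpl}. However, a finite orbit of the relative class does not yet give finitely many values. Analytic continuation changes $V_i$ by the period over the variation, which corresponds to $\operatorname{Var}_\gamma\Pi_i\in H_{N-1}(A\cap X\setminus S)$. The orbit controls only $\gamma\Pi_i=\Pi_i+i_*\operatorname{Var}_\gamma\Pi_i$. You must add that $i_*$ is injective on tubes, i.e.\ that the intersection form on $H_{N-2}(A\cap X\cap S)$ is nondegenerate; it is definite for simple $f|_{\hat S}$. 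This is exactly how the paper closes that argument.

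For the failure of the converse, promising a parabolic example is not an argument. The parabolic case is also the delicate one: finite-sheetedness there depends on how the kernel of the form interacts with the periods. The paper instead uses the case where both inertia indices of the form of $f|_{\hat S}$ are nonzero. The irreducible reflection group is then infinite, so the tube periods take infinitely many values (the argument of Theorem~\ref{Th} applied to $f|_{\hat S}$). The difference-set argument then gives infinite ramification of $V_i$ although $\Pi_i=0$ in $H_{N-1}(A\cap X,\partial D)$.
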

	
	If a linear deformation of a boundary singularity germ $f$ is $\mathcal{K}_{\hat S}$-versal (see Sec.~\ref{kr}), such a germ (together with the wave front germ) is called {\em stable}. Geometric stability criteria for simple real boundary singularities are obtained in Sec.~\ref{simplust}. Stability guarantees that the integral of the volume form over any vanishing integration contour (which form a distinguished basis of the group $H_{N+1}(D\setminus S, A\cup X\setminus S)$, see Sec.~\ref{j}) is not identically equal to zero, see Lemma~\ref{Vol1}.
	
	A connected component of the set $\operatorname{Re}B\setminus\Sigma_{\mathbb R}(\Lambda_f)$ is called a {\em quasilacuna} if all functions $V_i$ defined for it have finite ramification in $B$. We enumerate (Sec.~\ref{quasisimpl}) the quasilacunas for versal deformations of simple boundary singularities.
	
	In the first half of the work, we study the monodromy of isolated boundary singularities with a {\em non-proper boundary}. The role of the Milnor fiber is played by the set $A\cap X\setminus S$. We construct a distinguished basis of the group $H_{N-1}(A\cap X\setminus S)$, which consists of vanishing cycles of the group $H_{N-1}(A\cap X)$ and Leray tubes around vanishing cycles of the group $H_{N-2}(A\cap X\cap S)$, and compute the homotopy type of the space $A\cap X\setminus S$, see Lemma~\ref{baza}. A {\em form of generalized intersections} $[\cdot,\cdot]$ is introduced on the group $H_{N-1}(A\cap X\setminus S)$, and a variant of the Picard--Lefschetz formula \eqref{PL} is obtained:
	$$
	\gamma_i(\alpha_j)=\alpha_j+(-1)^{\frac{N(N+1)}{2}}[\alpha_j,\alpha_i]\alpha_i,
	$$
	where $\gamma_i$ is the monodromy operator corresponding to the basis element $\alpha_i\in H_{N-1}(A\cap X\setminus S)$. The monodromy group, defined by the image of the representation $\pi_1(B\setminus\Sigma)\to O(H_{N-1}(A\cap X\setminus S),[\cdot,\cdot])$, is finite and non-trivial only for even $N$ in two cases: 1) for the singularity $C_k$ (see~\cite{ARN}), and 2) for a simple purely boundary singularity (Lemma~\ref{monodromia}). It is shown (Sec.~\ref{MP}, Lemma~\ref{hard}) that the matrix of generalized intersections of the distinguished basis of the group $H_{N-1}(A\cap X\setminus S)$ is reconstructed from the intersection matrix of the {\em Gabrielov antisymmetric basis} (\cite{Ga}, Theorem 3) defined by the same system of paths. For arbitrary distinguished bases, this statement holds modulo two (Prop.~\ref{pepe}). The introduced matrices are computed for simple singularities (Lemma~\ref{BCF}). In Sec.~\ref{Dnk}, Lemma~\ref{Lyas} on the indecomposability of the monodromy representation is proved, which plays a key role in the proof of the main theorem in Sec.~\ref{osn}.
	
	In Sec.~\ref{Petrov}, we study the generalized even Petrovsky cycle $\Pi$, defined under the condition $\operatorname{Re}A\cap X\cap S=\varnothing$. Its class in the homology group $H_{N-1}(A\cap X\setminus S, \partial D\setminus S)$ is computed in terms of the matrix of generalized intersections. This cycle is directly related to the problem of the ramification of the total cut-off volume (lying on one side of the cutting hyperplane) from an unbounded domain in $\mathbb{R}^{N+1}$. The work concludes with Theorems~\ref{Vasvas} and \ref{Vas} (an analog of Theorem 1 on p. 188 of~\cite{VVA} for simple points) on topological obstructions to the algebraic integrability of domains, which follow from the main results.
	
	{\em Remarks.} The symbol $\Box$ denotes the end of a proof. If we refer to a ``topological pair'' of spaces $(A,B)$ where $B\subset A$ does not hold, it means the pair $(A, A\cap B)$.
	
	\section{Boundary singularities and their deformations}\label{kr}
	
	A holomorphic function $f:\mathbb{C}^N\rightarrow\mathbb C$ is called \textit{real} if it takes only real values on the subspace $\mathbb{R}^N\subset\mathbb{C}^N.$ Let $f: (\mathbb{C}^N, \mathbb{R}^N,0) \to (\mathbb{C}, \mathbb{R}, 0)$ be a real holomorphic function, and $\hat{S}\subset\mathbb{C}^N$ be a hyperplane defined by a real homogeneous linear form, called the \textit{boundary}. The germ $f$ is called a \textit{real boundary singularity} if $df|_{\hat{S}}(0)=0$, see~\cite{ARN}. If, in addition, $df(0)\neq0$ holds, then the singularity is called {\em purely boundary}. Two real boundary singularities are called \textit{($\mathcal{R}_{\hat S}$-)equivalent} if they belong to the same orbit of the action of the group of real holomorphic diffeomorphisms that preserve the boundary and the origin. In the absence of the requirement of reality for the function $f$, the boundary $\hat S$, and the corresponding diffeomorphisms, we obtain the definitions for the complex case. In what follows, by a boundary singularity we mean either a complex germ or a real one together with its complexification. A boundary singularity $f$ is called \textit{simple} if a sufficiently small neighborhood of the jet of $f$ in the space of jets of an arbitrarily high order intersects only a finite number of orbits of the specified action. The classification of simple boundary singularities was obtained in the works of V.~I.~Arnold~\cite{ARN}, \cite{Norm}, see also~\cite{Lyash}.
	
	Let $\Phi:(\mathbb{C}^N\times\mathbb{C}^{l},\mathbb{R}^N\times\mathbb{R}^{l},0)\to(\mathbb{C},\mathbb{R},0)$ be an arbitrary real deformation of a boundary singularity $f$. The set of deformation parameters ${\lambda}\in\mathbb{C}^l$ such that the corresponding function $f_{\lambda}\equiv \Phi(\cdot,\lambda)$ or its restriction to the boundary $f_{\lambda}|_{\hat{S}}$ has a critical point with value $0$ is called the \textit{discriminant} $\Sigma(\Phi)$ of the deformation $\Phi$. The set $\Sigma$ consists of two hypersurfaces $\Sigma^{f}$ and $\Sigma^{f|_{\hat{S}}}$ in $\mathbb{C}^{l}$: the zero-level set can be either non-smooth or non-transversal to the boundary. In the case of purely boundary singularities, the set $\Sigma^{f}$ is empty. The set of those $\lambda\in\mathbb{R}^l$ for which the function $f_{\lambda}$ or $f_{\lambda}|_{\hat{S}}$ has real critical points on the zero-level set is called the real discriminant and is denoted by $\Sigma_{\mathbb{R}}$. The set $\Sigma_{\mathbb{R}}$ does not coincide with $\Sigma\cap\mathbb{R}^l$ if, for some $\lambda\in\mathbb{R}^l$, the function $f_{\lambda}$ or $f_{\lambda}|_{\hat{S}}$ has only imaginary critical points with zero value. 
	
	A holomorphic deformation $F:(\mathbb{C}^N\times\mathbb{C}^m,0)\rightarrow(\mathbb{C},0)$ of a boundary singularity germ $f$ is called \textit{($\mathcal{R}_{\hat S}$-)versal} if any holomorphic deformation $\Phi$ of the germ $f$ is equivalent to a deformation induced from $F$ (see~\cite{AVG},~\S8). This means that for any deformation $\Phi:(\mathbb{C}^N\times\mathbb{C}^{l},0)\to(\mathbb{C},0)$ there exist
	
	1) a map $\varphi:(\mathbb{C}^l,0)\rightarrow(\mathbb{C}^m,0)$ holomorphic near zero,
	
	2) a family $\mathcal{G}(x,\lambda):(\mathbb{C}^N\times\mathbb{C}^l,0)\rightarrow(\mathbb{C}^N,0)$, holomorphically depending on the parameter $\lambda\in\mathbb{C}^l$, of boundary-preserving holomorphic diffeomorphisms of the space $\mathbb{C}^N$ defined near the point $0\in\mathbb{C}^N$ that perturb the identity ($\mathcal{G}(\cdot,0)\equiv\operatorname{id}$),
	
	such that
	\begin{equation}\label{VerS}
		\Phi(x,\lambda)=F(\mathcal{G}(x,\lambda),\varphi(\lambda)).
	\end{equation}
	For $\lambda\neq0$, the value $\mathcal{G}(0,\lambda)$ is not necessarily equal to zero, meaning that translations of the origin along the boundary are allowed. A versal deformation with the minimal number of parameters is called \textit{miniversal}.
	
	For real singularities and their deformations, the definition of versality is the same, except that all the maps involved are real.
	
	Two boundary singularities are called {\em $\mathcal{K}_{\hat S}$-equivalent} if they can be obtained from each other by a boundary-preserving holomorphic diffeomorphism of the argument space and multiplication by a holomorphic function that is non-zero at the origin. The definition of a {\em $\mathcal{K}_{\hat S}$-versal} deformation is analogous. It is clear that a versal deformation is also $\mathcal{K}_{\hat S}$-versal.
	
	Let the boundary hyperplane $\hat{S}$ be given by the condition $x_{1}=0$ in the space $\mathbb{C}^N$ with coordinates $x=(x_{1},\dots,x_{N}).$ By virtue of the versality theorem (\cite{AVG},~Sec.~8.3), for any isolated boundary singularity $f$, the miniversal deformation can be chosen in the form
	\begin{equation}\label{miniversal}
		f+\sum_{i=0}^{\mu-1}\lambda_i\varphi_i,
	\end{equation}
	where the classes of functions $\varphi_0\equiv1,\varphi_1,\dots,\varphi_{\mu-1}$ form a basis of the \textit{local algebra} $\mathbb{C}\{x\}_0/\nabla_{f|_{x_1}}$ of the boundary singularity $f$. Here, $\mathbb{C}\{x\}_0$ is the algebra of germs of holomorphic functions at the origin, $$\nabla_{f|_{x_1}}=\left(x_1\frac{\partial f}{\partial x_1},\frac{\partial f}{\partial x_2},\dots,\frac{\partial f}{\partial x_N}\right)$$ is the boundary gradient ideal, and $\mu=\mu(f)+\mu(f|_{\hat S})$ is the \textit{Milnor number} of the boundary singularity $f$ (see~\cite{ARN}; \cite{AVG},~\S 27). The minimal $\mathcal{K}_{\hat S}$-versal deformation can be chosen in the same form, by requiring that the classes of functions $\varphi_i$ (the number of which may be less than $\mu$) form a basis of the vector space $$\mathbb{C}\{x\}_0/(\nabla_{f|_{x_1}},f).$$
	
	In the present work, we will frequently consider deformations $F:(\mathbb{C}^N\times\mathbb{C}^l,\mathbb{R}^N\times\mathbb{R}^l,0)\rightarrow(\mathbb{C},\mathbb{R},0)$ such that
	\begin{equation}\label{obsch}
		\begin{split}
			& 1)\;F(x,\lambda)= F'(x,\lambda_1,\dots,\lambda_{l-1})+\lambda_0 \quad \text{for some subdeformation} \quad F',\\
			& 2)\;\text{for almost all}\; \lambda\in\mathbb{R}^l,\;\text{the functions}\; f_{\lambda}\;\text{and}\; f_{\lambda}|_{\hat{S}}\;\text{are Morse functions}.
		\end{split}
	\end{equation}
	
	Along with each function $f_{\tilde{\lambda}}$, a deformation of the form \eqref{obsch} contains a family of functions $f_{\tilde{\lambda}}-c$, $c\in\mathbb{C}$. By virtue of \eqref{VerS} and \eqref{miniversal}, any deformation is equivalent to a deformation induced from a versal deformation of the form \eqref{obsch}. The linear deformations $f+\sum_{i=1}^{N}\xi_i x_i+\xi_0$ arising in the study of the volume function are automatically deformations of the form \eqref{obsch}.
	
	\section{Topology of a local non-singular level set}
	
	In this section, we define an analog of the Milnor fiber (a local non-singular level set) for an isolated boundary singularity with a non-proper boundary and construct a distinguished basis for its middle-dimensional homology group (Sec.~\ref{otmb}). This is preceded by an introductory part. The Milnor fiber bundle is represented as a bundle of hyperplane sections of a hypersurface germ (Sec.~\ref{Rassl}). Definitions of a distinguished basis for an ordinary singularity and an antisymmetric basis for a boundary singularity are given (Sec.~\ref{Arncyc}), along with related concepts (intersection matrices, Picard--Lefschetz formulas, monodromy groups). 
	
	\subsection{Milnor fiber bundle of a boundary singularity}\label{Rassl}
	Consider a deformation $F$ of the form \eqref{obsch} of an isolated boundary singularity $f$. For a generic small value $\tilde{\lambda}\in\mathbb{C}^l$, the line $L_{\tilde{\lambda}}=\{f_{\tilde{\lambda}}-c \mid c\in\mathbb{C}\}$ transversally intersects the discriminant $\Sigma(F)$ at $\mu=\mu(f)+\mu(f|_{\hat{S}})$ points near the origin of the space $\mathbb{C}^l$.
	
	Let us introduce a surface $A_{\tilde{\lambda}}$ in $\mathbb{C}^{N+1}$ with coordinates $x,c$, given by the equation $c=f_{\tilde{\lambda}}(x)$. Let $X_{c'}$ be the hyperplane $c=c'$ for any $c'\in\mathbb C$, and let $S$ be the hyperplane $x_1=0$. Under the natural projection $p:\mathbb{C}^{N+1}\rightarrow L_{\tilde{\lambda}}$, the sections $A_{\tilde{\lambda}}\cap X_{c'}$ are identified with the level sets $f_{\tilde{\lambda}}=c'$. Moreover, the critical points of the restrictions $p|_{A_{\tilde{\lambda}}}$ and $p|_{A_{\tilde{\lambda}}\cap S}$ correspond to the critical points of the functions $f_{\tilde{\lambda}}$ and $f_{\tilde{\lambda}}|_{\hat{S}}$.
	
	\begin{figure}[h]
		\center{\includegraphics[width=1\linewidth]{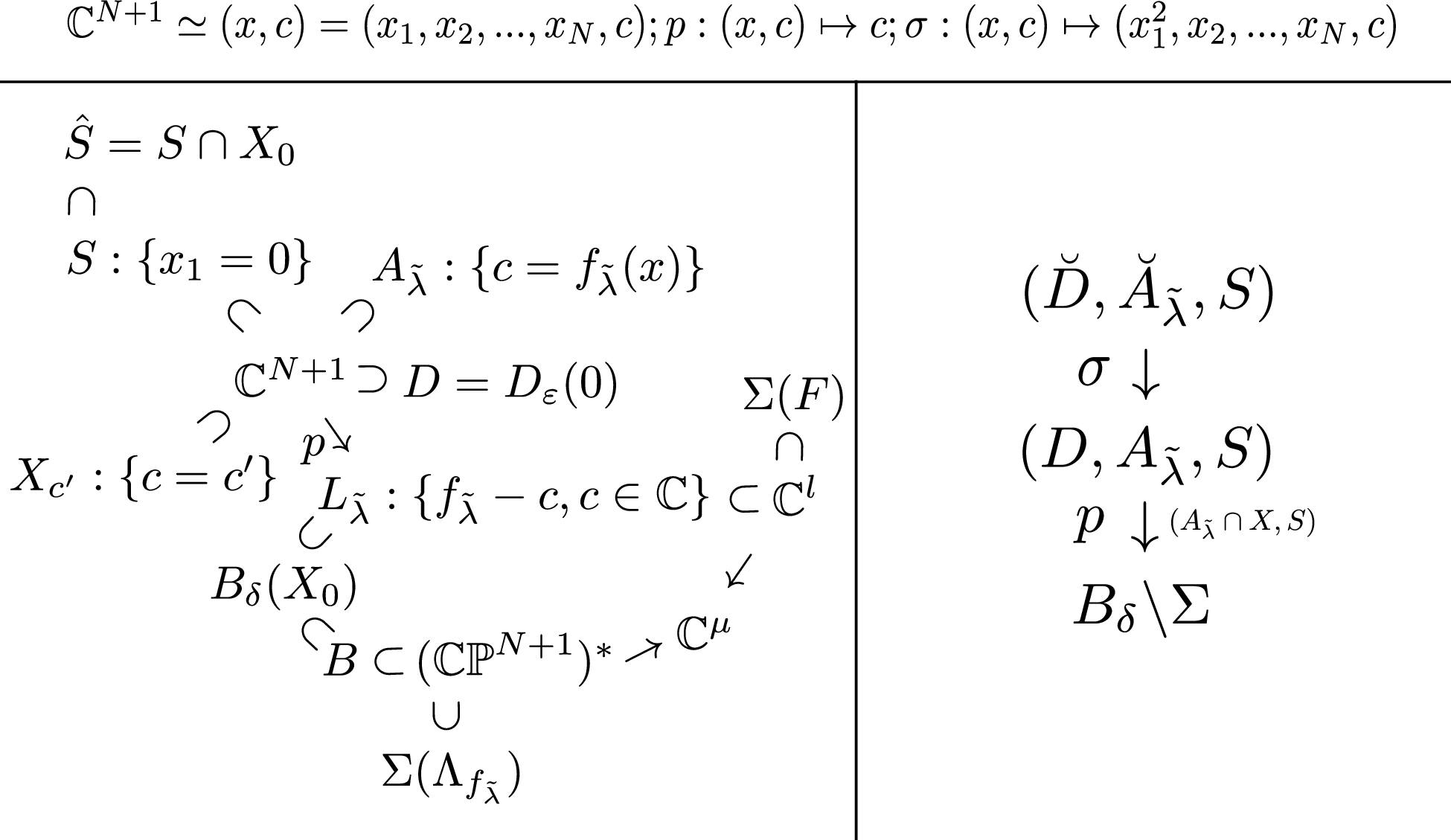}}
		\caption{Diagram of the sets involved}
		\label{F167}	
	\end{figure}
	
	By the curve selection lemma (\cite{MilnOs},~$\S3$), there exists an $\varepsilon>0$ such that the spheres $S^{2N-1}_{r}$ and $S^{2N-1}_{r}\cap\hat{S}$ of radii $r\leqslant\varepsilon$ centered at $0\in\mathbb{C}^N\simeq X_0$ are transversal to the sets $f=0$ and $f|_{\hat{S}}=0$, respectively. Let us denote by $D$ the $\varepsilon$-ball centered at the origin of $\mathbb{C}^{N+1}$. We choose $\delta>0$ such that the spheres $\partial D\cap X_{c'}$ and $\partial D\cap X_{c'}\cap S$ are transversal to the level sets $f=c'$ and $f|_{\hat{S}}=c'$ for $|c'|\leqslant\delta$ (and hence, to the sets $f_{\tilde{\lambda}}=c'$ and $f_{\tilde{\lambda}}|_{\hat{S}}=c'$ for sufficiently small $\tilde{\lambda}$). Let us take $\tilde\lambda$ small enough so that exactly $\mu$ critical values of the restrictions $p|_{A_{\tilde{\lambda}}}$ and $p|_{A_{\tilde{\lambda}}\cap S}$ are contained in the disk $B_{\delta}\subset L_{\tilde{\lambda}}$ defined by the condition $|c|\leqslant\delta$. Then all the critical points of the restrictions $p|_{A_{\tilde{\lambda}}}$ and $p|_{A_{\tilde{\lambda}}\cap S}$ (into which the boundary singularity $f$ has split) are contained inside the ball $D$, and the spheres $\partial D\cap X_{c'}$ and $\partial D\cap X_{c'}\cap S$ are transversal to the sets $A_{\tilde\lambda}\cap X_{c'}$ and $A_{\tilde\lambda}\cap X_{c'}\cap S$ for all $c'\in B_\delta$.
	
	Let us set $A\equiv A_{\tilde{\lambda}}\cap D$. By Thom's isotopy theorem (see~\cite{PHam}), the projection $p$ defines a fiber bundle of pairs $(A\cap X_{c'}, A\cap X_{c'}\cap S)$ of local non-singular level sets (Milnor fibers) of the singularities $f$ and $f|_{\hat{S}}$ over the set of regular (non-critical) values in $B_{\delta}$ (Fig.~\ref{F167}). Other fiber bundles considered in Sec.~\ref{j} are also associated with the projection $p$: the bundles of the sets $A\cap X_{c'}\setminus S$, $A\cap X_{c'}\cap\partial D$, $A\cap X_{c'}\cap\partial D\setminus S$, the pairs $(A\cap X_{c'}\cup\partial D, \partial D)$, $((A\cap X_{c'}\setminus S)\cup\partial D, \partial D)$, $(D\setminus S, A\cup X_{c'}\cup\partial D\setminus S)$, and others. The fiber bundles of the boundaries of the Milnor fibers --- the sets $A\cap X_{c'}\cap\partial D$ and $A\cap X_{c'}\cap S\cap\partial D$ --- are trivializable: due to the transversality of all fibers to the boundary of the ball, they can be extended to fiber bundles over the contractible base $B_{\delta}.$ 
	
	Let us choose a system of smooth paths $\{\gamma_s\}_{s=1}^{\mu(f)}\cup\{\gamma_t\}_{t=1}^{\mu(f|_{\hat S})}$ in $B_{\delta}$ connecting the critical values of the restrictions $p|_{A}$ and $p|_{A\cap S}$ to the point $c=0$ such that any two paths intersect only at the origin. Such a system of paths will be called \textit{distinguished}, see Fig.~\ref{systpath} on p.~\pageref{systpath} (see also~\cite{AVG},~Sec.~23.2; Sec.~1.6, p.~287 of~\cite{VVA}). Being distinguished implies that the paths enter the point $0$ with different tangents and are numbered in decreasing order of the arguments of entry into the origin --- this numbering will be ignored where it is not required. 
	
	Let us set $X\equiv X_{0}$. A distinguished system of paths determines (up to a choice of orientations) the distinguished basic \textit{vanishing} cycles $\{\Delta_s\}_{s=1}^{\mu(f)}$ and $\{\Delta_t'\}_{t=1}^{\mu(f|_{\hat S})}$ of the homology groups $H_{N-1}(A\cap X)$ and $H_{N-2}(A\cap X\cap S)$ of the Milnor fibers of the ordinary singularities $f$ and $f|_{\hat{S}}$, respectively (see~\cite{Br}; \cite{AVG},~\S 24.1). These fibers have the homotopy type of bouquets of $\mu(f)$ and $\mu(f|_{\hat{S}})$ spheres of middle dimensions (\cite{MilnOs},~Theorem~6.5).
	
	\begin{figure}[h]
		\center{\includegraphics[width=1\linewidth]{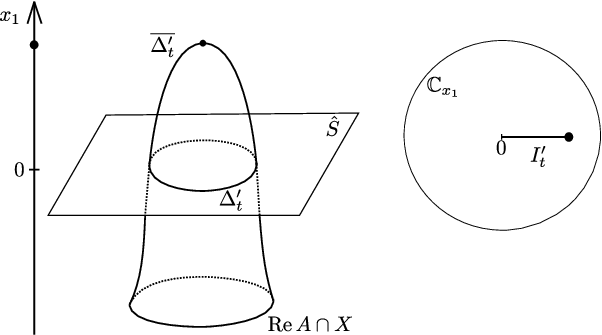}}
		\caption{Definition of the half-cycle $\overline{\Delta_t'}$ by the path $I_t'$. Here, the function $x_1$ has a real critical point with a zero positive Morse index}
		\label{it}	
	\end{figure}
	
	The paths $\{\gamma_s,\gamma_t\}$ also determine a distinguished basis of the homology group $H_{N-1}(A\cap X, A\cap X\cap S)\simeq\mathbb Z^{\mu}$. It is realized by the same vanishing cycles $\{\Delta_s\}$ and by the vanishing relative cycles (``half-cycle'') $\{\overline{\Delta_t'}\}$ with boundaries $\{\Delta_t'\}$. The quotient space $A\cap X / A\cap X\cap S$ is homotopically equivalent to a bouquet of $\mu$ spheres of dimension $N-1$, see~\cite{AVG},~\S 27.2. The following construction of the half-cycles $\{\overline{\Delta_t'}\}$ will be useful for us. The image of the {\em polar curve} of the germ $f_{\tilde\lambda}$ with respect to $x_1$ (defined in $\mathbb C^N$ by the equations $\frac{\partial f_{\tilde\lambda}}{\partial x_2}=\dots=\frac{\partial f_{\tilde\lambda}}{\partial x_N}=0$, see~\cite{Ga}) under the mapping $(x_1, f_{\tilde\lambda})$ has codimension (and hence, dimension) 1 in the space $\mathbb C^2$ with coordinates $x_1, c$. By a small perturbation of the germ $f_{\tilde\lambda}$ and of the paths, one can always ensure that the function $x_1$ (more precisely, its restriction) on each manifold $A\cap X_{c'}\subset\mathbb C^N$ has only Morse critical points with distinct values, where $c'$ is an arbitrary point of the set $\cup_t\gamma_t$. The half-cycle $\overline{\Delta_t'}\subset A\cap X$ can be defined by a smooth path $I_t'\subset\mathbb C_{x_1}$ (on the complex line with coordinate $x_1$) vanishing along $\gamma_t$ and connecting the critical value of the function $x_1$ on the manifold $A\cap X$ with zero. This path is the result of deforming the trajectory, originating at zero, of the critical value of the function $x_1$ on the fibers $f_{\tilde\lambda}=\nu$, where $\nu$ traces the distinguished path $\gamma_t$. Thus, near the boundary, the relative cycle $\overline{\Delta_t'}$ is identified with $I_t'^0\times\Delta_t'$, where $I_t'^0$ is a finite segment of the path $I_t'$. It is worth bearing in mind that for different $t$, the paths $I_t'$ may have non-zero non-removable by homotopy intersection points (see Example~\ref{x9}).
	
	Let us fix a distinguished basis $\{\Delta_s\}$ of the homology group $H_{N-1}(A\cap X)$. The {\em intersection matrix} $\langle\cdot,\cdot\rangle$ of this basis is a $\mu(f)\times\mu(f)$ matrix consisting of the intersection numbers (determined by the complex orientation of the fiber) of the vanishing cycles. The corresponding bilinear form is symmetric for odd $N$ and skew-symmetric for even $N$.
	
	The generators of the group $\pi_{1}(B_\delta\setminus\Sigma^{f})$ act on the group $H_{N-1}(A\cap X)$ according to the {\em Picard--Lefschetz formula} (\cite{AVG},~\S 23.3):
	\begin{equation}\label{PicL}
		\gamma_i(\Delta_j)=\Delta_j+(-1)^{\frac{N(N+1)}{2}}\langle\Delta_j,\Delta_i\rangle\Delta_i,
	\end{equation} 
	where $\gamma_i$ is the {\em monodromy operator} defined by a simple loop around the critical value for the vanishing cycle $\Delta_i$. The image of the representation $$\pi_{1}(B_\delta\setminus\Sigma^{f})\to O(H_{N-1}(A\cap X),\langle\cdot,\cdot\rangle)$$ is called the {\em monodromy group} of the isolated singularity $f$. The transition from one distinguished basis to another is carried out by the Picard--Lefschetz transformations (\cite{AVG},~\S 24.6), and the corresponding monodromy groups are conjugate in the group $O(H_{N-1}(A\cap X),\langle\cdot,\cdot\rangle).$ The monodromy group is finite only in the case of odd $N$ for simple singularities (it is isomorphic to one of the Weyl groups $A_k, D_k, E_6, E_7, E_8$, see~\cite{Norm}) and in the case of a Morse singularity for even $N$ (it is trivial).
	
	Let $N\equiv3\pmod4.$ In this case, the intersection matrix is symmetric, and all its diagonal elements are equal to $-2$. A singularity is called {\em elliptic} if the corresponding quadratic form is negative definite, {\em parabolic} if it is semi-definite, and {\em hyperbolic} if the positive inertia index is equal to one. Elliptic singularities are precisely the simple singularities (see~\cite{Norm}).
	
	The {\em Dynkin diagram} of a basis $\{\Delta_s\}_{s=1}^{\mu(f)}$ is a graph whose vertices are numbered from $1$ to $\mu(f)$, and whose edges are constructed according to the rule:
	the vertices $i$ and $j$ are connected by $r$ solid edges if $\langle\Delta_{i},\Delta_{j}\rangle=r>0$ and by $-r$ dashed edges if $r<0$.
	
	The Dynkin diagram is connected, as was shown by A.~M.~Gabrielov in~\cite{Gabr}, see also~\cite{AVG}, Theorem 4 of Sec.~25.2.
	
	For an arbitrary $N$, a graph on $\mu(f)$ vertices, where two vertices with indices $i$ and $j$ are connected by a single solid edge if and only if $\langle\Delta_i,\Delta_j\rangle\neq 0$, will be called the {\em support} of the Dynkin diagram.
	
	Thus, the distinguished bases $\{\Delta_{s}\}_{s=1}^{\mu(f)}\subset H_{N-1}(A\cap X)$ and $\{\Delta_{t}'\}_{t=1}^{\mu(f|_{\hat{S}})}\subset H_{N-2}(A\cap X\cap S)$ correspond to the intersection matrices and monodromy groups computed in these bases. By means of the following constructions (Sec.~\ref{Arncyc}, \ref{otmb}), integer lattices of rank $\mu=\mu(f)+\mu(f|_{\hat{S}})$ and their distinguished bases are introduced, for which one can define invariant intersection forms and analogs of formula \eqref{PicL}.
	
	\subsection{The group of antisymmetric cycles (after V.~I.~Arnold)}\label{Arncyc}
	Let $\sigma:\mathbb{C}^N\rightarrow\mathbb{C}^N$ be the map given by the formula $(x_{1},x_2,\dots,x_{N})\mapsto(x_{1}^2,x_2,\dots,x_{N}).$
	V.~I.~Arnold in~\cite{ARN} considered the local non-singular level set $\breve{A}\cap X=\sigma^{-1}(A\cap X)$ of the singularity $\breve f(x)=f(\sigma x)$, which is a two-sheeted covering of the manifold $A\cap X$ branched along the ``boundary'' $A\cap X\cap S$, and studied the action of the group $\pi_{1}(\Theta\setminus\Sigma)$ on the group $H^-$. The latter group consists of anti-invariant cycles with respect to the action on the homology group $H_{N-1}(\breve{A}\cap X)$ of the natural involution of the manifold $\breve{A}\cap X.$ The involution is induced by the transformation of $\mathbb{C}^N$ given by the formula $\rho:(x_{1},x_2,\dots,x_{N})\mapsto(-x_{1},x_2,\dots,x_{N})$. The distinguished basis of the group $H^-\simeq\mathbb Z^{\mu}$ is realized by \textit{long} and \textit{short} cycles $\beta_s\equiv\Delta_s^{\operatorname{I}}-\Delta_s^{\operatorname{II}}$ and $\beta_t'\equiv\overline{\Delta_t'}^{\operatorname{I}}-\overline{\Delta_t'}^{\operatorname{II}}$, which are appropriately oriented full preimages of the $\mu(f)$ vanishing spheres $\Delta_s$ and $\mu(f|_{\hat{S}})$ vanishing half-spheres $\overline{\Delta_t'}$, realizing the distinguished basis of the homology group $H_{N-1}(A\cap X,S)$. The intersection form on $H^-$ is defined by the restriction to $H^-$ of the standard intersection form on $H_{N-1}(\breve{A}\cap X)$.
	
	Let $\beta\in H^-$. The generators of the group $\pi_{1}(\Theta\setminus\Sigma)$ act on $H^-$ according to the generalized Picard--Lefschetz formulas (\cite{ARN},~Theorem~5):
	\begin{equation}
		\gamma_t(\beta)=\beta+(-1)^{\frac{N(N+1)}{2}}\langle\beta,\beta_t'\rangle\beta_t',
	\end{equation} 
	where $\gamma_t$ is the monodromy operator defined by a simple loop around the critical value for the short cycle $\beta_t'$, and
	\begin{equation}
		\gamma_s(\beta)=\beta+(-1)^{\frac{N(N+1)}{2}}\frac{\langle\beta,\beta_s\rangle}{2}\beta_s
	\end{equation} 
	in the case of the long cycle $\beta_s.$
	
	Let $N\equiv3\pmod4.$ The {\em Dynkin diagram} of a distinguished antisymmetric basis $\{\beta_s,\beta_t'\}$ is a graph constructed in the following way:
	
	1) the number of vertices of the graph is equal to $\mu$;
	
	2) the vertices $i$ and $j$ are connected by $|r|$ solid (if $r>0$) or dashed (if $r<0$) edges, where $r=\langle\beta_i,\beta_j'\rangle$ if one vertex corresponds to a long cycle and the other to a short cycle, and $r=\langle\beta_i',\beta_j'\rangle$ if both vertices correspond to short cycles. Vertices corresponding to two long cycles are similarly connected by $|r|/2$ edges, where $r=\langle\beta_i,\beta_j\rangle$;
	
	3) the edges connecting vertices of different types are equipped with an orientation from long vertices to short vertices.
	
	Arnold's construction is remarkable because the classification of simple boundary singularities with $df(0)=0$ is related to the classification of simple Lie algebras with double bonds in their Dynkin diagrams (see~\cite{ARN}). In particular, for odd $N$, the monodromy group is isomorphic to one of the Weyl groups $B_k, C_k, F_4$ (short and long cycles correspond to short and long roots).
	
	\subsection{Distinguished basis of homology for a Milnor fiber with a non-proper boundary}\label{otmb}
	We will show that the set of vanishing cycles of the group $H_{N-1}(A\cap X)$ and \textit{Leray tubes} (\cite{VVA},~Ch.~I,~Sec.~2.3,~\cite{Lere}) around the vanishing cycles of the group $H_{N-2}(A\cap X\cap S)$ forms a basis of the homology group $H_{N-1}(A\cap X\setminus S; \mathbb Z)$. This basis will be called \textit{distinguished}. The homotopy type of the space $A\cap X\setminus S$ will also be computed. 
	
	\begin{definition}\label{Miln}
		The set $A\cap X\setminus S$ is called the \textbf{Milnor fiber} (a local non-singular level set) \textbf{with a non-proper boundary} of the boundary singularity $f$. 
	\end{definition}
	
	\begin{example}\label{B2} 
		In the case of the singularity $f=x_{1}^2+x_{2}^2+\dots+x_{N}^2$ of type $B_2$ (see~\cite{ARN}) with the boundary $\hat S:x_1=0$, the triple of manifolds $(A_0, S, Y)$, where the hyperplane $Y\subset\mathbb{C}^{N+1}$ is transversal to $A_0$ at the point $0\in\mathbb{C}^{N+1}$ but is tangent to $A_0\cap S$, realizes the standard Pham degeneration $P_3$ (see~\cite{PH}, \cite{VVA},~Ch.~I,~\S9).
	\end{example} 
	
	We will need the following construction. The projection of the Milnor fiber of an isolated singularity $f$ along the boundary $\hat S$ defines a locally trivial fibration over the set of regular points (provided that the singularity $f|_{\hat S}$ is isolated) in the following way. Let us consider the set $\mathfrak{D}\equiv(D\cap S)\times \mathfrak{B}_{\delta'}$ in $\mathbb C^{N+1}$, where $\mathfrak{B}_{\delta'}$ is a $\delta'$-disk around zero on the line $\mathbb C_{x_1}$, and $\delta'$ is chosen such that the manifold $A_0\cap X\cap\{x_1=x_1'\}$ is transversal to $\partial D$ for $x_1'\in \mathfrak{B}_{\delta'}$. We take the parameter $\tilde\lambda$ small enough so that: 1) the manifold $A_{\tilde\lambda}\cap X\cap\{x_1=x_1'\}$ is transversal to $\partial D$ for $x_1'\in \mathfrak{B}_{\delta'}$; 2) the $\mu$ (distinct) critical points of the function $x_1$ on $A_{\tilde\lambda}\cap X$ that tend to zero as $\lambda\to0$ are contained in $\mathfrak{D}$; 3) the manifold $A_{\tilde\lambda}\cap X$ is transversal to all spheres $S^{2N-1}_r$ of radii $\delta'\leqslant r\leqslant\varepsilon$, where $S^{2N-1}_\varepsilon=\partial D\cap X$. Then: 1) the pair $(A_{\tilde\lambda}\cap X, A_{\tilde\lambda}\cap X\cap\partial D)$ is diffeomorphic to the pair $(A_{\tilde\lambda}\cap X\cap \mathfrak{D}, \partial(A_{\tilde\lambda}\cap X\cap \mathfrak{D}))$; the diffeomorphism is realized by the phase flow of a vector field supported in $\delta'\leqslant|x_1|\leqslant\varepsilon$, tangent to $A_{\tilde\lambda}\cap X$ and transversal to the spheres $S^{2N-1}_r$ (see the proof of Theorem 2.10 in~\cite{MilnOs}); 2) the function $x_1$ defines a locally trivial fibration of the set $A_{\tilde\lambda}\cap X\cap\mathfrak{D}$ (with singular fibers removed) over the complement in $\mathfrak{B}_{\delta'}$ to the set of critical values of the function $x_1$. A system of non-intersecting paths connecting these values to zero determines a certain basis of the group $H_{N-1}(A\cap X, A\cap X\cap S)$ which is not distinguished.
	
	The following commutative diagram holds:
	\begin{equation}\label{c'd}
		\begin{CD}
			H_{N-1}(A\cap X\setminus S) @>>> H_{N-1}(A\cap X,S)\\
			@VVV @VVV \\
			H_{N-1}(A\cap X\setminus S,\partial D) @>>>  H_{N-1}(A\cap X,S\cup\partial D)\\
		\end{CD}
	\end{equation}
	The horizontal arrows are given by the inclusion of finite chains into locally finite ones, and the vertical arrows are given by reduction modulo $\partial D$. The groups on the diagonals of the diagram are related by the Poincaré--Lefschetz duality.
	
	\begin{lemm}\label{baza}
		For any isolated boundary singularity $f$ with $N>2$, the set of classes of vanishing cycles $\{\Delta_{s}\}_{s=1}^{\mu(f)}\subset H_{N-1}(A\cap X)$ (which is empty if $df(0)\neq0$) and tubes $\{\tau(\Delta_{t}')\}_{t=1}^{\mu(f|_{\hat{S}})}$ around the vanishing cycles $\{\Delta_{t}'\}_{t=1}^{\mu(f|_{\hat{S}})}\subset H_{N-2}(A\cap X\cap S)$ defined by a distinguished system of paths forms a basis of the group $H_{N-1}(A\cap X\setminus S)\simeq\mathbb{Z}^{\mu}.$ The space $A\cap X\setminus S$ is homotopically equivalent to a bouquet of spheres $S^1 \vee \bigvee_{i=1}^{\mu} S_i^{N-1}$, where $\mu=\mu(f)+\mu(f|_{\hat S})$.
	\end{lemm}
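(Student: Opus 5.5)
The plan is to compare $A\cap X\setminus S$ with the pair $(M,K)$, where $M=A\cap X$ and $K=A\cap X\cap S$. By the results recalled in Sec.~\ref{Rassl}, $M$ is a bouquet of $\mu(f)$ spheres $S^{N-1}$ and $K$ is a bouquet of $\mu(f|_{\hat S})$ spheres $S^{N-2}$. The submanifold $K$ is a smooth complex hypersurface in $M$, cut out by the holomorphic function $x_1$, so its normal bundle is trivialized by $dx_1$. The homology statement should then come from the Thom--Gysin exact sequence of the pair:
$$\dots\to H_k(M\setminus K)\to H_k(M)\xrightarrow{\ \cap K\ } H_{k-2}(K)\xrightarrow{\ \tau\ } H_{k-1}(M\setminus K)\to H_{k-1}(M)\to\dots,$$
where $\tau$ is the Leray tube map.

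\textbf{Middle degree.} I take $k=N$ and then $k=N-1$. Since $H_N(M)=0$, the tube map $\tau\colon H_{N-2}(K)\to H_{N-1}(M\setminus K)$ is injective. Its cokernel injects into $H_{N-1}(M)$, with the next term equal to $H_{N-3}(K)$. For $N>3$ we have $H_{N-3}(K)=0$. For $N=3$ the map $H_2(M)\to H_0(K)$ is the intersection number with the principal divisor $\{x_1=0\}$, which vanishes on compact cycles. This yields the short exact sequence
$$0\to\mathbb Z^{\mu(f|_{\hat S})}\to H_{N-1}(M\setminus K)\to\mathbb Z^{\mu(f)}\to 0 ,$$
which splits because the right-hand term is free. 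It remains to produce explicit lifts of the $\Delta_s$. For this I realize each $\Delta_s$ as a small vanishing sphere near the Morse critical point of $f_{\tilde\lambda}$; for generic $\tilde\lambda$ this point lies off $S$. I then transport the sphere to $c=0$ along $\gamma_s$ inside the fibre bundle of the sets $A\cap X_{c'}\setminus S$, which is locally trivial by Thom's isotopy theorem for the pair (Sec.~\ref{Rassl}). The transported sphere lies in $M\setminus K$ and maps to $\Delta_s$. Hence $\{\Delta_s\}\cup\{\tau(\Delta'_t)\}$ is a basis, and the rank is $\mu$.

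\textbf{Low degrees.} For $N>2$ the sphere dimension $N-1\ge2$, so $H_1(M)=0$, and $K$ is connected. The same sequence in degrees $2,1$ gives $H_1(M\setminus K)\cong H_0(K)=\mathbb Z$, generated by the meridian; the map $H_2(M)\to H_0(K)$ is zero by the principal-divisor argument. All other reduced homology vanishes except in degree $N-1$.

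\textbf{Homotopy type.} I use the construction with $\mathfrak D$ preceding diagram \eqref{c'd}. The function $x_1$ fibres $M\cap\mathfrak D$ over $\mathfrak B_{\delta'}$, with $\mu$ Morse critical values and smooth fibres diffeomorphic to $K$. Removing the fibre over $0$ and retracting the punctured disk minus the critical values onto a small circle around $0$ with whiskers to the critical values gives
$$M\setminus K\simeq (K\times S^1)\cup\bigcup_{i=1}^{\mu}e_i^{N-1},$$
where the cells $e_i^{N-1}$ are the Lefschetz thimbles. Since $K\simeq\bigvee S^{N-2}$, the product $K\times S^1$ is $S^1\vee\bigvee S^{N-2}\vee\bigvee S^{N-1}$. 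In particular $\pi_1(M\setminus K)=\mathbb Z$, because the cells attached have dimension $\ge2$. The thimble cells are attached along the vanishing $(N-2)$-spheres in $K$.

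\textbf{Main obstacle.} The hard step is to show that these attaching maps can be homotoped so that the result is exactly $S^1\vee\bigvee_{i=1}^{\mu}S^{N-1}$. The attaching maps live in $\pi_{N-2}$, which is a module over $\mathbb Z[t^{\pm1}]$, so a homology count alone is not enough. My first attempt is to construct a map
$$S^1\vee\bigvee_{i=1}^{\mu}S^{N-1}\to M\setminus K$$
from the meridian and the basis above, with the spheres based on a common fibre. I would then check that it induces an isomorphism on the homology of universal (i.e.\ infinite cyclic) covers, and conclude by Whitehead's theorem. On the covers, the $\mathbb Z[t^{\pm1}]$-module structure should be computed from the Wang sequence of $x_1$ over the circle, where the monodromy acts trivially because the normal bundle of $K$ is trivial.
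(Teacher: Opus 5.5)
Your proof of the homology statement is correct, and tidier than the paper's. The paper gets integrality from field coefficients, torsion-freeness and a unimodular pairing with the dual relative cycles $\nabla_s,\nabla_t$. Your split short exact sequence, with $\Delta_s$ lifted by transport in the bundle of the sets $A\cap X_{c'}\setminus S$, gives the integral basis at once. Your principal-divisor argument also settles the map $H_2(M)\to H_0(K)$ for $N=3$, which the paper passes over.

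The gap is the homotopy type: you name the obstacle but do not overcome it, and several of your ingredients are wrong.

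(i) $K\times S^1$ is not homotopy equivalent to $S^1\vee\bigvee S^{N-2}\vee\bigvee S^{N-1}$. For $N\ge4$ its universal cover is $K\times\mathbb R\simeq K$, while that of the wedge has $H_{N-2}\cong\mathbb Z[t^{\pm1}]^{\mu(f|_{\hat S})}$. For $N=3$ even the fundamental groups differ.

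(ii) For $N=3$ the thimbles are $2$-cells and do change $\pi_1$. So $\pi_1(M\setminus K)=\mathbb Z$ needs the comparison with $M\simeq K\cup\bigcup_i e_i^{N-1}$ and the fact $\pi_1(M)=1$.

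(iii) The tubes $\tau(\Delta'_t)$ are maps of $S^1\times S^{N-2}$, not of spheres. To get a map from the bouquet you need the paper's capping: $\mathrm{pt}\times\Delta'_t$ bounds the truncated half-cycle $\mathring D^{N-1}_t\subset M\setminus K$, and this turns the tube into a sphere in the same class.

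(iv) No Wang sequence of $x_1$ is available on $M\setminus K$. The map $\arg x_1$ is not a fibration there, since it is critical at the $\mu$ critical points of $x_1$. The trivial monodromy you invoke only describes the piece $K\times S^1$.

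That your $\mu$ spheres lift to a $\mathbb Z[t^{\pm1}]$-basis of $H_{N-1}$ of the cover is precisely what must be proved. The paper does this by cutting $M$ along $x_1^{-1}(l)$ and gluing $\mathbb Z$ copies of the cut manifold. It then compares the resulting Mayer--Vietoris sequence with that of the bouquet via the 5-lemma, and only then applies Whitehead.

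Your own cell decomposition gives a shorter route. Take all whiskers from a single point $b$ of the small circle. Then $M\setminus K\simeq Y\cup_{K\times\{b\}}(K\times S^1)$, where $Y=K\cup\bigcup_i e_i^{N-1}\simeq M$ (retract the whole disk onto the star of whiskers). This is the double mapping cylinder of two copies of the inclusion $j\colon K\hookrightarrow Y$. Since $Y$ is $(N-2)$-connected and $K$ has the homotopy type of an $(N-2)$-complex, $j$ is null-homotopic; geometrically, the half-cycles $\overline{\Delta'_t}$ cap the $\Delta'_t$. By homotopy invariance of double mapping cylinders, $M\setminus K\simeq Y\vee\Sigma K\vee S^1\simeq S^1\vee\bigvee_{i=1}^{\mu}S^{N-1}$, with no Whitehead argument needed.
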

	
	\textbf{Proof.} Consider a fragment of the homological sequence of the pair $(A\cap X, A\cap X\setminus S):$
	\begin{equation}\label{Lere}
		\dots\rightarrow H_{N}(A\cap X,A\cap X\setminus S)\xrightarrow{h_{1}} H_{N-1}(A\cap X\setminus S)\xrightarrow{h_{2}} H_{N-1}(A\cap X)\rightarrow\dots
	\end{equation}
	The group $H_{N}(A\cap X,A\cap X\setminus S)$ is naturally isomorphic to the group $H_{N-2}(A\cap X\cap S).$ Indeed, let us denote by $T_{\varepsilon'}$ the total space of a tubular $\varepsilon'$-neighborhood of the submanifold $A\cap X\cap S$ in $A\cap X$. This fibration is trivializable due to the transversality of $A\cap X$ to $\hat S$. We have $H_{N}(A\cap X,A\cap X\setminus S)\simeq H_{N}(T_{\varepsilon},T_{\varepsilon}\setminus S)\simeq H_{N-2}(A\cap X\cap S),$ where the first isomorphism is excision, and the second is the Thom isomorphism. From this and Milnor's theorem (\cite{MilnOs}, Theorem 6.5), it follows that the extreme homology groups are trivial in all cases except for the fragment under consideration (and two more identical fragments containing the groups $H_1(A\cap X\setminus S)$ and $H_0(A\cap X\setminus S)$). The homomorphism $H_{N-2}(A\cap X\cap S)\rightarrow H_{N-1}(A\cap X\setminus S)$ is given by the Leray tube operator $\tau$, and the $\mu$ classes $\{\tau(\Delta_{t}')\}_{t=1}^{\mu(f|_{\hat{S}})}$ and $\{h_{2}^{-1}(\Delta_{s})\}_{s=1}^{\mu(f)}$ form a basis of the group $H_{N-1}(A\cap X\setminus S)$ with coefficients in a field. 
	
	Let us prove that this set forms an integral basis. First, the group $H_{N-1}(A\cap X\setminus S)$ is torsion-free since the manifold $A\cap X\setminus S$ is homotopic to an $(N-1)$-dimensional CW-complex (\cite{MilnMors},~\S7). Second, for each of the $\mu$ classes $\{\Delta_s,\tau(\Delta_t')\}$, there exists an element of the group $H_{N-1}(A\cap X, S\cup\partial D;\mathbb Z)$ giving an intersection number equal to $1$ with it. For the vanishing cycles \(\Delta_s\), these are the classes of the dual cycles \(\nabla_s\) of the group $H_{N-1}(A\cap X,\partial D)$; for the tubes \(\tau(\Delta'_t)\), we define the elements \(\nabla_t\) as follows: we take the cycle $\nabla_t'\in H_{N-2}(A\cap X\cap S,\partial D)$ dual to $\Delta_t'$ and transport it along an arbitrary path on $\mathbb C_{x_1}$ that bypasses the critical values of the function $x_1$ on the manifold $A\cap X\subset\mathbb C^N$ and connects the zero point ($x_1=0$ defines $\hat S$) with ``infinity''. Since the cycles \(\nabla_s\) can be realized outside $T_{\varepsilon'}$ (see the upper row in \eqref{c'd}), the intersection matrix of the constructed sets has a determinant equal to $\pm1,$ which implies the required result. $\Box$
	
	Now we compute the homotopy type. First, $\pi_1(A\cap X\setminus S)\simeq\mathbb Z$. Indeed, since $\pi_1(A\cap X)=0$ and for $N>2$ the space $A\cap X\cap S$ is path-connected, the fundamental group is generated by a tube around any point of the set $A\cap X\cap S$. From \eqref{Lere}, we have $H_1(A\cap X\setminus S)\simeq\mathbb Z,$ and there are no relations on this generator. Next, we trivialize $T_{\varepsilon'}$ using the coordinate $x_1$, that is, $T_{\varepsilon'}\simeq\{|x_1|\leqslant\varepsilon'\}\times A\cap X\cap S$. A distinguished system of paths determines the mappings of the spheres $\Delta_s: S_s^{N-1}\to A\cap X\setminus S$, the tubes $\tau(\Delta_t'): S^1\times S_t^{N-2}\to A\cap X\setminus S$, where $S^1\equiv\{|x_1|=\varepsilon'\}$, and the disks $\overline{\Delta_t'}:D_t^{N-1}\to A\cap X$, where $\partial D_t^{N-1}=S_t^{N-2}\subset A\cap X\cap S$. Since the relative cycle $D_t^{N-1}$ in the manifold $A\cap X$ can be defined by a path $I_t'\subset\mathbb C_{x_1}$ terminating at zero (see Sec.~\ref{Rassl}), for each tube $\tau(\Delta_t')$, the sphere $\operatorname{pt}(t)\times S_t^{N-2}$, where $\operatorname{pt}(t)=I_t'\cap\{|x_1|=\varepsilon'\}$, is capped by a disk $\mathring{D}_t^{N-1}$ (defined by a path $\mathring{I}_t'$ with a truncated end). This defines the mappings $S_t^{N-1}\to A\cap X\setminus S$ (the disks around the poles of the sphere $S_t^{N-1}$ map to the image of $\mathring{D}_t^{N-1}$, and the sphere itself is stretched along the tube). The union of all these mappings yields the mapping $\kappa:S^1 \vee \bigvee_{i=1}^{\mu} S_i^{N-1}\to A\cap X\setminus S$.
	
	Let us denote by $\widetilde{A\cap X\setminus S}$ the universal covering space of $A\cap X\setminus S.$ Let us emit a ray $l$ from zero to ``infinity'' in $\mathbb C_{x_1}$ that does not pass through the critical values of the function $x_1$ on the manifold $A\cap X.$ Using the construction indicated before the lemma, we identify the pair $(A\cap X, \partial D)$ with its image in $\mathfrak{D}$. We cut the manifold $A\cap X$ along the set $x_1^{-1}(l)\equiv W\simeq A\cap X\cap S\times l\sim A\cap X\cap S$. The resulting manifold with corners $V\sim A\cap X$ has a boundary containing the subset $W^{+}\cup W^-$. Then the space $\widetilde{A\cap X\setminus S}$ can be obtained from the union of $\mathbb Z$ copies $V_k$ of the space $V$ by the identifications of $W_k^+$ and $W_{k+1}^-.$ The corresponding Mayer--Vietoris sequence of reduced homology groups contains a single non-trivial fragment:
	$$0\to \tilde H_{N-1}(A\cap X)\otimes\mathbb Z[\mathbb Z]\to \tilde H_{N-1}(\widetilde{A\cap X\setminus S})\to \tilde H_{N-2}(A\cap X\cap S)\otimes\mathbb Z[\mathbb Z]\to 0.$$
	The map $\kappa$ induces an isomorphism of the fundamental groups and lifts to a map $\tilde\kappa$ of the universal coverings. By virtue of the proof of Lemma~\ref{hard} in Sec.~\ref{MP}, we can assume that the realizations of the cycles $\Delta_s$ do not intersect the set ${x_1}^{-1}(l)$, and the images of $S^{N-1}_t$ intersect it transversally along the spheres $\operatorname{pt}'\times S^{N-2}_t$, where $\operatorname{pt}'=S^1\cap l$. We construct the universal covering of the bouquet $S^1 \vee \bigvee_{i=1}^{\mu} S_i^{N-1}$ by cutting the $\mu(f|_{\hat S})$ spheres along the bouquet of their equators, and consider an analogous exact sequence. By the 5-lemma, the map $\tilde\kappa$ induces an isomorphism of all homology groups, and then, by Whitehead's theorem, $\kappa$ is a homotopy equivalence. $\Box$ 
	
	\begin{remark}\label{N2}
		Let $N=2$. Then ${H_{0}}(A\cap X\cap S)\simeq\mathbb{Z}^{\mu(f|_{\hat{S}})+1}$, and the true basis of the group $H_{1}(A\cap X\setminus S)\simeq\mathbb{Z}^{\mu+1}$ is formed by the classes of the cycles $\{h_{2}^{-1}(\Delta_{s})\}_{s=1}^{\mu(f)}$ and of the tubes around all points of the set $A\cap X\cap S$. By a distinguished basis we mean the set of cycles $\{h_{2}^{-1}(\Delta_{s})\}_{s=1}^{\mu(f)}$ and $\mu(f|_{\hat{S}})$ tubes around the vanishing cycles of the group $\tilde{H}_{0}(A\cap X\cap S)$ (reduced modulo the fundamental cycle). The space $A\cap X\setminus S$ is homotopically equivalent to a bouquet of $\mu+1$ circles (each punctured point from the fiber $A\cap X\sim\bigvee_{i=1}^{\mu(f)}S^{1}_i$ adds a circle to the bouquet).
	\end{remark}
	
	\begin{remark}
		In the case where $df(0)\neq0$ holds, we have $H_{N-1}(A\cap X)=0$, and $h_{1}$ (see \eqref{Lere}) is an isomorphism.
	\end{remark}
	
	\section{The monodromy group of boundary singularities with a non-proper boundary}
	In Sec.~\ref{PLF}, we compute the action of the group $\pi_{1}(B_{\delta}\setminus\Sigma)$ on $H_{N-1}(A\cap X\setminus S)$. Its generators act according to formulas \eqref{pl}--\eqref{yyi}. A {\em form of generalized intersections} $[\cdot,\cdot]$ is introduced, and an analog of the Picard--Lefschetz formula \eqref{PL} is obtained, which unifies these formulas into a single one. It is shown (Sec.~\ref{MP}) that the matrix of generalized intersections is reconstructed from the intersection matrix of the antisymmetric {\em Gabrielov basis} (\cite{Ga}). These matrices are computed for simple singularities (Lemma~\ref{BCF}). In Sec.~\ref{Dnk}, another proof of the connectedness of the Dynkin diagram for a boundary singularity is presented (\cite{Lyash}). 
	
	\subsection{Picard--Lefschetz--Pham formulas}\label{PLF}
	We will derive the formulas by which the generators of the group $\pi_{1}(B_{\delta}\setminus\Sigma)$ act on the distinguished basis of the homology group $H_{N-1}(A\cap X\setminus S)$ introduced in Prop.~\ref{baza}.
	
	Let us fix a distinguished system of paths in $L_{\tilde{\lambda}}$ and define the bases $\{\Delta_{s}\}_{s=1}^{\mu(f)}\subset H_{N-1}(A\cap X)$, $\{\Delta_{t}'\}_{t=1}^{\mu(f|_{\hat{S}})}\subset H_{N-2}(A\cap X\cap S)$, and $\{\Delta_{s}, \tau(\Delta_{t}')\}\subset H_{N-1}(A\cap X\setminus S)$. 
	
	For an ordered pair $(\Delta_{s},\tau(\Delta_{t}'))$, consider the linking number $l(\Delta_{s},\Delta_{t}')$ defined via the {\em localization homomorphism} \eqref{23} in the following way (see~\cite{VVA},~Ch.~II,~Sec.~1.2). 
	
	Let a cycle $\Delta_{t}'\in H_{N-2}(A\cap X\cap S)$, determined by a distinguished path $\gamma_{t}$, vanish at a critical point $x_{\theta'}\in\mathbb{C}^{N+1}$ of the function $p|_{A\cap S}$ with value $\theta'$. Let us denote by $O(x_{\theta'})\subset\mathbb{C}^{N+1}$ a small ball around the point $x_{\theta'}$. Transport an arbitrary basic cycle $\Delta_{s}\in H_{N-1}(A\cap X)$ along the path $\gamma_{t}$ from the distinguished fiber $A\cap X$ to a generic fiber $A\cap X_{\theta'-\varepsilon'}$ sufficiently close to the critical one. The pair $$(O(x_{\theta'})\cap A\cap X_{\theta'-\varepsilon'}, O(x_{\theta'})\cap A\cap X_{\theta'-\varepsilon'}\cap S)$$ is homeomorphic to the pair $$(\mathbb{C}^{N-1}, \text{the Milnor fiber of a Morse function on}\; \mathbb{C}^{N-1})$$ (see Fig.~\ref{u}).
	
	\begin{figure}[H]
		\center{\includegraphics[width=0.75\linewidth]{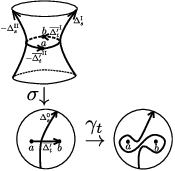}}
		\caption{Example (the case $N=2$): Topological scheme of the branched two-sheeted covering $\sigma$ of the set $O(x_{\theta'})\cap A\cap X_{\theta'-\varepsilon'}\simeq D^2$, given in local Morse coordinates $(x,y)$ on $X_{\theta'-\varepsilon'}=\mathbb{C}^2$ by the equation $x=\varepsilon-y^2$, $\varepsilon>0$. The covering $\sigma$ branches along the boundary $\hat S$ defined by the condition $x=0$. $O(x_{\theta'})\cap A\cap X_{\theta'-\varepsilon'}\cap S=\{a,b\},$ where $b=\sqrt{\varepsilon}$ and $a=-\sqrt{\varepsilon}.$ $\Delta_{t}'=\{b\}-\{a\}$. The localization $\Delta_{s}^0$ of the cycle $\Delta_{s}$ realizes a generator of the homology group $H_1(O(x_{\theta'})\cap A\cap X_{\theta'-\varepsilon'}\setminus S, \partial O(x_{\theta'}))$, and $l(\Delta_s,\Delta_t')=-1$ holds. The covering space is locally given by the equation $x^2=\varepsilon-y^2.$ The monodromy along the loop $\gamma_t$ is realized by a one-parameter family of diffeomorphisms, along whose trajectories the points $a$ and $b$ make a counterclockwise half-turn. Hence, $\operatorname{Var}_{\gamma_{t}}(\Delta_{s}^0)=\tau(b)-\tau(a)=\tau(\Delta_{t}').$}
		\label{u}	
	\end{figure}
	
	Let us denote by $\Delta_{s}^0$ the image of the cycle $\Delta_{s}$ under the homomorphism 
	\begin{equation}\label{23}
		H_{N-1}(A\cap X_{\theta'-\varepsilon'}\setminus S)\rightarrow H_{N-1}(O(x_{\theta'})\cap A\cap X_{\theta'-\varepsilon'}\setminus S, \partial O(x_{\theta'})),	
	\end{equation} 
	induced by the reduction of $D\subset\mathbb{C}^{N+1}$ modulo the complement of $O(x_{\theta'}).$ 
	
	The relative cycle $\overline{\Delta_{t}'}$ of the group $$H_{N-1}(A\cap X_{\theta'-\varepsilon'}, A\cap X_{\theta'-\varepsilon'}\cap S)$$ that vanishes at the point $x_{\theta'}$ is automatically localized in $O(x_{\theta'}).$ Between the groups $$H_{N-1}(O(x_{\theta'})\cap A\cap X_{\theta'-\varepsilon'}\setminus S, \partial O(x_{\theta'}))$$ and $$H_{N-1}(O(x_{\theta'})\cap A\cap X_{\theta'-\varepsilon'}, O(x_{\theta'})\cap A\cap X_{\theta'-\varepsilon'}\cap S)$$ there is a non-degenerate pairing given by the intersection numbers (see~\cite{VVA}, p.~29). We define the orientation of the relative cycle $\overline{\Delta_{t}'}$ from the condition $\partial\overline{\Delta_{t}'}=\Delta_{t}'.$
	
	\begin{definition}\label{chislol}
		$l(\Delta_{s},\Delta_{t}')\equiv\langle\Delta_{s}^0,\overline{\Delta_{t}'}\rangle_{A\cap X\setminus S}$.
	\end{definition} 
	
	Let us compute the action on the cycle $\Delta_s$ of the monodromy operator $\gamma_{t}:H_{N-1}(A\cap X\setminus S)\rightarrow H_{N-1}(A\cap X\setminus S)$, which is defined by a {\em simple loop} around the point $\theta'.$
	It follows from Proposition 6 on p. 29 of~\cite{VVA} that for $N>2$, the group $$H_{N-1}(O(x_{\theta'})\cap A\cap X_{\theta'-\varepsilon'}\setminus S, \partial O(x_{\theta'}))$$ is generated by the Leray tube $\tau(\nabla_{t}')$ around the relative cycle $$\nabla_{t}'\in H_{N-2}(O(x_{\theta'})\cap A\cap X_{\theta'-\varepsilon'}\cap S, \partial O(x_{\theta'})),$$ which is the Poincaré dual to the vanishing cycle $\Delta_{t}'.$ Let $\Delta_{s}^0=n\tau(\nabla_{t}'), n\in\mathbb{Z}.$ Then $$l(\Delta_{s},\Delta_{t}')=\langle\Delta_{s}^0,\overline{\Delta_{t}'}\rangle_{A\cap X\setminus S}=n\langle\tau(\nabla_{t}'),\overline{\Delta_{t}'}\rangle_{A\cap X\setminus S}=n(-1)^{N}\langle\nabla_{t}',\Delta_{t}'\pmb{\rangle}_{A\cap X\cap S}=n(-1)^{N}.$$ 
	The sign $(-1)^N$ arises from the commutativity of diagram I.7 on p. 29 of~\cite{VVA}, see also diagram \eqref{c'd} and the proof of Proposition~\ref{bilin}.
	
	Then, since $\operatorname{Var}_{\gamma_t}\nabla_t'=(-1)^{\frac{(N-1)N}{2}}\Delta_t'$ and the {\em variation} (see \cite{VVA},~Ch.~I,~\S 1.3) commutes with the tube operator, the operator $\gamma_{t}$ acts on the cycle $\Delta_s$ according to the Picard--Lefschetz--Pham formula:
	\begin{equation}\label{pl}
		\gamma_{t}(\Delta_{s})=\Delta_{s}+(-1)^{\frac{N(N-1)}{2}}(-1)^{N} l(\Delta_{s},\Delta_{t}')\tau(\Delta_{t}')=\Delta_{s}+(-1)^{\frac{N(N+1)}{2}} l(\Delta_{s},\Delta_{t}')\tau(\Delta_{t}').
	\end{equation} 
	For $N=2$, formula \eqref{pl} is easily verified by direct computation (see Fig.~\ref{u}).
	
	\begin{remark}\label{rem}
		Since the monodromy can be realized by a diffeomorphism of the pair $(A\cap X, S)$ that is fixed near $\partial D$, the variation operator $\operatorname{Var}_{\gamma_{t}}:H_{N-1}(A\cap X\setminus S, \partial D)\to H_{N-1}(A\cap X\setminus S)$ is defined. The coefficients $l(\nabla, \Delta_{t}')$ for elements $\nabla\in H_{N-1}(A\cap X\setminus S, \partial D)$ are defined analogously to the absolute case. The variation operator acts according to the formula $$\operatorname{Var}_{\gamma_{t}}(\nabla)=(-1)^{\frac{N(N+1)}{2}} l(\nabla,\Delta_{t}')\tau(\Delta_{t}'),$$ and the corresponding monodromy operator \(\gamma_t\) acts on \(\nabla\) as follows: $\gamma_t(\nabla)=\nabla+i_*\operatorname{Var}_{\gamma_{t}}(\nabla),$ where $i_*$ is homomorphism of reduction modulo $\partial D.$ 	
	\end{remark}
	
	From the Picard--Lefschetz formula and from the fact that the monodromy commutes with the tube operator, we obtain the formula:
	\begin{equation}\label{yyj}
		\gamma_{j}(\tau(\Delta_{i}'))=\tau(\Delta_{i}')+(-1)^{\frac{N(N-1)}{2}}\langle\Delta_{i}',\Delta_{j}'\rangle_{A\cap X\cap S} \tau(\Delta_{j}').
	\end{equation}
	
	Let us now consider the operators defined by simple loops around the critical values of the function $p|_{A}$. They act according to the standard Picard--Lefschetz formula. It follows from the definition of the Leray tube that for any vanishing cycle $\Delta_s\in H_{N-1}(A\cap X\setminus S)$, one has $\langle\tau(\Delta_{t}'),\Delta_{s}\rangle_{A\cap X\setminus S} =0$, and the action of the operator $\gamma_{s}$ on the tubes is trivial:
	\begin{equation}\label{yy}
		\gamma_{s}(\tau(\Delta_{t}'))=\tau(\Delta_{t}')+(-1)^{\frac{N(Y+1)}{2}}\langle\tau(\Delta_{t}'),\Delta_{s}\rangle_{A\cap X\setminus S}=\tau(\Delta_{t}').
	\end{equation}
	Finally, 
	\begin{equation}\label{yyi}
		\gamma_{j}(\Delta_{i})=\Delta_{i}+(-1)^{\frac{N(N+1)}{2}}\langle\Delta_{i},\Delta_{j}\rangle_{A\cap X} \Delta_{j}.
	\end{equation}
	
	\begin{definition}\label{intmatr}
		The \textbf{matrix of generalized intersections} of the distinguished basis $\{\Delta_{s}\}_{s=1}^{\mu(f)}, \{\tau(\Delta_{t}')\}_{t=1}^{\mu(f|_{\hat{S}})}$ of the homology group $H_{N-1}(A\cap X\setminus S)$ is the $\mu\times\mu$ matrix whose elements $[\cdot,\cdot]$ are defined by pairs of basis elements as follows: 
		\begin{eqnarray*}
			[\Delta_{i},\Delta_{j}]=\langle\Delta_{i},\Delta_{j}\rangle_{A\cap X}, \quad [\tau(\Delta_{i}'),\tau(\Delta_{j}')]=(-1)^N\langle\Delta_{i}',\Delta_{j}'\rangle_{A\cap X\cap S},\\\label{0}
			[\Delta_{s},\tau(\Delta_{t}')]=l(\Delta_{s},\Delta_{t}'), \quad [\tau(\Delta_{t}'),\Delta_{s}]=\langle\tau(\Delta_{t}'),\Delta_{s}\rangle_{A\cap X\setminus S}=0.	
		\end{eqnarray*}
	\end{definition}
	
	\begin{remark}\label{ed}
		Two different systems of paths can define identical bases of the group $H_{N-1}(A\cap X\setminus S)$ with different matrices of generalized intersections. Suppose (see Example~\ref{ff4}) that for some pair $(\Delta_s,\tau(\Delta_t'))$ one has $l(\Delta_s,\Delta_t')\neq 0$, and the path $\tilde\gamma_t$ differs from the path $\gamma_t$ for $\Delta_t'$ by a simple turn around the critical value for $\Delta_s$. The paths $\gamma_t$ and $\tilde\gamma_t$ define the same element $\tau(\Delta_t')$ of the group $H_{N-1}(A\cap X\setminus S),$ but different classes $\overline{\Delta}_t'$ and $\overline{\Delta}_t'\pm l(\Delta_s,\Delta_t')\Delta_s$ in the group $H_{N-1}(A\cap X,S)$. Let two systems of paths be obtained from each other by replacing $\gamma_t$ with $\tilde\gamma_t$. Then for any vanishing cycle $\Delta_i$ such that \(\langle\Delta_i,\Delta_s\rangle\neq 0\), the values of the coefficients $l(\Delta_i,\Delta_t')$ defined by these two systems of paths turn out to be different.
	\end{remark}
	
	In Sec.~\ref{43}, we will show that the introduced matrix plays a role similar to that of the ordinary intersection matrix in classical theory.
	
	\subsection{Intersection matrices}\label{MP}
	
	\begin{PREDLOZHENIE}\label{pepe}
		Let a single distinguished system of paths determine the bases $\{\Delta_s, \tau(\Delta_t')\}$ and $\{\beta_s, \beta_t'\}$ (with orientations chosen arbitrarily). Then:
		
		1) $\langle\beta_i, \beta_j\rangle/2 \equiv \langle\Delta_i, \Delta_j\rangle \pmod 2$,
		
		2) $\langle\beta_s, \beta_t'\rangle/2 \equiv l(\Delta_s, \Delta_t') \pmod 2$,
		
		3) $\langle\beta_i', \beta_j'\rangle \equiv \langle \Delta_i', \Delta_j'\rangle \pmod 2$.
	\end{PREDLOZHENIE}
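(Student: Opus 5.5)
Since the statement is only modulo $2$, orientations and all signs can be ignored throughout. Every intersection index in $H^-$ will be computed by lifting through the double covering $\sigma:\breve A\cap X\to A\cap X$, which branches along $A\cap X\cap S$. The plan is to put all realizing chains in general position with respect to each other and to the branch locus, and then count preimages of intersection points.

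\emph{Step 1: item 1.} Choose representatives $\Delta_i,\Delta_j$ that miss $S$ and meet transversally at points away from $S$. Each such point $p$ has two preimages, one on each sheet $\mathrm{I}$, $\mathrm{II}$. In $\beta_i=\Delta_i^{\mathrm I}-\Delta_i^{\mathrm{II}}$ and $\beta_j=\Delta_j^{\mathrm I}-\Delta_j^{\mathrm{II}}$, the same-sheet pairs $(\mathrm{I},\mathrm{I})$ and $(\mathrm{II},\mathrm{II})$ meet over $p$, while mixed pairs meet nowhere. Since $\sigma$ is a local diffeomorphism off the branch locus, both contributions equal the local index at $p$ up to sign. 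Hence $\langle\beta_i,\beta_j\rangle=\pm2\langle\Delta_i,\Delta_j\rangle$, and in fact the sign is global: $\langle\beta_i,\beta_j\rangle=2\langle\Delta_i,\Delta_j\rangle$ with $\rho$ preserving complex orientation. This gives item 1 even before reducing mod $2$.

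\emph{Step 2: item 3.} Realize $\overline{\Delta_i'}$ and $\overline{\Delta_j'}$ as in Sec.~\ref{Rassl}: near $S$ they are products $I_i'^0\times\Delta_i'$ and $I_j'^0\times\Delta_j'$, where the paths $I'_i,I'_j$ in $\mathbb C_{x_1}$ end at $0$ and are chosen with different end tangents. The preimage of the path $I'_i$ under $x_1\mapsto x_1^2$ is a smooth arc through $0$. The intersections of $\beta_i'$ and $\beta_j'$ then split into two kinds. Points over the branch locus come from $\Delta_i'\cap\Delta_j'$ in $A\cap X\cap S$: there the two lifted arcs cross once transversally in the $x_1$-line, so each such point contributes $\pm\langle\Delta_i',\Delta_j'\rangle$ locally. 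Points off the branch locus are interior intersections of the half-cycles, and each lifts in pairs (same-sheet, as in Step 1), so they contribute an even number. Hence $\langle\beta_i',\beta_j'\rangle\equiv\langle\Delta_i',\Delta_j'\rangle \pmod 2$. One caveat: the paths $I'_i,I'_j$ may meet away from $0$ (Example~\ref{x9}), but such points lie off $S$ and are absorbed into the even part.

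\emph{Step 3: item 2, the main obstacle.} Represent $\Delta_s$ so that near $x_{\theta'}$ its localization is $\Delta_s^0=n\,\tau(\nabla_t')$ (Definition~\ref{chislol}), with $l(\Delta_s,\Delta_t')=\pm n$. Away from the neighbourhood $O(x_{\theta'})$, $\Delta_s$ is disjoint from $\overline{\Delta_t'}$, which is localized there. Inside $O(x_{\theta'})$, $\Delta_s\cap\overline{\Delta_t'}$ consists of transversal points off $S$, with algebraic count $\pm n$. Each such point lifts to exactly two points of $\beta_s\cap\beta_t'$, one on each sheet, and both carry the same local sign. Therefore $\langle\beta_s,\beta_t'\rangle=\pm2n$, and $\langle\beta_s,\beta_t'\rangle/2\equiv l(\Delta_s,\Delta_t')\pmod 2$.

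The delicate point is making the reduction $\Delta_s\mapsto\Delta_s^0$ compatible with a global representative of $\Delta_s$ transversal to $\overline{\Delta_t'}$, and checking that no intersections hide near $S$. This holds because $\Delta_s$ lies in $A\cap X\setminus S$, so it can be pushed off the tubular neighbourhood $T_{\varepsilon'}$ of $S$, while $\overline{\Delta_t'}$ is a product there. For $N=2$ I would check the local picture directly using Fig.~\ref{u}.
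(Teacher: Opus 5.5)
Your approach is the paper's own. You lift intersection points through $\sigma$ and use that $\rho$ preserves the complex orientation, so the two lifts of a point off the branch locus contribute equally. For item 3 you separate the points over $x_1=0$ from the paired points off it. Each point of $\Delta_i'\cap\Delta_j'$ gives exactly one transversal intersection of $\beta_i'$ and $\beta_j'$, via the distinct end tangents of $I_i'$ and $I_j'$. The off-locus points include those coming from intersections of $I_i'$ and $I_j'$ away from zero.

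There is, however, a false claim in Steps 1 and 3 that you should remove: there are no global sheets $\mathrm{I}$, $\mathrm{II}$. The double covering of $A\cap X\setminus S$ induced by $\sigma$ is connected, because a small loop around $S$ interchanges the two preimages. The labels $\mathrm{I}$, $\mathrm{II}$ are therefore chosen separately for each cycle. Over a given intersection point $p$, it may be the mixed pairs $(\Delta_i^{\mathrm{I}},\Delta_j^{\mathrm{II}})$ and $(\Delta_i^{\mathrm{II}},\Delta_j^{\mathrm{I}})$ that meet. In that case $p$ contributes $-2\epsilon_p$ rather than $2\epsilon_p$, where $\epsilon_p$ is the local index of $\Delta_i,\Delta_j$ at $p$.

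So in general $\langle\beta_i,\beta_j\rangle=2\sum_p\eta_p\epsilon_p$ with a point-dependent sign $\eta_p=\pm1$. Likewise $\langle\beta_s,\beta_t'\rangle=2\sum_p\eta_p\epsilon_p$, summed over $p\in\Delta_s\cap\overline{\Delta_t'}$. Consequently the exact identities $\langle\beta_i,\beta_j\rangle=2\langle\Delta_i,\Delta_j\rangle$ and $\langle\beta_s,\beta_t'\rangle=\pm2n$ fail for general distinguished bases. This is exactly the obstruction the paper discusses just before Example~\ref{dd4}: a loop in $\bigcup_i\Delta_i$ with odd linking number with $\hat S$. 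It is also why Lemma~\ref{hard} needs Gabrielov's special bases to obtain equalities.

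Since $\eta_p\epsilon_p\equiv\epsilon_p \pmod 2$, your count still gives items 1 and 2 in the stated mod-2 form once the exactness claims are dropped. In item 3 the off-locus pairs contribute an even number whatever their signs, so that step is unaffected.
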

	
	\textbf{Proof.} We decompose the antisymmetric cycles into their components $\beta_s=\Delta_s^{\operatorname{I}}-\Delta_s^{\operatorname{II}}$ and $\beta_t'=\overline{\Delta_t'}^{\operatorname{I}}-\overline{\Delta_t'}^{\operatorname{II}}$ such that $\sigma_*\Delta_s^{\operatorname{I}}=\Delta_s$ and $\sigma_*\overline{\Delta_t'}^{\operatorname{I}}=\overline{\Delta_t'}$. Then we have $\langle\beta_i, \beta_j\rangle=\langle\Delta_i^{\operatorname{I}}-\Delta_i^{\operatorname{II}}, \Delta_j^{\operatorname{I}}-\Delta_j^{\operatorname{II}}\rangle=\langle\Delta_i^{\operatorname{I}}, \Delta_j^{\operatorname{I}}\rangle+\langle\Delta_i^{\operatorname{II}}, \Delta_j^{\operatorname{II}}\rangle-\langle\Delta_i^{\operatorname{I}}, \Delta_j^{\operatorname{II}}\rangle-\langle\Delta_i^{\operatorname{II}}, \Delta_j^{\operatorname{I}}\rangle.$ Each intersection point of the cycles $\Delta_i$ and $\Delta_j$ is counted twice in this sum with the same sign, since the involution $\rho:x_1\mapsto-x_1$ preserves the complex orientation of the fiber. The second statement is proved in exactly the same way. Let us clarify the third statement by using the definition of the half-cycle $\overline{\Delta_t'}$ via the path $I_t'\subset\mathbb C_{x_1}$ (see Sec.~\ref{Rassl}). These paths, generally speaking, can intersect away from zero for different $t$ (see Example~\ref{Xx9}). The short cycle $\beta_t'$ is defined by the full preimage of the path $I_t'$ under the map $x_1\mapsto x_1^2$, and the intersection points of the cycles $\beta'_i$ and $\beta'_j$ outside $x_1=0$ are also counted in pairs with identical signs. $\Box$
	
	Next, we show that there exist special bases for which the congruences modulo two can be replaced by exact equalities.
	
	By a small perturbation of the germ $\breve{f}_{\tilde\lambda}\equiv f_{\tilde\lambda}(x_1^2, x_2, \dots, x_N)$ and of the system of paths (slightly bifurcating the paths originating from the critical values of the function $f_{\tilde\lambda}$), we define a distinguished basis of the group $H_{N-1}(\breve A\cap X).$ The involution defined by the transformation $\rho:x_1\mapsto-x_1$ in composition with $-\operatorname{id}$ acts on the Dynkin diagram of this basis (we orient the permuted cycles such that under the action of $\rho_*$ one transforms into the negative of the other). It permutes the vertices corresponding to long cycles, while leaving the vertices corresponding to short ones fixed. The Dynkin diagram of the antisymmetric basis is obtained from this diagram by factorization under the action of this involution and doubling the edges connecting the images of short and long vertices. From this, its connectedness immediately follows (\cite{Lyash}, Lemma 2.1). 
	
	\begin{theorem}\label{ga}(A.~M.~Gabrielov, \cite{Ga}, Theorem 3)
		
		1) Let $\{h_j\}$ be a distinguished basis determined by a special system of paths for a special perturbation of the singularity $f|_{x_1=0}+x_1^2$. Then there exists a distinguished basis $\{\hat e_j^m\}$ for $\breve f$ with the intersection matrix:
		$$\begin{aligned}
			& \langle \hat e_j^m, \hat e_{j'}^m\rangle=\langle h_j, h_{j'}\rangle,\\ 
			& \langle \hat e_j^m, \hat e_{j}^{m'}\rangle=(|m'|-|m|)^{N-1} \quad \text{for} \quad |m'-m|=1,\\ 
			& \langle \hat e_j^m, \hat e_{j'}^{m'}\rangle=(-1)^N\langle h_j, h_{j'}\rangle \quad \text{for} \quad |m'-m|=1 \quad \text{and} \quad (|m'|-|m|)(j'-j)<0, \\ 
			& \langle \hat e_j^m, \hat e_{j'}^{m'}\rangle=0 \quad \text{for} \quad |m'-m|>1 \quad \text{or} \quad (|m'|-|m|)(j'-j)>0. 
		\end{aligned}$$
		
		2) The involution $\rho$ acts on the basis $\{\hat e_j^m\}$ according to the formula $\hat e_j^m\mapsto-\hat e_j^{-m}$, meaning that the basis $\{\hat e_j^m\}$ defines the antisymmetric basis of the boundary singularity $f$: $\hat e_j^0$ are short cycles, and $\hat e_j^m+\hat e_j^{-m}$ for $m>0$ are long cycles. 
	\end{theorem}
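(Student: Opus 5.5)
The statement is Gabrielov's Theorem 3 from \cite{Ga}. My plan is to reconstruct its proof by the standard ``join/suspension'' technique for functions of the form $g(x_2,\dots,x_N)+x_1^{2}$, composed with the double covering $\sigma$.

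First, I fix the special perturbation. Write $\breve f_{\tilde\lambda}(x)=f_{\tilde\lambda}(x_1^2,x_2,\dots,x_N)$ and expand $f_{\tilde\lambda}(u,x')=g(x')+u\,a(x')+O(u^2)$ with $g=f_{\tilde\lambda}|_{\hat S}$. After a $\mathcal R_{\hat S}$-equivalence and a choice of perturbation (the ``special'' one of \cite{Ga}), the function $x_1^2$ restricted to the fibers of $g$ behaves like the second summand in a Sebastiani--Thom sum. The critical points of $\breve f$ then split into two groups:
\begin{itemize}
\item those on $x_1=0$, which are in bijection with the critical points of $g+x_1^2$ and give the basis $\{h_j\}$;
\item pairs $\pm x_1$, which are images under $\rho$ of each other and lie over the critical points of $f_{\tilde\lambda}$.
\end{itemize}
The index $m$ records the ``sheet'' or level of the $x_1$-coordinate: $m=0$ on the boundary, and $m$ and $-m$ are exchanged by $\rho$.

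Second, I compute the intersection numbers by fibering over the $x_1$-line, as in the construction before Lemma~\ref{baza}. The function $x_1$ on the fiber $\breve A\cap X$ has Morse critical points, and I choose a system of paths in $\mathbb C_{x_1}$ consisting of segments along a line through $0$ ordered by $m$. Each cycle $\hat e_j^m$ is then a suspension (join) of $h_j$-type cycles in the slice $x_1=\mathrm{const}$ with a segment of that path.

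The intersection formulas then follow from the join formula for intersection numbers, $\langle a*b,a'*b'\rangle=\pm\langle a,a'\rangle\langle b,b'\rangle$:
\begin{itemize}
\item the same level $m$ gives $\langle h_j,h_{j'}\rangle$;
\item adjacent levels contribute only through the shared endpoint, with the one-dimensional factor $(|m'|-|m|)^{N-1}$ arising from the orientation of the segments;
\item the ordering condition $(|m'|-|m|)(j'-j)<0$ reflects which of two intersecting paths passes ``below'' the other, exactly as in the Gabrielov--Sebastiani--Thom computation;
\item levels that are not adjacent give disjoint supports and hence zero.
\end{itemize}

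Third, for part 2 I use the symmetry. Since $\rho$ preserves $\breve f$ and maps the chosen path system to itself, with the level $m$ going to $-m$, it sends $\hat e_j^m$ to $\pm\hat e_j^{-m}$. The orientation convention fixed before the theorem makes the sign $-1$. Hence $\hat e_j^0$ is anti-invariant and equals the full preimage of a half-cycle, i.e.\ a short cycle, while $\hat e_j^m+\hat e_j^{-m}$ for $m>0$ is of the form $\Delta^{\mathrm I}-\Delta^{\mathrm{II}}$, i.e.\ a long cycle.

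I expect the main obstacle to be the second step. Keeping track of signs and of the ordering condition requires choosing the special path system so that the joins are genuinely distinguished vanishing cycles of $\breve f$. If this cannot be done directly, I would fall back on citing \cite{Ga}, Theorem 3, where this is done for suspension-type perturbations.
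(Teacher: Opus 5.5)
The paper does not prove this statement. It is quoted from \cite{Ga}, Theorem~3, and only pieces of Gabrielov's construction reappear, in the proof of Lemma~\ref{hard}. Your fallback of citing \cite{Ga} is therefore exactly what the paper does. The reconstruction you offer as the main route, however, has a genuine gap, and it sits in the step you flag.

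\textbf{The Sebastiani--Thom model does not apply.} $\breve f=f(x_1^2,x_2,\dots,x_N)$ is a direct sum only when $f=g(x')+\phi(x_1)$. No $\mathcal R_{\hat S}$-equivalence or perturbation achieves this in general. For such a sum with $df(0)=0$ one has $\mu(f)=\mu(g)\mu(\phi)\geqslant\mu(g)=\mu(f|_{\hat S})$. But $C_k$ ($k\geqslant3$), $f=x_1y+y^k$, has $\mu(f)=1<k-1=\mu(f|_{\hat S})$, and $\breve f=x_1^2y+y^k$ is of type $D_{k+1}$. Three consequences follow.
\begin{itemize}
\item The labels $(j,m)$ are not a product of ``levels'' and indices of $h$. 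Instead $m$ runs over $|m|\leqslant m_j$ with $\sum_j m_j=\mu(f)$; for $C_k$ a single $j$ carries $\hat e_j^{\pm1}$. Nothing in your sketch determines the $m_j$ or attaches a pair of critical points off $x_1=0$ to a given $(j,\pm m)$.
\item The formula $\langle a*b,a'*b'\rangle=\pm\langle a,a'\rangle\langle b,b'\rangle$ is false for intersection numbers. Multiplicativity holds for variation operators (Seifert forms), not intersection forms: for $x^2+y^2$ the joined vanishing circle has self-intersection $0$, not $\pm4$.
\item Even for genuine sums, Gabrielov's formula is piecewise and governed by the orderings of the distinguished bases. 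An ordering by $m$ itself, as in your line of segments, would produce $(m'-m)(j'-j)$, which is not invariant under $m\mapsto-m$ and so contradicts part~2. The statement requires ordering by $|m|$.
\end{itemize}
There is also a dimension mismatch: the slices $\{x_1=\mathrm{const}\}$ of the fiber carry the $(N-2)$-dimensional vanishing cycles $\delta_j$ of a perturbation of $f|_{\hat S}$. The cycle $h_j$ is their stabilization and does not live in a slice.

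\textbf{What is missing is Gabrielov's polar-curve mechanism} (cf.\ the polar curve in Sec.~\ref{Rassl}), which is the actual content of the theorem.
\begin{itemize}
\item Project the fiber of $\breve f$ to $\mathbb C_{x_1}$. The critical values are the square roots of the critical values of $x_1$ on the fiber of $f_{\tilde\lambda}$, i.e.\ of the $x_1$-coordinates of the points where the polar curve meets that fiber.
\item In Gabrielov's special situation, $\hat e_j^m$ is $\delta_j$ swept along the segments of a $\rho$-symmetric chain of paths joining the points where $\delta_j$ vanishes. The central segment gives $m=0$, the short cycle.
\item Intersection numbers come from shared endpoints ($j=j'$) and from crossings of these paths weighted by $\langle\delta_j,\delta_{j'}\rangle$ ($j\neq j'$).
\end{itemize}
One must then prove two things for a suitable special perturbation. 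First, these swept cycles are exactly the vanishing cycles of a distinguished system of paths in the value plane. Second, crossings occur exactly for $|m'-m|=1$ and $(|m'|-|m|)(j'-j)<0$. This is the substance of \cite{Ga} (Lemma~2, Theorem~2, Propositions~4,~5). Its traces appear in the proof of Lemma~\ref{hard}: the perturbation $f(x_1+\tfrac{\varepsilon}{2},x')-2\varepsilon(x_1+\tfrac{\varepsilon}{2})+\varepsilon^2$ deformed into $f(\tfrac{\varepsilon}{2},x')+(x_1-\tfrac{\varepsilon}{2})^2$, critical values with distinct arguments, and straight special paths. Your proposal postpones exactly this.

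\textbf{Part~2 also needs repair.}
\begin{itemize}
\item $\rho$ acts trivially on the value plane. For a $\rho$-symmetric perturbation the critical values of $\hat e_j^{m}$ and $\hat e_j^{-m}$ coincide, so a distinguished basis exists only after a symmetry-breaking perturbation, as the paper does just before Theorem~\ref{ga}.
\item The sign is not a convention fixed earlier in the paper. For $m=0$ it is forced, because $\rho$ restricts to a reflection of the sphere $\hat e_j^0$ fixing its equator in $x_1=0$.
\item For $m\neq0$ the sign then follows from $\langle\hat e_j^{\pm(|m|-1)},\hat e_j^{\pm|m|}\rangle=1$ and the $\rho$-invariance of the intersection form, since $\rho$ preserves the complex orientation.
\end{itemize}
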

	
	From the last condition of item 1 of this theorem, it follows that the subdiagram in the Dynkin diagram of the basis $H^-$ from item 2, consisting of long vertices and the edges connecting them, is the Dynkin diagram of the basis $\{\sigma_*\hat e_j^m, m>0\}$ (or $m<0$) of the group $H_{N-1}(A\cap X)$ (\cite{Il},~Lemma~4). In other words, the diagram for $f$ lifts to the diagram for $\breve f$ as two disjoint copies.
	
	\begin{lemm}\label{hard}
		There exists a distinguished basis $\{\Delta_s^{\operatorname{I}}, \Delta_s^{\operatorname{II}}, \beta_t'\}$ of the group $H_{N-1}(\breve A\cap X)$ such that $\rho_*(\Delta_s^{\operatorname{I}})=\Delta_s^{\operatorname{II}}$ (and hence, $\{\beta_s=\Delta_s^{\operatorname{I}}-\Delta_s^{\operatorname{II}}, \beta_t'\}$ is a basis of the group $H^-$), and for the basis $\{\Delta_s, \tau(\Delta_t')\}$ of the group $H_{N-1}(A\cap X\setminus S)$ determined by the same system of paths, under a natural agreement of orientations of cycles vanishing over the same path, the following holds:
		$$\begin{aligned}
			& 1) \langle\beta_{i}, \beta_{j}\rangle=2\langle\Delta_{i}, \Delta_{j}\rangle,\\ 
			& 2) \langle\beta_{s}, \beta_{t}'\rangle=2\;l(\Delta_{s}, \Delta_{t}'),\\ 
			& 3) \langle \beta_{i}', \beta_{j}'\rangle=\operatorname{sgn}(j-i)(-1)^{N-1}\langle\Delta_{i}', \Delta_{j}'\rangle.  
		\end{aligned}$$
	\end{lemm}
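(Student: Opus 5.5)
We will use Gabrielov's basis from Theorem~\ref{ga}. After a $\mathcal{R}_{\hat S}$-change of coordinates and a special perturbation, $f_{\tilde\lambda}$ is taken in the form $g(x_2,\dots,x_N)+x_1\,\phi(x)$ compatible with the construction of~\cite{Ga}, so that $\breve f$ is a perturbation of $f|_{x_1=0}+x_1^2$-type with the $\mathbb Z_2$-symmetry $\rho$.

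Define $\Delta_s^{\operatorname{I}}=\hat e_j^m$ for $m>0$ and $\Delta_s^{\operatorname{II}}=\rho_*\Delta_s^{\operatorname{I}}=-\hat e_j^{-m}$, and take $\beta_t'=\hat e_j^0$. The system of paths for $\breve f$ descends under $\sigma$ to a distinguished system for $f_{\tilde\lambda}$: critical values of $\breve f$ off $x_1=0$ come in $\rho$-pairs with equal values, which we split slightly, and those on $x_1=0$ are critical values of $f|_{\hat S}$. So the same paths determine $\{\Delta_s,\tau(\Delta_t')\}$, and we fix orientations by $\sigma_*\Delta_s^{\operatorname{I}}=\Delta_s$ and $\sigma_*\overline{\Delta_t'}^{\operatorname{I}}=\overline{\Delta_t'}$.

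\textbf{Item 1.} We expand $\langle\beta_i,\beta_j\rangle$ as in Prop.~\ref{pepe}. The last row of Theorem~\ref{ga} gives $\langle\hat e^m_j,\hat e^{-m'}_{j'}\rangle=0$ for $m,m'>0$ unless $m=m'=1$. The remaining cross terms are then controlled by the sign rule $(|m'|-|m|)(j'-j)$, which forces the mixed-sheet contributions $\langle\Delta^{\operatorname{I}}_i,\Delta^{\operatorname{II}}_j\rangle$ to vanish; the key point is that the two sheets are separated over the paths. Hence
$$\langle\beta_i,\beta_j\rangle=\langle\Delta_i^{\operatorname{I}},\Delta_j^{\operatorname{I}}\rangle+\langle\Delta_i^{\operatorname{II}},\Delta_j^{\operatorname{II}}\rangle=2\langle\Delta_i,\Delta_j\rangle,$$
using that $\sigma$ restricted to a neighbourhood of one sheet is a diffeomorphism and that $\rho$ preserves complex orientation. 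This is the remark after Theorem~\ref{ga}: the diagram lifts as two disjoint copies, \cite{Il}, Lemma~4.

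\textbf{Item 2.} We expand $\langle\beta_s,\beta_t'\rangle=\langle\Delta_s^{\operatorname{I}}-\Delta_s^{\operatorname{II}},\beta_t'\rangle=2\langle\Delta_s^{\operatorname{I}},\beta_t'\rangle$ by $\rho$-equivariance, since $\rho_*\beta_t'=-\beta_t'$. Then we localize. By the Gabrielov structure, the intersection of $\hat e_j^m$ with $\hat e_{j'}^0$ is concentrated near the critical point $x_{\theta'}$ of $f|_{\hat S}$, after moving $\Delta_s$ along $\gamma_t$ as in \eqref{23}. There $\sigma$ maps the sheet of $\Delta_s^{\operatorname{I}}$ near $\overline{\Delta_t'}^{\operatorname{I}}$ diffeomorphically off $S$. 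Thus $\langle\Delta^{\operatorname{I}}_s,\overline{\Delta_t'}^{\operatorname{I}}-\overline{\Delta_t'}^{\operatorname{II}}\rangle$ equals the pairing $\langle\Delta_s^0,\overline{\Delta_t'}\rangle$, which is $l(\Delta_s,\Delta_t')$ by Definition~\ref{chislol}. The half-cycle $\overline{\Delta_t'}^{\operatorname{II}}$ lies on the other sheet and does not meet $\Delta_s^{\operatorname{I}}$ away from $S$.

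\textbf{Item 3.} This is the main obstacle: it is a genuine computation and not a mod~2 count. We use the description of $\overline{\Delta_t'}$ via paths $I_t'\subset\mathbb C_{x_1}$. The short cycle $\beta_t'$ fibres over $\sigma^{-1}(I_t')$, a path through $0$ in the $x_1$-plane, with fibre $\Delta_t'$ shrinking at both ends. For the special Gabrielov system the paths $I_i',I_j'$ can be chosen to meet only at $0$, which is why we need the special basis: in general extra intersections away from $0$ occur, as in Example~\ref{x9}. The intersection $\langle\beta_i',\beta_j'\rangle$ then localizes at $x_1=0$, where the two preimage curves cross transversally in $\mathbb C_{x_1}$ and the fibres meet as $\Delta_i',\Delta_j'$ in $A\cap X\cap S$. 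The local contribution is the sign of the crossing of the two curves times $(-1)^{N-1}$, the latter from reordering the base and fibre orientations (base real dimension $1$, fibre dimension $N-2$). The crossing sign equals $\operatorname{sgn}(j-i)$ because the paths are ordered by decreasing argument at $0$. We would check this sign convention against the $m=0$ entries of Theorem~\ref{ga}, where $\langle\hat e^0_j,\hat e^0_{j'}\rangle=\langle h_j,h_{j'}\rangle$ and $h$ is the stabilization of $\Delta'$ by $x_1^2$. The stabilization sign for Seifert-type forms gives exactly $\operatorname{sgn}(j-i)(-1)^{N-1}\langle\Delta_i',\Delta_j'\rangle$ for $i\ne j$, and $0$ on the diagonal when $N$ is even, consistent with the statement.
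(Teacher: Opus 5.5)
\textbf{Gap in item 2.} Your reduction there is correct, and up to applying $\rho$ it is the reduction the paper makes.
\begin{itemize}
\item $\rho$-equivariance gives $\langle\beta_s,\beta_t'\rangle=2\langle\Delta_s^{\operatorname{I}},\beta_t'\rangle$.
\item The projection formula for $\sigma$ is legitimate because $\Delta_s^{\operatorname{I}}$ avoids the branch locus. It gives $\langle\Delta_s^{\operatorname{I}},\overline{\Delta_t'}^{\operatorname{I}}\rangle+\langle\Delta_s^{\operatorname{I}},\overline{\Delta_t'}^{\operatorname{II}}\rangle=l(\Delta_s,\Delta_t')$.
\item Hence item 2 is equivalent to $\langle\Delta_s^{\operatorname{I}},\overline{\Delta_t'}^{\operatorname{II}}\rangle=0$ for all $s,t$.
\end{itemize}

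This vanishing is the whole content of the lemma, and you only assert it (``lies on the other sheet''). There are no global sheets: $\breve A\cap X\setminus\breve S\to A\cap X\setminus S$ is a connected double cover, since a small loop around $S$ does not lift to a closed loop. Localizing near the branch point does not settle it either. Which of the two local lifts of each piece of $\Delta_s\cap O(x_{\theta'})$ belongs to $\Delta_s^{\operatorname{I}}$ is decided globally, and different pieces can lift to different local branches.

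For general distinguished systems the equality fails and only the congruence of Proposition~\ref{pepe} survives (see the examples after the lemma, e.g.\ Example~\ref{dd4} and Example~\ref{ff4}). Your argument at this step uses nothing specific to Gabrielov's basis, so it cannot be complete. You need an explicit cut avoided by all the relevant cycles, together with a coherent labelling of the two halves of each $\beta_t'$.

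The paper obtains this from Gabrielov's construction:
\begin{itemize}
\item It deforms the special perturbation into $F=f(\frac{\varepsilon}{2},x_2,\dots,x_N)+(x_1-\frac{\varepsilon}{2})^2$.
\item It realizes $h_j$, hence $\hat e_j^{1}+\hat e_j^{-1}$, through segments $I_j$, and the half-cycles through paths $I_j'$ in $\mathbb C_{x_1}$.
\item It orders the paths $\mu_j$ like the $\gamma_j$, so that the roots $\pm\sqrt{-\nu_j}$ are chosen consistently. The labelling of the halves of $\hat e^0_j$ comes from condition 2 of Theorem~\ref{ga}.
\item It observes that $\operatorname{Re}_{\le0}$ meets $\bigcup_j I_j\cup I_j'$ only at $0$, so the preimages under $x_1\mapsto x_1^2$ are separated by the imaginary axis.
\item The cycles with $|m|>1$ are handled by condition 4 of Theorem~\ref{ga}.
\end{itemize}

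\textbf{Smaller point in item 1.} The cross terms $\langle\hat e^m_j,\hat e^{-m'}_{j'}\rangle$ with $m,m'>0$ vanish without exception, because $|{-m'}-m|\ge2$ already for $m=m'=1$. Your claimed exception, and the appeal to the sign rule (where $|m'|-|m|=0$ decides nothing), are a misreading of Theorem~\ref{ga}. The conclusion and the projection-formula step are nevertheless right.

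\textbf{Smaller point in item 3.} Your primary geometric route rests on two unproved claims. One is that $I_i'\cap I_j'=\{0\}$ for the special system. The other is that the crossing sign at $0$ equals $\operatorname{sgn}(j-i)$, which again depends on the coherent choice of roots. Your fallback suffices by itself and is exactly the paper's argument: $\langle\beta_i',\beta_j'\rangle=\langle\hat e^0_i,\hat e^0_j\rangle=\langle h_i,h_j\rangle$ from Theorem~\ref{ga}, followed by Gabrielov's stabilization formula.
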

	
	\textbf{Proof.} 1) Let us take a basis $\{\Delta_s^{\operatorname{I}}, \Delta_s^{\operatorname{II}}, \beta_t'\}$ satisfying Theorem~\ref{ga} after a renumbering and a reversal of the orientations of the cycles $\Delta_s^{\operatorname{II}}$. We orient the cycles $\{\Delta_s\}$ so that $\sigma_*(\Delta_s^{\operatorname{I}})=\Delta_s$ holds. Then $\sigma_*(\Delta_s^{\operatorname{II}})=\Delta_s$ as well, and since $\langle\Delta_{i}^{\operatorname{I}}, \Delta_{j}^{\operatorname{II}}\rangle=0$ for all $i,j$, the first condition of the lemma is satisfied.
	
	2) From the second condition of item 1 of Theorem~\ref{ga}, it follows that each short cycle $\beta_j'$ uniquely decomposes into a sum of two half-cycles $\overline{\Delta_j'}^{\operatorname{I}}-\overline{\Delta_j'}^{\operatorname{II}}$ such that the first half-cycle intersects $\Delta_{j}^{\operatorname{I}},$ and the second intersects $\Delta_{j}^{\operatorname{II}},$ where $\beta_j'\equiv\hat e_j^0, \Delta_{j}^{\operatorname{I}}\equiv \hat e_j^1, \Delta_{j}^{\operatorname{II}}\equiv -\hat e_j^{-1}.$ We orient the half-cycles $\{\overline{\Delta_j'}\}$ so that $\sigma_*\overline{\Delta_j'}^{\operatorname{I}}=\sigma_*\overline{\Delta_j'}^{\operatorname{II}}=\overline{\Delta_j'}.$ It is sufficient for us to check that $\langle\overline{\Delta_t'}^{\operatorname{I}}, \Delta_s^{\operatorname{II}}\rangle=0$ for all $t,s.$
	
	\begin{figure}[H]
		\center
		\makebox{\includegraphics[width=1\linewidth]{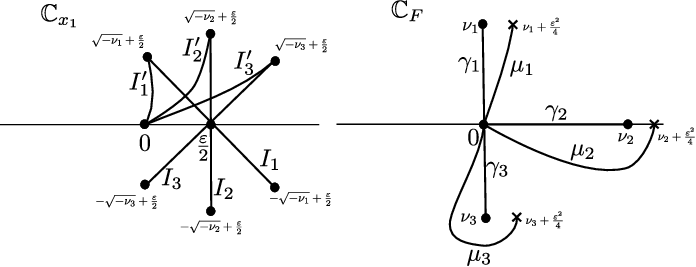}}
		\caption{The paths $I_j$ and $I_j'$}
		\label{lemma}	
	\end{figure}
	
	Let us turn to Gabrielov's construction from~\cite{Ga}. Consider the special perturbation $$f(x_1+\frac{\varepsilon}{2},x_2,\dots,x_N)-2\varepsilon(x_1+\frac{\varepsilon}{2})+\varepsilon^2,\quad \varepsilon>0$$ of the germ $f$ (if necessary, we replace it by a close morsification) and deform it into the perturbation $F=f(\frac{\varepsilon}{2},x_2,\dots,x_N)+(x_1-\frac{\varepsilon}{2})^2$ of the stabilization of the restriction $f(0,x_2,\dots,x_N)+x_1^2$. The deformation is carried out in two stages: first, the term $\delta (x_1+\frac{\varepsilon}{2})^2$, $\delta\in[0,1]$, is added, then the first (shifted) argument of the function $f$ is replaced by $(1-\delta) x_1+\frac{\varepsilon}{2}$, $\delta\in[0,1]$. According to~\cite{Ga} (see Lemma 2 and Theorem 2), along this deformation, the special (distinguished) paths for the cycles $\{\sigma_*\hat e_j^1\}\equiv\{\Delta_j\}\subset\{\Delta_s\}$ transform into the special paths $\{\gamma_j\}$ determining the basis $\{h_j\}$ for $F$. At the final moment of the deformation $f|_{x_1=\frac{\varepsilon}{2}}+(x_1-(1-\delta)\frac{\varepsilon}{2})^2$, $\delta\in[0,1]$, the latter paths $\{\gamma_j\}$ are identified with the special paths $\{\mu_j\}$ connecting the critical values of the function $F|_{x_1=0}$ to zero (defining the half-cycles $\{\overline{\Delta_j'}\}$).
	
	One can assume (\cite{Ga}, Propositions 4, 5) that all critical values $\nu_j$ of the function $F$ have distinct arguments, none of which is equal to $\pi$; and the special paths $\gamma_j$ are segments from $\nu_j$ to zero. Then the cycle $h_j$ in the fiber $F=0$ is determined by the segment $I_j=[\sqrt{-\nu_j}+\frac{\varepsilon}{2}, -\sqrt{-\nu_j}+\frac{\varepsilon}{2}]$ on the complex line $\mathbb{C}_{x_1}$. This segment is obtained by shifting by $\frac{\varepsilon}{2}$ the full preimage under the map $x_1\mapsto x_1^2$ of the trajectory, originating at zero, of the critical value of the function $x_1$ on the fibers $f|_{x_1=\frac{\varepsilon}{2}}+x_1=\nu$, $\nu\in\gamma_j$. We define the half-cycle $\overline{\Delta_j'}$ by the path $I_j'$ from zero to $\pm\sqrt{-\nu_j}+\frac{\varepsilon}{2}$. This path is the trajectory of the critical value of the function $x_1$ on the fibers $F=\nu$, $\nu\in\mu_j$. Moreover, for each path $\gamma_j$, the path $\mu_j$ is chosen such that in each such pair, the paths are identically ordered by the argument of entry into the origin (see Fig.~\ref{lemma}): this guarantees a consistent choice of the roots of $-\nu_j$ (for the endpoints of the paths $I_j'$) and the satisfaction of condition 3 of item 1 of Theorem~\ref{ga} for $m=0, m'=1$ (the path $I_t'$ intersects the path $I_s$ if and only if $s\leqslant t$). 
	
	The ray $\operatorname{Re}_{\leqslant0}\subset\mathbb C_{x_1}$ does not intersect the set $\bigcup_jI_j\cup I'_j$ anywhere except at zero. The required statement follows from this, since the short and long cycles $\hat e_j^0$ and $\hat e_j^1+\hat e_j^{-1}$ (identified with their images in the manifold $F(x_1^2,x_2,\dots,x_N)=0$ under the deformation specified above) are determined by the full preimages of these paths under the map $x_1\mapsto x_1^2$ (which are separated into sheets by the $\operatorname{Im}$ axis), while $\langle\hat e_j^0,\hat e_{j'}^m\rangle=0$ for $|m|>1$ by condition 4 of item 1 of Theorem~\ref{ga}.
	
	3) We orient the cycles $\{\Delta_t'\}$ from $\partial\overline{\Delta_t'}=\Delta_t'.$ From the first condition of item 1 of Theorem~\ref{ga} (or from the reasoning above), it follows that $\langle \beta_{i}',\beta_{j}'\rangle=\langle h_i,h_j\rangle,$ and the third condition of the lemma follows from Gabrielov's formula (\cite{GABRIEL}, Corollary 1 of Theorem 1) for stabilization by the square of one new variable. $\Box$
	
	\begin{Relation}
		The matrices of generalized intersections for the singularities from Table 1 of~\cite{Ga} are reconstructed from the data presented in that table by means of Theorem~\ref{ga} and Lemma~\ref{hard}.
	\end{Relation}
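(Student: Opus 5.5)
The corollary follows by combining the two results just established, so I would argue entirely at the level of matrices. Table~1 of~\cite{Ga} lists, for each singularity in it, a special perturbation of the stabilization $f|_{x_1=0}+x_1^2$ and the intersection matrix $\langle h_j,h_{j'}\rangle$ of the corresponding distinguished basis $\{h_j\}$. The plan has three steps.

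\emph{Step 1.} Feed these data into item~1 of Theorem~\ref{ga}. This yields the full intersection matrix of the basis $\{\hat e_j^m\}$ of $H_{N-1}(\breve A\cap X)$. By item~2 of that theorem it also yields the antisymmetric basis of $H^-$: the short cycles are $\hat e_j^0$ and the long cycles are $\hat e_j^m+\hat e_j^{-m}$.

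\emph{Step 2.} Pass to the basis $\{\Delta_s^{\operatorname{I}},\Delta_s^{\operatorname{II}},\beta_t'\}$ of Lemma~\ref{hard}, taking $\Delta_j^{\operatorname{I}}=\hat e_j^1$, $\Delta_j^{\operatorname{II}}=-\hat e_j^{-1}$ and $\beta_j'=\hat e_j^0$, with the renumbering and orientation conventions fixed in the proof of that lemma. The system of paths is the same one, so this basis determines the basis $\{\Delta_s,\tau(\Delta_t')\}$ of $H_{N-1}(A\cap X\setminus S)$. I would then apply the three equalities of Lemma~\ref{hard} entry by entry:
\[
\langle\Delta_i,\Delta_j\rangle=\tfrac12\langle\beta_i,\beta_j\rangle,\qquad l(\Delta_s,\Delta_t')=\tfrac12\langle\beta_s,\beta_t'\rangle,
\]
\[
\langle\Delta_i',\Delta_j'\rangle=\operatorname{sgn}(j-i)(-1)^{N-1}\langle\beta_i',\beta_j'\rangle\quad (i\neq j),
\]
with the diagonal values of $\langle\Delta_i',\Delta_i'\rangle$ equal to the standard self-intersections in dimension $N-2$.

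\emph{Step 3.} Assemble these values according to Definition~\ref{intmatr}:
\[
[\Delta_i,\Delta_j]=\langle\Delta_i,\Delta_j\rangle,\qquad [\Delta_s,\tau(\Delta_t')]=l(\Delta_s,\Delta_t'),
\]
\[
[\tau(\Delta_i'),\tau(\Delta_j')]=(-1)^N\langle\Delta_i',\Delta_j'\rangle,\qquad [\tau(\Delta_t'),\Delta_s]=0 .
\]

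The only point I expect to need care is checking that the bases realized in Table~1 of~\cite{Ga} are exactly the special distinguished bases covered by Theorem~\ref{ga}, so that Lemma~\ref{hard} applies to them. I would also keep track of the orientation agreement for cycles vanishing over the same path, and of the sign conventions, where the factors $(-1)^{N-1}$ and $(-1)^N$ combine. Apart from that, the argument is only the substitution described in Steps~1--3.
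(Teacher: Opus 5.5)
Your proposal is correct and is the unpacking the paper leaves implicit, since the paper states this corollary without proof: feed Gabrielov's tabulated $\langle h_j,h_{j'}\rangle$ into Theorem~\ref{ga}, convert with the three identities of Lemma~\ref{hard}, and assemble via Definition~\ref{intmatr}; you are right to treat the diagonal of the short block separately, because item~3 carries the factor $\operatorname{sgn}(j-i)$. One small correction to Step~2: the long cycles must be indexed by all pairs $(j,m)$ with $m>0$ occurring in Theorem~\ref{ga}, with $\Delta^{\operatorname{I}}_{(j,m)}=\hat e_j^m$ and $\Delta^{\operatorname{II}}_{(j,m)}=-\hat e_j^{-m}$ (for $B_k$ there is a single $j$ and $m=1,\dots,k-1$), whereas your identification with $m=\pm1$ is only the local one used in the proof of item~2 of Lemma~\ref{hard}.
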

	
	\begin{Relation}\label{Pure}
		In the case of a purely boundary singularity, formula 3 of Lemma~\ref{hard} is valid for an arbitrary distinguished basis.
	\end{Relation}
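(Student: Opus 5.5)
The plan is to reduce the corollary to item 3 of Lemma~\ref{hard} and show that, in the purely boundary case, the basis it provides is not special. Since $df(0)\neq0$, we have $\mu(f)=0$. So there are no long cycles, no vanishing cycles $\Delta_s$, and no paths $\gamma_s$. A distinguished system of paths then consists only of the paths $\{\gamma_t\}$ joining the critical values of $p|_{A\cap S}$ to zero. The antisymmetric basis consists only of the short cycles $\beta_t'$, and $H^-=H_{N-1}(\breve A\cap X)$.

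The first step is to show that every such system of paths arises from the construction in the proof of Lemma~\ref{hard}, up to Picard--Lefschetz transformations (braid moves) of the paths. In the general case, Lemma~\ref{hard} needs a special system because the paths $\gamma_s$ and $\gamma_t$ interact: by Remark~\ref{ed}, turning $\gamma_t$ around a critical value of $p|_A$ changes $\overline{\Delta_t'}$ by a multiple of $\Delta_s$. That obstruction disappears here, since no such critical values exist. So I would argue as follows. The braid group acts transitively on distinguished systems of paths, and any braid move on $\{\gamma_t\}$ changes both bases consistently.
\begin{itemize}
\item On $H_{N-2}(A\cap X\cap S)$, the cycles $\Delta_t'$ change by the ordinary Picard--Lefschetz formula \eqref{PicL} in dimension $N-2$.
\item On $H^-$, the short cycles $\beta_t'$ change by Arnold's formula for short cycles, in dimension $N-1$.
\end{itemize}

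The second step, which I expect to be the main obstacle, is to check that the identity $\langle\beta_i',\beta_j'\rangle=\operatorname{sgn}(j-i)(-1)^{N-1}\langle\Delta_i',\Delta_j'\rangle$ is preserved by an elementary braid move exchanging two adjacent paths $\gamma_k,\gamma_{k+1}$. Under such a move, $\beta_{k+1}'$ is replaced by $\beta_{k+1}'\pm\langle\beta_{k+1}',\beta_k'\rangle\beta_k'$, and $\Delta_{k+1}'$ is replaced by the analogous expression with the corresponding sign exponents $\frac{N(N+1)}{2}$ and $\frac{(N-1)N}{2}$. Then the indices are renumbered.
\begin{itemize}
\item First I would verify that the two sign conventions differ exactly by the factor $\operatorname{sgn}(j-i)(-1)^{N-1}$.
\item Then I would check that the reordering of the indices is compensated by the change of $\operatorname{sgn}(j-i)$.
\end{itemize}
This is essentially Gabrielov's argument for stabilization by a square (\cite{GABRIEL}, Corollary 1 of Theorem 1). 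Indeed, $\breve f$ is $\mathcal R$-equivalent to a stabilization: because $df(0)\neq0$ with $\partial f/\partial x_1(0)\neq0$, after a boundary-preserving change of coordinates $f=x_1+g(x_2,\dots,x_N)$, so $\breve f=x_1^2+g$. Moreover $f|_{\hat S}=g$.

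Alternatively, and more directly, I would run the third step of the proof of Lemma~\ref{hard} for an arbitrary distinguished system. Take $f=x_1+g$. The half-cycles $\overline{\Delta_t'}$ are defined by paths $I_t'$ in $\mathbb C_{x_1}$ (the fiber $x_1+g=0$ is the graph $x_1=-g$). The short cycles $\beta_t'$ are then exactly the vanishing cycles of $x_1^2+g$ along the same paths $\gamma_t$. Gabrielov's stabilization formula holds for every distinguished basis, and it gives $\langle\beta_i',\beta_j'\rangle=\operatorname{sgn}(j-i)(-1)^{N-1}\langle\Delta_i',\Delta_j'\rangle$ immediately, once the orientations are chosen so that $\partial\overline{\Delta_t'}=\Delta_t'$. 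What remains is to confirm that this orientation convention matches the one in Gabrielov's formula; I would do this by checking the case $\mu(g)=1$ by hand.
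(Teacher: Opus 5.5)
Your proposal is correct, and its \emph{alternative, more direct} route is exactly the paper's proof. In the purely boundary case a boundary-preserving change of coordinates gives $f\sim x_1+f|_{\hat S}$, so $\breve f=x_1^2+f|_{\hat S}$ is a stabilization by one square whose distinguished vanishing cycles are the short cycles $\beta_t'$, and Gabrielov's stabilization formula, which holds for every distinguished basis, gives item~3 immediately. The braid-move argument you put first is a valid but redundant re-derivation of why that formula is basis-independent: the Hurwitz-move coefficients do match, since $(-1)^{N(N+1)/2-N(N-1)/2}=(-1)^N$ and the self-intersections are $0$ versus $\pm2$ in the two dimensions, so you can drop it.
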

	\textbf{Proof.} This follows from the same Gabrielov formula in~\cite{GABRIEL}. $\Box$
	
	The examples presented below show that Lemma~\ref{hard} does not hold for an arbitrary distinguished basis.
	
	The presence of a loop in the set $\cup_{i=1}^{\mu(f)}\Delta_i\subset A\cap X\setminus S$ with an odd linking number with the boundary $\hat S$ is an obstruction to the trivialization of the covering over this set. In particular, if the Dynkin diagram contains a cycle $\Delta_1\Delta_2\dots\Delta_k\Delta_1$ and such a loop exists in its support set, then it is impossible to adjust the orientations of the long cycles $\beta_i=\pm(\Delta_i^{\operatorname{I}}-\Delta_i^{\operatorname{II}})$ so that $\langle\beta_i,\beta_j\rangle=+2\langle\Delta_i,\Delta_j\rangle$ holds simultaneously for all $i,j$.
	
	\begin{figure}[H]
		\center{\includegraphics[width=0.33\linewidth]{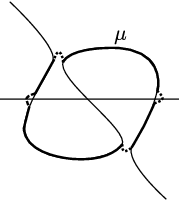}}
		\caption{Scheme of the path $\mu$}
		\label{mumu}	
	\end{figure} 
	
	\begin{example}\label{dd4} 
		Consider the {\em sabirization} (see~\cite{GZ}, \cite{VVA},~\S4.1) $x^3+y^3-\varepsilon(x+y)$ of a germ of class $D_4$. For its perturbation $F=x^3+y^3-\varepsilon((1-\delta)x+y)$, $\delta\ll\varepsilon$, we define a basis $\{\Delta_i\}_{i=1}^4$ of the group $H_1(A\cap X)$ using the canonical system of paths $\{\gamma_i\}_{i=1}^4$ (in $\operatorname{Im}\geqslant0$ with the distinguished point $0$). The cycles $\Delta_1$ and $\Delta_4$ vanish at the minimum and maximum points, while $\Delta_2$ and $\Delta_3$ vanish at the saddles. Consider the cycle $\Delta_1\Delta_2\Delta_4\Delta_3\Delta_1$ in the support of the Dynkin diagram. Introduce the boundary $\hat S:x=0$ and mark the critical values of the function $y^3-\varepsilon y$. The cycles $\Delta_1$ and $\Delta_4$, now considered as elements of the group $H_1(A\cap X\setminus S)$, can be realized in $\mathbb R^2$ except for sufficiently small neighborhoods of the saddle critical points of the function $F$ (where $\Delta_2$ and $\Delta_3$ are concentrated) and the set $A\cap X\cap S$. Consider a closed path $\mu$ in the set $\Delta_1\cup\Delta_2\cup\Delta_3\cup\Delta_4\subset A\cap X\setminus S$, starting at an arbitrary point of the cycle $\Delta_1$ and running along $\Delta_1$ to the point $\Delta_1\cap\Delta_2$, then along $\Delta_2$ to the point $\Delta_2\cap\Delta_4$, and so on (the cycles $\Delta_2$ and $\Delta_3$ can be assumed to be transversal to $\operatorname{Re}A\cap X$), see Fig.~\ref{mumu}. It is elementarily verified that $\operatorname{lk}(\mu,\hat S)=0$ (it suffices to take the projection of the loop $\mu$ onto $\mathbb C_{x}$). This value depends on how the paths $\gamma_1$ and $\gamma_4$ bypass the critical values of the function $F|_{x=0}$: if one of these paths is replaced by a path running in $\operatorname{Im}<0$, then the linking number changes its value by $\pm1$, and then the cycle in the Dynkin diagram for $F$ lifts to a cycle in the diagram for $\breve F$.
	\end{example}
	
	The next example relates to Remark~\ref{ed} and to item 3 of Lemma~\ref{hard}.
	
	\begin{example}\label{ff4}
		Consider a morsification of a boundary singularity of class $F_4$ of two variables in the form $F=x^2+y^3+\varepsilon(x-y)$ with the boundary $x=0$. In~\cite{NIKIT}, a system of paths $\{\gamma_i\}_{i=1}^4$ was chosen, a basis $\{\Delta_1, \Delta_2, \tau(\Delta_3'), \tau(\Delta_4')\}$ of the group $H_1(A\cap X\setminus S)$ was constructed, and its monodromy was computed (see~\eqref{C}). Let us replace the path $\gamma_3$ by a path $\tilde{\gamma_3},$ see Fig.~\ref{paths}. The new path determines the same elements $\Delta_3'\in \tilde H_0(A\cap X\setminus S)$ and $\tau(\Delta_3')\in H_1(A\cap X\setminus S)$, but a different element of the group $H_1(A\cap X,S)$. We have $l(\Delta_2, \Delta_3')_{\gamma_2, \gamma_3}=0$, but $l(\Delta_2, \Delta_3')_{\gamma_2, \tilde\gamma_3}=\langle\Delta_2, \overline{\Delta_3'}-\Delta_1\rangle=1$.  
		
		Suppose that item 3 of Lemma~\ref{hard} holds. Replacing $\gamma_3$ by $\tilde\gamma_3$ does not affect the numbering of paths for $F|_{\hat S}$ by the argument of entry into the origin. Then the relation $\langle\beta_3', \beta_4'\rangle=\langle\tilde\beta_3', \beta_4'\rangle$ must hold, where the cycle $\tilde\beta_3'$ is determined by the path $\tilde{\gamma_3}.$ By the Picard--Lefschetz formula, we have $\tilde\beta_3'=\gamma_1(\beta_3')=\beta_3'-\frac{\langle\beta_3', \beta_1\rangle}{2}\beta_1$. We have $\langle\tilde\beta_3', \beta_4'\rangle=\langle\beta_3', \beta_4'\rangle-\dfrac{\langle\beta_3', \beta_1\rangle}{2}\langle\beta_1, \beta_4'\rangle$. Since both values $\langle\beta_3', \beta_1\rangle$ and $\langle\beta_1, \beta_4'\rangle$ are equal to $\pm2$, we arrive at a contradiction. Here, changing the path alters the value of the intersection number of short cycles from $1$ to $-1$ or vice versa. 
	\end{example}
	
	\begin{figure}[H]
		\center{\includegraphics[width=0.6\linewidth]{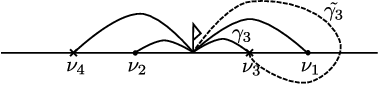}}
		\caption{Distinguished system of paths for $F_4$}
		\label{paths}	
	\end{figure}
	
	The next example justifies item 3 of Proposition~\ref{pepe} and strengthens the previous one.
	
	\begin{example}\label{Xx9}
		Consider the perturbation $F=(x+\varepsilon^2)^2+(y^2-\varepsilon^2)(y^2-4\varepsilon^2)+\varepsilon^5y-\dfrac{9}{4}\varepsilon^4$ of the germ $x^2+y^4$ at the origin with the boundary $x=0$. All critical values of the functions $F$ and $F|_{x=0}$ are real and are arranged as follows: $\nu_1<\nu_2<\nu_1'<\nu_2'<0<\nu_3<\nu_3'$. We define a basis of the group $H_1(A\cap X,S)$ by a canonical system of paths in $\operatorname{Im}\geqslant0$. The elements $\Delta_1$ and $\Delta_2$ can be realized by closed curves in $\mathbb R^2$ (bypassing the boundary in the imaginary domain), and the elements $\overline{\Delta_1'}$ and $\overline{\Delta_2'}$ by the arcs of these curves lying in $x>0$, see Fig.~\ref{X99}. We introduce orientations such that $l(\Delta_i,\Delta_j')\equiv\langle\Delta_i,\overline{\Delta_j'}\rangle=\delta_{ij}$ holds for $i\in\{1,2,3\}, j\in\{1,2\}$; $\langle\Delta_1,\Delta_3\rangle=\langle\Delta_2,\Delta_3\rangle=1$; obviously, $\langle\Delta_1',\Delta_2'\rangle=0$ and $\langle\Delta_1,\Delta_2\rangle=0$ hold. Consider the paths $\mathring{\gamma}_1'$ and $\mathring{\gamma}_2'$ shown in Fig.~\ref{x9}. These paths determine the same elements $\Delta_1'$ and $\Delta_2'$ of the group $\tilde H_0(A\cap X\cap S)$. By the Picard--Lefschetz formula, the path $\mathring{\gamma}_1'$ determines the element $\mathring{\overline{\Delta_1'}}=\gamma_1^{-1}\gamma_2^{-1}(\overline{\Delta_1'})=\overline{\Delta_1'}+\Delta_1$ of the group $H_1(A\cap X,S)$, and $\mathring{\gamma}_2'$ determines the element $\mathring{\overline{\Delta_2'}}=\gamma_3^{-1}\gamma_1^{-1}\gamma_2^{-1}\gamma_1'^{-1}(\overline{\Delta_2'})=\overline{\Delta_2'}+\Delta_2+\Delta_3$. Then we have $\langle\mathring{\overline{\Delta_1'}},\mathring{\overline{\Delta_2'}}\rangle_{A\cap X\setminus S}=\langle\overline{\Delta_1'}+\Delta_1, \overline{\Delta_2'}+\Delta_2+\Delta_3\rangle_{A\cap X\setminus S}=1$. It follows from this that the paths $\mathring{I}_1'$ and $\mathring{I}_2'$ must have a non-removable intersection point away from zero. For the covering short cycles, $\langle\mathring{\beta}_1',\mathring{\beta}_2'\rangle=\pm2\neq\langle\Delta_1',\Delta_2'\rangle$ holds. These cycles realize the basis elements for the unimodal singularity $\breve f=x^4+y^4$ of type $X_9$. Since the paths $\mathring{\gamma}_1'$ and $\mathring{\gamma}_2'$ do not intersect anywhere except at zero, they can be completed to a distinguished system, and one can construct bases for which item 3 of Lemma~\ref{hard} does not hold (even up to sign). 
	\end{example}
	
	\begin{figure}[H]
		\center{\includegraphics[width=1\linewidth]{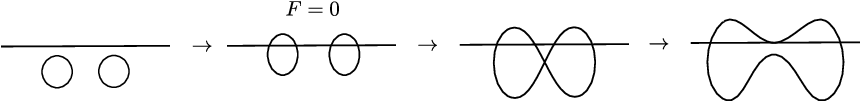}}
		\caption{Topological scheme of bifurcations of the level sets}
		\label{X99}	
	\end{figure} 
	
	\begin{figure}[H]
		\center{\includegraphics[width=0.6\linewidth]{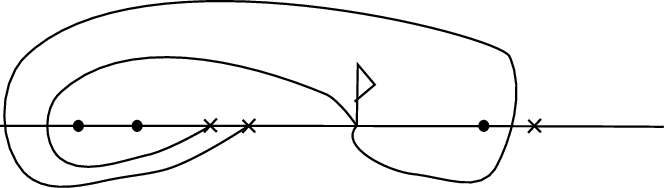}}
		\caption{The paths $\mathring{\gamma}_1'$ and $\mathring{\gamma}_2'$}
		\label{x9}	
	\end{figure} 
	
	\begin{remark}\label{auth}
		It is unknown to the author whether the conditions of Lemma~\ref{hard} are satisfied in the case of the canonical distinguished basis (see Fig.~\ref{systpath}) for an arbitrary real perturbation of a real singularity.
	\end{remark}
	
	For $N=2$, we consider (see~\cite{NIKIT}, pp.~9--14) certain distinguished bases for simple singularities $B_{k}, C_{k}, F_{4}$ and directly compute the matrices of generalized intersections. The results for $N>2$ can be obtained from the results for $N=2$ by applying Gabrielov's formulas for stabilizations (see~\cite{GABRIEL}).
	
	\begin{lemm}\label{BCF} 
		The matrices of generalized intersections of certain distinguished bases of the group $H_{1}(A\cap X\setminus S)$ for the singularities $B_{k}, C_{k}, F_{4}$ of two variables are of the form, respectively:
		\begin{equation}\label{A}	
			\begin{pmatrix}
				2 &  0 &          0 & \dots & \dots & \dots &  0 \\
				-1 &  0 &  (-1)^{k} &     0 & \dots & \dots &  0 \\
				\dots & (-1)^{k-1} &  0 & (-1)^{k-1} & 0 & \dots &  0 \\
				\dots &  0 & (-1)^{k-2} & \dots & \dots & \dots &  0 \\
				\dots & \dots &          0 & \dots &     0 &     1 &  0 \\
				\dots & \dots &      \dots & \dots &    -1 &     0 & -1 \\
				0 &  0 &          0 &     0 &     0 &     1 &  0
			\end{pmatrix},
		\end{equation}
		
		\begin{equation}\label{B}
			\begin{pmatrix}
				0 & -1 &  0 & \dots & \dots & \dots &  0 \\
				0 &  2 & -1 &     0 & \dots & \dots &  0 \\
				\dots & -1 &  2 &    -1 &     0 & \dots &  0 \\
				\dots &  0 & -1 & \dots & \dots & \dots &  0 \\
				\dots & \dots &  0 & \dots &     2 &    -1 &  0 \\
				\dots & \dots & \dots & \dots &    -1 &     2 & -1 \\
				0 &  0 &  0 &     0 &     0 &    -1 &  2
			\end{pmatrix},
		\end{equation}
		
		\begin{equation}\label{C}	
			\begin{pmatrix}
				0 &  1 & -1 &  1 \\
				-1 &  0 &  0 & -1 \\
				0 &  0 &  2 & -1 \\
				0 &  0 & -1 &  2
			\end{pmatrix}.
		\end{equation}
	\end{lemm}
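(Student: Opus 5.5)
The plan is a direct computation in explicit real morsifications of two variables, as in~\cite{NIKIT}. For each class we fix normal forms with boundary $\hat S:x=0$: for $B_k$ the germ $x^k+y^2$, for $C_k$ the germ $x+y^k$ (purely boundary), and for $F_4$ the germ $x^2+y^3$. We choose real morsifications whose critical values, both of $F$ and of $F|_{x=0}$, are all real and distinct. For $B_k$ we take $P_k(x)+y^2$, where $P_k$ is a real polynomial with $k-1$ real critical points and one of them at $x=0$ so that $F|_{x=0}$ has a real Morse critical point. For $C_k$ we take $x+Q_k(y)$, where $Q_k$ is a Chebyshev-type polynomial. For $F_4$ we take the perturbation $x^2+y^3+\varepsilon(x-y)$ of Example~\ref{ff4}. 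For each of these we use the canonical system of paths in $\operatorname{Im}\geqslant0$ ending at the distinguished value $c=0$. The vanishing cycles $\Delta_s$ are then realized as real ovals, or as cycles through saddles via the method of Gusein-Zade/A'Campo, in the real picture of the level curves. The cycles $\Delta_t'$ are realized as pairs of points $\{b\}-\{a\}$ on the line $x=0$, and the half-cycles $\overline{\Delta_t'}$ as arcs of level curves ending on $x=0$.

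We compute the four blocks of Definition~\ref{intmatr} in order. The block $[\Delta_i,\Delta_j]$ is the classical intersection matrix of $f$. For $B_k$ this is the single value $2$ on the one $A_1$-type vanishing cycle after the sign conventions for $N=2$. For $F_4$, with $f=x^2+y^3$, it is the $A_2$ block. For $C_k$ the block is empty. These values are read off from the real pictures by the Gusein-Zade rules. The block $[\tau(\Delta_i'),\tau(\Delta_j')]=(-1)^N\langle\Delta_i',\Delta_j'\rangle$ uses intersections of $0$-cycles in $A\cap X\cap S$. Here $\langle\Delta_i',\Delta_i'\rangle=2$, and the off-diagonal entries equal $\pm1$ when the two pairs of points share a point. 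This gives the tridiagonal $2,-1$ pattern for $C_k$ and the corresponding pattern for $F_4$. Corollary~\ref{Pure} gives an independent consistency check against Gabrielov's stabilization formula. The block $[\tau(\Delta_t'),\Delta_s]$ vanishes identically by \eqref{yy}.

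The main work is the mixed block $l(\Delta_s,\Delta_t')=\langle\Delta_s^0,\overline{\Delta_t'}\rangle$, which is where the signs $(-1)^k$ in \eqref{A} come from. We compute it exactly as in Fig.~\ref{u}. We transport $\Delta_s$ along $\gamma_t$ to a fiber near the critical value $\theta'$, localize it near $x_{\theta'}$, and count with signs its intersections with the half-cycle $\overline{\Delta_t'}$ (the arc ending on $x=0$). In the real morsification this reduces to checking whether the oval or saddle cycle $\Delta_s$ meets the real arc $\overline{\Delta_t'}$, and to tracking orientations consistently with the complex orientation of $\mathbb{C}^2$. The first row of \eqref{A} and the first two rows of \eqref{C} come out this way.

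The obstacle I expect is bookkeeping of orientations. The vanishing cycles must be oriented coherently so that the alternating signs $(-1)^{k-j}$ appear along the chain for $B_k$. For the $F_4$ entries one must also watch that the path choice matters (Remark~\ref{ed}, Example~\ref{ff4}). Since the statement only claims these matrices for certain distinguished bases, we fix orientations inductively along the chain of cycles: $\langle\Delta_t',\Delta_{t+1}'\rangle$ is made to have a prescribed sign, and this then forces the signs of the $l$-entries. As a final check, the mod-2 reduction of each matrix should agree with the antisymmetric Dynkin diagrams $B_k,C_k,F_4$ of~\cite{ARN} via Proposition~\ref{pepe}.
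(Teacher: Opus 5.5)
Your overall strategy of explicit real morsifications in two variables, canonical paths in $\operatorname{Im}\geqslant0$, and a block-by-block evaluation of Definition~\ref{intmatr} is the one behind the paper's proof. The paper simply defers to the direct computations of~\cite{NIKIT}, and your $F_4$ set-up matches Example~\ref{ff4}.

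The set-up for $C_k$, however, uses the wrong singularity. $C_k$ is $xy\pm y^k$ with boundary $x=0$ (see the table in Sec.~\ref{quasisimpl}). The germ $x+y^k$ is the purely boundary singularity $A_{k-1}$. For $xy+y^k$ one has $df(0)=0$, $\mu(f)=1$ (a Morse point) and $\mu(f|_{\hat S})=k-1$. So the distinguished basis consists of one vanishing cycle $\Delta$ together with $k-1$ tubes, and the first row and column of \eqref{B} belong to $\Delta$: $[\Delta,\Delta]=0$, $[\Delta,\tau(\Delta_1')]=l(\Delta,\Delta_1')=-1$, and $[\tau(\Delta_t'),\Delta]=0$. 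Your claim that the $\Delta$-block is empty for $C_k$ is therefore false. With $x+Q_k(y)$ you would get a $(k-1)\times(k-1)$ matrix reproducing only the lower-right tridiagonal block. The one non-formal entry of \eqref{B}, the linking number of the Morse cycle with the first short cycle, never gets computed; Lemma~\ref{Lyas} says some such entry must be non-zero when $df(0)=0$.

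For $B_k$ you have the two kinds of basis elements interchanged. Here $\mu(f)=k-1$ and $\mu(f|_{\hat S})=1$, and in \eqref{A} the first basis element is the single tube. The entry $2$ is $[\tau(\Delta'),\tau(\Delta')]=(-1)^2\langle\Delta',\Delta'\rangle$ with $\Delta'=\{b\}-\{a\}$. It is not the self-intersection of a vanishing $1$-cycle on a curve, which is $0$ because the form is skew. The remaining $(k-1)\times(k-1)$ block is the skew intersection matrix $\langle\Delta_i,\Delta_j\rangle$ of the $k-1$ vanishing cycles of $P_k(x)+y^2$. That is where the alternating signs $(-1)^k,(-1)^{k-1},\dots$ sit, not in the linking numbers. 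The mixed block is the first column, with $l(\Delta_1,\Delta')=-1$ at the top and $l(\Delta_{k-1},\Delta')=0$ at the bottom. The first row $(2,0,\dots,0)$ needs no linking computation at all. So the step you call the main work targets the wrong entries, and your $\Delta$-block for $B_k$ (``the single value $2$'') is wrong.

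Finally, demanding that $P_k$ have a critical point at $x=0$ puts a critical point of $F$ on the boundary. That is a $B_2$-point (cf.\ Example~\ref{po}) where $F$ and $F|_{x=0}$ share a critical value, contradicting your own requirement of distinct critical values. You need $P_k'(0)\neq0$; $F|_{x=0}=P_k(0)+y^2$ is Morse anyway. Once these points are corrected, your plan becomes the computation of~\cite{NIKIT}.
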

	\textbf{Proof} see in~\cite{NIKIT} (Lemmas 2--4). $\Box$
	
	\subsection{The bilinear form associated with the monodromy group}\label{43}
	
	The matrix of generalized intersections defines a bilinear form $[\cdot,\cdot]$ on the homology group $H_{N-1}(A\cap X\setminus S).$ This form is invariant under the action of the monodromy, since its values are the intersection numbers of cycles in the manifolds $A\cap X\setminus S$ and $A\cap X\cap S$, and the monodromy itself can be realized by a composition of diffeomorphisms of the fiber $A\cap X$ that preserve the boundary.
	
	Let us denote by $\Theta$ a ball of radius $\delta$ (see Sec.~\ref{Rassl}) around zero in the parameter space $\mathbb C^l$ of an arbitrary deformation $F$ of the form \eqref{obsch}. By the generalized Zariski theorem (see~\cite{Zarisk}, \cite{GM}), the homomorphism $\pi_1(B_\delta\setminus\Sigma)\to\pi_1(\Theta\setminus\Sigma)$ is an epimorphism: the fundamental group of the complement to the discriminant is generated by simple loops around the critical values.
	
	\begin{definition}
		The image of the representation of $\pi_{1}(\Theta\setminus\Sigma(F))$ in the group $O(H_{N-1}(A\cap X\setminus S), [\cdot,\cdot])$ is called the \textbf{monodromy group} of the boundary singularity $f$ with a non-proper boundary.
	\end{definition}
	
	Let us consider the diagram:
	\begin{equation}\label{cd}
		\begin{CD}
			H_{N-1}(A\cap X,\partial D) @<\text{epi}<< H_{N-1}(A\cap X\setminus S,\partial D) @<\text{mono}<< H_{N-2}(A\cap X\cap S, \partial D)\\
			\otimes @. \otimes @. \otimes \\
			H_{N-1}(A\cap X) @>\text{mono}>>  H_{N-1}(A\cap X,A\cap X\cap S) @>\text{epi}>> H_{N-2}(A\cap X\cap S)\\
			@VVV @VVV @VVV \\
			\mathbb{Z} @>\operatorname{id}>>\mathbb{Z} @>\cdot(-1)^N>>\mathbb{Z}
		\end{CD}
	\end{equation}
	The upper row is taken from the exact sequence of the triple $((A\cap X)\cup\partial D, (A\cap X \setminus S)\cup\partial D, \partial D)$, and the middle row is from the exact sequence of the pair $(A\cap X, A\cap X\cap S)$. In the upper right corner, the group $H_{N}((A\cap X)\cup\partial D, (A\cap X \setminus S)\cup\partial D)$ is replaced by an isomorphic one using excision and the Thom isomorphism (see the proof of Lemma~\ref{baza}). Thus, the upper right arrow is given by the Leray tube operator. The columns of the diagram are non-degenerate pairings given by intersection numbers. The commutativity of the diagram follows from the proof of Proposition~\ref{bilin}.
	
	Let us denote by $i_*$ the homomorphism $H_{N-1}(A\cap X\setminus S)\rightarrow H_{N-1}(A\cap X\setminus S,\partial D),$ and by $\{\alpha_i\}_{i=1}^{\mu}$ the distinguished basis of the group $H_{N-1}(A\cap X\setminus S)$, i.e., an arbitrarily numbered set $\{\Delta_{s},\tau(\Delta_{t}')\}$. The system of paths determines an isomorphism between the groups $H_{N-1}(A\cap X\setminus S)$ and $H_{N-1}(A\cap X,S)$: the vanishing cycles $\Delta_s$ realize the classes of both groups, and the tubes $\tau(\Delta_{t}')$ correspond to the half-cycles $\overline{\Delta_{t}'}$ taken with their natural orientation. Of course, this isomorphism depends on the system of paths (see Remark~\ref{ed}).
	
	Let us denote by $\{\overline{\alpha_i}\}_{i=1}^{\mu}$ the basis of the group $H_{N-1}(A\cap X,S)$ corresponding to the basis $\{\alpha_i\}_{i=1}^{\mu}$ under the specified isomorphism. The following proposition can be taken as another definition of the form of generalized intersections $[\cdot,\cdot]$ on the group $H_{N-1}(A\cap X\setminus S).$
	
	\begin{PREDLOZHENIE}\label{bilin}
		$[\alpha_i,\alpha_j]=\langle i_*\alpha_i,\overline{\alpha_j}\rangle_{A\cap X\setminus S}$.
	\end{PREDLOZHENIE}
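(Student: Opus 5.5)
We verify the identity entry by entry over the four types of pairs of basis elements, using Definition~\ref{intmatr} on the left-hand side. The pairing on the right is the middle column of diagram~\eqref{cd}: relative classes modulo $\partial D$ in $A\cap X\setminus S$ against classes in $H_{N-1}(A\cap X,S)$. This pairing is well defined because supports are compact away from $\partial D$ and the second cycle may touch $S$ only where the first is absent. Under the path-determined isomorphism, $\overline{\alpha_j}=\Delta_j$ when $\alpha_j=\Delta_j$, and $\overline{\alpha_j}=\overline{\Delta_t'}$ when $\alpha_j=\tau(\Delta_t')$.

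\emph{Case $\alpha_i=\Delta_i$, $\alpha_j=\Delta_j$.} Both cycles are realized away from $S$. By the left square of~\eqref{cd}, the number $\langle i_*\Delta_i,\Delta_j\rangle_{A\cap X\setminus S}$ is the ordinary intersection $\langle\Delta_i,\Delta_j\rangle_{A\cap X}$, since the epimorphism and the monomorphism are both induced by inclusion.

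\emph{Case $\alpha_i=\Delta_s$, $\alpha_j=\tau(\Delta_t')$.} The half-cycle $\overline{\Delta_t'}$ is localized in $O(x_{\theta'})$ after transport along $\gamma_t$. Because the intersection number is invariant under the transport, only the localization $\Delta_s^0$ from~\eqref{23} contributes. By Definition~\ref{chislol} this number is exactly $l(\Delta_s,\Delta_t')$.

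\emph{Case $\alpha_i=\tau(\Delta_t')$, $\alpha_j=\Delta_s$.} We realize the tube in an $\varepsilon'$-neighbourhood of $S$ and move $\Delta_s$ off $T_{\varepsilon'}$, which is possible as in the proof of Lemma~\ref{baza}. The intersection is then empty, giving $0$.

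\emph{Case $\alpha_i=\tau(\Delta_i')$, $\alpha_j=\tau(\Delta_j')$.} We must show $\langle\tau(\Delta_i'),\overline{\Delta_j'}\rangle=(-1)^N\langle\Delta_i',\Delta_j'\rangle_{A\cap X\cap S}$, which is the commutativity of the right square of~\eqref{cd}. Near $S$, trivialize $T_{\varepsilon'}\simeq\{|x_1|\le\varepsilon'\}\times(A\cap X\cap S)$ and identify $\overline{\Delta_j'}$ there with $I_j'^0\times\Delta_j'$. The tube is $\{|x_1|=\varepsilon'\}\times\Delta_i'$, and the circle meets the segment $I_j'^0$ transversally exactly once. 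Intersection points therefore correspond bijectively to the points of $\Delta_i'\cap\Delta_j'$ in $A\cap X\cap S$. Intersections of the tube with $\overline{\Delta_j'}$ away from $T_{\varepsilon'}$ are excluded by taking $\varepsilon'$ small.

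The main obstacle is the sign in this last case. Each local intersection sign is the product of the sign of (circle $\cdot$ segment) in $\mathbb C_{x_1}$, which is $\pm1$ depending on the orientation convention of $I_j'$ induced by $\partial\overline{\Delta_j'}=\Delta_j'$, and a reordering sign from commuting the $1$-dimensional circle factor and the $(N-2)$-dimensional factor past each other, relative to the complex orientation $\mathbb C_{x_1}\times(A\cap X\cap S)$. I would compute this with the product-orientation formula $(a\times b)\cdot(c\times d)=(-1)^{\dim b\,\operatorname{codim}c}(a\cdot c)(b\cdot d)$ and expect $(-1)^N$. The same computation with $\nabla_t'$ in place of $\Delta_i'$ gives the factor $(-1)^N$ already used in the derivation of~\eqref{pl}, which serves as a consistency check. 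The full middle row of~\eqref{cd} then commutes, and the proposition follows.
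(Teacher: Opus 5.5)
Your proposal is correct and follows the paper's proof essentially step for step. The easy cases are handled the same way: the ordinary intersection in $A\cap X$ for two vanishing cycles, Definition~\ref{chislol} for a vanishing cycle against a tube, and shrinking the tube into a thin neighbourhood of $S$ for a tube against a vanishing cycle. The tube--tube case uses the same local product $\{|x_1|\leqslant\varepsilon'\}\times(A\cap X\cap S)$, with $\tau(\Delta_i')=S^1\times\Delta_i'$, $\overline{\Delta_j'}=I_j'^0\times\Delta_j'$, and a bijection between intersection points and the points of $\Delta_i'\cap\Delta_j'$.

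The sign you only ``expect'' is likewise left by the paper to a ``straightforward accounting of orientations'', and it does come out as claimed if the tube is oriented circle-first (as the boundary of the normal disc bundle). The condition $\partial\overline{\Delta_j'}=\Delta_j'$ forces $I_j'^0$ to be oriented towards $0$. Hence the counterclockwise circle meets $I_j'^0$ with sign $+1$ in $\mathbb C_{x_1}$, and the reordering sign is $(-1)^{N-2}=(-1)^N$.
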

	
	\textbf{Proof.} If we pair two vanishing cycles or a tube with a vanishing cycle, the statement is obvious; if we pair a vanishing cycle with a tube, see Definition~\ref{chislol}; and if we pair a tube with a tube, it is easy to verify that $(-1)^N\langle\Delta_{i}',\Delta_{j}'\rangle_{A\cap X\cap S}=\langle\tau(\Delta_{i}'),\overline{\Delta_{j}'}\rangle_{A\cap X\setminus S}$. Indeed, let the cycles $\Delta_{i}'$ and $\Delta_{j}'$ intersect transversally in $A\cap X\cap S.$ The transversality of $A\cap X$ to the boundary allows us to identify a small tubular neighborhood of $A\cap X\cap S$ in $A\cap X$ with the direct product of $A\cap X\cap S$ and a small disk in $\mathbb{C}_{x_1}$ centered at the origin. Then the cycle $\tau(\Delta_{i}')$ is identified with $\{\varepsilon e^{\sqrt{-1}\varphi}\}\times\Delta_{i}',$ and the cycle $\overline{\Delta_{j}'}$ with $I_{j}'^0\times\Delta_{j}',$ where $I_{j}'^0$ is a finite segment (terminating at zero) of the path $I_{j}'$ in $\mathbb{C}_{x_1}$ that defines this cycle on the manifold $A\cap X$ (this path is the result of deforming the trajectory, originating at zero, of the critical value of the function $x_1$ on the fibers $f_{\tilde\lambda}=\nu$, where $\nu$ traces the distinguished path $\gamma_j$). Each intersection point of the cycles $\Delta_{i}'$ and $\Delta_{j}'$ in the manifold $A\cap X\cap S$ corresponds to a unique intersection point of the cycles $\tau(\Delta_{i}')$ and $\overline{\Delta_{j}'}$ lying in $A\cap X\cap S\times \varepsilon\cdot e^{\sqrt{-1}\operatorname{arg}I_{j}'}.$ Now, a straightforward accounting of orientations yields $\langle\Delta_{i}',\Delta_{j}'\rangle=(-1)^{N-2}\langle\tau(\Delta_{i}'),\overline{\Delta_{j}'}\rangle. \Box$
	
	Let $\gamma_i$ be a simple loop corresponding to the basis element $\alpha_i\in H_{N-1}(A\cap X\setminus S).$ Then the monodromy operator $\gamma_i$ acts on the element $\alpha_j$ according to the following formula:
	
	\begin{lemm}\label{Picard}(Generalized Picard--Lefschetz formula)
		\begin{equation}\label{PL}
			\gamma_i(\alpha_j)=\alpha_j+(-1)^{\frac{N(N+1)}{2}}[\alpha_j,\alpha_i]\alpha_i. \;\Box
		\end{equation}	
	\end{lemm}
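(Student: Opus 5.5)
The plan is a case check: the four types of ordered pairs $(\alpha_i,\alpha_j)$ are matched against the three formulas \eqref{pl}, \eqref{yyj}, \eqref{yy}--\eqref{yyi} already derived in Sec.~\ref{PLF}, with Definition~\ref{intmatr} supplying the values of $[\cdot,\cdot]$. Throughout, $\gamma_i$ denotes the simple loop around the critical value of $p|_A$ (if $\alpha_i=\Delta_i$) or of $p|_{A\cap S}$ (if $\alpha_i=\tau(\Delta_i')$) defined by the same distinguished path. Since all monodromy operators are linear, it suffices to verify \eqref{PL} on basis elements.

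\emph{Case 1: $\alpha_i=\Delta_i$, $\alpha_j=\Delta_j$.} Here \eqref{yyi} gives the claim directly, because $[\Delta_j,\Delta_i]=\langle\Delta_j,\Delta_i\rangle_{A\cap X}$.

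\emph{Case 2: $\alpha_i=\Delta_s$, $\alpha_j=\tau(\Delta_t')$.} By \eqref{yy} the action is trivial. This agrees with \eqref{PL}, because $[\tau(\Delta_t'),\Delta_s]=0$ by definition.

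\emph{Case 3: $\alpha_i=\tau(\Delta_t')$, $\alpha_j=\Delta_s$.} Formula \eqref{pl} gives $\gamma_t(\Delta_s)=\Delta_s+(-1)^{N(N+1)/2}l(\Delta_s,\Delta_t')\tau(\Delta_t')$. This is \eqref{PL}, since $[\Delta_s,\tau(\Delta_t')]=l(\Delta_s,\Delta_t')$.

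\emph{Case 4: both elements are tubes.} This is the one place where a sign must be reconciled. Formula \eqref{yyj} carries the coefficient $(-1)^{N(N-1)/2}\langle\Delta_j',\Delta_i'\rangle_{A\cap X\cap S}$. On the other hand, $[\tau(\Delta_j'),\tau(\Delta_i')]=(-1)^N\langle\Delta_j',\Delta_i'\rangle$, and
\[
(-1)^{N(N+1)/2}(-1)^N=(-1)^{N(N-1)/2+2N}=(-1)^{N(N-1)/2},
\]
so the two coefficients agree.

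Formula \eqref{yyj} itself rests on two facts: the Picard--Lefschetz formula \eqref{PicL} in dimension $N-2$ inside $A\cap X\cap S$, where the sign is $(-1)^{(N-1)N/2}$, and the commutation of monodromy with $\tau$. Commutation holds because the monodromy is realized by a diffeomorphism of the pair $(A\cap X,A\cap X\cap S)$, which can be chosen to preserve a trivialized tubular neighborhood $T_{\varepsilon'}$.

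The main obstacle is consistency of orientations and signs, not any new geometric input. The sign $(-1)^N$ in \eqref{pl}, which comes from $\langle\tau(\nabla_t'),\overline{\Delta_t'}\rangle=(-1)^N\langle\nabla_t',\Delta_t'\rangle$, must be the same sign that enters the definition of $[\cdot,\cdot]$ on tube--tube pairs. I would settle this by invoking Proposition~\ref{bilin} and the commutativity of diagram \eqref{cd}. Together they show that every entry of the matrix equals $\langle i_*\alpha_j,\overline{\alpha_i}\rangle_{A\cap X\setminus S}$ under one fixed orientation convention. It follows that \eqref{PL} can be stated uniformly as a variation formula,
\[
\operatorname{Var}_{\gamma_i}(x)=(-1)^{N(N+1)/2}\langle x,\overline{\alpha_i}\rangle\,\alpha_i,
\]
which is consistent with Remark~\ref{rem}.

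For $N=2$ the tube--tube case needs a separate check via Remark~\ref{N2}, where $\Delta_t'$ is a difference of two points. I would carry this out directly, following Fig.~\ref{u}: the half-turn of the points $a$ and $b$ gives the required coefficient.
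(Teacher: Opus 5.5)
Your proposal is correct and is essentially the paper's own argument: the lemma is stated there with no separate proof because it is just the case-by-case consolidation of \eqref{pl}, \eqref{yyj}, \eqref{yy}, \eqref{yyi} via Definition~\ref{intmatr}. Your reconciliation $(-1)^{N(N+1)/2}(-1)^N=(-1)^{N(N-1)/2}$ in the tube--tube case is exactly why the factor $(-1)^N$ is built into $[\tau(\Delta_i'),\tau(\Delta_j')]$.

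One small correction. The separate $N=2$ check flagged in the paper, and illustrated by Fig.~\ref{u}, concerns the vanishing-cycle--tube formula \eqref{pl}, not the tube--tube case. For $N=2$ the tube--tube case follows exactly as for $N>2$, from the zero-dimensional Picard--Lefschetz formula on $\tilde H_0(A\cap X\cap S)$ and the commutation of monodromy with $\tau$.
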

	
	Let us fix a basis $\{\nabla_i\}_{i=1}^{\mu}$ in the group $H_{N-1}(A\cap X\setminus S, \partial D)$ that is Poincaré dual to the basis $\{\overline{\alpha_i}\}_{i=1}^{\mu}$ of the group $H_{N-1}(A\cap X,S)$.
	
	\begin{PREDLOZHENIE}
		The matrix of generalized intersections is the matrix of the homomorphism $i_*:H_{N-1}(A\cap X\setminus S)\rightarrow H_{N-1}(A\cap X\setminus S,\partial D)$ computed in the bases $\{\alpha_i\}_{i=1}^{\mu}$ and $\{\nabla_i\}_{i=1}^{\mu}$. $\Box$
	\end{PREDLOZHENIE}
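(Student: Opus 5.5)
The plan is to compute the coordinates of $i_*\alpha_j$ in the basis $\{\nabla_i\}$ by pairing against the basis $\{\overline{\alpha_k}\}$ of $H_{N-1}(A\cap X,S)$, and then to identify these coordinates with the entries of the matrix of generalized intersections via Proposition~\ref{bilin}.

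The first step is to make precise the non-degenerate pairing
$$H_{N-1}(A\cap X\setminus S,\partial D)\otimes H_{N-1}(A\cap X,S)\to\mathbb Z$$
given by intersection numbers. This is the middle column of diagram \eqref{cd}, or equivalently the diagonal Poincaré--Lefschetz duality in \eqref{c'd}. Since $\{\nabla_i\}$ is by definition the basis dual to $\{\overline{\alpha_i}\}$ with respect to this pairing, we have $\langle\nabla_i,\overline{\alpha_k}\rangle=\delta_{ik}$. We fix the order of the arguments so that it matches Proposition~\ref{bilin}: the relative-to-$\partial D$ class comes first.

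Next, for any $\beta\in H_{N-1}(A\cap X\setminus S,\partial D)$, non-degeneracy over $\mathbb Z$ lets us write
$$\beta=\sum_k\langle\beta,\overline{\alpha_k}\rangle\nabla_k.$$
Both groups are free of rank $\mu$: this follows from Lemma~\ref{baza} and the duality, and the determinant of the pairing is $\pm1$, as in the proof of Lemma~\ref{baza}. Applying this expansion to $\beta=i_*\alpha_j$ gives
$$i_*\alpha_j=\sum_k\langle i_*\alpha_j,\overline{\alpha_k}\rangle\nabla_k=\sum_k[\alpha_j,\alpha_k]\nabla_k,$$
where the last equality is Proposition~\ref{bilin}. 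Hence the matrix of $i_*$ in the bases $\{\alpha_i\}$ and $\{\nabla_i\}$ has $(j,k)$ entry $[\alpha_j,\alpha_k]$, up to the usual transpose convention for whether the matrix acts on rows or columns.

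The only point needing care is unimodularity of the pairing over $\mathbb Z$, rather than just over a field, so that the dual basis $\{\nabla_i\}$ exists integrally. This follows from Poincaré--Lefschetz duality for the manifold-with-boundary pair, or directly from the determinant argument in Lemma~\ref{baza}. I would cite that rather than reprove it.
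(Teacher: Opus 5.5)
Your argument is correct and is exactly what the paper intends. The paper gives the statement without a separate proof because it follows at once from Proposition~\ref{bilin} and the definition of $\{\nabla_i\}$ as the basis dual to $\{\overline{\alpha_i}\}$, which is precisely your expansion $i_*\alpha_j=\sum_k\langle i_*\alpha_j,\overline{\alpha_k}\rangle\nabla_k=\sum_k[\alpha_j,\alpha_k]\nabla_k$; your side checks on integrality of the duality and the transpose convention are appropriate, and implicitly assume $N>2$ as Lemma~\ref{baza} does.
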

	
	\begin{remark}
		In a similar way, one can introduce a form of generalized intersections on the group $H_{N-1}(A\cap X, A\cap X\cap S)$ using the pairing $H_{N-1}(A\cap X,S\cup\partial D)\otimes H_{N-1}(A\cap X\setminus S)\to\mathbb Z$ (see \eqref{c'd}), but this is not required for our purposes.
	\end{remark}
	
	\begin{figure}[H]
		\center{\includegraphics[width=1\linewidth]{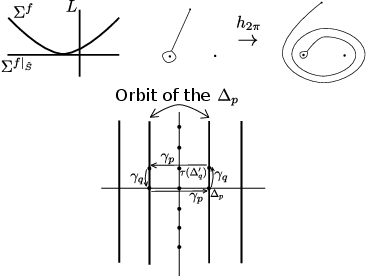}}
		\caption{To Example~\ref{po}.}
		\label{B2'}	
	\end{figure}  
	
	\begin{example}\label{po} 
		Let us compute $\pi_1(\Theta\setminus\Sigma)$ and the monodromy group for a singularity of class $B_2$. Consider the germ $f(x,y)=x^2+y^2+\sum_{i=1}^{N-2}z_i^2$ at $0\in\mathbb{C}^N$ with the boundary $x=0$.
		Consider a versal deformation of the form \eqref{miniversal} $F=f+\lambda_1 x+\lambda_0$. The sets $\Sigma^{f}$ and $\Sigma^{f|_{\hat{S}}}$ are given by the conditions $\lambda_0=\lambda_1^2/4$ and $\lambda_0=0$, respectively. Let us define a line $L$ by the equation $\lambda_1=\varepsilon>0$ and fix a distinguished point on $L$ (here, for convenience, we choose a point on $\partial B_\delta$). The group $\pi_1(\Theta\setminus\Sigma)$ is generated by two simple loops $\gamma_p$ and $\gamma_q$ on $L$ around the points $p=\Sigma^{f}\cap L$ and $q=\Sigma^{f|_{\hat{S}}}\cap L.$ By the Zariski theorem, all relations on these generators are computed by looping the line $L$ around the line $\lambda_1=0.$ Consider the loop $\{\lambda_1=\varepsilon\cdot e^{it},\;t\in[0,2\pi]\}$ and compute the action of the diffeomorphism $h_{2\pi}:(L, L\setminus\Sigma)\rightarrow(L, L\setminus\Sigma)$, determined by the covering homotopy, on $\gamma_p$ and $\gamma_q$. Along this loop, the point $p$ makes two counterclockwise turns around the point $q$ (see Fig.~\ref{B2'}), which yields the unique relation $(\gamma_p\gamma_q)^2=(\gamma_q\gamma_p)^2$. 
		
		Let $N$ be even. Let the paths $\gamma_p$ and $\gamma_q$ determine the basic cycles $\Delta_{p}$ and $\tau(\Delta'_q)$, taken with arbitrary orientations. The computation of $l(\Delta_{p},\Delta'_q)$ reduces to the situation shown in Fig.~\ref{u} (under stabilizations, the number $|l(\Delta_{p},\Delta'_q)|=|\langle\Delta_{p},\overline{\Delta'_q}\rangle|$ remains constant, see~\cite{GABRIEL}). The matrix of generalized intersections is of the form:
		\begin{equation*}	
			\begin{pmatrix}
				0 & \pm1\\
				0 & 2(-1)^{N/2}
			\end{pmatrix},
		\end{equation*}
		i.e., $[\Delta_p,\Delta_p]=0,\;[\Delta_{p},\tau(\Delta'_q)]=\pm1,\;[\tau(\Delta'_q),\Delta_{p}]=0$, and $[\tau(\Delta_q'),\tau(\Delta_q')]=\pm2.$
		Thus, the operator $\gamma_p$ acts trivially on the group $H_{N-1}(A\cap X\setminus S),$ while the operator $\gamma_q$ is of order 2 (see~\eqref{PL}), and the monodromy group is isomorphic to $\mathbb{Z}_2.$
		
		In the case of odd $N$, we have the matrix:
		\begin{equation*}	
			\begin{pmatrix}
				2(-1)^{(N-1)/2} & \pm1\\
				0 & 0
			\end{pmatrix}.
		\end{equation*}
		The orbit of the cycle $\tau(\Delta_q')$ consists of a single point, but the orbit of the cycle $\Delta_p$ is infinite, which implies that the monodromy group is also infinite. The operator $\gamma_p$ is of order 2, and the subgroup $\langle\gamma_q\rangle$ generated by the operator $\gamma_q$ is infinite cyclic. The operator $(\gamma_p\gamma_q)^2$ acts trivially on $H_{N-1}(A\cap X\setminus S)$ (see Fig.~\ref{B2'}). The subgroup $\langle\gamma_p\rangle$ acts on the normal subgroup $\langle\gamma_q\rangle$ by conjugations according to the formula $\varphi_p(\gamma_q^s)=\gamma_p\gamma_q^s\gamma_p=\gamma_q^{-s}.$ From this, it follows that the monodromy group is isomorphic to the infinite dihedral group $\mathbb{Z}\rtimes_{\varphi}\mathbb{Z}_2\simeq D_{\infty}$.
	\end{example}
	
	\begin{lemm}\label{monodromia}
		Let $f$ be an isolated boundary singularity with a non-proper boundary. 
		1) If $df(0)=0$, then the monodromy group is finite if and only if $N$ is even and this singularity belongs to the series $C_k$. In this case, it is isomorphic to the Weyl group $A_{k-1}$. 
		2) If $df(0)\neq0$, then the monodromy group is finite if and only if the singularity is either simple for even $N$ or Morse for odd $N$.
	\end{lemm}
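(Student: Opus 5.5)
The plan is to exploit the block-triangular structure of the matrix of generalized intersections (Definition~\ref{intmatr}): since $[\tau(\Delta_t'),\Delta_s]=0$, the generalized Picard--Lefschetz formula \eqref{PL} shows that every monodromy operator preserves the sublattice $T=\tau(H_{N-2}(A\cap X\cap S))\subset H_{N-1}(A\cap X\setminus S)$. By Lemma~\ref{baza} the tube map $\tau$ is injective, so $T$ is a copy of the homology of the boundary Milnor fiber. By \eqref{yyj} and \eqref{yy}, the restriction of the action to $T$ is exactly the classical monodromy group of $f|_{\hat S}$, a function of $N-1$ variables. By \eqref{yyi} and $h_2$ from \eqref{Lere}, the induced action on the quotient $H_{N-1}(A\cap X\setminus S)/T\simeq H_{N-1}(A\cap X)$ is the classical monodromy group of $f$. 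Both are homomorphic images of our monodromy group. By Zariski's theorem (Sec.~\ref{43}) the group is generated by the simple loops $\gamma_s,\gamma_t$, so it suffices to work with one morsification.

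\emph{Necessity.} If the monodromy group is finite, both classical groups are finite. By the classical facts recalled in Sec.~\ref{Rassl}, the monodromy group of $f$ ($N$ variables) is finite iff either $N$ is odd and $f$ is simple, or $N$ is even and $f$ is Morse. Similarly, the group of $f|_{\hat S}$ ($N-1$ variables) is finite iff either $N$ is even and $f|_{\hat S}$ is simple, or $N$ is odd and $f|_{\hat S}$ is Morse.

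In case 2) there are no $\Delta_s$, the group equals the monodromy of $f|_{\hat S}$ transported by $\tau$, and a purely boundary $f$ is simple iff $f|_{\hat S}$ is. This gives 2) completely.

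In case 1) with $N$ odd, finiteness would force $\mu(f|_{\hat S})=1$. Then the single tube satisfies $[\tau,\tau]=0$, because the form on $H_{N-2}$ is skew. Hence $\gamma_t(\Delta_s)=\Delta_s\pm l(\Delta_s,\Delta_t')\tau$ and $\gamma_t(\tau)=\tau$, so $\gamma_t$ is unipotent. It has infinite order as soon as some $l(\Delta_s,\Delta_t')\neq0$. This follows from the connectedness of the Dynkin diagram of the antisymmetric basis (Sec.~\ref{MP}, \cite{Lyash}) combined with item 2 of Lemma~\ref{hard}, $\langle\beta_s,\beta_t'\rangle=2l(\Delta_s,\Delta_t')$, applied to the basis of Lemma~\ref{hard}; Example~\ref{po} is the model case. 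So for odd $N$ the group is always infinite.

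For even $N$, finiteness forces $f$ to be Morse, i.e. $\mu(f)=1$, and $f|_{\hat S}$ to be simple. By Arnold's classification \cite{ARN}, a boundary singularity with $df(0)=0$ and Morse $f$ and simple $f|_{\hat S}$ is of type $C_k$ ($xy+y^k$), with $f|_{\hat S}$ of type $A_{k-1}$.

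\emph{Sufficiency for $C_k$, $N$ even.} Here $[\Delta,\Delta]=0$ (skew form), so $\gamma_s=\mathrm{id}$ by \eqref{PL} and \eqref{yy}. On $T\otimes\mathbb Q$ the form $[\cdot,\cdot]=\langle\cdot,\cdot\rangle$ is the nondegenerate $A_{k-1}$ form, since $(-1)^N=1$. I will choose $w\in T\otimes\mathbb Q$ with $[w,\tau(\Delta_t')]=[\Delta,\tau(\Delta_t')]=l(\Delta,\Delta_t')$ for all $t$, and set $\Delta^\ast=\Delta-w$. Then \eqref{PL} gives $\gamma_t(\Delta^\ast)=\Delta^\ast$ for all $t$. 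Over $\mathbb Q$ the representation therefore splits as the trivial line plus the monodromy of $A_{k-1}$, i.e. the Weyl group $A_{k-1}$. It is finite, and faithful on $T$, hence isomorphic to $A_{k-1}$.

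The main obstacle I expect is the odd-$N$ infinitude step: guaranteeing a nonzero $l(\Delta_s,\Delta_t')$ in an arbitrary distinguished basis. Lemma~\ref{hard} provides one special basis, and finiteness of a group does not depend on the basis, so I would pass to that basis to avoid this difficulty.
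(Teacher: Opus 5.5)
Your proposal is correct and follows the paper's proof in substance. The paper also reduces, via infinite orbits in the skew-symmetric summand (your quotient action on $H_{N-1}(A\cap X)$ and sub-action on the tube lattice), to two cases. For $B_k$ with odd $N$, it rules out finiteness by connectedness of the boundary Dynkin diagram, which is your nonzero $l(\Delta_s,\Delta_t')$ making $\gamma_t$ an infinite-order transvection. For $C_k$ with even $N$, $\gamma_s$ acts trivially and the group is $A_{k-1}$. Your invariant vector $\Delta-w$ is a useful addition: it supplies the faithfulness of the action on the tube lattice, which the paper's one-line conclusion ``isomorphic to the Weyl group $A_{k-1}$'' leaves implicit.
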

	
	\begin{remark}
		The complex singularities $B_2$ and $C_2$ are equivalent, see~\cite{ARN}.
	\end{remark}
	
	\textbf{Proof.} If $df(0)\neq0,$ the statement reduces to the well-known result for ordinary singularities (see~\cite{Norm}).
	Now let $df(0)=0$. If the intersection form on one of the groups $H_{N-1}(A\cap X)$ or $H_{N-2}(A\cap X\cap S)$, the rank of which is at least two, is skew-symmetric, then it follows from the connectedness of the Dynkin diagram for an ordinary singularity and the Picard--Lefschetz formula that the orbit of any vanishing cycle of this group under the action of the monodromy is infinite. Then the orbits of the same vanishing cycles (or tubes around them) in the group $H_{N-1}(A\cap X\setminus S)$ are also infinite, which implies that the monodromy group is infinite. It could turn out to be finite only if the intersection form on one of the groups $H_{N-1}(A\cap X)$ or $H_{N-2}(A\cap X\cap S)$ is symmetric while the other group is of rank 1, because then the intersection form in the adjacent dimension will be skew-symmetric. Therefore, it is sufficient to consider the singularities of the series $B_k$ for odd $N$ and $C_k$ for even $N$. In the case of $B_k$, the obstruction is the connectedness of the Dynkin diagram for a boundary singularity. In the case of $C_k$, the monodromy operator corresponding to a vanishing cycle of the group $H_{N-1}(A\cap X)$ acts trivially on the group $H_{N-1}(A\cap X\setminus S)$ (see~\eqref{C}), and the monodromy group is isomorphic to the Weyl group $A_{k-1}$. $\Box$
	
	\subsection{Points of type $B_2$ and the indecomposability of the monodromy representation}\label{Dnk}
	
	By virtue of Lemma~\ref{hard}, the connectedness of the Dynkin diagram for a boundary singularity with $df(0)=0$ is equivalent to at least one value of $l(\Delta_{s},\Delta_{t}')$ being non-zero (the case $df(0)\neq0$ reduces to the case of an ordinary singularity). 
	
	\begin{lemm}\label{Lyas}
		For any distinguished basis $\{\Delta_{s}\}_{s=1}^{\mu(f)}, \{\tau(\Delta_{t}')\}_{t=1}^{\mu(f|_{\hat{S}})}$ of the homology group $H_{N-1}(A\cap X\setminus S)$ of a local non-singular level set of an isolated boundary singularity $f$ with a non-proper boundary and with $df(0)=0$, there exists a pair $(s,t)=(\tilde{s},\tilde{t})$ such that $l(\Delta_{\tilde{s}},\Delta_{\tilde{t}}')\neq0.$
	\end{lemm}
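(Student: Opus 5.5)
We argue by contradiction: suppose that for some distinguished basis all coefficients $l(\Delta_s,\Delta_t')$ vanish. Then, by Definition~\ref{intmatr}, the matrix of generalized intersections is block diagonal. The blocks are $\langle\Delta_i,\Delta_j\rangle$ and $(-1)^N\langle\Delta_i',\Delta_j'\rangle$, and the mixed entries are zero in both orders.

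By the generalized Picard--Lefschetz formula \eqref{PL}, together with \eqref{pl}--\eqref{yyi}, the operators $\gamma_t$ then act trivially on every $\Delta_s$. The operators $\gamma_s$ already act trivially on the tubes, by \eqref{yy}. Hence the subgroups $H_1=\langle\Delta_s\rangle$ and $H_2=\langle\tau(\Delta_t')\rangle$ of $H_{N-1}(A\cap X\setminus S)$ are invariant under the whole monodromy group. This uses Zariski's theorem: $\pi_1(\Theta\setminus\Sigma)$ is generated by the simple loops along the distinguished paths.

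The aim is to show that this splitting cannot occur. I would look for a specific loop in $\pi_1(\Theta\setminus\Sigma)$ that moves some vanishing cycle $\Delta_s$ by a nonzero multiple of some tube. The natural source is the local picture at a point of type $B_2$, namely Example~\ref{po}. There $l(\Delta_p,\Delta_q')=\pm1\neq0$, computed via the model in Fig.~\ref{u}.

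The key steps are as follows.

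\begin{enumerate}
\item Show that the discriminant of a versal deformation of $f$ (with $df(0)=0$) contains points where $\Sigma^f$ and $\Sigma^{f|_{\hat S}}$ meet in a $B_2$ configuration. The function there has a Morse critical point lying on the boundary, non-degenerate both for $f$ and for $f|_{\hat S}$. I would obtain this by moving a Morse critical point of $f_{\tilde\lambda}$ onto the hyperplane $x_1=0$ using the parameter $\lambda_1$ of the term $x_1$ in \eqref{miniversal}. This is possible since $f$ has a critical point at $0$ and the linear deformation is available.
\item Near such a point, choose a line $L$ and a local distinguished pair of paths $\gamma_p,\gamma_q$. Example~\ref{po} gives $l(\Delta_p,\Delta_q')=\pm1$ for this local pair.
\item Extend this local pair to a full distinguished system. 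By Lemma~\ref{baza}, the corresponding $\Delta_p$ and $\tau(\Delta_q')$ are elements of a distinguished basis of $H_{N-1}(A\cap X\setminus S)$. By \eqref{pl}, $\gamma_q(\Delta_p)-\Delta_p=\pm\tau(\Delta_q')\neq0$.
\item Transport this back to the given basis. Any two distinguished bases differ by a sequence of Picard--Lefschetz transformations, so $\Delta_p$ lies in the monodromy orbit of some basis cycle. Since the monodromy group is connected to the hypothesized one, $\Delta_p$ lies in the invariant subgroup $H_1$, and $\tau(\Delta_q')$ in $H_2$. The vanishing cycles of $H_{N-1}(A\cap X)$ form a single orbit, by connectedness of the Dynkin diagram (Gabrielov). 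Likewise, the tubes form a single orbit as images of vanishing cycles of $f|_{\hat S}$. But the monodromy element $\gamma_q$ sends $\Delta_p\in H_1$ outside $H_1$, which contradicts invariance.
\end{enumerate}

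The main obstacle is step 4. We must show that the invariant splitting $H_1\oplus H_2$ is intrinsic, i.e., independent of the distinguished system. This is delicate because, by Remark~\ref{ed}, the lifts $\Delta_s$ and the coefficients $l$ depend on the paths. I would first try to characterize $H_2$ intrinsically as $\operatorname{im}h_1$, the tubes. Then $H_2$ is canonically monodromy-invariant. A complementary invariant subgroup mapping isomorphically onto $H_{N-1}(A\cap X)$ would make the extension in \eqref{Lere} split as a monodromy module. The local $B_2$ computation shows that it does not split, because there some $\gamma_q-\mathrm{id}$ maps a preimage of a vanishing cycle nontrivially into $\operatorname{im}h_1$, modulo the images of $\gamma_q-\mathrm{id}$ on $H_2$. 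Verifying this last "modulo" condition is the remaining step. If it proves hard, I would instead use Proposition~\ref{pepe}, item 2, combined with the connectedness of the Dynkin diagram of the antisymmetric basis (\cite{Lyash}, Lemma 2.1). Some $\langle\beta_s,\beta_t'\rangle\neq0$ is forced, but this only gives $l\neq0$ when the value is $\equiv2\pmod4$, so a parity argument via the $B_2$ point would still be needed.
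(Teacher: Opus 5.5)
Your Steps 1--4 are essentially the paper's proof. A $B_2$ point, where Example~\ref{po} gives $l=\pm1$, is carried into the fiber of the given basis and contradicts the invariance of the span $H_1$ of the $\Delta_s$. The obstacle you flag in Step~4 is not real: you never need the splitting $H_1\oplus H_2$ to be intrinsic, so you can drop the non-splitting and parity detours.

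Let $G$ be the group generated by the simple loops of the \emph{given} system on the \emph{given} line. By \eqref{pl}--\eqref{yyi} and your hypothesis, $G$ preserves $H_1$. Transport along a path $\xi$ (discussed below).
\begin{itemize}
\item The path $\xi_*\gamma_p$ ends at a critical value of $f$-type. It differs from the distinguished path $\gamma_s$ ending at that value by a loop in the punctured disc. Hence $\xi_*\Delta_p=\pm g(\Delta_s)$ with $g\in G$, so $\xi_*\Delta_p\in H_1$.
\item The loop $\xi_*\gamma_q$ lies in the same punctured disc, so it acts by an element of $G$.
\item Its variation on $\xi_*\Delta_p$ is $\xi_*\operatorname{Var}_{\gamma_q}\Delta_p=\pm\tau(\xi_*\Delta_q')$. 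This is a nonzero element of $\operatorname{im}h_1=H_2$: that subgroup is canonical, $\tau$ commutes with transport, and $h_1$ is injective because $H_N(A\cap X)=0$ in \eqref{Lere}.
\end{itemize}
Since $H_1\cap H_2=0$, this contradicts the invariance of $H_1$. Remark~\ref{ed} is irrelevant: only the $l$'s of the given system are used, and only through the $G$-invariance of $H_1$. This is exactly how the paper concludes.

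What you do leave unspecified is the transport $\xi$ itself, hidden in the phrase ``the monodromy group is connected to the hypothesized one''. The given basis may come from any deformation of the form \eqref{obsch}, e.g.\ a non-versal linear one, while your $B_2$ point lives in a versal base. The paper handles this in two steps.
\begin{itemize}
\item It writes $F(x,\lambda)=M(\mathcal G_{\lambda'}(x),\chi_F(\lambda))$ with $M$ miniversal. The given line then maps isomorphically onto a line $L_{\mathring\nu}$, with fibers identified by boundary-preserving diffeomorphisms.
\item It joins $\mathring\nu$ to a point $\tilde\nu$ near the $B_2$ stratum by a path whose projection to the truncated base $\mathbb C^{\mu-1}$ avoids the bifurcation diagram, whose complement is connected. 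All intermediate lines are then transversal to $\Sigma(M)$ with $\mu$ distinct critical values.
\end{itemize}

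For Step~1 the paper uses Proposition~\ref{p8}. Your direct route also works: take a Morse critical point $q$ of $f_{\tilde\lambda}|_{\hat S}$ and add $-\partial_1 f_{\tilde\lambda}(q)\,x_1$ and a constant. You must also make $q$ non-degenerate for the full function, i.e.\ non-parabolic; this holds after adding a generic small $x_1^2$ term. The monomial $x_1$ need not be one of the $\varphi_i$ in \eqref{miniversal} (cf.\ $C_k$), but this is harmless, since every deformation is induced from a versal one. The ``single orbit'' claims in Step~4 are not needed.
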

	
	We present below a proof of this lemma based on different considerations; the connectedness of the Dynkin diagram was discussed in Sec.~\ref{MP}.
	
	Consider the square of the maximal ideal $\textbf{m}^2$ of the algebra $\mathbb{C}\{x\}_{0}$ of germs of holomorphic functions at the point $0\in\mathbb{C}^N$, consisting of all germs $f$ with $f(0)=0$ and $df(0)=0$. Let us fix a hyperplane (boundary) $\hat S\simeq\mathbb{C}^{N-1}:x_1=0$ in $\mathbb{C}^N$. Any germ $f\in\textbf{m}^2$ automatically has a critical point $0$ on the boundary: $df|_{x_1=0}(0)=0.$ We denote by $\nabla_{f|_{x_1}}$ the boundary gradient ideal $(x_{1}\frac{\partial f}{\partial x_1}, \dots, \frac{\partial f}{\partial x_N})$ in $\mathbb{C}\{x\}_{0}$ and by $T_f$ the minimal local transversal in the space $\textbf{m}^2$ at the point $f$ to the orbit of the isolated boundary singularity $f$ under the action of the group of holomorphic diffeomorphisms of the triple $(\mathbb{C}^N, \mathbb{C}^{N-1}, 0)$.
	
	\begin{PREDLOZHENIE}
		Let a boundary singularity $f$ be isolated and $df(0)=0$ hold. Then $\dim_{\mathbb{C}}T_f=\mu-2.$
	\end{PREDLOZHENIE}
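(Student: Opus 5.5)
The plan is to compute $\dim T_f$ as the codimension of the orbit tangent space in $\textbf{m}^2$. For the action of boundary-preserving diffeomorphisms fixing the origin, this tangent space is spanned by the germs $v(f)$, where $v=\sum v_i\partial_{x_i}$ is a vector field tangent to $\hat S$ and vanishing at $0$. Tangency to $\hat S$ means $v_1\in(x_1)$, and vanishing at $0$ means $v_i\in\textbf{m}$. So the tangent space to the orbit is
$$\tau_f=\textbf{m}\cdot\Big(x_1\frac{\partial f}{\partial x_1},\frac{\partial f}{\partial x_2},\dots,\frac{\partial f}{\partial x_N}\Big)=\textbf{m}\nabla_{f|_{x_1}}.$$
Hence $\dim T_f=\dim_{\mathbb C}\textbf{m}^2/\textbf{m}\nabla_{f|_{x_1}}$. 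Since $df(0)=0$, we have $\nabla_{f|_{x_1}}\subset\textbf{m}$, so $\textbf{m}\nabla_{f|_{x_1}}\subset\textbf{m}^2$ and the quotient makes sense.

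The count then goes through the chain
$$\textbf{m}\nabla_{f|_{x_1}}\subset\nabla_{f|_{x_1}}\subset\textbf{m}\subset\mathbb C\{x\}_0,$$
using that $\dim\mathbb C\{x\}_0/\nabla_{f|_{x_1}}=\mu$ (Sec.~\ref{kr}, the local algebra). This gives
$$\dim\textbf{m}^2/\textbf{m}\nabla_{f|_{x_1}}=\dim\mathbb C\{x\}_0/\textbf{m}\nabla_{f|_{x_1}}-\dim\mathbb C\{x\}_0/\textbf{m}^2=\mu+\dim\nabla_{f|_{x_1}}/\textbf{m}\nabla_{f|_{x_1}}-(N+1).$$
By Nakayama's lemma, $\dim\nabla_{f|_{x_1}}/\textbf{m}\nabla_{f|_{x_1}}$ is the minimal number of generators of the ideal $\nabla_{f|_{x_1}}$. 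The claim therefore reduces to showing that this minimal number is exactly $N-1$.

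This is the main obstacle, and the count must come out right. The ideal is generated by $N$ elements, so at least $N-1$ are needed only if exactly one generator is redundant. That seems wrong a priori. I would recheck the group first: if translations along the boundary are excluded but the origin is fixed, one should also account for the constant term and first-order terms. In $\textbf{m}^2$ the tangent space is then really $\tau_f$ as above, and I expect the correct count to be
$$\dim T_f=\mu+N-(N+1)=\mu-1$$
whenever the $N$ generators are minimal. Minimality holds when $f$ has no linear terms, because an $\mathcal O$-linear relation among $x_1f_{x_1},f_{x_2},\dots,f_{x_N}$ with a unit coefficient would make $\{f|_{\hat S}\}$ or $f$ nonisolated. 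Before committing I would verify on $B_2$ ($f=x_1^2+x_2^2$, $\mu=2$). There $\textbf{m}^2$ has the basis $x_1^2,x_1x_2,x_2^2$ plus higher terms, and $\textbf{m}\nabla=\textbf{m}(x_1^2,x_2)$ contains $x_1^2,x_1x_2,x_2^2,\dots$. This gives $\dim T_f=0=\mu-2$.

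The test shows the discrepancy comes from the inclusion $\textbf{m}\nabla_{f|_{x_1}}\subset\textbf{m}^2$: the generator $x_1f_{x_1}$ already lies in $\textbf{m}^2$. So the correct chain replaces the bottom step by $\mathbb C\{x\}_0\supset\textbf{m}\supset\textbf{m}^2$ and counts which generators of $\nabla$ lie outside $\textbf{m}^2$. The minimal number of generators $g$ of $\nabla_{f|_{x_1}}$ enters as computed above, giving $\mu+g-N-1$; with $g=N$ this is $\mu-1$. The $B_2$ example forces $\mu-2$, so $\textbf{m}\nabla$ is not the whole tangent space to compare with.

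The actual step to carry out is to recompute $\dim\textbf{m}^2/\textbf{m}\nabla_{f|_{x_1}}$ as
$$\dim\mathbb C\{x\}_0/\textbf{m}\nabla-\dim\mathbb C\{x\}_0/\textbf{m}^2=\big(\mu+g\big)-(1+N).$$
The formula $\dim\mathbb C\{x\}_0/\textbf{m}\nabla=\mu+g$ requires $g$ to be the number of minimal generators. For $B_2$ this gives $\dim\mathbb C\{x\}_0/(x_1^3,x_1^2x_2,x_1x_2,x_2^2)$, with basis $1,x_1,x_2,x_1^2$, which equals $4$, so $g=2$ and the count is $2+2-3=1$. Yet directly $\textbf{m}^2/\textbf{m}\nabla$ is spanned by $x_1^2$ alone, giving $1$, not $0$. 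My earlier direct check was wrong, because $x_1^2\notin\textbf{m}(x_1^2,x_2)$.

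So the formula yields $\mu+N-(N+1)=\mu-1$ for the transversal to the orbit in $\textbf{m}^2$ under origin-fixing diffeomorphisms. The stated $\mu-2$ must additionally quotient out multiplication of $f$ by constants, or use the equivalence including the Euler-type direction $f$ itself; note $f\in\textbf{m}\nabla$ for quasihomogeneous $f$ only up to this point. The plan is therefore to pin down the exact acting group (I expect $\mathcal K_{\hat S}$-type constants or the $\mathbb C^*$-scaling contributing the extra direction $f$). I would then show that $f\notin\textbf{m}\nabla_{f|_{x_1}}$ spans the one extra dimension, giving $\mu-2$.
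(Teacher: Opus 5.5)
Your count goes wrong at the first step: the tangent space to the orbit is not $\textbf{m}\nabla_{f|_{x_1}}$. A vector field $v=\sum v_i\,\partial/\partial x_i$ tangent to $\hat S$ has $v_1=x_1\varphi_1$. This vanishes at the origin for \emph{every} $\varphi_1\in\mathbb{C}\{x\}_0$; only $v_2,\dots,v_N$ are forced into $\textbf{m}$. In particular $x_1\,\partial/\partial x_1$ is an admissible infinitesimal generator, so the tangent space is $\textbf{I}=(x_1f_{x_1})+\textbf{m}(f_{x_2},\dots,f_{x_N})=\textbf{m}\nabla_{f|_{x_1}}+\mathbb{C}\,x_1f_{x_1}$ (with $f_{x_i}=\partial f/\partial x_i$), which is exactly the ideal the paper uses. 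By requiring $\varphi_1\in\textbf{m}$ you dropped precisely this one direction, which is why you got $\mu-1$. Your $B_2$ test shows it: $x_1^2=\tfrac12x_1f_{x_1}\in\textbf{I}$, so $\textbf{I}=\textbf{m}^2$ and $T_f=0=\mu-2$. Your first ``direct check'' was in fact computing $\textbf{I}$, not $\textbf{m}\nabla_{f|_{x_1}}$.

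The repair you propose (adding the scaling direction $f$, or passing to $\mathcal{K}_{\hat S}$-equivalence) is about a different group. The paper's $T_f$ is a transversal to the orbit of the boundary- and origin-preserving diffeomorphisms only. You also leave unproved the claim that $f$ spans the missing direction. It only appears to work because, for quasihomogeneous $f$, the Euler identity gives $f\equiv w_1x_1f_{x_1}\pmod{\textbf{m}\nabla_{f|_{x_1}}}$; that is, it is the direction $x_1f_{x_1}$ in disguise.

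With the correct $\textbf{I}$, your own ingredients close the argument. The ideal $\nabla_{f|_{x_1}}$ is $\textbf{m}$-primary of colength $\mu$ in an $N$-dimensional regular local ring, so by Krull's height theorem it needs at least $N$ generators. Hence $x_1f_{x_1},f_{x_2},\dots,f_{x_N}$ is a minimal generating system, and by Nakayama $x_1f_{x_1}\notin\textbf{m}\nabla_{f|_{x_1}}$. Therefore $\dim\textbf{I}/\textbf{m}\nabla_{f|_{x_1}}=1$, and $\dim\textbf{m}^2/\textbf{I}=(\mu-1)-1=\mu-2$.

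The paper reaches the same number slightly differently. It identifies $\nabla_{f|_{x_1}}/\textbf{I}$ with $(f_{x_2},\dots,f_{x_N})/\textbf{m}(f_{x_2},\dots,f_{x_N})$, using isolatedness to control $(x_1f_{x_1})\cap(f_{x_2},\dots,f_{x_N})$; this quotient has dimension $N-1$ by Nakayama. It then computes $\dim\textbf{m}^2/\textbf{I}=(N-1)+\mu-(N+1)$.
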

	
	\textbf{Proof.} The tangent space to the orbit at the point $f$ consists of all functions of the form $\varphi_1(x)x_1\frac{\partial f}{\partial x_1}(x)+\sum_{i=2}^{N}\varphi_i(x)\frac{\partial f}{\partial x_i}(x),$ where $\varphi_{i}(0)=0$ for $i=2, \dots, N.$ This space is the ideal $\textbf{I}=(x_1\frac{\partial f}{\partial x_1})+\textbf{m}(\frac{\partial f}{\partial x_2}, \dots, \frac{\partial f}{\partial x_N})$ of the ring $\mathbb C\{x\}_0$.
	
	Let us compute the dimension of the quotient space $\nabla_{f|_{x_1}}/\textbf{I}.$ It follows from the isomorphism theorem that $\nabla_{f|_{x_1}}/\textbf{I}\simeq (\frac{\partial f}{\partial x_2}, \dots, \frac{\partial f}{\partial x_N})/\textbf{m} (\frac{\partial f}{\partial x_2}, \dots, \frac{\partial f}{\partial x_N})$ provided that $(x_1\frac{\partial f}{\partial x_1})\cap(\frac{\partial f}{\partial x_2}, \dots, \frac{\partial f}{\partial x_N})\subset\textbf{m}(\frac{\partial f}{\partial x_2}, \dots, \frac{\partial f}{\partial x_N}),$ which is ensured by the condition that the boundary singularity $f$ is isolated. By Nakayama's lemma, we have $$\nabla_{f|_{x_1}}/\textbf{I}\simeq \left(\frac{\partial f}{\partial x_2}, \dots, \frac{\partial f}{\partial x_N}\right)\otimes_{\mathbb{C}\{x\}_0}\mathbb{C}\{x\}_0/\textbf{m},$$
	which implies $\dim \nabla_{f|_{x_1}}/\textbf{I}=N-1.$
	
	On the other hand, $\dim \mathbb{C}\{x\}_0/\textbf{m}^2=N+1.$ Then, since $\textbf{I}\subset\nabla_{f|_{x_1}}\subset\mathbb{C}\{x\}_0$ and $\textbf{I}\subset\textbf{m}^2\subset\mathbb{C}\{x\}_0,$ we obtain: 
	\begin{eqnarray*}
		\dim T_f & = & \dim \textbf{m}^2/\textbf{I}=\dim \mathbb{C}\{x\}_0/\textbf{I}-\dim \mathbb{C}\{x\}_0/\textbf{m}^2=\\    
		& = & \dim \nabla_{f|_{x_1}}/\textbf{I}+\dim \mathbb{C}\{x\}_0/\nabla_{f|_{x_1}}-\dim \mathbb{C}\{x\}_0/\textbf{m}^2=\mu-2. \quad \Box 
	\end{eqnarray*}
	
	Consider $T_f$ as a deformation of the germ $f$ with the base $\mathbb{C}^{\mu-2}$. By the versality theorem (Sec.~8.3 of~\cite{AVG}), the set of germs with a $B_2$ singularity (see Example~\ref{po}) on the zero level set is open and dense in $T_f$. Indeed, this set is open and dense in $\textbf{m}^2$, and any deformation of the germ $f$ in the space $\textbf{m}^2$ is equivalent to a deformation induced from $T_f$. Let $\chi_{T}:\mathbb{C}^{\mu-2}\rightarrow\mathbb{C}^{\mu}$ be the map inducing the deformation $T_f$ from the miniversal deformation of the germ $f$ (for instance, of the form \eqref{miniversal}) in the space $\mathbb{C}\{x\}_{0}$. Obviously, $\chi_{T}(\mathbb{C}^{\mu-2})\subset\Sigma$. This map is an analog of the {\em Gabrielov map} from~\cite{Gabr}, Theorem 1. At a generic point $g\in T_f$, one has $\operatorname{ker}d\chi_{T}(g)=0$. Indeed, if $v\in\operatorname{Ker}d\chi_{T}(g),$ then the function $v$ must vanish on a set of $\mu-2$ distinct non-zero critical points of the function $g$ and its restriction to the boundary, but a family of such functions does not exist in $T_f$ by Thom's transversality theorem. We have:
	
	\begin{PREDLOZHENIE}\label{p8}
		Let $f$ be an isolated boundary singularity with $df(0)=0.$ Then the stratum of functions with a $B_2$ singularity on the zero level set has codimension $2$ in the base of the miniversal deformation $\mathbb C^{\mu}. \Box$
	\end{PREDLOZHENIE}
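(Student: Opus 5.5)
The statement to prove is Proposition~\ref{p8}. I would read it off the Gabrielov-type map $\chi_T$ just constructed, and argue in three steps: an upper bound, a lower bound, and a check that the stratum really has pure codimension two.

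\emph{Inclusion and upper bound.} A germ $g$ in the base $\mathbb C^{\mu}$ of the miniversal deformation has a $B_2$ point on its zero level set exactly when it has, at some point $a$ of the boundary, a zero value and a vanishing full differential $dg(a)=0$ (not only $dg|_{\hat S}(a)=0$). After a boundary-preserving translation along $\hat S$, such a germ is a germ of $\mathbf m^2$ at the origin. By versality, any such germ is equivalent to one induced from $T_f$. So the stratum is the union of the orbits through $\chi_T(T_f)$, or equivalently the image of $T_f\times\mathbb C^{N-1}$, where the extra factor records the translations of the base point along $\hat S$. Because translations are built into the equivalence used in \eqref{VerS}, these orbits are already absorbed in the base: the miniversal base has dimension $\mu$ and already accounts for moving the point along the boundary. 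This gives dimension at most $\dim T_f=\mu-2$.

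\emph{Lower bound.} Here I use the fact shown above that $\ker d\chi_T(g)=0$ at a generic point $g\in T_f$. Hence $\chi_T$ is an immersion there, its image has dimension exactly $\mu-2$, and the codimension is $2$.

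\emph{The defining equations.} To see the two conditions directly, observe that on the stratum the function and its boundary restriction share a critical point with critical value $0$. So the stratum lies in $\Sigma^{f}\cap\Sigma^{f|_{\hat S}}$, and these two hypersurfaces have distinct generic components. Near a $B_2$ point the local model of Example~\ref{po}, with $\lambda_0=\lambda_1^2/4$ and $\lambda_0=0$, shows that the stratum is locally their tangential intersection $\lambda_0=\lambda_1=0$. This is smooth of codimension $2$ by versality of $f+\lambda_1x+\lambda_0$ for $B_2$. Combining these facts, and using that $B_2$ germs are open and dense in $T_f$, the stratum has codimension $2$.

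The main obstacle is making the dimension count honest: one must check that $\chi_T$ is generically injective up to the finite ambiguity of choosing the $B_2$ point, so that no dimension is lost, and that translations along the boundary do not add dimensions beyond $\mu-2$. I would handle both by the local $B_2$ model just described.
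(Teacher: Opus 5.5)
Your lower bound in the second paragraph is exactly the paper's argument: $\dim T_f=\mu-2$, generic points of $T_f$ are $B_2$ germs, so $\chi_T$ maps them into the stratum, and $\ker d\chi_T(g)=0$ at a generic $g$. Injectivity of $\chi_T$ is not needed; an immersion at one generic point already gives a $(\mu-2)$-dimensional piece of the image. The gap is the upper bound in your first paragraph. Versality of $T_f$ only says that the germ of $f_\lambda$ at its $B_2$ point $a$, moved to the origin, is equivalent to some $h_t$ with $t\in T_f$. That carries no information. By the boundary Morse lemma all $B_2$ germs in $\mathbf m^2$ are equivalent, so the generic points of $T_f$ all lie in one open orbit. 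What distinguishes points of the stratum is the rest of $f_\lambda$ on the Milnor ball, namely its other $\mu-2$ critical points and values, and an equivalence of germs at $a$ does not see these. So you cannot conclude $\lambda\in\chi_T(T_f)$. The ``orbits through $\chi_T(T_f)$'' are not subsets of the miniversal base, and ``translations are absorbed'' is an assertion, not a dimension bound. The route can be made rigorous: apply versality of $T_f$ to the whole family $(\lambda,a)\mapsto f_\lambda(\cdot+a)$ over the incidence set, then use miniversality at $\lambda$ to pin down base points. That is not what you wrote, though.

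Your third paragraph contains a correct repair once you state the missing ingredient, openness of versality. Near any point $\lambda_*$ of the stratum, the miniversal deformation of $f$ is versal for the germ of $f_{\lambda_*}$ at its $B_2$ point $a_*$. It is therefore induced from the versal deformation $x^2+y^2+\sum z_i^2+\lambda_1x+\lambda_0$ of Example~\ref{po} by a map $\mathbb C^\mu\to\mathbb C^2$ that is a submersion at $\lambda_*$. The branch of the stratum through $\lambda_*$ coming from $a_*$ is the fibre over $(0,0)$, hence smooth of codimension $2$. Together with non-emptiness near $0$ (density of $B_2$ in $\mathbf m^2$), this proves the proposition on its own. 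It is a genuinely different, local route: the paper counts globally via $\dim T_f=\mu-2$ and the generic injectivity of $d\chi_T$, whereas the local model needs neither. Your remark that the stratum lies in $\Sigma^{f}\cap\Sigma^{f|_{\hat S}}$ also gives the upper bound, but only once you show the two hypersurfaces share no irreducible component. For example, each is the image of a smooth irreducible incidence germ, by versality. ``Distinct generic components'' does not quite say this.
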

	
	Along the $B_2$ stratum, the components of $\Sigma$ are tangent with tangency order $1$ (see Example~\ref{po}).
	
	\begin{remark} 
		In a similar way, the stratum of functions with a $B_2$ singularity on the zero level set is non-empty in the base of a $\mathcal{K}_{\hat S}$-versal deformation. Moreover, the codimension of the $\mathcal{K}_{\hat S}$-orbit in $\textbf{m}^2$ is at most $\mu-2$. It is equal to $\mu-2$ if the germ $f$ lies in the ideal $\nabla_{f|_{x_1}}.$ By K.~Saito's theorem, this condition is satisfied for all singularities that can be reduced to a quasi-homogeneous form by an admissible diffeomorphism of the argument space, in particular, for all simple ones.
	\end{remark}
	
	\textbf{Proof of Lemma~\ref{Lyas}}. Let us take a miniversal deformation $M$ of the germ $f$ with the base $\mathbb{C}^{\mu}\simeq(\nu_0, \nu_1, \dots, \nu_{\mu-1})$ in the form \eqref{miniversal}. Let $\nu'\in\Sigma^{f}\cap\Sigma^{f|_{\hat S}}$ be a point (close to $0\in\mathbb C^{\mu}$) such that the corresponding function $f_{\nu'}$ has a critical point of type $B_{2}$ with value $0$. We choose a point ${\tilde\nu}\in\mathbb{C}^{\mu}\setminus\Sigma$ very close to $\nu'$ and consider the line $L_{\tilde\nu}:f_{\tilde\nu}-c, c\in\mathbb{C}$. We can assume that the line $L_{\tilde\nu}$ is transversal to $\Sigma$, since this can always be achieved by a small perturbation of the point $\tilde\nu$. Then on $L_{\tilde\nu}$ in a sufficiently small neighborhood of zero (we introduce the coordinate $c$ on $L_{\tilde\nu}$), there is only a pair of critical values $\theta$ and $\theta'$ of the restrictions $p|_{A_{\tilde\nu}}$ and $p|_{{A_{\tilde\nu}}\cap S}$. They are taken at the critical points into which the $B_2$ singularity has split. Connect the points $\theta$ and $\theta'$ to the point $0\equiv\tilde{X}\in L_{\tilde\nu}$ by distinguished paths $\gamma, \gamma'$ (not leaving the small neighborhood under consideration) and define the vanishing elements $\{\alpha, \tau(\alpha')\}\in H_{N-1}(A_{\tilde\nu}\cap \tilde{X}\setminus S)$. Then $l(\alpha,\alpha')=\pm1$ and $\operatorname{Var}_{\gamma'}(\alpha)=\pm\tau(\alpha')$ hold (see Example~\ref{po}).
	
	Now we take an arbitrary deformation $F(x,\lambda)$ of the form \eqref{obsch} with the base $\mathbb{C}^l\simeq(\lambda_0, \lambda_1, \dots, \lambda_{l-1})$. Let us write it in the form $F'(x,\lambda')+\lambda_0$ and consider the truncated deformation $F'$. By the definition of versality, $F'(x,\lambda')=M(\mathcal{G}_{\lambda'}(x), \chi_{F'}(\lambda')),$ where $\chi_{F'}(\lambda')=(\chi_{F'}^0(\lambda'), \chi_{F'}^1(\lambda'), \dots, \chi_{F'}^{\mu-1}(\lambda'))$. Consequently, we have $F(x,\lambda)=M(\mathcal{G}_{\lambda'}(x), \chi_{F}(\lambda))$, where $\chi_{F}(\lambda)=(\chi_{F'}^0(\lambda')+\lambda_0, \chi_{F'}^1(\lambda'), \dots, \chi_{F'}^{\mu-1}(\lambda'))$. Then, for any $\lambda=(\lambda_0, \lambda')$, the line $L_{\lambda}=\{f_{\lambda}-c \mid c\in\mathbb{C}\}\subset\mathbb C^l$ is mapped under $\chi_{F}$ isomorphically onto the line $L_{\chi_F(\lambda)}\subset\mathbb C^{\mu}$ (which consists of the same functions up to the boundary-preserving diffeomorphism $\mathcal{G}_{\lambda'}$).
	
	Let us fix a generic value of the parameter $\lambda=\tilde\lambda$. We connect the images of the points $\tilde\nu$ and $\chi_F(\tilde{\lambda})\equiv\mathring\nu$ under the projection $\mathbb{C}^\mu\rightarrow\mathbb{C}^{\mu-1}$ to the base of the truncated deformation by a path lying entirely in the complement $\Theta\cap\mathbb{C}^{\mu-1}$ to the bifurcation diagram of functions (which consists of all $\nu$ for which the total number of distinct critical values of the restrictions $p|_{A_{\nu}}$ and $p|_{A_{\nu}\cap S}$ is less than $\mu$). This path determines a path $\xi$ in the space of lines of the form $f_{\nu}-c$ transversal to $\Sigma(M)$, connecting the line $L_{\tilde\nu}$ with the line $L_{\mathring\nu}$.
	
	Let us choose an arbitrary distinguished system of paths on the line $L_{\mathring\nu}$ and determine its basis $\{\Delta_{s}, \tau(\Delta_{t}')\}$ of the homology group $H_{N-1}(A_{\mathring\nu}\cap X\setminus S)$. On the other hand, we extend the paths $\gamma, \gamma'$ to a distinguished system on $L_{\tilde\nu}$ and construct a basis of the group $H_{N-1}(A_{\tilde\nu}\cap X\setminus S)$. The transition from one basis $\mathbb{Z}^{\mu}\simeq H_{N-1}(A\cap X\setminus S)$ to another induced by the path $\xi$ is carried out by generalized Picard--Lefschetz transformations; moreover, the tube $\alpha'$ (i.e., its class) transforms into a non-zero linear combination of the tubes $\tau(\Delta_{t}')$, and the cycle $\alpha$ transforms into a combination of the cycles $\Delta_{s}$ and, possibly, some tubes $\tau(\Delta_{t}')$. 
	
	Suppose that for all $s,t$, one has $l(\Delta_{s},\Delta_{t}')=0$. Then the cycle $\xi_{*}\alpha\in H_{N-1}(A_{\mathring\nu}\cap X\setminus S)$ belongs to the linear span of the cycles $\Delta_{s}$, and its variation along the loop $\xi_{*}\gamma'$ is equal to zero. This leads to a contradiction since $\xi_{*}\tau(\alpha')\neq0$. $\Box$
	
	\section{Stable wave fronts}\label{UsT}
	
	\subsection{Generating functions and generating families}\label{Pro} 
	
	Let $\Xi$ be an algebraic hypersurface in $\mathbb C^{N+1}$, and $\overline\Xi\subset\mathbb{CP}^{N+1}$ be its projective compactification. The {\em generating function} (see~\cite{VVA},~Ch.~III,~\S 4) of the surface $\overline\Xi$ at a non-singular point $a\in\overline\Xi$ is a holomorphic algebraic function $f:\mathbb C^{N}\to \mathbb C$ whose graph $c=f(x)$ in some {\em adapted} affine coordinate system $c,x$ centered at $a$ defines the germ of $\overline\Xi$. Being adapted means that the hyperplane $c=0$ is tangent to $\overline\Xi$ at the point $a$, which is a critical point for the function $f$. The existence of a generating function follows from the implicit function theorem. Any two generating functions defined in this way at the same point are $\mathcal{K}$-equivalent. If the surface $\overline\Xi$ is {\em real}, then a real generating function is considered at any non-singular point $a\in\operatorname{Re}\overline\Xi$.  
	
	A point $a\in\overline\Xi$ is called {\em non-degenerate} if the tangent hyperplane to $\overline\Xi$ at this point is not tangent at any other point in a sufficiently small neighborhood of $a$. Note that in the case of a real surface $\overline\Xi$, non-degeneracy is verified algebraically for its complexification. The point $a$ is non-degenerate if and only if the singularity of the generating function $f$ at the point $a$ is isolated. 
	
	Let us generalize the notion of a generating function to the case of a pair $(\overline\Xi, \overline\Xi\cap S)$, where $S$ is the hyperplane at infinity in $\mathbb {CP}^{N+1}$ for $\mathbb{C}^{N+1}\supset\Xi$. Let $\overline\Xi$ be transversal to $S$ at a point $a\in\overline\Xi\cap S$ which is non-singular for $\overline\Xi$. We call such a point {\em suitable}. An adapted (local) affine coordinate system $c,x$ centered at a suitable point $a$ is chosen such that $S$ is given by the equation $x_1=0$ and the hyperplane $c=0$ (in which we introduce coordinates $x=(x_1, \dots, x_N)$) is non-transversal to $\overline\Xi\cap S$ at the point $a$. Then, by the implicit function theorem, the germ of $\overline\Xi$ at the point $a$ is also given by the graph $c=f(x)$ of an algebraic function, but the germ $f$ here is considered as a boundary singularity germ ($df|_{\hat S}=0$) with the boundary $\hat S:x_1=0$. It is natural, moreover, to choose the hyperplane $c=0$ to be tangent to $\overline\Xi$ if the point $a$ is non-degenerate for $\overline\Xi$ (in this case $df(a)=0$). In the case of a degenerate point, the hyperplane $c=0$ is chosen to be transversal to $\overline\Xi$, which corresponds to a purely boundary singularity ($df(a)\neq0$). The function $f$ defined in this way will be called the {\em generating function} of the pair $(\overline\Xi, \overline\Xi\cap S)$ at a suitable (infinite) point $a$. Any two such generating functions defined at the same point are $\mathcal{K}_{\hat S}$-equivalent. The point $a$ is a non-degenerate point for $\overline\Xi\cap S$ if and only if the boundary singularity $f$ is isolated. Such a suitable point will be called {\em simple}.
	
	Let us denote by $\Lambda_{f}$ the linear deformation $f-\xi_{1}x_{1}-\xi_{2}x_{2}-\dots-\xi_{N}x_{N}-\xi_0$ of the boundary singularity $f$ with parameters $\xi_0, \xi_1, \dots, \xi_N$. Any hyperplane $X_{\tilde{\xi}}\subset\mathbb{C}^{N+1}_{c,x}$ close to $X_0:c=0$ is given by the equation $c=\tilde{\xi}_{1}x_{1}+\tilde{\xi}_{2}x_{2}+\dots+\tilde{\xi}_{N}x_{N}+\tilde{\xi}_0$, where $\tilde{\xi}=(\tilde{\xi}_0, \tilde{\xi}_{1}, \dots, \tilde{\xi}_{N})$. We identify the point $\tilde{\xi}$ of the base of the deformation $\Lambda_{f}$ with its corresponding hyperplane $X_{\tilde{\xi}}$. Then the deformation parameters determine local affine coordinates centered at the point $X_{0}\in(\mathbb{CP}^{N+1})^*,$ where $(\mathbb{CP}^{N+1})^*$ is the projectively dual space to the projective closure of $\mathbb{C}^{N+1}$.
	
	\begin{definition}
		The \textbf{projective generating family} of the pair $(\overline\Xi, \overline\Xi\cap S)$ at a suitable point is the linear deformation $\Lambda_f$ of the generating function $f$.
	\end{definition}
	
	The concept of a generating function traditionally refers not only to a hypersurface, but also to the discriminant set of its generating family.
	
	Let us denote by $A_0$ (cf. Sec.~\ref{Rassl}) the germ of $\overline\Xi$ at a simple point $a$ in an adapted coordinate system. Each hyperplane $X_{\tilde\xi}$ non-transversal to $A_0\cup S$ corresponds to a non-generic value of the parameter $\xi=\tilde\xi$, and the local discriminant $\Sigma(\Lambda_f)$ defines the hypersurface germ $\Sigma(\Xi_a)_{X_0}\subset(\mathbb{CP}^{N+1})^*$ at the point $X_0$. In the case where $df(a)=0$, it has two irreducible components: the first, $\Sigma^{f}(\Lambda_f)$, is the germ of the projectively dual surface to $A_0$, denoted by $(A_0)^*$, and the second, $\Sigma^{f|_{\hat S}}(\Lambda_f)$, is diffeomorphic to $(A_0\cap S)^*\times\mathbb C_{\xi_1}$; in the case where $df(a)\neq 0$, only the latter remains. The germ of the global discriminant $\Sigma(\Xi)_{X_0}$ may also contain other components related to the non-transversality of hyperplanes $X_{\xi}$ close to $X_0$ to an arbitrary analytic Whitney stratification (see~\cite{VVA}, Ch.~I,~\S8) of the set $\overline{\Xi}\cup S$ at points outside $A_0$ --- these are not considered here.
	
	Let the hypersurface $\Xi\subset\mathbb{R}^{N+1}$ be smooth and its closure $\overline\Xi$ be transversal to $S$. The closure of the set of all hyperplanes in $\mathbb{RP}^{N+1}$ non-transversal to the set $\overline\Xi\cup S$ will be called the {\em wave front} for $\Xi$ (following~\cite{VVA}, Ch.~V,~\S3). The local wave front $\Sigma_{\mathbb R}(\Xi_a)$ is given by the germ of the real discriminant $\Sigma_{\mathbb R}(\Lambda_f)$ of the generating family. We are interested in local fronts for simple points $a\in\overline{\Xi}\cap S$.
	
	\begin{figure}[H]
		\center{\includegraphics[width=1\linewidth]{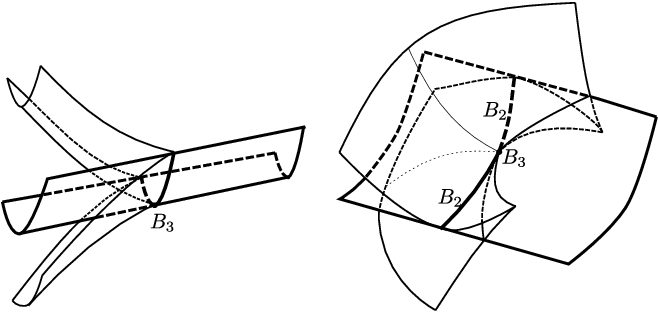}}
		\caption{Unstable and stable fronts near a singular point of type $B_3$}
		\label{fr}	
	\end{figure}
	
	\begin{definition}\label{usto}
		The germ of a boundary singularity $f$ at $0\in\mathbb C^N$ with $df(0)=0$ (for $df(0)\neq0$, see Definition~\ref{ust'}) is called \textbf{stable} if its linear deformation $\Lambda_f$ is $\mathcal{K}_{\hat S}$-versal (cf. Definition 3, \S 3, Ch.~V of~\cite{VVA}).
	\end{definition}
	The singularity of a stable germ is of finite multiplicity and is automatically isolated by the local version of Hilbert's Nullstellensatz.
	
	The $\mathcal{K}_{\hat S}$-classification of real singularities determines a certain classification of singular points of wave fronts. A front germ is called stable if the germ of its generating function is stable.
	
	\subsection{Stable germs and parabolic points}\label{UP}
	
	A point $a\in\overline{\Xi}$ is called {\em parabolic} if it is a critical point for the Gauss map $\overline{\Xi}\to(\mathbb{CP}^{N+1})^*$ that associates each non-singular point with its tangent hyperplane. A parabolic point is non-degenerate if and only if the singularity of the generating function for $\overline{\Xi}$ at the point $a$ is an isolated non-Morse singularity. 
	
	\begin{PREDLOZHENIE}\label{linvers} 
		If the germ of a boundary singularity $f$ with $df(0)=0$ is stable, then the set of points in $A_{\tilde\lambda}\cap S$ that are non-parabolic for $A_{\tilde\lambda}$ has codimension zero in $A_{\tilde\lambda}\cap S$. 
	\end{PREDLOZHENIE}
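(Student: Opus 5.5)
The plan is to reduce the statement to a codimension count in the base of the linear deformation $\Lambda_f$, using $\mathcal{K}_{\hat S}$-versality. A point of the graph $c=f_{\tilde\lambda}(x)$ is parabolic exactly when $\det\operatorname{Hess} f_{\tilde\lambda}=0$ there. The set $A_{\tilde\lambda}\cap S$ is the graph of $f_{\tilde\lambda}$ over a connected neighbourhood of $0$ in $\hat S$. So it suffices to show that the analytic function $\det\operatorname{Hess} f_{\tilde\lambda}(0,x_2,\dots,x_N)$ does not vanish identically. If it is not identically zero for $\tilde\lambda=0$, it has a nonzero value at some point of $\hat S\cap D$, and this persists for all small $\tilde\lambda$. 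We therefore argue for $f$ itself and assume, for contradiction, that $\det\operatorname{Hess} f\equiv0$ on $\hat S$ near $0$.

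\emph{Step 1: openness of versality.} For each $b\in\hat S$ near $0$, let $l_b$ be the affine function such that $g_b=f-l_b$ satisfies $dg_b(b)=0$ and $g_b(b)=0$; its graph is the hyperplane tangent to $A_0$ at $(b,f(b))$. Then $g_b$ is a boundary singularity at $b$ with $dg_b(b)=0$. Its linear deformation is the same family $\Lambda_f$ with shifted parameters. Versality is an open condition (the infinitesimal criterion is preserved under small perturbation), so $\Lambda_f$ remains $\mathcal{K}_{\hat S}$-versal at $g_b$ for $b$ close to $0$. Consequently, the pull-back under $\xi$ of any $\mathcal{K}_{\hat S}$-invariant stratum of codimension $k$ in the space of germs has codimension $k$ in $\mathbb C^{N+1}_\xi$.

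\emph{Step 2: two strata.} Let $\Sigma_2$ be the set of $\xi$ for which $f-\xi_0-\sum\xi_ix_i$ has, at some boundary point, a critical point (full differential zero) with value $0$. This is the $B_2$-stratum together with its degenerations. By Proposition~\ref{p8} (or by the direct count: $N$ conditions on $d g$ plus one on the value, with $N-1$ degrees of freedom for the point) it has codimension $2$, hence dimension $N-1$. Adding the condition $\det\operatorname{Hess}=0$ at that point gives a stratum $\Sigma_3\subset\Sigma_2$ of codimension $3$, hence dimension $N-2$. This is the step I expect to be the main obstacle. One must check that the degenerate-Hessian condition is $\mathcal{K}_{\hat S}$-invariant (multiplication by a unit and boundary-preserving diffeomorphisms preserve it at a critical point). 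One must also check that it cuts out a genuine codimension-one subset of the $B_2$-stratum in the jet space. Transversality in Step 1 then transfers this codimension to the $\xi$-space.

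\emph{Step 3: contradiction.} The map $b\mapsto\xi(b)$ (the coefficients of $l_b$) from an $(N-1)$-dimensional neighbourhood in $\hat S$ has image in $\Sigma_2$. It is locally surjective onto $\Sigma_2$, since every hyperplane in $\Sigma_2$ near $X_0$ is tangent to $A_0$ at some boundary point near $a$. Its fibres are finite, since a hyperplane near $X_0$ is tangent to $A_0$ at only finitely many boundary points, by the isolatedness of the stable singularity. Hence its image has dimension $N-1$. Under our assumption, however, every $g_b$ has degenerate Hessian at $b$, so the whole image lies in $\Sigma_3$, which has dimension $N-2$. This contradiction shows that $\det\operatorname{Hess} f\not\equiv0$ on $\hat S$. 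By the first paragraph, the non-parabolic points are then open and dense, i.e. of codimension zero, in $A_{\tilde\lambda}\cap S$. $\Box$
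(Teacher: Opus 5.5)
Your argument is correct, but it takes a different route from the paper's.

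\textbf{The paper's route.} It argues by non-emptiness. A linear perturbation does not change the Hessian, so a member $f-l_\xi$ of $\Lambda_f$ can have a $B_2$ point on its zero level only at a boundary point $b$ with $\det\operatorname{Hess}f(b)\neq0$. If all of $A_0\cap S$ were parabolic, $\Lambda_f$ would miss the $B_2$ stratum entirely. A $\mathcal{K}_{\hat S}$-versal family must meet it: the stratum comes arbitrarily close to $0$ in the miniversal base (Proposition~\ref{p8} and the remark after it), and the inducing map is a submersion, hence open.

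\textbf{Your route.} You compare dimensions. The boundary Gauss map $b\mapsto\xi(b)$ has $(N-1)$-dimensional image. Its fibres are finite because $(\xi_2,\dots,\xi_N)(b)=\nabla(f|_{\hat S})(b)$, and the gradient map of an isolated critical point is finite. The degenerate locus $\Sigma_3$, by contrast, has dimension $N-2$.

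\textbf{What your route costs.} You need a stronger input: $\operatorname{codim}\Sigma_3\geq3$ at every point near $0$. This means $B_2$ germs must be dense in each component of the boundary-critical stratum, not merely present near $0$.

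This codimension should be computed in the miniversal base $M$, not through jet transversality of $\Lambda_f$ itself. Since $\Lambda_f$ never moves the Hessian, transversality of $(b,\xi)\mapsto j^2_b(f-l_\xi)$ to $\{\det\operatorname{Hess}=0\}$ would mean $d\bigl((\det\operatorname{Hess}f)|_{\hat S}\bigr)\neq0$ on its zero set. That is stronger than what you want to prove.

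A clean way to get it:
\begin{enumerate}
\item Take a sufficiently large polynomial unfolding $G$ of $f$, for which $(b,c)\mapsto j^2_bG_c$ is a submersion, so $\operatorname{codim}\Sigma_3(G)\geq3$.
\item Note that $\Sigma_3$ is $\mathcal{K}_{\hat S}$-invariant: at a critical point with value $0$, $\operatorname{Hess}(ug)=u\operatorname{Hess}g$.
\item Both $G$ and $\Lambda_f$ are induced from $M$ by submersions, so $\dim\Sigma_3(\Lambda_f)\leq N-2$ follows.
\end{enumerate}
Done this way, your Step~1 (openness of versality at each $g_b$) becomes unnecessary, because all strata already let the boundary critical point range over a neighbourhood of $0$.

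\textbf{What your route gains.} Your passage to small $\tilde\lambda$ is more transparent than the paper's appeal to ``versality is equivalent to transversality to the orbit''. You use only that non-vanishing of $\det\operatorname{Hess}f_{\tilde\lambda}$ at one point persists, together with connectedness of $A_{\tilde\lambda}\cap S$.
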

	
	\textbf{Proof.} If almost all (and hence, all) points of the set $A_0\cap S$ are parabolic for $A_0$, then the generating function of the surface $A_0$ at any such point has a degenerate second differential. This implies that the linear deformation does not intersect the $B_2$ stratum, which contradicts its $\mathcal{K}_{\hat S}$-versality. Since versality is equivalent to transversality to the orbit, the statement also holds for a sufficiently small $\tilde\lambda$. $\Box$ 
	
	In the case of a purely boundary singularity $f$, the versality of the linear deformation is not responsible for the non-parabolicity of the points of the set $A_{\tilde\lambda}\cap S$ for $A_{\tilde\lambda}$, since the discriminant set $\Sigma^f(\Lambda_f)$ is empty (all hyperplanes close to the adapted asymptotic hyperplane $c=0$ are transversal to $A_0$, and hence to $A_{\tilde\lambda}$ for small $\tilde\lambda$). However, even in this case, the non-parabolicity condition is generic. For example, if the germ $A_0$ in an adapted affine coordinate system is given by the equation $c=x+y^a$, then all points of $A_0$ are degenerate parabolic points. The diffeomorphism $x\mapsto x, y\mapsto y-\varepsilon x^2$ transforms $A_0$ into a surface whose set of (again, degenerate) parabolic points forms a curve $y=\varepsilon x^2$ on $A_0$, intersecting $S:x=0$ only at the origin.   
	
	\begin{PREDLOZHENIE}\label{parab}
		Let $f:(\mathbb{C}^N, \hat{S}, 0)\to(\mathbb C, 0)$ be the germ of an isolated boundary singularity. Then, by a boundary-preserving diffeomorphism of the argument space arbitrarily close to the identity, the germ $f$ can be reduced to a form such that the set of points in $A_{\tilde\lambda}\cap S$ that are non-parabolic for $A_{\tilde\lambda}$ has codimension zero in $A_{\tilde\lambda}\cap S$. 
	\end{PREDLOZHENIE}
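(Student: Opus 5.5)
We will reduce the statement to the case $\tilde\lambda=0$. We show that after a suitable boundary-preserving diffeomorphism $\Phi$, close to the identity, the parabolic set of $A_0$ meets $A_0\cap S$ in a proper analytic subset. Non-parabolicity is an open condition, so for $\tilde\lambda$ small the non-parabolic points of $A_{\tilde\lambda}\cap S$ then still contain a nonempty open, hence dense (being Zariski open in the irreducible components, after a generic choice), subset. This gives codimension zero, exactly as at the end of the proof of Proposition~\ref{linvers}.

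The parabolic set of the graph $c=g(x)$ is the zero set of the Hessian determinant $\mathcal H_g=\det(\partial^2 g/\partial x_i\partial x_j)$. Its restriction to $A_0\cap S$ is identified with the function $h_g=\mathcal H_g|_{\hat S}$ on the set $\{g|_{\hat S}=0\}$ near $0$. It therefore suffices to find $\Phi$ with $h_{f\circ\Phi}$ not identically zero on any component of $\{f|_{\hat S}=0\}$, or, more generally, on any component of the local level sets near it. Since $f|_{\hat S}$ is unchanged if $\Phi|_{\hat S}=\mathrm{id}$, we will use diffeomorphisms of the form
$$\Phi_\varepsilon(x_1,x')=(x_1,\;x'+\varepsilon x_1^2 v(x')),$$
which preserve the boundary pointwise, generalizing the example $y\mapsto y-\varepsilon x^2$ given above. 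For $g=f\circ\Phi_\varepsilon$, a direct computation on $\hat S$ gives the following. First, $g|_{\hat S}=f|_{\hat S}$. Second, the mixed derivatives $\partial^2 g/\partial x_1\partial x_j$ and the $x'$-Hessian agree with those of $f$ on $\hat S$. Third, $\partial^2 g/\partial x_1^2=\partial^2 f/\partial x_1^2+2\varepsilon\langle\nabla_{x'}f,v\rangle$. Expanding the determinant along the first row, we get
$$h_g=h_f+2\varepsilon\langle\nabla_{x'}f,v\rangle\cdot\det\Bigl(\frac{\partial^2 f}{\partial x_i\partial x_j}\Bigr)_{i,j\ge2}\Big|_{\hat S}.$$

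The key step is to choose $v$ so that this correction term does not vanish identically on any component of $\{f|_{\hat S}=0\}$. Since $f|_{\hat S}$ has an isolated singularity, $\nabla_{x'}f|_{\hat S}\neq0$ off the origin, so we can take $v=\nabla_{x'}\overline{f}$ suitably replaced by a holomorphic choice, for instance a generic constant vector or a generic linear vector field. This makes $\langle\nabla_{x'}f,v\rangle\not\equiv0$ on each component. The remaining factor is the Hessian of $f|_{\hat S}$. If it vanishes identically on a component of the zero level, that component of $A_0\cap S$ is parabolic for $A_0\cap S$ itself, which is not excluded by the hypotheses. This is the main obstacle. To handle it we would first apply a preliminary boundary-preserving diffeomorphism, namely $x_1\mapsto x_1+\varepsilon x_1 w(x')$ or more flexible terms of order $x_1^2$ in all coordinates, so that the correction also involves the mixed entries $\partial^2 g/\partial x_1\partial x_j$. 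Concretely, we would use $\Phi(x)=x+\varepsilon x_1^2 V(x')$ with $V$ having a nonzero $x_1$-component as well, and expand $h_g$ as a polynomial in $\varepsilon$. We then argue that its $\varepsilon$-linear (or top-order in $\varepsilon$) coefficient restricted to each component is a nonzero function for generic $V$. The top coefficient should be a determinant built from $\nabla f$ and rank-one perturbations, and it is generically nonzero wherever $df\neq0$, which holds on $A_0\cap S$ away from the origin in the non-Morse case. Once some $\varepsilon$ gives $h_g\not\equiv0$ on every component, all but finitely many small $\varepsilon$ do, which yields $\Phi$ arbitrarily close to the identity.
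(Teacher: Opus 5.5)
Your key computation is the paper's. The paper takes $g_\varepsilon=f(x_1,x_2-\varepsilon x_1^2,x_3,\dots,x_N)$, which is your $v=-e_2$. It notes that on $\hat S$ only $\partial^2 g/\partial x_1^2$ changes, by $-2\varepsilon\,\partial f/\partial x_2$, and writes $\det\operatorname{Hess}$ via the Schur complement, which is equivalent to your cofactor expansion.

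The argument as written does not close, because in your second paragraph you misidentify $A_0\cap S$. With $A_0=\{c=f(x)\}$ and $S=\{x_1=0\}$, the set $A_0\cap S=\{x_1=0,\ c=f(0,x')\}$ is the graph of $f|_{\hat S}$ over a whole neighbourhood of $0$ in $\hat S$. The level set $\{f|_{\hat S}=0\}$ corresponds only to $A_0\cap X_0\cap S$, a hypersurface inside it. Your first paragraph states the right target (the parabolic set meets $A_0\cap S$ in a proper analytic subset). You then pursue the strictly stronger requirement that $h_g\not\equiv0$ on every component of $\{f|_{\hat S}=0\}$.

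That requirement cannot always be met. For $N=2$ the zero level is the single point $0$. If $df(0)=0$ and $f$ is not Morse (e.g.\ $B_k$, $k\geqslant3$), then $h_{f\circ\Phi}(0)=(\det D\Phi(0))^2\det\operatorname{Hess}f(0)=0$ for every boundary-preserving $\Phi$.

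The ``main obstacle'' is an artifact of this misreading, and your last paragraph does not remove it. It is a plan rather than an argument. Its concrete claim is also wrong: for $\Phi(x)=x+\varepsilon x_1^2V(x')$ all new terms are of order $x_1^2$, so on $\hat S$ the mixed entries are unchanged. Only $\partial^2 g/\partial x_1^2$ acquires the extra term $2\varepsilon V_1\,\partial f/\partial x_1$. The asserted generic non-vanishing of a ``top coefficient'' is not justified.

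What is needed is only $h_g\not\equiv0$ on a neighbourhood of $0$ in $\hat S$. Your formula gives this once you supply one fact, which the paper uses tacitly in the phrase ``at almost all $x\in\hat S$ such that $(x,f(x))$ is a non-parabolic point of $A_0\cap S$'': namely $\det\operatorname{Hess}(f|_{\hat S})\not\equiv0$.

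\begin{itemize}
\item Since $0$ is an isolated zero of the map germ $\nabla(f|_{\hat S})$, this germ is finite and maps every neighbourhood of $0$ onto a neighbourhood of $0$. By Sard's lemma, a map whose Jacobian vanishes identically has image of measure zero, so $\det\operatorname{Hess}(f|_{\hat S})\not\equiv0$.
\item For any constant $v\neq0$, $\langle\nabla_{x'}f,v\rangle|_{\hat S}\not\equiv0$. Otherwise $f|_{\hat S}$ would be constant along $v$, and $0$ would not be an isolated critical point.
\end{itemize}

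Hence the coefficient $2\langle\nabla_{x'}f,v\rangle\det\operatorname{Hess}(f|_{\hat S})$ of $\varepsilon$ is a non-zero holomorphic function on a connected neighbourhood. Since $h_{g_\varepsilon}$ is affine in $\varepsilon$, it is $\not\equiv0$ for all $\varepsilon$ except at most one. The passage to small $\tilde\lambda$ by openness then goes through as you and the paper describe. No generic choice of $v$, no preliminary diffeomorphism and no analysis of components of level sets is required.
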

	
	\textbf{Proof.} At almost all $x\in\hat S\equiv\{x_1=0\}$ (such that $(x, f(x))$ is a non-parabolic point of $A_0\cap S$), one has:
	$$\operatorname{det}\operatorname{Hess}f(x)=\operatorname{det}\operatorname{Hess}f|_{\hat S}(x)\cdot\left(\operatorname{a}(x)-\operatorname{b}(x)(\operatorname{Hess}f|_{\hat S}(x))^{-1}\operatorname{b}^{T}(x)\right),$$ 
	where $\operatorname{a}=\frac{\partial^2f}{\partial x_1^2}$ and $\operatorname{b}=(\frac{\partial^2 f}{\partial x_1\partial x_2}, \dots, \frac{\partial^2 f}{\partial x_1\partial x_N})$.
	
	Consider the following subset of the orbit of $f$: 
	$$g_{\varepsilon}(x)=f(\mathcal{G}_\varepsilon(x))=f(x_1, x_2-\varepsilon x_1^2, x_3, \dots, x_N).$$ 
	It is verified directly that for $x\in\hat S$, the maps $\operatorname{Hess}g_{\varepsilon}|_{\hat S}(x)$ and $\operatorname{b}_{g_\varepsilon}(x)$ do not depend on $\varepsilon,$ but $\operatorname{a}_{g_\varepsilon}(x)=\operatorname{a}_f(x)-2\varepsilon\frac{\partial f}{\partial x_2}(x)$. Since the singularity $f|_{\hat S}$ is isolated, the function $\frac{\partial f}{\partial x_2}(x)$ is non-zero almost everywhere on $\hat S$. Then, by Sard's lemma, for almost all $\varepsilon$, zero is a regular value of the function $\operatorname{det}\operatorname{Hess}g_{\varepsilon}(x)$ on a Zariski-open subset of $\hat S$, which implies the required statement for $\tilde\lambda=0$. Finally, for any generic fixed $\varepsilon$, the same will hold for all functions $f_{\tilde\lambda}$ with sufficiently small $\tilde\lambda$, since regularity is an open property. $\Box$
	
	\begin{definition}\label{ust'}
		The germ of a purely boundary singularity is called \textbf{stable} if its linear deformation is $\mathcal{K}_{\hat S}$-versal and Proposition~\ref{parab} holds for it. 
	\end{definition}
	
	This definition is consistent with Definition~\ref{usto}, since by Proposition~\ref{linvers}, a stable germ $f$ with $df(0)=0$ is already in the form required by Proposition~\ref{parab}.
	
	\subsection{Simple stable real singularities}\label{simplust}
	
	In the quasi-homogeneous normal form (see Table~\ref{tablo}, \cite{Lyash}), the germ of any simple boundary singularity $f$ lies in its boundary gradient ideal $\nabla_{f|_{x_1}}$. The condition $f\in\nabla_{f|_{x_1}}$ is invariant under the action of diffeomorphisms of $\mathbb{R}^N$ that preserve the boundary $\hat{S}:x_1=0$. Therefore, in the case of a simple boundary singularity, the $\mathcal{K}_{\hat S}$-versality of its deformation is equivalent to the ordinary ($\mathcal{R}_{\hat S}$-)versality. On the other hand, since for quasi-homogeneous singularities the $\mathcal{K}_{\hat S}$- and $\mathcal{R}_{\hat S}$-orbits coincide up to sign, it is sufficient to study Arnold's normal forms.
	
	Let the germs of classes ${B}_{k}, C_k, F_4$ at the point $0\in\mathbb{R}^N$ be given in the form $\pm \eta_{1}^k \pm \eta_{2}^2 \pm \eta_{3}^2 \dots \pm \eta_{N}^2$, $\pm \eta_{1}\eta_{2} \pm \eta_{2}^k \pm \eta_{3}^2 \dots \pm \eta_{N}^2$, and $\pm \eta_{1}^2 \pm \eta_{2}^3 \pm \eta_{3}^2 \dots \pm \eta_{N}^2$, respectively, in some (generally speaking, curvilinear) coordinate system, where the boundary germ is everywhere defined by the condition $\eta_{1}=0$.
	
	\begin{PREDLOZHENIE}\label{ust}
		The germ of class $B_{k}$ is stable if and only if the first $k-1$ derivatives of the curve $\{\eta_{2}=\dots=\eta_{N}=0\}$ are linearly independent at $0\in\mathbb{R}^N.$ The germ of class $C_{k}$ is stable if and only if the first $k-2$ derivatives of the curve $\{\eta_{1}=\eta_{3}=\dots=\eta_{N}=0\}$ are linearly independent at $0\in\mathbb{R}^N.$ 
	\end{PREDLOZHENIE}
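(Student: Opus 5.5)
We reduce stability to an explicit transversality computation in the local algebra. By Definition~\ref{usto} and the discussion at the start of Sec.~\ref{simplust}, $\mathcal{K}_{\hat S}$-versality of $\Lambda_f$ is equivalent to $\mathcal{R}_{\hat S}$-versality for simple germs. By the versality theorem (\cite{AVG}, Sec.~8.3) this holds iff the classes of $1,x_1,\dots,x_N$ span the local algebra $Q_f=\mathbb{C}\{x\}_0/\nabla_{f|_{x_1}}$. Here $x=(x_1,\dots,x_N)$ are the original affine coordinates and $\eta=\eta(x)$ is the curvilinear chart of the normal form, with $\eta_1=\lambda(x)x_1$ since the boundary is preserved. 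Since $\nabla_{f|_{x_1}}$ is invariant under boundary-preserving diffeomorphisms, we may compute $Q_f$ in the $\eta$-coordinates. The question then becomes: do the linear functions $x_i(\eta)$, expressed as power series in $\eta$, together with $1$, span $Q_f$ computed for the normal form?

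For $B_k$ with $f=\pm\eta_1^k\pm\eta_2^2\pm\dots\pm\eta_N^2$, the boundary gradient ideal is $(\eta_1^k,\eta_2,\dots,\eta_N)$. So $Q_f\simeq\mathbb{C}\{\eta_1\}/(\eta_1^k)$, with basis $1,\eta_1,\dots,\eta_1^{k-1}$ and $\mu=k$. The projection to $Q_f$ is restriction to the curve $\Gamma=\{\eta_2=\dots=\eta_N=0\}$ parametrized by $\eta_1$, truncated at order $k-1$. Write $\Gamma(t)=x(t,0,\dots,0)$. The image of $x_i$ in $Q_f$ is the $(k-1)$-jet of the coordinate function $x_i(\Gamma(t))$, i.e. $\sum_{j=1}^{k-1}x_i^{(j)}(0)t^j/j!$ (the constant term vanishes). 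Thus $1,x_1,\dots,x_N$ span $Q_f$ iff the $N\times(k-1)$ matrix $\bigl(\Gamma^{(j)}(0)\bigr)_{j=1}^{k-1}$ has rank $k-1$. This is exactly linear independence of the first $k-1$ derivatives of $\Gamma$ at $0$, which gives the $B_k$ claim. One should check that this condition does not depend on the choice of parameter along $\Gamma$; it does not, because reparametrization acts on jets by a triangular invertible matrix (the Faà di Bruno formula).

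For $C_k$ with $f=\pm\eta_1\eta_2\pm\eta_2^k\pm\eta_3^2\dots$, the ideal is $(\eta_1\eta_2,\ \eta_1\pm k\eta_2^{k-1},\ \eta_3,\dots,\eta_N)$. Modulo it, $\eta_1\equiv\mp k\eta_2^{k-1}$, and then $\eta_1\eta_2\equiv\mp k\eta_2^k$. Hence $Q_f\simeq\mathbb{C}\{\eta_2\}/(\eta_2^k)$ with basis $1,\eta_2,\dots,\eta_2^{k-1}$, giving $\mu=k$. The natural projection is restriction to the curve $\{\eta_3=\dots=\eta_N=0,\ \eta_1=\mp k\eta_2^{k-1}\}$. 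This curve is not the curve $\Gamma'=\{\eta_1=\eta_3=\dots=0\}$ of the statement, but it agrees with $\Gamma'$ up to order $k-2$ in the parameter $\eta_2$. The $(k-1)$-th coefficient picks up an extra contribution $\mp k\,\partial x_i/\partial\eta_1(0)$.

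The main obstacle is this last coefficient, and also the fact that the span must contain the $x_i$, which satisfy $x_1|_{\hat S}=0$. To handle it, compute the image of $x_1=\eta_1/\lambda$ in $Q_f$: it is $\mp k\lambda(0)^{-1}\eta_2^{k-1}$ plus higher-order terms, so $x_1$ alone always generates the top basis element $\eta_2^{k-1}$. Modulo that element, the remaining $x_i$ restricted to the curve give the $(k-2)$-jet of $\Gamma'$. The spanning condition is then equivalent to linear independence of $\Gamma'^{(1)}(0),\dots,\Gamma'^{(k-2)}(0)$. This explains the shift from $k-1$ to $k-2$. As a final step, check that the sign choices and the reduction to Arnold's normal form (valid because the $\mathcal{K}_{\hat S}$- and $\mathcal{R}_{\hat S}$-orbits coincide up to sign) do not affect the argument. $\Box$
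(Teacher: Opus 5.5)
Your proposal is correct and follows essentially the paper's route. Both arguments reduce stability to the condition that the classes of $1, x_1(\eta),\dots,x_N(\eta)$ span $\mathbb{R}\{\eta\}_0/\nabla_{f|_{\eta_1}}$, read this off as jets of the coordinates along the relevant curve, and for $C_k$ use that $x_1$ always gives a non-zero multiple of the top element $\eta_2^{k-1}$ (the paper's ``second column of $d\varphi$ has a unique non-zero element in the last row''). The only difference is packaging: the paper differentiates the versality identity $F(\mathcal{H},\varphi)=\Phi$ and argues with the matrix $d\varphi|_{\xi=0}$, whereas you apply the infinitesimal versality criterion directly, which gives both directions at once.
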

	
	\textbf{Proof.} 
	Consider a germ $f(\eta)$ of type $B_k$. Let $\mathcal{G}$ be a boundary-preserving diffeomorphism of $\mathbb{R}^N$ that brings the germ in the normal form $f(\eta)$ into a stable form $\mathring{f}(x)=f(\mathcal{G}(x))$. Consider the linear (and therefore versal) deformation $\Lambda(x,\xi)=\mathring{f}(x)+\xi_1x_1+\dots+\xi_Nx_N+\xi_0$ of the germ $\mathring{f}(x)$. Then $\Phi(\eta,\xi)\equiv\Lambda(\mathcal{G}^{-1}(\eta),\xi)$ is a versal deformation of the germ $f(\eta)$. Let us now take a miniversal deformation of the form \eqref{miniversal} $F(\eta,\lambda)=f(\eta)+\lambda_{k-1}\eta_{1}^{k-1}+\dots+\lambda_1\eta_1+\lambda_0$ of the germ $f(\eta)$. By the definition of versality \eqref{VerS}, there exist such $\mathcal{H}$ and $\varphi$ that $F(\mathcal{H}(\eta,\xi),\varphi(\xi))=\Phi(\eta,\xi)$. Differentiating with respect to $\xi$ and substituting $\xi=0,$ we obtain:
	\begin{equation} \label{ioi}
		\operatorname{grad}f(\eta)\cdot\dfrac{\partial \mathcal{H}}{\partial \xi}(\eta,\xi)\bigg|_{\xi=0}+(1,\eta_{1},\dots,\eta_{1}^{k-1})\cdot d\varphi\bigg|_{\xi=0} =(1,x_1(\eta),x_{2}(\eta),\dots,x_{N}(\eta)),
	\end{equation}
	where $(x_1(\eta),x_2(\eta),\dots,x_N(\eta))=\mathcal{G}^{-1}(\eta)$. 
	
	Projecting this equality into $\mathbb{R}\{\eta\}_{0}/\nabla_{f|_{\eta_1}}$, we verify that $\operatorname{rk}d\varphi|_{\xi=0}=k,$ that is, $\varphi$ is a submersion. Indeed, since $\mathcal{H}_{\xi}(\eta)$ are boundary-preserving diffeomorphisms, all elements of the first row of the matrix $\dfrac{\partial \mathcal{H}}{\partial \xi}(\eta,\xi)\big|_{\xi=0}$ belong to the ideal $(\eta_1)$, and thus all components of the map $\operatorname{grad}f(\eta)\cdot\dfrac{\partial \mathcal{H}}{\partial \xi}(\eta,\xi)\big|_{\xi=0}$ belong to the ideal $\nabla_{f|_{\eta_1}}$. Meanwhile, the classes of the set on the right-hand side of \eqref{ioi} generate the $k$-dimensional vector space $\mathbb{R}\{\eta\}_{0}/\nabla_{f|_{\eta_1}}$ by the versality theorem (\cite{AVG},~\S 8.3).
	
	Let us return to \eqref{ioi} and substitute $\eta_2=\dots=\eta_N=0$. Now the right-hand side of this equality contains exactly (1, the parametrization of the curve $\{\eta_{2}=\dots=\eta_{N}=0\}$). Differentiating this equality further with respect to $\eta_1$ and substituting $\eta_{1}=0$, we obtain the required result. Indeed, the components of the map $\operatorname{grad}f(\eta)\cdot\dfrac{\partial \mathcal{H}}{\partial \xi}(\eta,\xi)\big|_{\xi=\eta_2=\dots=\eta_N=0}$ belong to the ideal $(\eta_1^k),$ and the first $k-1$ derivatives of the curve $\{\eta_{2}=\dots=\eta_{N}=0\}$ coincide with the rows of a matrix of rank $k-1$ (the matrix $d\varphi|_{\xi=0}$ from which the first row and the first column $(1,0,\dots,0)$ have been removed). Conversely: if the derivatives are linearly independent, then $\operatorname{rk}d\varphi|_{\xi=0}=k$, and by the versality theorem, the deformation $\Phi$ is versal, which implies that the linear deformation $\Lambda$ is also versal. 
	
	The case $C_k$ is analyzed analogously. Namely, we arrive at the equation
	
	\begin{equation}\label{qq}
		\operatorname{grad}f(\eta)\cdot\dfrac{\partial \mathcal{H}}{\partial \xi}(\eta,\xi)\bigg|_{\xi=0}+(1,\eta_{2},\dots,\eta_{2}^{k-1})\cdot d\varphi\bigg|_{\xi=0} =(1,x_1(\eta),x_{2}(\eta),\dots,x_{N}(\eta)),
	\end{equation}
	where $\operatorname{rk}d\varphi|_{\xi=0}=k$. Since $\mathcal{G}^{-1}(\eta)$ is boundary-preserving, $x_1(\eta)|_{\eta_1=0}=0$ holds. Since $\eta_1 \pm \eta_2^{k-1}\in\nabla_{f|_{\eta_1}},$ the second column of the matrix $d\varphi|_{\xi=0}$ has a unique non-zero element in the last row. Let us substitute $\eta_1=\eta_3=\dots=\eta_N=0$ into \eqref{qq}. Differentiating with respect to $\eta_2$ and substituting $\eta_2=0$, we obtain the required result. $\Box$
	
	\textbf{Examples.} The germs of classes $B_2, C_2, C_3$ of two variables are stable in their normal forms. In the case where $k>2$, the germ $B_k$ is stable for $\eta_{1}=x_1$, $\eta_{2}=x_2-x_1^2$, $\eta_{3}=x_3-x_1^3$, $\dots$, $\eta_{k-1}=x_{k-1}-x_1^{k-1}$, $\eta_{k}=x_{k}$, $\dots$, $\eta_{N}=x_{N}$. For $k>3$, the germ $C_k$ is stable for $\eta_{1}=x_1$, $\eta_{2}=x_2$, $\eta_{3}=x_{3}-x_2^2$, $\dots$, $\eta_{k-1}=x_{k-1}-x_2^{k-2}$, $\eta_{k}=x_{k}$, $\dots$, $\eta_{N}=x_{N}$. The germ $F_4$ in the form $\pm x_1^2 \pm x_2^3 \pm (x_{3}-x_1x_2)^2 \pm x_{4}^2 \pm \dots \pm x_{N}^2$ is stable.
	
	Thus, stable germs of classes $B_k, C_k$ for $k>2$ first appear in functions of $N=k-1$ variables, while the germ of class $F_4$ appears in three variables. The classification of simple stable purely boundary germs (with $df(0)\neq 0$) reduces to the classification of simple stable Legendrian germs (see~\cite{AVG},~Sec.~21.8).
	
	\section{Ramification of the volume function close to an asymptotic hyperplane and quasilacunas}\label{vetv}
	Consider the problem of the ramification of the volume cut off by a hyperplane from a domain in a Euclidean space $\mathbb R^{N+1}$ with a smooth semi-algebraic boundary. An affine hyperplane $Y$ is called {\em asymptotic} if its projective closure is non-transversal to the ``part at infinity'' of the boundary. 
	
	In local affine coordinates centered at a simple point (see Sec.~\ref{Pro}), the closure of the boundary is given by the germ of the graph $c=f(x)$ of a generating function with an isolated real boundary singularity. We study the variations of the local integration contours $\Omega_i$ (see Fig.~\ref{F17}) along paths that originate in a generic real hyperplane $X_{\tilde\xi}$ and do not leave a small neighborhood of $Y$. 
	
	It is sufficient to restrict our attention to paths on a line in general position in $(\mathbb{CP}^{N+1})^*$, which models a pencil of parallel hyperplanes in $\mathbb C^{N+1}$. Such a line is embedded into the base of a versal deformation of the germ $f$ of the form \eqref{obsch}. By means of the versality equation, the problem reduces to studying the ramification in a pencil of hyperplanes of the form $c=\operatorname{const}$ for the deformed graph germ $A_{\lambda}:c=f_{\lambda}(\mathcal{G}_{\xi}x)$. Here $f_{\lambda}(x)=f(x)-(\xi_1x_1+\dots+\xi_Nx_N+\xi_0)$, and $\mathcal{G}_{\xi}$ is a boundary-preserving diffeomorphism. A generic value $\tilde \xi$ corresponding to the hyperplane $X_{\tilde \xi}: c=\tilde \xi_1x_1+\dots+\tilde \xi_Nx_N+\tilde \xi_0$, maps under the inducing map to the value $\lambda(\tilde \xi)=\tilde\lambda$. The coordinate $c$ is introduced on the line $L_{\tilde\lambda}=\{f_{\tilde\lambda}-c, c\in\mathbb C\}$ in the base of the versal deformation and in the new coordinates $X_{\tilde\xi}$, will henceforth be denoted by $X_0$.
	
	For any real non-discriminant perturbation $f_{\tilde{\lambda}}$, a set of germs at the point $X_0$ of the \textit{volume functions} $\{V_i(X)\}$ is introduced. These germs are defined by the integrals of the volume form over the cycles $\{\Omega_i(X)\}$. Each integration contour $\Omega_i(X_0)$ corresponds to a cycle $\Pi_i(X_0)$ given by a set of connected components of the set $\operatorname{Re}\{f_{\tilde\lambda}=0\}.$ The stability of $f$ introduced in Sec.~\ref{UsT} guarantees that the integral of the volume form over any vanishing integration contour (which form a distinguished basis of the group $H_{N+1}(D\setminus S, A\cup X\setminus S)$, see Sec.~\ref{j}) is not identically equal to zero, see Lemma~\ref{Vol1}. It is shown (Sec.~\ref{osn}) that the triviality of the class $\Pi_i$ is necessary (and sometimes sufficient) for the local finite-sheetedness of the volume function $V_i$. Such connected components of the complement to $\Sigma_{\mathbb R}$ (\textit{quasilacunas}), for which all functions $V_i$ are finite-sheeted, are enumerated for versal deformations of simple boundary singularities in Sec.~\ref{quasisimpl}. The problem of the ramification of the total cut-off volume lying on one side of a cutting hyperplane is related to the generalized local even Petrovsky cycle $\Pi$. This cycle is defined under the condition $\operatorname{Re}\{f_{\tilde\lambda}=0\}\cap S=\varnothing$ and is cooriented by the vector $\operatorname{grad}f_{\tilde\lambda}$. The class of the cycle $\Pi$ in the homology group $H_{N-1}(A\cap X\setminus S, \partial D)$ is computed (see Sec.~\ref{Petrov}) in terms of the matrix of generalized intersections introduced in Sec.~\ref{43}.
	
	\subsection{Distinguished basis of vanishing integration contours}\label{j}
	
	The linear deformation $f_{\tilde{\lambda}}-\xi_{1}x_{1}-\xi_{2}x_{2}-\dots-\xi_{N}x_{N}-\xi_0$ of the function $f_{\tilde{\lambda}}$ with parameters $\xi_0, \xi_1, \dots, \xi_N$ is denoted by $\Lambda_{f_{\tilde{\lambda}}}$. Let $B\subset(\mathbb{CP}^{N+1})^*$ be a $\delta$-ball centered at $X_{0}$ (see Sec.~\ref{Rassl}). Simple loops in $B_{\delta}=B\cap L_{\tilde{\lambda}}$ around the critical values of the restrictions $p|_{A}$ and $p|_{A\cap S}$ generate the fundamental group $\pi_{1}(B\setminus\Sigma(\Lambda_{f_{\tilde{\lambda}}}))$ (see~\cite{Zarisk}, \cite{GM}).
	
	For any hyperplane $X\in B\setminus\Sigma(\Lambda_{f_{\tilde{\lambda}}})$, consider the chain of isomorphisms (see~\cite{VVA}, Prop. 4, p. 123):
	\begin{equation}\label{1}
		H_{N+1}(D\setminus S, A\cup X\cup\partial D\setminus S)\rightarrow H_{N}(A\cup\partial D\setminus S, (A\cap X)\cup\partial D\setminus S)\rightarrow H_{N-1}(A\cap X\setminus S, \partial D\setminus S).
	\end{equation}
	
	We also consider an analogous chain without reduction modulo $\partial D$:
	\begin{equation}\label{-1}
		H_{N+1}(D\setminus S, A\cup X\setminus S)\rightarrow H_{N}(A\setminus S, A\cap X\setminus S)\rightarrow H_{N-1}(A\cap X\setminus S).
	\end{equation}
	
	The first homomorphism of this chain is an isomorphism, and the second is an isomorphism for $N>2$.
	
	\begin{PREDLOZHENIE}\label{Izol}
		In the case of an arbitrary isolated boundary singularity and its deformation of the form \eqref{obsch}, the basis of the group $H_{N+1}(D\setminus S, A\cup X\setminus S)$ is realized by a set of vanishing cycles and tubes $\{\text{Ш}_i\}$, which is mapped by the homomorphism $H_{N+1}(D\setminus S, A\cup X\setminus S)\rightarrow H_{N-1}(A\cap X\setminus S)$ into the distinguished basis of the group $H_{N-1}(A\cap X\setminus S)$.  
	\end{PREDLOZHENIE}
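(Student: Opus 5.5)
We reduce the statement to Lemma~\ref{baza} by lifting the distinguished basis of $H_{N-1}(A\cap X\setminus S)$ to the level of integration contours via the chain \eqref{-1}, and then check that the lifts are realized by vanishing cycles and tubes.

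\textbf{Step 1: the chain \eqref{-1} is an isomorphism.} For the first arrow, consider the exact sequence of the triple $(D\setminus S, A\cup X\setminus S, X\setminus S)$ together with the excision $H_*(A\cup X\setminus S, X\setminus S)\simeq H_*(A\setminus S, A\cap X\setminus S)$. The groups $H_*(D\setminus S, X\setminus S)$ vanish: $D\setminus S$ deformation retracts onto $X\setminus S$ along the $c$-direction, since $S$ is given by $x_1=0$, which is independent of $c$. For the second arrow, use the exact sequence of the pair $(A\setminus S, A\cap X\setminus S)$. Here $A\setminus S$ is diffeomorphic, via the projection along $c$, to $\{x\in\mathbb C^N : x_1\neq0\}\cap D$, which is homotopic to $S^1$. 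Hence $H_N(A\setminus S)=H_{N-1}(A\setminus S)=0$ once $N>2$, and the boundary map $H_N(A\setminus S,A\cap X\setminus S)\to H_{N-1}(A\cap X\setminus S)$ is an isomorphism. So the whole chain is an isomorphism, and it suffices to exhibit explicit preimages of the distinguished basis $\{\Delta_s,\tau(\Delta_t')\}$ from Lemma~\ref{baza}.

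\textbf{Step 2: the preimages, following the classical Lefschetz thimble construction.}

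\emph{Vanishing cycles.} For $\Delta_s$, take the thimble $\Theta_s\subset A\setminus S$ swept by $\Delta_s$ as the fiber $A\cap X_{c'}$ moves along $\gamma_s$ from $0$ to the critical value. This gives a relative class in $H_N(A\setminus S, A\cap X\setminus S)$ with $\partial\Theta_s=\Delta_s$. We must choose the realizations of $\Delta_s$ along the path so that they avoid $S$. This is possible because the Morse point is away from $S$ and because, by the general position statement used in Lemma~\ref{baza}, the spheres can be isotoped off $A\cap X\cap S$. The cycle $\text{Ш}_s$ is then the cone-type $(N+1)$-chain in $D\setminus S$: the union of the segments in the $c$-direction joining points of $\Theta_s$ to $X$ (the ``tube over a thimble'' of~\cite{VVA}, Ch.~IV). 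Its boundary lies in $A\cup X$.

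\emph{Tubes.} For $\tau(\Delta_t')$, apply the same construction to $\Delta_t'$ in $A\cap S$ along $\gamma_t$, giving a thimble $\Theta_t'\subset A\cap S$. Take its Leray coboundary $\tau(\Theta_t')\subset A\setminus S$, the $S^1$-bundle over $\Theta_t'$ in the trivialized tubular neighbourhood $T_{\varepsilon'}$. Its boundary is $\tau(\Delta_t')$, because the tube operator commutes with $\partial$ up to the sign $(-1)^N$, which we absorb into the orientation. Then let $\text{Ш}_t$ be the $c$-cone over it as before. Since both the thimble and the tube operation are compatible with the transport along paths, the distinguished system of paths determines $\{\text{Ш}_i\}$, and by construction their images under \eqref{-1} are exactly $\{\Delta_s,\tau(\Delta_t')\}$. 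Step~1 then shows that $\{\text{Ш}_i\}$ is a basis.

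\textbf{Expected main obstacle: the case $N=2$,} where the second arrow of \eqref{-1} fails to be an isomorphism (see Remark~\ref{N2}). There $H_1(A\setminus S)\simeq\mathbb Z$, so $H_2(A\setminus S, A\cap X\setminus S)$ maps onto the kernel of $H_1(A\cap X\setminus S)\to H_1(A\setminus S)$. We would handle this by checking two things. First, each distinguished element (a vanishing cycle, or a difference of tubes around the two points of $\Delta_t'$) has zero linking number with $S$, so it lies in that kernel. Second, that kernel is precisely the span of the distinguished basis. Indeed, the extra tube around a single point of $A\cap X\cap S$ maps to the generator of $H_1(A\setminus S)$. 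The statement of the proposition is about the distinguished basis in the reduced sense of Remark~\ref{N2}, which is exactly this kernel. A secondary technical point is verifying that the thimbles and their tubes can be chosen disjoint from $S$ consistently along whole paths; we would settle this by transversality, using the local product structure $T_{\varepsilon'}\simeq\{|x_1|\leqslant\varepsilon'\}\times A\cap S$ on compact parts of the paths.
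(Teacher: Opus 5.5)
Your proposal is correct and is essentially the paper's argument. The triple sequence for $(D\setminus S, A\cup X\setminus S, X\setminus S)$, excision, and the pair sequence for $(A\setminus S, A\cap X\setminus S)$ with $H_{N-1}(A\setminus S)=0$ for $N>2$ make \eqref{-1} an isomorphism, so the distinguished basis of Lemma~\ref{baza} pulls back; for $N=2$ the paper just cites Remark~\ref{N2}, whereas you spell out the kernel of $H_1(A\cap X\setminus S)\to H_1(A\setminus S)\simeq\mathbb Z$, and your explicit thimble-and-cone preimages are detail the paper leaves implicit.

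One small correction concerns keeping the thimbles off $S$. This should be justified by transport along $\gamma_s$ in the locally trivial fibration of pairs $(A\cap X_{c'}, A\cap X_{c'}\cap S)$, starting from a vanishing sphere concentrated near a critical point of $f_{\tilde\lambda}$ off $\hat S$. Transversality does not do the job: for $N\geqslant 3$ a dimension count does not force an $(N-1)$-sphere to miss the real codimension-$2$ set $A\cap X\cap S$ in $A\cap X$.
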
 
	
	\textbf{Proof.} From the exact sequence of the triple $(D\setminus S, A\cup X\setminus S, X\setminus S)$, the isomorphism $$H_{N+1}(D\setminus S, A\cup X\setminus S)\simeq H_{N}({A\cup X\setminus S, X\setminus S})$$ follows, since $H_{N}(D\setminus S, X\setminus S)=0$ for $N>1$. By the excision property, $$H_{N}(A\cup X\setminus S, X\setminus S)\simeq H_{N}(A\setminus S, A\cap X\setminus S).$$ Consider a fragment of the exact sequence of the pair $(A\setminus S, A\cap X\setminus S):$    
	$$\dots\rightarrow H_{N}(A\setminus S)\rightarrow H_{N}(A\setminus S, A\cap X\setminus S)\rightarrow H_{N-1}(A\cap X\setminus S)\rightarrow H_{N-1}(A\setminus S)\rightarrow\dots$$ 
	If $N>2,$ then $H_{N-1}(A\setminus S)\simeq H_{N-1}(\mathbb{C}^N\setminus \mathbb{C}^{N-1})=0.$ For $N=2$, the group $H_{N-1}(A\setminus S)=H_{1}(A\setminus S)\simeq\mathbb{Z}$ is generated by a tube around an arbitrary point of the curve $A\cap S.$ In this case, the statement of the proposition follows from Remark~\ref{N2} on p.~\pageref{N2}. $\Box$
	
	By Thom's isotopy theorem (see~\cite{PHam}), for distinct non-discriminant hyperplanes $X_{\tilde\xi}$, the groups of each of the six types participating in \eqref{1} and \eqref{-1} are isomorphic to each other. The isomorphisms are realized by transporting cycles along paths in the base via the Gauss--Manin connection and depend on the homotopy class of the path. The monodromy operators, acting by the transport of cycles along simple (and therefore any) loops of the group $\pi_{1}(B\setminus\Sigma(\Lambda_{f_{\tilde{\lambda}}}))$, commute with the homomorphisms of the chains \eqref{1} and \eqref{-1}.
	
	For any element $\gamma\in\pi_{1}(B\setminus\Sigma;X)$, the chains $\eqref{1}$ and $\eqref{-1}$ can be arranged into a commutative diagram:
	\begin{equation}\label{cd0.5}
		\begin{CD}
			H_{N+1}(D\setminus S,A\cup X\cup\partial D\setminus S) @>>> H_{N-1}(A\cap X\setminus S,\partial D\setminus S)\\
			@VVV @VVV \\
			H_{N+1}(D\setminus S,A\cup X\setminus S) @>>> H_{N-1}(A\cap X\setminus S),\\
		\end{CD}
	\end{equation}
	whose vertical arrows are the \textit{variation} operators $\operatorname{Var}_{\gamma}$ (see Remark~\ref{rem}, and also~\cite{VVA}, Ch.~I,~\S1, Sec.~1.3) determined by the action of the loop $\gamma$.
	
	The following commutative diagrams also hold:
	\begin{equation}\label{cd1}
		\begin{CD}
			H_{N}(D\cap S,(A\cup X\cup\partial D)\cap S) @>>> H_{N-2}(A\cap X\cap S,\partial D)\\
			@VVV @VVV \\
			H_{N+1}(D\setminus S,(A\cup X\cup\partial D)\setminus S) @>>>  H_{N-1}(A\cap X\setminus S,\partial D),\\
		\end{CD}
	\end{equation}
	
	\begin{equation}\label{cd2}
		\begin{CD}
			H_{N}(D\cap S,(A\cup X)\cap S) @>>> H_{N-2}(A\cap X\cap S)\\
			@VVV @VVV \\
			H_{N+1}(D\setminus S,A\cup X\setminus S) @>>>  H_{N-1}(A\cap X\setminus S).\\
		\end{CD}
	\end{equation}
	
	The vertical arrows of both diagrams are the Leray tube operators, and the lower arrows are the isomorphisms \eqref{1} and \eqref{-1}. The upper arrow in \eqref{cd2} is an isomorphism according to~\cite{VVA}, Prop. 3, p. 122. The upper arrow in \eqref{cd1} is an isomorphism because it factors through the isomorphism $$H_{N}(D\cap S,(A\cup X\cup\partial D)\cap S)\rightarrow H_{N-1}(A\cap S,(A\cap X\cap S)\cup\partial D)$$ (see~\cite{VVA}, Prop. 3, p. 122), and zeros stand in the appropriate places of the exact sequence of the triple $$((A\cap S)\cup\partial D,(A\cap X\cap S)\cup\partial D,\partial D)\text{:}$$ $H_*(A\cap S,\partial D)\simeq H_*(B,\partial B)\simeq H_*(S^{2N-2})=0$ for $*=N-2, N-1$. Here $B$ is a standard ball in $\mathbb{R}^{2N-2},$ and the first isomorphism follows from Lemma 2.12 of~\cite{MilnOs}.
	
	The diagrams \eqref{cd1} and \eqref{cd2} can be included into one large commutative diagram by adding arrows that define the variation operators. Indeed, any loop $\gamma\in\pi_1(B\setminus\Sigma^{f|_{\hat{S}}};X)$ determines the operators $$\operatorname{Var}_{\gamma}:H_{N-2}(A\cap X\cap S,\partial D)\rightarrow H_{N-2}(A\cap X\cap S)$$ and $$\operatorname{Var}_{\gamma}:H_{N}(D\cap S,(A\cup X\cup\partial D)\cap S)\rightarrow H_{N}(D\cap S,(A\cup X)\cap S)$$ which commute with the horizontal isomorphisms. Moreover, the operators $$\operatorname{Var}_{\gamma}:H_{N-2}(A\cap X\cap S,\partial D)\rightarrow H_{N-2}(A\cap X\cap S)$$ and the right arrow in \eqref{cd0.5} (respectively, $\operatorname{Var}_{\gamma}:H_{N}(D\cap S,(A\cup X\cup\partial D)\cap S)\rightarrow H_{N}(D\cap S,(A\cup X)\cap S)$ and the left arrow in \eqref{cd0.5}) commute with the Leray tube operators. 
	
	\begin{figure}[H]
		\center{\includegraphics[width=0.8\linewidth]{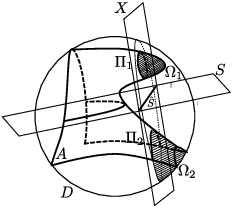}}
		\caption{The actors involved in the definition of the volume function}
		\label{F17}	
	\end{figure}
	
	\subsection{The volume function and the behavior of the integral over a vanishing tube}
	
	Consider any real non-discriminant perturbation $f_{\tilde\lambda}$. Define a set of classes $\{\Omega_{i}\}\subset H_{N+1}(D\setminus S, A\cup X_0\cup\partial D\setminus S)$ as follows (see Fig.~\ref{F17}). Each such class is represented by a relative cycle $\Omega_{i}$ given by a connected domain in the space $\mathbb{R}^{N+1}\cap D$ that is not adjacent to $S$, and is bounded by the sphere $\partial D$, the hyperplane $X_0$, and the hypersurface $\operatorname{Re}A$. This domain is equipped with the orientation induced by a fixed positive orientation of $\mathbb{R}^{N+1}$. Such domains may lie on different sides of $X_0$.
	
	Let us denote by ${\Pi_i}\in H_{N-1}(A\cap X_0\setminus S, \partial D\setminus S)$ the image of the class $\Omega_{i}$ under the composition of the isomorphisms \eqref{1} (hence, it is automatically non-zero). The cycle $\Pi_i$ is realized by a subset in $\operatorname{Re}A\cap X_0$ that does not intersect the boundary $\hat S$. This subset is not always connected.
	
	\begin{figure}[H]
		\center{\includegraphics[width=0.5\linewidth]{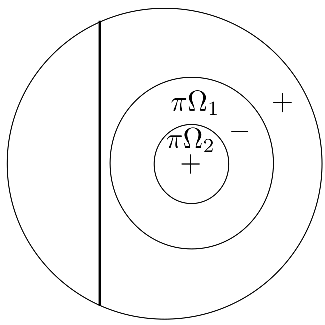}}
		\caption{A finite volume is cut off from different sides of $X_0$; the cycle $\Pi_1$ is disconnected}
		\label{df}	
	\end{figure} 
	
	\begin{example}\label{Oo}
		Consider the perturbation $f_{\tilde\lambda}=(x-\frac{3}{2}\varepsilon)^4+y^4-\varepsilon^2((x-\frac{3}{2}\varepsilon)^2+y^2)+\frac{3}{16}\varepsilon^4$ of the singularity $X_9$ with the boundary $x=0$. Fig.~\ref{df} shows the topological scheme of the projection $\pi$ of the domain $\Omega_1$, which corresponds to a disconnected cycle $\Pi_1$. The domains $\Omega_1$ and $\Omega_2$ lie on different sides of plane $X_0$.
	\end{example}
	
	Let $F$ be a deformation of the germ $f$ of the form \eqref{obsch}, and let $\Gamma$ be an arbitrary local connected component of the complement in $\Theta\cap\mathbb{R}^l$ to $\Sigma_{\mathbb{R}}(F)$. For each pair of points $\tilde{\lambda}_{1}, \tilde{\lambda}_{2}$ in $\Gamma$, the classes ${\Pi_i}$ are identified along any path in $\Gamma$ connecting these points. This identification, generally speaking, depends on the homotopy class of the path. E.~Looijenga showed in~\cite{Loo} that the connected components of the complements to the discriminants of versal deformations of ordinary simple singularities are contractible. M.~A.~Gudiev established a similar result in~\cite{MAG} for the boundary singularities $B_k$ and $C_k$. In these cases, the class $\Pi_i$ is defined invariantly inside $\Gamma$.
	
	\begin{example}\label{tyt}
		The homotopy $f_t(x,y)=(x-3\varepsilon)^4-5\varepsilon^2((x-3\varepsilon)\cos t - y\sin t)^2+4\varepsilon^4+y^4$, where $\varepsilon>0$ is close to zero, realizes a loop in the complement to $\Sigma_{\mathbb{R}}$ as $t$ varies from $0$ to $\pi$, along which the connected components of the set $\operatorname{Re}\{f_0=0\}$ are permuted: see Fig.~\ref{ovals}.
	\end{example}
	
	\begin{figure}[H]
		\center{\includegraphics[width=0.5\linewidth]{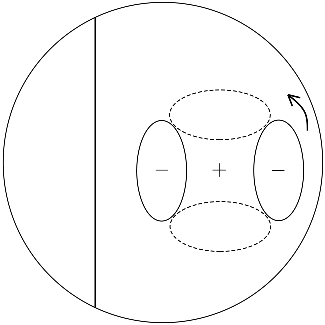}}
		\caption{Permutation of components under the action of $\pi_1(\Gamma)$}
		\label{ovals}	
	\end{figure}
	
	\begin{PREDLOZHENIE}
		In the case of a linear deformation $F=\Lambda_f$, for any $\tilde{\lambda}\in\Gamma,$ where $\Gamma$ is an arbitrary local connected component of the complement to $\Sigma_{\mathbb{R}}(\Lambda_f),$ the class ${\Pi_i}(\tilde{\lambda})$ is defined invariantly under the action of $\pi_{1}(\Gamma)$.
	\end{PREDLOZHENIE}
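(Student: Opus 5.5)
The plan is to show that every loop in $\Gamma$ acts trivially on the real picture, so that no permutation of components as in Example~\ref{tyt} can occur. The key is the linear structure of $\Lambda_f$. Its parameter space is the space of affine hyperplanes $X_\xi$ close to $X_0$, and every function of the family is $f-\ell_\xi$, with $\ell_\xi$ affine. So the real zero set $\operatorname{Re}\{f_{\tilde\lambda}=0\}$ is just the section $\operatorname{Re}A_0\cap X_{\tilde\xi}$, projected to $\mathbb R^N$ along the $c$-axis, of one fixed real hypersurface $\operatorname{Re}A_0$. The domains $\Omega_i(\tilde\xi)$ are then the connected components of $(\mathbb R^{N+1}\cap D)\setminus(\operatorname{Re}A_0\cup X_{\tilde\xi})$ that are not adjacent to $S$. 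They are all cut out of the fixed set $\mathbb R^{N+1}\cap D\setminus \operatorname{Re}A_0$ by a moving real hyperplane.

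First I would note that $\Gamma$ is a connected open subset of the ball $\Theta\cap\mathbb R^{N+1}$. Over $\Gamma$ the hyperplanes $X_\xi$ remain transversal to $\operatorname{Re}A_0$ and to $\operatorname{Re}A_0\cap S$ inside $D$. By Thom's isotopy theorem, the family $\{(\mathbb R^{N+1}\cap D,\operatorname{Re}A_0\cup X_\xi, S)\}_{\xi\in\Gamma}$ is then locally trivial. Transport along a path in $\Gamma$ therefore carries each domain $\Omega_i$ onto some domain of the same kind. The task is to show that this correspondence of domains does not depend on the path.

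Second, I would label each domain intrinsically. Each $\Omega_i(\xi)$ lies in a definite connected component $K$ of $\mathbb R^{N+1}\cap D\setminus\operatorname{Re}A_0$, which is fixed and independent of $\xi$. It also lies on a definite side of the cooriented hyperplane $X_\xi$, and that side is constant on $\Gamma$ by continuity. I would then show that the pair ($K$, side) determines $\Omega_i$. Inside $K$, the piece lying on one side of $X_\xi$ and not adjacent to $S$ should be a single region. For this I would use that the half-space $\{c>\ell_\xi\}$ (resp. $\{c<\ell_\xi\}$) intersected with $K$ sweeps monotonically in the normal direction: as one translates $\xi_0$, domains only appear or disappear through real discriminant points. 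This step needs refinement, because the pair ($K$, side) may a priori contain several domains; Example~\ref{Oo} shows that $\Pi_i$ can be disconnected. So I would refine the label to ($K$, side, set of components of $K\cap X_\xi$ bounding it). The claim to prove is that these boundary components are themselves canonically determined. For this I would try to show that each connected component of $\operatorname{Re}A_0\cap X_\xi$ disjoint from $S$ is determined by the component of $\operatorname{Re}A_0$ containing it together with its position relative to the parallel family.

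The main obstacle is exactly this rigidity step: ruling out a monodromy like the rotation in Example~\ref{tyt}. That rotation is a loop in the full versal base but not a loop of hyperplane sections of one fixed surface. I would first try a convexity argument. In $\Gamma$, a loop of real hyperplanes can be contracted to a point by straight-line homotopy in $\xi$. If the component $\Gamma$ is star-shaped, or more generally if the map from $\Gamma$ is null-homotopic in the set of transversal hyperplanes, triviality of the action follows at once. Otherwise I would fall back on the labeling argument above, showing that the transported domain, after a full loop, has the same labels and hence coincides with the original. Finally, since $\Pi_i$ is the image of $\Omega_i$ under the isomorphism \eqref{1}, which commutes with transport, invariance of $\Omega_i$ (as an oriented domain in the fixed $\mathbb R^{N+1}$) yields invariance of $\Pi_i(\tilde\lambda)$.
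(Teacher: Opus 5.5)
You correctly locate the difficulty, but the proposal does not overcome it. The ``rigidity step'' is the whole content of the proposition, and neither of your routes proves it.

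The labels carry too little information. $\operatorname{Re}A_0$ is the graph $c=f(x)$ over a real ball, so it is connected and cuts $\mathbb R^{N+1}\cap D$ only into the regions above and below it. The pair ($K$, side of $X_\xi$) therefore takes only a handful of values. It cannot distinguish, e.g., several ovals of $\{f=\ell_\xi\}$ each bounding a region where $f<\ell_\xi$, which is exactly the configuration in which a permutation could occur. Your refinement does not repair this:
\begin{itemize}
\item ``the component of $\operatorname{Re}A_0$ containing it'' is vacuous, since $\operatorname{Re}A_0$ is connected;
\item ``the components of $K\cap X_\xi$ bounding it'' is circular, since these are the very objects being permuted;
\item ``position relative to the parallel family'' is never defined.
\end{itemize}
The monotone sweeping you mention concerns translations in $\xi_0$ with fixed direction $(\xi_1,\dots,\xi_N)$, not loops that rotate the direction.

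The convexity route is conditional. The straight-line homotopy generally leaves $\Gamma$ through $\Sigma_{\mathbb R}$. Nothing shows $\Gamma$ is star-shaped or simply connected; the paper only quotes contractibility results of Looijenga and Gudiev for versal deformations of particular singularities.

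The topology of the level set alone cannot settle the question either. Homeomorphisms of the pair (ball, level set) that preserve nesting and signs can swap sibling ovals, as Example~\ref{tyt} shows. Some input specific to the linear family is required.

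The paper supplies that input in two steps. First, components meeting $\partial D$ are fixed. Their traces on $\partial D$ stay transversal for all small $\xi$, including across $\Sigma_{\mathbb R}$, so they form a trivializable bundle over a contractible base. Your proposal never uses $\partial D$.

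Second, for closed components (after reducing to the boundaryless case), the paper turns your own remark about Example~\ref{tyt} into an argument. Every fiber is a section of the single fixed contractible manifold $\operatorname{Re}A_0\cong\mathbb R^N$. Suppose a loop $\gamma$ moved $\Pi_i$, and let $r$ be the minimal positive power with $\gamma^r\Pi_i=\Pi_i$. The covering homotopy along $\gamma^r$ gives a map $\kappa=\psi\circ\tilde h$ of $[0,1]\times\Pi_i$ into $\operatorname{Re}A_0$ that closes up on $\Pi_i$. The paper then argues that the disjoint components $\gamma^j\Pi_i$ cannot collide inside the smooth section $\operatorname{Re}A_0\cap X_{\tilde\xi(s)}$. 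Hence $\operatorname{Im}\kappa$ would be a closed, connected, non-simply-connected $N$-dimensional submanifold of $\operatorname{Re}A_0$, which is impossible.

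What your proposal is missing is this global sweeping argument inside the fixed real hypersurface, rather than a combinatorial labeling of domains.
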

	
	\textbf{Proof.}
	It is sufficient to prove this statement for an ordinary singularity $f$ (without boundary), assuming that each cycle $\Pi_i$ is realized by a connected component.
	Let us denote by $D_{\varepsilon}$ a small ball around zero in the space $\mathbb{R}^{N+1}$ such that the sphere $\partial(D_{\varepsilon}\cap X_0)$ is transversal to the set $\Lambda_f(x,0)\equiv f(x)=0.$ Then the spheres $\partial(D_{\varepsilon}\cap X_{\xi})$ are transversal to the sets $\Lambda_f(x,\xi)=0$ for $\xi$ close to zero. Consider the fiber bundle $p$ of the real Milnor fibers of the ordinary singularity $f$ over the base $\Gamma$. The total space $E(p)$ consists of all possible pairs $(X_{\tilde\xi}, x)\in\Gamma\times \pi(D_{\varepsilon}\cap X_{\tilde\xi})\subset\Gamma\times\mathbb{R}^N$ such that $\Lambda_f(x,{\tilde\xi})=0;$ here $\pi:(c,x)\mapsto x$ is the projection onto $\mathbb{R}^N.$ The fibers are realized by the sets $\operatorname{Re}A_{0}\cap X_{\tilde\xi}$ in the ball $D_{\varepsilon}\subset\mathbb{R}^{N+1}$. The bundle $p$ contains a trivializable subbundle of the boundaries of the Milnor fibers --- the sets of those $x\in \pi(\partial D_{\varepsilon}\cap X_{\tilde\xi})$ for which $\Lambda_f(x,{\tilde\xi})=0$.
	
	The fundamental group $\pi_{1}(\Gamma)$ acts on the set $\{\Pi_i\}$ by permutations, without mixing cycles with a non-empty boundary in \(\partial D_{\varepsilon}\) with absolute cycles. The fact that the subbundle of the boundaries is trivializable implies that the action of $\pi_{1}(\Gamma)$ on cycles with boundary is trivial. Suppose there exists an element $\gamma\in\pi_{1}(\Gamma)$ such that the orbit of some cycle $\Pi_i$ under the action of the cyclic subgroup generated by the class of the loop $\gamma$ is non-trivial. Then for some $r$ (among which we take the minimal positive one), $h_*(\Pi_i)={\Pi_i}$ holds for $h=\gamma^r$.
	
	Let us map the cylinder $[0,1]\times\Pi_i$ over the cycle $\Pi_i$ into the set $\operatorname{Re}A_0$ by the composition of maps $\kappa\equiv\psi\circ \tilde{h}$, where $\tilde{h}$ is determined by the covering homotopy: $(t,y)\mapsto (X_{\xi(t)},h_t(y))\in E(p)$, and the map $\psi:E(p)\rightarrow\operatorname{Re}A_0$ associates a point $(X_{\tilde\xi}, x)$ with the point $(x, f(x))\in\mathbb{R}^{N+1}$. For any $t$, the cycle $\Pi_i$ is mapped diffeomorphically onto its image $\psi\circ \tilde{h}_t(\Pi_i)$. 
	
	The set $\operatorname{Re}A_{0}$ is contractible since it is diffeomorphic to $\mathbb{R}^N$. However, the set $\operatorname{Im}\kappa$ cannot have a boundary (in particular, it cannot be simply connected): otherwise, the images of the cycles $\Pi_i$ and $\gamma^{r-1}\Pi_i$ under the action of $\psi\circ\tilde{\gamma}_s$ would intersect for some $s$, which is incompatible with the smoothness of the set $\operatorname{Re}A_{0}\cap X_{\tilde\xi(s)}$. Thus, $\operatorname{Im}\kappa$ realizes a connected, non-simply-connected, closed $N$-dimensional submanifold in $\operatorname{Re}A_{0}$, which leads to a contradiction. $\Box$
	
	For each $i$, we define near $X_0$ the germ of the {\em volume function} $V_{i}$:
	\begin{definition}\label{Vol}
		\begin{equation}\label{2}
			V_{i}(X)=\int_{\Omega_{i}(X)} \omega,
		\end{equation} 	
	\end{definition}
	\noindent where $\omega$ is a rational differential form on $\mathbb{CP}^{N+1}$ whose restriction to the affine chart $\mathbb{C}^{N+1}\subset\mathbb{CP}^{N+1}$ with the hyperplane at infinity $S$ coincides with the holomorphic volume form. It is verified directly that the hyperplane $S$ is a polar set of order $N+2$ for the form $\omega$, and in our local coordinates, it is written (up to a constant factor --- the determinant of the affine transformation) in the form:
	\begin{equation}
		\omega=dc\wedge\frac{dx_1}{x_1^{N+2}}\wedge dx_2\wedge\dots\wedge dx_N.
	\end{equation}
	
	In~\cite{VVA}, Ch.~III,~\S 6, p. 201, a {\em different} differential form is considered as $\omega$.
	
	\begin{lemm}\label{Vol1}
		Let either $N$ be even, or $N$ be odd and the critical points of the function $p|_{A_{\tilde\lambda}\cap S}$ be non-parabolic points of the surface $A_{\tilde\lambda}$. Then the integral of the form $\omega$ over any basic cycle $\text{\CYRSH}(X)\in H_{N+1}(D\setminus S, A_{\tilde\lambda}\cup X\setminus S)$ (see Proposition~\ref{Izol}) is not identically equal to zero.
	\end{lemm}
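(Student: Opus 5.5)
Two kinds of basic cycles occur: the vanishing cycles $\text{\CYRSH}_s$ attached to critical values of $p|_{A_{\tilde\lambda}}$ (mapping to $\Delta_s$) and the tubes $\text{\CYRSH}_t$ attached to critical values of $p|_{A_{\tilde\lambda}\cap S}$ (mapping to $\tau(\Delta_t')$ by diagram \eqref{cd2}). I will treat them separately and in each case compute the leading asymptotics of $\int_{\text{\CYRSH}(X)}\omega$ as the hyperplane $X$ approaches the corresponding critical value along the distinguished path.

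For $\text{\CYRSH}_s$, the cycle is localized near a Morse critical point $x_\theta\notin S$, where $\omega$ is holomorphic and nonvanishing. In local Morse coordinates the integral is the classical one: the volume of a vanishing "cap" of the quadric cut off by $X$. It behaves as $C\,(c-\theta)^{(N+2)/2}$ with $C\neq0$, by the standard computation (\cite{VVA}, Ch.~III). For either parity of $N$ this is not identically zero; for even $N$ it is a polynomial of degree $(N+2)/2$ in $c-\theta$. So I will simply invoke the known formula for the integral of a holomorphic volume form over a vanishing Lefschetz thimble.

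For a tube $\text{\CYRSH}_t=\tau(\cdot)$, I will use Leray's residue formula. The integral over the tube equals $2\pi\sqrt{-1}$ times the integral of $\operatorname{Res}_S\omega$ over the corresponding cycle in $H_N(D\cap S,(A\cup X)\cap S)$. Because $S$ is a pole of order $N+2$, the residue is computed by expanding the remaining holomorphic data in $x_1$. After rewriting the chain via $x_1$-fibration (as in the trivialization $T_{\varepsilon'}$ of Lemma~\ref{baza}), this amounts to taking the coefficient of $x_1^{N+1}$ in the Taylor expansion in $x_1$ of the volume cut off in each slice $x_1=\mathrm{const}$. Thus the integral reduces to $\frac{1}{(N+1)!}\partial_{x_1}^{N+1}$ of a local volume function $v(x_1,c)$ of the slice hypersurface $A\cap\{x_1=\mathrm{const}\}$ near the Morse critical point of $p|_{A\cap S}$. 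Near that point $A$ is given by $c-\theta'=\alpha x_1+Q(x_1,x')$, where $Q$ is a nondegenerate quadratic form in $x'=(x_2,\dots,x_N)$ plus $x_1$-terms. The slice volume is then $v\sim C\,(c-\theta'-g(x_1))^{(N+1)/2}\cdot h(x_1)$, with $g$ and $h$ determined by the second fundamental form of $A$ at the point.

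The key step is to show that $\partial_{x_1}^{N+1}v|_{x_1=0}\not\equiv0$ as a function of $c$. I expect this to be the main obstacle. For even $N$ the exponent $(N+1)/2$ is a half-integer, so the function is genuinely ramified at $c=\theta'$. Differentiating $N+1$ times produces terms $(c-\theta')^{(N+1)/2-k}$, and the most singular term, $(g'(0))^{N+1}(c-\theta')^{-(N+1)/2}$, has nonzero coefficient whenever $g'(0)\neq0$. If $g'(0)=0$, I will use the term with $(g''(0))^{(N+1)/2}$ instead. This is nonzero because $A$ is transversal to $S$, so $f|_{\hat S}$ Morse forces nonvanishing of a term of the form $(c-\theta')^{0}$ or a half-integer power; half-integer powers cannot cancel against integer ones. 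For odd $N$ the exponent is an integer, so $v$ is a polynomial in $c$, and the derivative can vanish identically. Here non-parabolicity of the critical point of $p|_{A\cap S}$ for $A$ enters. The coefficient of the surviving top term $(c-\theta')^{0}$ is proportional to a power of $\operatorname{a}-\operatorname{b}(\operatorname{Hess}f|_{\hat S})^{-1}\operatorname{b}^T$, which by the formula in the proof of Proposition~\ref{parab} is $\det\operatorname{Hess}f/\det\operatorname{Hess}f|_{\hat S}\neq0$ at a non-parabolic point. I would first try to make this precise by completing the square in $x'$ exactly, writing $Q=Q'(x'+x_1 b')+\kappa x_1^2$ with $\kappa$ the Schur complement, which reduces $v$ to an explicit function of $c-\theta'-\alpha x_1-\kappa x_1^2$ times a constant. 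The needed derivative then becomes a single Hermite-type coefficient that is manifestly nonzero when $\kappa\neq0$ (odd $N$) and, for even $N$, always nonzero.
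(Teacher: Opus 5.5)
Your route is the paper's route. The caps attached to critical points of $p|_{A}$ are handled by the local volume computation. For a tube, you use Leray/Fubini over the slices $x_1=\mathrm{const}$, which gives $2\pi\sqrt{-1}$ times the coefficient of $x_1^{N+1}$ in the slice volume $v(x_1,c)$. A parity analysis follows, in which non-parabolicity (Schur complement $\kappa\neq0$) plays the role of the paper's case $a=2$.

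The difference is in how the general case is reached. The paper first normalizes to $z=x_1^a+\sum z_j^2$, $X:z=\varepsilon+\xi x_1$, computes exactly, and then reaches the general case by a perturbation argument ($\mathcal G$ close to the identity, analytic dependence on parameters). You stay in adapted coordinates, where $\omega$ keeps its exact form.

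\textbf{Even $N$.} Your argument is sound, and in fact robust. Exactly, $v=\sum_{j\ge0}(u-g(x_1))^{m+j}h_j(x_1)$, with $u=c-\theta'$, $m=\frac{N+1}{2}$, $g$ the slice critical value and $h_0(0)\neq0$. The coefficient of $u^{-m}$ in $[x_1^{N+1}]v$ comes only from $j=0$ and $k=N+1$ in $(u-g)^m=\sum_k\binom{m}{k}u^{m-k}(-g)^k$. It equals $-\binom{m}{N+1}\alpha^{N+1}h_0(0)\neq0$, where $\alpha=g'(0)$.

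You should say why $\alpha\neq0$. Otherwise the point would be a critical point of $f_{\tilde\lambda}$ with the same critical value, contradicting that $L_{\tilde\lambda}$ meets $\Sigma$ in $\mu$ distinct points. Your fallback for $g'(0)=0$ is wrong: then the model integrand is even in $x_1$ and the integral vanishes identically, as the paper remarks for $\xi=0$. It is never needed, though. A minor point: for the caps the integral is $\sum b_k\varepsilon^{N/2+k+1}$, not a polynomial, but only $b_0\neq0$ matters.

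\textbf{Odd $N$: the gap.} The coefficient of $u^0$ is not a power of the Schur complement in general. Completing the square is exact only for a quadratic graph. In general $[x_1^{N+1}]v$ involves the Taylor coefficients of $g$ up to order $N+1$ and the $x_1$-dependence of the $h_j$. For $N=3$, take
\[
c=\alpha x_1+\kappa x_1^2+\gamma x_1^3+x_2^2+x_3^2,\qquad v=\tfrac{\pi}{2}\bigl(c-g(x_1)\bigr)^2,\qquad \int_{\tau}\omega=\pi^2\sqrt{-1}\,\bigl(\kappa^2+2\alpha\gamma\bigr).
\]
This vanishes identically along the whole pencil $c=\mathrm{const}$ when $\kappa^2=-2\alpha\gamma$, although the point is non-parabolic. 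So non-vanishing along one fixed pencil cannot be derived from $\kappa\neq0$.

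What does follow is this. Tilting the hyperplane (the parameter $\xi_1$ of $\Lambda_{f_{\tilde\lambda}}$) replaces $g$ by $g-\xi_1x_1$ and leaves the $h_j$ unchanged. Write $g=\alpha x_1+g_0$ with $g_0=\kappa x_1^2+O(x_1^3)$. Then the coefficient of $u^0\alpha^0$ is exactly $(-\kappa)^m h_0(0)$, because $g_0^{m+j}$ vanishes to order $N+1+2j$. Hence the tube integral is not identically zero as a function on $B$, and therefore along a generic pencil. That is what the proof of Theorem~\ref{o} uses.

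This is the content the paper's general-case step is meant to supply, and your exact square completion does not replace it. The example also shows that such a step can only give non-vanishing on $B$, or for generic $\tilde\lambda$, not along every pencil.
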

	
	\begin{remark}
		If Proposition~\ref{parab} holds (or, more restrictively, if the germ $f$ is stable), then the condition that all critical points are non-parabolic is satisfied for almost all $\lambda$.
	\end{remark}
	
	The case where the element \(\text{\CYRSH}(X)\in H_{N+1}(D\setminus S, A\cup X\setminus S)\) is realized by a vanishing cycle (a ``cap'') was studied in~\cite{VVA}, Ch.~III,~\S3. We will also consider it below.
	
	Let a basis element $\text{\CYRSH}(X)\in H_{N+1}(D\setminus S, A\cup X\setminus S)$ be realized by a tube $\tau(\text{\CYRSH}'(X))$ around a vanishing cycle of the group $H_{N}(D\cap S, A\cup X)$. By the Leray residue theorem, see~\cite{Lere}, we have:
	\begin{equation}\label{vych}
		\int_{\text{\CYRSH}(X)}\omega=2\pi i\int_{\text{\CYRSH}'(X)}\operatorname{Res}\omega,
	\end{equation}
	where $\operatorname{Res}\omega$ is the {\em Leray residue form} of the form $\omega$. It can be computed by finding a form $\omega'$ with a simple pole along $S$ that is cohomologous to the form $\omega$ in the cohomology group $H^{N+1}(D\setminus S, A\cup X\setminus S).$ Namely, the relation $\omega'-\omega=d\eta$ must hold, where $[\eta]\in H^N(D\setminus S, A\cup X\setminus S).$ The value of the integral \eqref{vych} depends on the cohomology class of the form $\omega$ in the group $H^{N+1}(D\setminus S, A\cup X\setminus S)$. Therefore, although in all cases of interest to us here the germ of the set $A\cap S$ is represented by the graph of a function with a Morse singularity, the behavior of the integral depends on the geometry of the surface $A$ at this critical point.  
	
	From the exact Leray cohomology sequence \eqref{Leray}, see~\cite{Lere}, it follows that $[\operatorname{Res}\omega]\neq 0$ whenever $[\omega]\neq 0\in H^{N+1}(D\setminus S, A\cup X\setminus S).$ 
	\begin{equation}\label{Leray}
		\dots\to H^{N+1}(D,A\cup X) \to H^{N+1}(D\setminus S,A\cup X\setminus S)\overset{\operatorname{Res}}\to H^N(D\cap S,A\cup X)\to\dots 
	\end{equation}
	
	\textbf{Proof of Lemma~\ref{Vol1}.} 
	
	1) Let a cycle $\text{\CYRSH}$ vanish at a point $x_{\theta},$ which is a Morse critical point of the function $p|_{A}$ (= the function $f_{\tilde{\lambda}}$) with the value $\theta$. By a local diffeomorphism preserving the function $p|_{A}$ and defined near the point $x_{\theta}$, the germ of the surface $A$ can be reduced to the standard form $c=\theta+z_1^2+\dots+z_N^2$. For any small real $\varepsilon>0$, the cycle $\text{\CYRSH}(X_{\theta+\varepsilon})$ is realized by the interior of a Lefschetz thimble (a ``cap'') in the space \(\mathbb{R}^{N+1}\) with coordinates \(c, z_1, \dots, z_N\) (see Fig.\(~\ref{vol}\)). It is obvious that its Euclidean volume is non-zero, and consequently, our integral is non-zero as well. Moreover, it is easy to verify directly that the expansion of the function \(\int_{\text{\CYRSH}(X)}\omega\) into a power series in a neighborhood of the point \(X_{\theta}\) on \(B_{\delta}\) contains only integer exponents for even N and only half-integer exponents for odd N. 
	
	Indeed, let \(\mathcal{G}\) be a diffeomorphism of \(\mathbb C^N\), and let \(J\mathcal{G}\) be its Jacobian. Then we have:
	\[\int_{\text{\CYRSH}(X_{\theta+\varepsilon})}\omega=\int_{0}^{\varepsilon}\left(\int_{\sum z_j^2\leqslant c}J\mathcal{G}(z)dz\right)dc=\int_{0}^{\varepsilon}\sum a_k c^{\frac{N}{2}+k}dc=\sum b_k\varepsilon^{\frac{N}{2}+k+1},\]
	where \(a_k\sim \operatorname{Vol}(B^N_1)\Delta^k(J\mathcal{G}(0))\), meaning that the coefficients \(a_k\) arise from integrating the even terms of the Taylor series of the function \(J\mathcal{G}(z)\) at the origin (monomials containing variables to odd powers yield zero due to the symmetry of the integration contour), \(\operatorname{Vol}(B^N_1)\) is the volume of the unit N-ball, and $\Delta$ is the Laplace operator.
	
	From this, it is clear that the analytic continuation of the function $\int_{\text{\CYRSH}(X)}\omega$ along the loop $X_{c(t)}$, where $c(t)\equiv\{\theta+\varepsilon e^{it}, t\in[0,2\pi]\}$, maps it to itself for even $N$ and to its negative for odd $N$, which is also confirmed by the Picard--Lefschetz formula.
	
	2) Now let the cycle $\text{\CYRSH}$ vanish at a point $x_{\theta'},$ which is a Morse critical point of the function $p|_{A\cap S}$ (= the function $f_{\tilde{\lambda}}|_{\hat{S}}$) with the value $\theta'$. 
	
	\begin{figure}[H]
		\center{\includegraphics[width=1\linewidth]{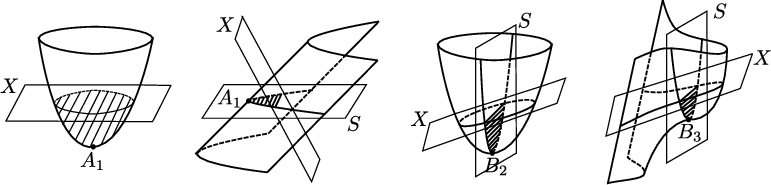}}
		\caption{Vanishing integration contours}
		\label{vol}	
	\end{figure}
	
	By the generalized Morse lemma, the equation of the surface $A$ near the point $x_{\theta'}$ can be reduced via an $S$-preserving holomorphic diffeomorphism of the space $\mathbb {C}^{N+1}$ to the equation of the graph of a function with a singularity of class $B_a$ (see~\cite{ARN}):
	\begin{equation}\label{eque}
		z=x_1^a+z_2^2+\dots+z_N^2,
	\end{equation}
	where $z=0$ is the tangent hyperplane to $A$ at the point $x_{\theta'}$ (if it is non-degenerate; in the case of a degenerate point, we require tangency to $A\cap S$ and transversality to $A$), and $x_1=0$ is the equation of $S$. Since, moreover, the diffeomorphism is required to preserve the function $p\equiv f_{\tilde\lambda}$, in the general case the equation must differ from \eqref{eque} by higher-order terms. The hyperplane $X\equiv X_{\theta'+\varepsilon}$ in our adapted coordinate system is given by the equation $z=\varepsilon+\xi x_1,$ where $\xi$ is relatively far from zero, and $\varepsilon$ is a small positive number.
	
	Consider the {\em reference example}: the germ of $A$ at the point $x_{\theta'}$ is given by equation \eqref{eque}. The contour $\text{\CYRSH}(X)$ is equal to the Leray tube $\tau(\text{\CYRSH}'(X))$ around the vanishing contour $\text{\CYRSH}'(X)$ on the boundary $S$, enclosed in $\mathbb R^N=\operatorname{Re}S$ between $A$ and $X$ (see Fig.~\ref{vol}).
	
	By Fubini's theorem, we have:
	$$\int_{\tau(\text{\CYRSH}')}\omega=\int_{|x_1|=\delta'}\left(\int_{\sum_{j=2}^{N}z_j^2\leqslant \varepsilon+\xi x_1-x_1^a}\left(\int_{x_1^a+\sum_{j=2}^{N}z_j^2}^{\varepsilon+\xi x_1}dz\right)dz_2\wedge\dots\wedge dz_N\right)\frac{dx_1}{x_1^{N+2}}.$$ 
	Let $R^2(x_1)=\varepsilon+\xi x_1-x_1^a$, $\rho^2(z)=\sum_{j=2}^{N}z_j^2$, and $dV=dz_2\wedge\dots\wedge dz_N$. Then the inner integral transforms into:
	$$\int_{B^{N-1}(R)}(R^2-\rho^2)dV=\int_0^R (R^2-\rho^2)(N-1)\operatorname{Vol}(B^{N-1}_1)\rho^{N-2}d\rho.$$ 
	Evaluating it, we obtain:
	\begin{equation}\label{eq}
		\int_{\tau(\text{\CYRSH}')}\omega=\frac{2}{N+1}\operatorname{Vol}(B^{N-1}_1)\int_{|x_1|=\delta'}(\varepsilon+\xi x_1-x_1^a)^{\frac{N+1}{2}}\frac{dx_1}{x_1^{N+2}}.
	\end{equation}
	
	\begin{enumerate}
		\item $N$ is odd. For $a=2$, the integral \eqref{eq} does not depend on the values of $\xi$ and $\varepsilon$ (and in general, on any small linear perturbations of the hyperplane $X$) and is equal to:
		$$(-1)^{\frac{N+1}{2}}\frac{4\pi i}{N+1}\operatorname{Vol}(B_1^{N-1}).$$
		
		The residue form $\operatorname{Res}\omega$ depends only on $\varepsilon$ and is equal to:
		$$(-1/\varepsilon)^{\frac{N+1}{2}}dz\wedge dz_2\wedge\dots\wedge dz_N.$$
		
		If $a=1$ or $a>N+1,$ then the integral \eqref{eq} is zero. It can be shown that for $2<a<N+2$, the integral \eqref{eq} is identically equal to zero only in exceptional cases.
		
		\item For even $N$, the integral \eqref{eq} is non-zero for all $a$, since the power series expansion of the function $(\varepsilon+\xi x_1-x_1^a)^{\frac{N+1}{2}}$ near $x_1=0$ contains monomials of all degrees with non-zero coefficients. Moreover, $\varepsilon$ enters the coefficient of $x_1^{N+1}$ only with negative half-integer exponents.
		
		Let us set $m=\frac{N+1}{2}$. For $a=2$, we have:
		$$(\varepsilon+\xi x_1-x_1^2)^{m}=\varepsilon^{m}\left(1+\frac{\xi x_1-x_1^2}{\varepsilon}\right)^{m}=\sum_{k=0}^{\infty}\binom{m}{k}\varepsilon^{m-k}(\xi x_1-x_1^2)^k.$$
		Now,
		$$(\xi x_1-x_1^2)^k=\sum_{j=0}^k\binom{k}{j}(\xi x_1)^{k-j}(-x_1^2)^j=\sum_{j=0}^k\binom{k}{j}(-1)^j\xi^{k-j}x_1^{k+j}.$$
		
		The coefficient $\alpha_{N+1}$ of $x_1^{N+1}$ is given by those pairs $(j,k)$ such that $k+j=N+1.$ Since $0\leqslant j \leqslant k,$ we obtain:
		$$\frac{N+1}{2}\leqslant k\leqslant N+1.$$
		
		The minimal such integer $k$ is equal to $\frac{N+2}{2}.$
		
		Then,
		$$\alpha_{N+1}(\varepsilon)=(-1)^{N+1-k}\sum_{k=\frac{N+2}{2}}^{N+1}\binom{m}{k}\varepsilon^{m-k}\binom{k}{N+1-k}\xi^{2k-(N+1)},$$
		and $\alpha_{N+1}(\varepsilon)$ can be written in the form:
		$$\sum_{s=0}^{\frac{N}{2}}\beta_s\varepsilon^{-\left(\frac{1+2s}{2}\right)}.$$
		
		It follows from this that $\left|\int_{\tau(\text{\CYRSH}'(\varepsilon))}\omega\right|\to\infty$ as $\varepsilon\to 0.$
		
	\end{enumerate}
	
	By the Picard--Lefschetz formula, the analytic continuation of the function $\int_{\text{\CYRSH}(X)}\omega$ along the loop $X_{c(t)},$ where $c(t)\equiv\{\theta'+\varepsilon e^{it}, t\in[0,2\pi]\}$, maps it to its negative for even $N$ and to itself for odd $N$, which is confirmed by our computation.
	
	In the {\em general case}, there exists a diffeomorphism $\mathcal{G}$ of the space $\mathbb C^{N+1}$ that reduces the equation of $A$ to the form \eqref{eque}, while preserving $S$ and the function $z$ (and therefore, the function $p|_{A\cap S}\equiv f_{\tilde\lambda}|_{\hat S}$). However, $\mathcal{G}$ may not preserve the function $p\equiv f_{\tilde\lambda}$, meaning that it may deform the hyperplane $X$ while preserving $X\cap S$. Let us denote by $\tilde X$ the image $\mathcal{G}(X).$ The desired integral is equal to the integral of the form $\omega$ multiplied by the function $\frac{J\mathcal{G}}{g^{N+2}}$ (where $J\mathcal{G}$ is the Jacobian of the diffeomorphism $\mathcal{G}$, and the function $g$ is defined by the condition $\mathcal{G}:x_1\mapsto x_1g(x_1, z_2, \dots, z_N)$) over the tube around the same cycle $\text{\CYRSH}'$, but now this tube realizes an element of the homology group $H_{N+1}(D\setminus S, A\cup\tilde X\setminus S).$ Since the diffeomorphism $\mathcal{G}$ is a small perturbation of the identity, the values of its Jacobian and of the function $g$ near the point $x_{\theta'}$ lie in a small neighborhood of the point $1\in\mathbb C.$ On the other hand, the integral \eqref{eq} analytically depends on the parameters of the diffeomorphism $\mathcal{G}$ (which acts only on the equation of $X$). Consequently, in the general case for $a=2$, the integral \eqref{eq} is also non-zero, and the lemma is proved. $\Box$
	
	\begin{example}
		\textbf{Monodromy of a smooth affine quadric}
	\end{example}
	Consider the surface $Q\subset\mathbb{C}^{N+1}$ given by the equation $x^2+y^2+z_1^2+\dots+z_{N-1}^2=1$. It is smooth, and its closure $\overline Q$ is transversal to $S$. The monodromy of its hyperplane sections was studied by D.~Yang in~\cite{Dao}, but bypasses around simple asymptotes were not considered there. Let us take a line $L\subset(\mathbb{CP}^{N+1})^*$ defined by a pencil of hyperplanes $\{x=c(y-2), c\in\mathbb C\}$ in $\mathbb{C}^{N+1}$. The set $\Sigma(Q)\subset(\mathbb{CP}^{N+1})^*$ consists of a smooth quadric $Q^*$ (projectively dual to $\overline Q$) and a cone $\hat Q$ over its hyperplane section with the vertex at $S$. The set $Q^*\cap\hat Q$ is the $B_2$ stratum (see Sec.~\ref{Dnk}). The line $L$ transversally intersects $\Sigma$ at the points $c=\pm1/\sqrt{3}$ and $\pm i$. Let us connect these points to $X:c=0$ by segments. By the Zariski theorem, the set of corresponding simple loops $\{\gamma_1, \mu_1, \gamma_2, \mu_2\}$ (in the counterclockwise order starting from the segment $[0, 1/\sqrt{3}]$) generates the fundamental group $\pi_1((\mathbb{CP}^{N+1})^*\setminus\Sigma)$. The homology group $H_{N+1}(\mathbb{C}^{N+1}, Q\cup X)$ is isomorphic to $\mathbb Z^2$, and as generators one can take the classes of domains $\Omega_1, \Omega_2\subset\mathbb{R}^{N+1}$ (equipped with the induced orientation) vanishing along the paths $\gamma_1, \gamma_2$, into which the hyperplane $X$ divides $\operatorname{Re}Q$. The cycle $\Omega=\Omega_1+\Omega_2$ generates the group $H_{N+1}(\mathbb{C}^{N+1}, Q)\simeq\mathbb Z$. 
	
	\begin{PREDLOZHENIE}
		The relation 
		\begin{equation}\label{try1}
			\gamma_1=\mu_1\gamma_2\mu_1^{-1}
		\end{equation}
		holds in $\pi_1((\mathbb{CP}^{N+1})^*\setminus \Sigma)$.
	\end{PREDLOZHENIE}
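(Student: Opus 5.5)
The plan is to compute the relation via the Zariski--van Kampen method, by moving the line $L$ in a loop and watching how the four intersection points move. First I will make the geometry explicit. In the dual space, the hyperplane $ax+by+\sum c_iz_i=d$ is tangent to $Q$ iff $a^2+b^2+\sum c_i^2=d^2$; this is $Q^*$. It is asymptotic (non-transversal to $\overline Q\cap S$) iff $a^2+b^2+\sum c_i^2=0$; this is the cone $\hat Q$.

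For the pencil $x-cy+2c=0$ these conditions become $1+c^2=4c^2$, giving $c=\pm1/\sqrt3$, and $1+c^2=0$, giving $c=\pm i$. This agrees with the statement. So $\gamma_1,\gamma_2$ are the loops around the tangency points $\pm1/\sqrt3$, and $\mu_1,\mu_2$ are the loops around the asymptote points $\pm i$. With the counterclockwise order they are $1/\sqrt3,\ i,\ -1/\sqrt3,\ -i$.

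Next I will deform the pencil inside the family $\{x=c(y-t)\}$, moving the base point parameter $t$ from $2$ to other values. The tangency points are $c^2=1/(t^2-1)$, while the asymptote points $c=\pm i$ stay fixed. As $t$ passes through values with $|t|<1$, the points $c=\pm1/\sqrt{t^2-1}$ move onto the imaginary axis. Near $t=\pm\sqrt2$ they cross exactly $\pm i$; this is the non-generic line through the $B_2$ stratum, or rather through $Q^*\cap\hat Q$.

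To avoid that non-generic position, I will let $t$ run along a small complex loop around the critical value. Concretely, I will take $t$ around a loop in the $t$-plane encircling the value where $1/(t^2-1)=-1$, that is, $t^2=0$. Then I will track the braid traced by the four points. The tangency pair $\pm1/\sqrt{t^2-1}$ undergoes a half-twist exchange, passing around the fixed points $\pm i$. By the same reasoning as in Example~\ref{po}, near the $B_2$ stratum the tangency order is $1$, so locally a tangency point winds once around an asymptote point.

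Then I will read off the braid monodromy action on the free generators $\gamma_1,\mu_1,\gamma_2,\mu_2$. The exchange of the tangency pair, which carries $1/\sqrt3$ to $-1/\sqrt3$ passing near $i$ on one side, acts as $\gamma_1\mapsto\mu_1\gamma_2\mu_1^{-1}$. The Zariski relation ``generator $=$ its image'' then gives \eqref{try1}.

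An alternative, cleaner check is to use global symmetry. The rotation $(x,y)\mapsto(-x,-y)$ fixes $Q$ but changes the pencil. Instead I will use the complex conjugation and reflection $y\mapsto y$, $x\mapsto -x$, which maps the pencil to itself via $c\mapsto -c$. This exhibits the generators' symmetry but not the conjugation, so it only checks consistency.

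The main obstacle is the precise bookkeeping of which side each moving point passes the fixed points $\pm i$, since that determines whether the conjugator is $\mu_1$ or $\mu_2^{-1}$ and so on. I would settle this by an explicit parametrization, for instance taking $t=\sqrt{2}\,e^{is}$ or a path to $t=0$ and back, and drawing the trajectories of $\pm1/\sqrt{t^2-1}$ relative to the segments used for the distinguished paths.

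As a cross-check, I will verify the relation homologically. Acting on $\Omega_1,\Omega_2$ and the tube classes, the relation must be compatible with the Picard--Lefschetz formula \eqref{PL}. Since $\mu_1$ maps the cycle vanishing along $\gamma_2$ to the cycle vanishing along the conjugated path, the operators must satisfy $\gamma_1=\mu_1\gamma_2\mu_1^{-1}$ in the monodromy representation on $H_{N+1}(\mathbb C^{N+1}\setminus S,Q\cup X\setminus S)$.
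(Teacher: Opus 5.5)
\textbf{There is a gap: the loop you choose cannot produce \eqref{try1}.} Your framework, Zariski--van Kampen in the family of pencils $x=c(y-t)$ with fixed base point $c=0$, is the same as the paper's. Your location of the four points is also correct.

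The problem is which non-generic member you encircle. Suppose a loop in the family returns each of the four points to itself, i.e.\ gives a pure braid. Then every relation it produces has the form $g=wgw^{-1}$, which is trivial in $H_1$. But \eqref{try1} abelianizes to $\gamma_1=\gamma_2$, which fails in $H_1(L\setminus\Sigma)\simeq\mathbb Z^3$. So you need a loop that \emph{swaps} the two tangency points.

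A small loop around $t=0$ does not swap them. There $c^2=1/(t^2-1)=-(1+t^2+\cdots)$, so $c=\pm i(1+t^2/2+\cdots)$. Each tangency point stays next to one asymptote point and winds \emph{twice} around it. These are the two turns of Example~\ref{po}, which give relations of type $(\gamma\mu)^2=(\mu\gamma)^2$, with no exchange. Your fallback $t=\sqrt2e^{is}$ fails too: $t^2-1$ winds twice around $0$, so $c=\pm(t^2-1)^{-1/2}$ returns to itself.

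Your homological cross-check does not help. It is circular, since the paper uses \eqref{try1} to compute the action of $\mu_1$. In any case, checking an identity in a monodromy representation cannot establish a relation in $\pi_1$.

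\textbf{The missing idea is that the exchange happens at $t=\pm1$.} This is the value you would have to detour around anyway to get from $t=2$ to near $t=0$. There both roots of $c^2(t^2-1)=1$ run off to $c=\infty$, because the hyperplane $y=\pm1$ is tangent to $Q$.

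This is what the paper does: it pushes $L$ along real $s$ from $2$ to $1+\delta$ and bypasses $s=1$ there. Read the printed $e^{it}(1+\delta)$ as $1+\delta e^{it}$, as in the arc $1+\delta e^{i\varphi}$ used for \eqref{try}. A circle $|s|=1+\delta$ would enclose $0,\pm1$ and be pure, for the same reason as your $t=\sqrt2e^{is}$.

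Concretely, take $s=1+\delta e^{-i\theta}$, $\theta\in[0,2\pi]$. Then $c\approx\pm(2\delta)^{-1/2}e^{i\theta/2}$. The point coming from $1/\sqrt3$ travels through the upper half-plane at radius $\approx(2\delta)^{-1/2}>1$, outside $i$, to the former position of $-1/\sqrt3$, and vice versa. Meanwhile $c=0$ (the hyperplane $x=0$) and $c=\infty$ (the hyperplane $y=s$) stay off $\Sigma$.

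Dragging the segment $[0,1/\sqrt3]$ along turns it into $\mu_1$ followed by $[0,-1/\sqrt3]$. With loops composed left to right this gives $\gamma_1\mapsto\mu_1\gamma_2\mu_1^{-1}$, so the Zariski relation $\gamma_1=\beta(\gamma_1)$ is exactly \eqref{try1}. The opposite orientation gives $\gamma_1=\mu_2^{-1}\gamma_2\mu_2$. This is the same relation modulo $\gamma_1\mu_1\gamma_2\mu_2=1$ in $\pi_1(L\setminus\Sigma)$, where $L\simeq\mathbb{CP}^1$.
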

	
	\textbf{Proof.} Consider the loop $\omega(t)\equiv\{x=c(y-e^{it}(1+\delta)), t\in[0,2\pi], \delta>0\}\subset(\mathbb{CP}^{N+1})^*\setminus \Sigma$. Let us transport the line $L$ along the path $\{x=c(y-s), s\in[1+\delta, 2]\}$ to the line $\omega(0)$, make a bypass along $\omega$, and return back. Under this operation, $\gamma_1$ transforms into $\mu_1\gamma_2\mu_1^{-1},$ which implies the required result. $\Box$
	
	In~\cite{Dao}, the set $\{\gamma_1, \gamma_2, \kappa\}$ was considered as a set of generators of the group $\pi_1((\mathbb{CP}^{N+1})^*\setminus \Sigma)$. The loop $\kappa$ is determined by the family of hyperplanes $\{x\cos t+y\sin t=0, t\in[0,\pi]\}$. The operator $\kappa$ obviously permutes $\Omega_1$ and $\Omega_2$.
	
	\begin{PREDLOZHENIE}
		The relation 
		\begin{equation}\label{try}
			\kappa=\gamma_1\mu_1
		\end{equation}
		holds in $\pi_1((\mathbb{CP}^{N+1})^*\setminus \Sigma)$.
	\end{PREDLOZHENIE}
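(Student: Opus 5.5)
The relation $\kappa=\gamma_1\mu_1$ will be proved by the same method as \eqref{try1}: a homotopy in $(\mathbb{CP}^{N+1})^*\setminus\Sigma$. The loop $\kappa$ is a half-turn of the line through the origin, $\{x\cos t+y\sin t=0\}$, $t\in[0,\pi]$. It starts at the hyperplane $X:x=0$ (the point $c=0$ on $L$) and ends at the same hyperplane. I will deform $\kappa$, with its base point fixed at $X$, into a loop lying entirely in the line $L$. Then I will read off the result in terms of the generators $\gamma_1,\mu_1,\gamma_2,\mu_2$.

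The first step is to embed $\kappa$ in the two-parameter family of hyperplanes $\{x\cos t+y\sin t=s\sin t\}$. For $s=2$, as $t$ runs over $[0,\pi]$, these hyperplanes all pass through the line $\{x=0,\ y=2\}$, so they trace a path in the pencil $L$: writing $x=c(y-2)$ gives $c=-\tan t$. Thus for $s=2$ the family is a loop in $L$ through $c=0$ that goes through $c=\infty$. For $s=0$ the family is exactly $\kappa$.

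The second step is to follow the points of $\Sigma$ met as $s$ moves from $0$ to $2$. The hyperplane $x\cos t+y\sin t=s\sin t$ is tangent to $Q$ when $|s\sin t|=1$, so it meets $Q^*$. It passes through a point at infinity tangent to $\overline Q\cap S$ when its direction $(\cos t,\sin t,0,\dots)$ is isotropic, so it meets $\hat Q$. For real $t\ne0,\pi$ the second condition never holds. Hence the only obstruction comes from tangency to $Q$, which occurs for real $s$ only when $|s|\geqslant1$. I will therefore deform $s$ off the real axis, taking $s=2\rho e^{i\varphi}$ with $\varphi$ small, so that the interpolating loops avoid $\Sigma$. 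The third step is to track which points of $L\cap\Sigma$, namely $\pm1/\sqrt3$ and $\pm i$, are encircled by the final loop in $L$, and in what order. The path $c=-\tan t$ passes through $\infty\in L$. I will close it through the upper or lower half-plane according to the sign of the imaginary perturbation. This shows that the loop goes around $1/\sqrt3$ and $i$ in the counterclockwise order, which in terms of the chosen segments is $\gamma_1\mu_1$.

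I expect the main obstacle to be the exact bookkeeping of this last step. The half-turn $\kappa$ is not a loop in a single line. One must check that the deformation avoids $\Sigma$ near the base point and near the $B_2$ stratum $Q^*\cap\hat Q$, and one must correctly identify the homotopy class of a loop in $L$ passing near $c=\infty$. I would first verify the answer with a consistency check from homology. The operator $\kappa$ permutes $\Omega_1$ and $\Omega_2$. The product $\gamma_1\mu_1$ must also do so: $\mu_1$ transforms $\Omega_1,\Omega_2$ by a tube term, which $\gamma_1$ then compensates, as in Example~\ref{po} with matrix entries of the generalized intersection form. Sign or order discrepancies would then be fixed by choosing the side of the imaginary perturbation.
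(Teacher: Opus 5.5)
\textbf{Verdict: genuine gap in your step 3.} Your first two steps are correct. They are essentially the paper's construction run backwards, with an explicit loop.
\begin{itemize}
\item The paper moves the axis $\{x=0,\ y=s\}$ of the pencil $L_s=\{x=c(y-s)\}$ from $s=2$ to $s=0$, passing $s=1$ along an upper semicircle, and lifts the homotopy of $\gamma_1\mu_1$.
\item You instead carry the loop $c=-\tan t$ of $L_s$. The criterion $\{x\cos t+y\sin t=s\sin t\}\in\Sigma\iff s\sin t=\pm1$ makes the avoidance of $\Sigma$ immediate whenever $\operatorname{Im}s\neq0$.
\end{itemize}
All the content lies in step 3, and as you plan it, it does not produce $\gamma_1\mu_1$.

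\emph{What the sign actually controls.} The passage through $c=\infty$ is irrelevant: $\infty\notin\Sigma\cap L$ and all four points have $|c|\leqslant1$, so both closures are homotopic. The sign of $\operatorname{Im}s$ decides on which side the loop passes $\pm1/\sqrt3$, which lie on it at $s=2$. For $s=2e^{i\varphi}$ with $\varphi>0$, the point $(s^2-1)^{-1/2}$ continuing $1/\sqrt3$ sits slightly below the real axis, and its negative slightly above.

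\emph{The orientation of $\kappa$.} Since $c=-\tan t$ starts with $c<0$, $\kappa$ runs along the real axis in the decreasing direction. It is therefore the counterclockwise boundary of the lower half-plane. Carry it to $L$, using counterclockwise simple loops and products read left to right (the convention of the proof of the next lemma). You get:
\begin{itemize}
\item $\kappa=\mu_2\gamma_1$ for $\operatorname{Im}s>0$;
\item $\kappa=\gamma_2\mu_2=(\gamma_1\mu_1)^{-1}$ for $\operatorname{Im}s<0$, using $\gamma_1\mu_1\gamma_2\mu_2=1$ in $L\cong\mathbb{CP}^1$.
\end{itemize}
So the loop is never counterclockwise around $\{1/\sqrt3,i\}$.

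\emph{Why switching sides cannot help.} Both homotopies are legitimate, so the two answers agree in $\pi_1$; given the cyclic relation, their agreement is equivalent to \eqref{try1}. Both differ from $\gamma_1\mu_1$ by an inversion. Your homological check cannot detect this, because $\kappa$ and $\kappa^{-1}$ both swap $\Omega_1$ and $\Omega_2$.

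\textbf{The missing ingredient is $\kappa^2=1$.} Write hyperplanes as $a_xx+a_yy+\sum_ja_jz_j=b$. Then $Q^*=\{a\cdot a=b^2\}$ and $\hat Q=\{a\cdot a=0\}$. The loop $\kappa$ lies in the $\mathbb{CP}^N=\{b=0\}$ of hyperplanes through the origin, which $\Sigma$ meets in the smooth quadric $a\cdot a=0$.
\begin{itemize}
\item The complement of this quadric is doubly covered by the affine quadric $\{a\cdot a=1\}$, which is simply connected for $N\geqslant2$.
\item $\kappa^2$ lifts to the closed loop $(\cos t,\sin t,0,\dots)$, $t\in[0,2\pi]$, so $\kappa^2=1$.
\item From the $\operatorname{Im}s<0$ computation, $\gamma_1\mu_1=\kappa^{-1}=\kappa$.
\end{itemize}

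This step is not cosmetic. For $N=1$, the function $(a_x+ia_y)/(a_x-ia_y)$ is a unit on the complement of $\Sigma$. It has degree $+1$ on $\kappa$ and degree $-1$ on $\gamma_1\mu_1$, so for $N=1$ the relation holds only with $\kappa$ reversed.

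The paper's upper-semicircle transport is homotopic to your $\operatorname{Im}s>0$ choice. It literally carries $\mu_2\gamma_1$ onto $\kappa$, so its one-line proof implicitly needs the same final step.
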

	
	\textbf{Proof.} Transporting the line $L$ to the line $L':\{x=cy, c\in\mathbb{C}\}$ along the path consisting of three parts: 1)~$\{x=c(y-s), s\in[1+\delta, 2]\}$, 2)~$\{x=c(y-(1+\delta e^{i\varphi})), \varphi\in[0,\pi]\}$, 3)~$\{x=c(y-s), s\in[0, 1-\delta]\}$, and lifting the homotopy to the homotopy of the loop $\gamma_1\mu_1$, we obtain the required result. $\Box$
	
	Let us denote by $\text{\CYRSH}'$ the class of the group $H_{N}(S, \overline Q\cup \overline X)$ represented by the cycle vanishing along the path $\mu_1$.
	
	\begin{figure}[H]
		\center{\includegraphics[width=1\linewidth]{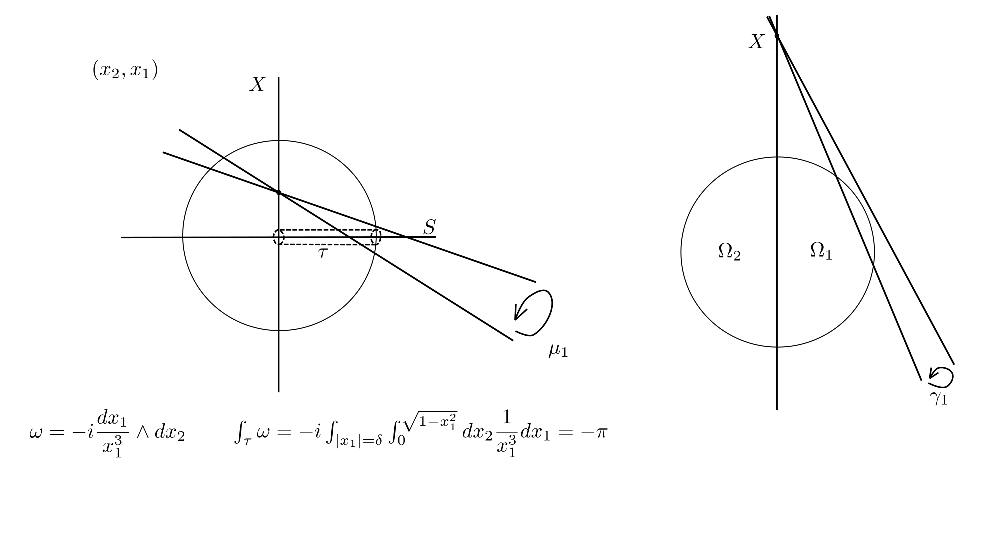}}
		\caption{Example: $N=1$}
		\label{Q}	
	\end{figure}  
	
	\begin{lemm} Let $N+1$ be even. Then:
		$$\mu_1(\Omega_1)=\Omega_1-\Omega, \quad \mu_1(\Omega_2)=\Omega_2+\Omega.$$
		
		Let $N+1$ be odd. Then:
		$$\mu_1(\Omega_1)=\Omega_2, \quad \mu_1(\Omega_2)=\Omega_1.$$
	\end{lemm}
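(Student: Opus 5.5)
The plan is to compute the variation of $\Omega_1$ and $\Omega_2$ along the simple loop $\mu_1$ with the Picard--Lefschetz formula for a tube, in the form of diagram \eqref{cd0.5} and Remark~\ref{rem}. The loop $\mu_1$ goes around the critical value $c=i$ of the restriction $p|_{\overline Q\cap S}$. The cycle vanishing along $\mu_1$ in $H_{N+1}(\mathbb{C}^{N+1},Q\cup X)$ is the Leray tube $\tau(\text{\CYRSH}')$ around $\text{\CYRSH}'\in H_N(S,\overline Q\cup\overline X)$. The variation will therefore have the form
$$\operatorname{Var}_{\mu_1}(\Omega_j)=\pm(-1)^{\frac{N(N+1)}{2}}\,l(\Omega_j,\text{\CYRSH}')\,\tau(\text{\CYRSH}').$$
Here $l(\Omega_j,\text{\CYRSH}')$ is the localized pairing of Definition~\ref{chislol}: the intersection number, near the tangency point at infinity, of the localization of $\Omega_j$ with the half-cycle $\overline{\text{\CYRSH}'}$. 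The work then splits into two parts: identify $\tau(\text{\CYRSH}')$ in the basis $\{\Omega_1,\Omega_2\}$, and compute the two coefficients $l(\Omega_j,\text{\CYRSH}')$.

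For the first part I use the isomorphism \eqref{-1}, which reduces the question to $H_{N-1}(Q\cap X\setminus S)$, and the exact sequence \eqref{Lere}. The tube $\tau(\text{\CYRSH}')$ lies in the kernel of the map to the compact homology $H_{N+1}(\mathbb{C}^{N+1},Q\cup X)\to H_{N+1}(\mathbb{C}^{N+1},Q)$, up to the part coming from $\Omega$. Concretely, $\tau(\text{\CYRSH}')$ is determined by its intersection numbers with the dual Lefschetz thimbles. Pairing with these thimbles, and using the reality of the picture (complex conjugation swaps the critical values $\pm i$ and fixes $\operatorname{Re}Q$), I expect $\tau(\text{\CYRSH}')=\pm\Omega$ when $N$ is even. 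The form $\omega$ gives a useful check: by Lemma~\ref{Vol1}, its integral over a tube does not vanish identically, so $\tau(\text{\CYRSH}')\neq0$.

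For the second part I localize at the point of $\overline Q\cap S$ where the hyperplane of the pencil becomes tangent. After a real-to-complex change of coordinates this is the $B_2$ model of Example~\ref{po} and Fig.~\ref{u}. Near infinity the real domains $\Omega_1$ and $\Omega_2$ approach $S$ from opposite sides of $X$, so their localizations are the tube generator taken with opposite signs. This gives $l(\Omega_1,\text{\CYRSH}')=-l(\Omega_2,\text{\CYRSH}')=\pm1$. In the even case ($N+1$ even) this yields $\mu_1(\Omega_1)=\Omega_1-\Omega$ and $\mu_1(\Omega_2)=\Omega_2+\Omega$, with the overall sign fixed by the orientation convention $\partial\overline{\text{\CYRSH}'}=\text{\CYRSH}'$.

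For $N+1$ odd, the tube class should instead be $\tau(\text{\CYRSH}')=\pm(\Omega_1-\Omega_2)$. This is consistent with the tube terms of the matrix in Example~\ref{po} having diagonal entry $\pm2$ exactly when $N$ is even; in the other parity the diagonal entry of the tube vanishes, and the transvection formula \eqref{PL} would then give an infinite-order operator. So I will instead use the group relations \eqref{try1} and \eqref{try}. The operator $\kappa$ swaps $\Omega_1$ and $\Omega_2$. The ordinary Picard--Lefschetz formula gives $\gamma_1(\Omega_1)=-\Omega_1$ in this parity and $\gamma_1(\Omega_2)=\Omega_2$ (the caps are disjoint and have half-integer expansions by Lemma~\ref{Vol1}); I must also allow $\gamma_1$ to add multiples of $\Omega$, as computed from intersections. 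From $\mu_1=\gamma_1^{-1}\kappa$ I would then read off $\mu_1(\Omega_1)=\Omega_2$ and $\mu_1(\Omega_2)=\Omega_1$ once the $\gamma_1$ action is pinned down.

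The main obstacle is sign and parity bookkeeping. That includes the exponent $(-1)^{N(N+1)/2}$, the factor $(-1)^N$ from the tube pairing (Proposition~\ref{bilin}), and the sign of $\gamma_1$ on the cap in each parity. I would check every sign against the case $N=1$ in Fig.~\ref{Q}, where all of them can be verified by hand.
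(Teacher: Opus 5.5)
Your proposal has genuine gaps in both parity cases.

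\textbf{Case $N+1$ even.} Here your direct computation of $\operatorname{Var}_{\mu_1}$ is never carried out, and the route you sketch cannot deliver it.

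First, the reduction \eqref{-1} is not injective in this global situation. The kernel of $H_N(Q,Q\cap X)\to H_{N-1}(Q\cap X)$ is the image of $H_N(Q)\cong\mathbb Z$, which is spanned by $\Omega$. So the identity $\tau(\text{\CYRSH}')=\pm\Omega$ you want is invisible after reduction, and no pairing computed in $H_{N-1}(Q\cap X\setminus S)$ can establish it. You also state this identity ``when $N$ is even'', which is the wrong parity.

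Second, your localization picture is false. $\operatorname{Re}Q$ is the unit sphere, so $\Omega_1$ and $\Omega_2$ are compact half-balls that never approach $S$. The tangencies at infinity occur at non-real points, over the non-real values $c=\pm i$. Hence $l(\Omega_j,\text{\CYRSH}')$ concerns cycles transported along a complex path, and its value $\pm1$ does not follow from anything you say. The relation $l(\Omega_1,\text{\CYRSH}')=-l(\Omega_2,\text{\CYRSH}')$ does hold, but for a different reason: $\Omega$ comes from $H_{N+1}(\mathbb C^{N+1},Q)$ and is therefore fixed by all monodromy.

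Third, the overall sign is not a matter of orientation convention. The product $l(\Omega_1,\text{\CYRSH}')\,\tau(\text{\CYRSH}')$ does not depend on the orientation of $\text{\CYRSH}'$, so its sign must actually be computed.

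The missing idea is to avoid computing the tube at all. Write $\mu_1(\Omega_1)=\Omega_1+x$ and $\mu_1(\Omega_2)=\Omega_2-x$, where $x$ is a multiple of the tube. Then $x$ is fixed by $\gamma_2$ (by \eqref{yy}) and by $\mu_1$ (the tube has zero self-pairing for odd $N$). Now evaluate the relation \eqref{try1} on $\Omega_1$, using $\gamma_1(\Omega_1)=-\Omega_1$ and $\gamma_2(\Omega_1)=\Omega_1+2\Omega_2$. This gives $-\Omega_1=\Omega_1+2\Omega_2+2x$, hence $x=-\Omega$, sign included. The case $N=1$ must be done by hand, because there the cycle vanishing along $\mu_1$ sits in ``$H_{-1}$'' and the tube formula is unavailable.

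\textbf{Case $N+1$ odd.} You use the right relation \eqref{try}, but with the wrong cap monodromy. For even $N$ the intersection form on $H_{N-1}(Q\cap X)$ is skew-symmetric, so $\langle\Delta,\Delta\rangle=0$. Consistently, Lemma~\ref{Vol1} gives integer, not half-integer, exponents in this parity. Hence $\gamma_1$ and $\gamma_2$ act trivially on $H_{N+1}(\mathbb C^{N+1},Q\cup X)$, and \eqref{try} gives $\mu_1=\kappa$ at once.

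With your $\gamma_1(\Omega_1)=-\Omega_1$, $\gamma_1(\Omega_2)=\Omega_2$, the operator $\mu_1$ would send one of $\Omega_1,\Omega_2$ to minus the other, whichever order of composition you take. This contradicts the statement. Allowing $\gamma_1$ to add multiples of $\Omega$ cannot repair it: $\gamma_1(\Omega)=\Omega$, and $-\Omega_1+k\Omega\neq\Omega_1$ for every $k$.

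Disjointness of the real caps is also not the relevant criterion. $\Omega_1$ and $\Omega_2$ reduce to the same sphere $\operatorname{Re}Q\cap X$ with opposite signs. So in the parity where cap monodromy is nontrivial ($N$ odd), one has $\gamma_1(\Omega_2)=\Omega_2+2\Omega_1$, not $\Omega_2$.
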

	
	\textbf{Proof.} Let $N+1$ be even and $N>1$. By the generalized Picard--Lefschetz formula, we have $\mu_1(\Omega_1)=\Omega_1+x$ and $\mu_1(\Omega_2)=\Omega_2-x,$ where:
	$$x=(-1)^{\frac{N(N+1)}{2}} l(\Omega_1, \text{\CYRSH}')\tau(\text{\CYRSH}'),$$
	and the coefficient $l(\Omega_1, \text{\CYRSH}')$ is determined via localization (see Sec.~\ref{PLF}) and reduction (see~\eqref{1}). Let us act by the loop \eqref{try1} on $\Omega_1.$ On the one hand, we obtain $-\Omega_1.$ On the other hand, we have $\Omega_1 \mapsto \Omega_1+x \mapsto \Omega_1+2\Omega_2+x \mapsto \Omega_1+2\Omega_2+x-x+2x=\Omega_1+2\Omega_2+2x.$ From this, it follows that $x=-\Omega_1-\Omega_2=-\Omega.$ The action of the loop \eqref{try} yields the same answer even more simply.
	
	Now let $N=1$. The cycle of the ``group $H_{-1}(\overline{Q}\cap S\cap\overline{X})$'' vanishing along $\mu_1$ is equal to zero (the value $l(\Omega_1, \text{\CYRSH}')$ is not defined) and $\operatorname{Var}_{\mu_1}(\Delta)=0,$ where $\Delta$ is a generator of the group $\tilde H_0(Q\cap X)$. Let us denote by $\Omega_i'$ the image of the cycle $\Omega_i$ under the isomorphism $H_2(\mathbb{C}^2, Q\cup X)\rightarrow H_1(Q, Q\cap X).$ The variation of the cycle $\Omega_1'$ is computed directly: it is equal to a tube around one of the points $\overline{Q}\cap S$ and is homologous to the generator $-\Omega'$ of the group $H_1(Q).$ Hence, $\operatorname{Var}_{\mu_1}(\Omega_1)=-\Omega$ as well.
	
	If $N+1$ is odd, the answer follows immediately from \eqref{try}, since in this case the action of $\gamma_1$ and $\gamma_2$ is trivial (the monodromy group is isomorphic to $\mathbb{Z}_2$). $\Box$
	
	It follows from the proof that for odd $N$, the tube $\tau(\text{\CYRSH}')$ is homologous to the cycle $\pm\Omega$ in the group $H_{N+1}(\mathbb C^{N+1}, Q\cup X)$; hence, the integral of the volume form over it does not depend on $X$ (which can also be easily verified by direct computation, cf.~\eqref{eq}). In the case of even $N$, the cycle $\tau(\text{\CYRSH}')$ is homologous to $\pm(\Omega_1-\Omega_2)$, and the integral over it is zero. However, this value is unstable under a small perturbation of $X$ (this is analogous to the fact that the integral \eqref{eq} vanishes only for $\xi=0$).
	
	\subsection{The main theorem}\label{osn}
	Let $F$ be an arbitrary deformation of the form \eqref{obsch} of an isolated real boundary singularity $f$, and let $\Theta$ be a sufficiently small neighborhood of zero in the parameter space.
	
	\begin{definition}
		The ramification of the analytic continuation of the function $V_i$ (see \eqref{2}) is called \textbf{locally finite-sheeted} if the integral of the form $\omega$ over all possible variations of the cycle $\Omega_i$ under the action of the group $\pi_1(B\setminus\Sigma)$ takes only a finite number of values. 
	\end{definition} 
	
	\begin{definition}\label{quasi}
		A component $\Gamma$ of the complement in $\Theta\cap\mathbb{R}^l$ to $\Sigma_{\mathbb{R}}(F)$ is called a (strong) \textbf{quasilacuna} if, for some (and therefore, for any) $\tilde{\lambda}\in\Gamma$, the ramification of the analytic continuation of each function $V_{i}$ is locally finite-sheeted.  
	\end{definition}
	
	\begin{theorem}\label{o} 
		a) Let $N$ be odd and the germ $f$ satisfy Proposition~\ref{parab}. Then each function $V_i$ is infinitely ramified in $B\subset(\mathbb{CP}^{N+1})^*$. 
		b) Let $N$ be even. If the class of the cycle $\Pi_i$ is non-zero in the homology group $H_{N-1}(A\cap X, \partial D)$, then the function $V_i$ is infinitely ramified in $B\subset(\mathbb{CP}^{N+1})^*$. 
	\end{theorem}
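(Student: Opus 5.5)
The plan is to use the fact that analytic continuation of $V_i$ along a loop $\gamma\in\pi_1(B\setminus\Sigma)$ equals the integral of $\omega$ over $\gamma(\Omega_i)$. By Remark~\ref{rem} and diagram \eqref{cd0.5}, $\gamma(\Omega_i)=\Omega_i+\operatorname{Var}_\gamma(\Omega_i)$, and the variation lies in the image of $H_{N+1}(D\setminus S,A\cup X\setminus S)$. Its basis is the set of vanishing caps and tubes $\{\text{\CYRSH}_j\}$ of Proposition~\ref{Izol}. Through the isomorphisms \eqref{1}, \eqref{-1} this basis corresponds to the distinguished basis $\{\alpha_j\}$, on which the monodromy acts by the generalized Picard--Lefschetz formula \eqref{PL}. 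It therefore suffices to find a loop $g$ for which $g^n(\Omega_i)=\Omega_i+n\,c+(\text{bounded})$ with $\int_c\omega\not\equiv0$, or more generally an element whose orbit is infinite, and to show that the integrals along that orbit take infinitely many values. Lemma~\ref{Vol1} supplies the non-vanishing of $\int\omega$ over each basic vanishing contour, under the non-parabolicity assumption in the odd case, which holds by the Remark after Lemma~\ref{Vol1} when Proposition~\ref{parab} holds. The analytic behaviour of these integrals is computed in its proof: integer exponents for caps when $N$ is even, half-integer exponents when $N$ is odd, and for tubes with $N$ even the integral is unbounded as $\varepsilon\to0$.

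For part a), with $N$ odd, I would use a tube $\tau(\Delta'_t)$. For $N$ odd we have $[\tau(\Delta_t'),\tau(\Delta_t')]=-\langle\Delta'_t,\Delta'_t\rangle=0$, since $N-2$ is odd and the form on $H_{N-2}(A\cap X\cap S)$ is skew. Hence $\gamma_t$ acts as a transvection, $\gamma_t(\nabla)=\nabla\pm l(\nabla,\Delta'_t)\tau(\Delta'_t)$, and $\gamma_t^n(\Omega_i)=\Omega_i\pm n\,l(\Pi_i,\Delta'_t)\tau(\text{\CYRSH}'_t)$. By case 1 of the proof of Lemma~\ref{Vol1}, with $a=2$, the integral over $\tau(\text{\CYRSH}'_t)$ is a nonzero constant; this is the non-parabolic situation, and the general case is obtained by the perturbation argument given there. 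The values $V_i+n\cdot\mathrm{const}$ are therefore pairwise distinct, provided some $l(\Pi_i,\Delta'_t)\neq0$. The main obstacle is to produce such a $t$, possibly after first moving $\Omega_i$ by other monodromies. Here I would argue as follows. $\Pi_i\neq0$ in $H_{N-1}(A\cap X\setminus S,\partial D\setminus S)$, since it is the image of $\Omega_i\neq0$. If all coefficients $l(\gamma\Pi_i,\Delta'_t)$ vanished along the whole orbit, the orbit would lie in a proper invariant sublattice. Lemma~\ref{Lyas} (some $l(\Delta_s,\Delta'_t)\neq0$), combined with the connectedness of the ordinary Dynkin diagram and the $B_2$-point argument from its proof, should contradict this: by applying caps and then a tube, one forces a nonzero tube component in the variation. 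Concretely, I would apply some $\gamma_s$ with $l(\Delta_s,\Delta'_t)\neq0$ to reach an element $\gamma_s\Omega_i$ that pairs nontrivially with $\overline{\Delta'_t}$, and then iterate $\gamma_t$.

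For part b), with $N$ even, I would instead use the caps. The class of $\Pi_i$ in $H_{N-1}(A\cap X,\partial D)$ is nonzero, so by Poincaré duality $\langle\Pi_i,\Delta_s\rangle\neq0$ for some vanishing cycle $\Delta_s$. For $N$ even the form $\langle\cdot,\cdot\rangle$ on $H_{N-1}(A\cap X)$ is skew, so $\gamma_s$ acts as a transvection of infinite order, $\gamma_s^n(\Omega_i)=\Omega_i\pm n\langle\Pi_i,\Delta_s\rangle\text{\CYRSH}_s$. Case 1 of Lemma~\ref{Vol1} shows that the cap integral is a nonzero function with nonzero leading term $b_0\varepsilon^{N/2+1}$, so the values $V_i\pm n\,c\,\int_{\text{\CYRSH}_s}\omega$ are pairwise distinct as germs. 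This gives infinite ramification.
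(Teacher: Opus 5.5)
Your part b) is the paper's argument (the proof of Lemma~\ref{VV}: Poincar\'e duality, a transvection $\gamma_s$, Lemma~\ref{Vol1}). Your reduction in part a) is also the paper's. Since $\langle\Delta_t',\Delta_t'\rangle=0$ for odd $N$, iterating $\gamma_t$ on any orbit element $g\Omega_i$ with $[g\Pi_i,\tau(\Delta_t')]\neq0$ gives infinitely many values.

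\textbf{The gap.} It lies in producing such an element. This is the heart of part a), and you leave it at ``should contradict this''. The concrete move you offer does not work in general. The operator $\gamma_s$ changes $\Pi_i$ by $\pm[\Pi_i,\Delta_s]\,i_*\Delta_s$. So for an $s$ supplied by Lemma~\ref{Lyas} (one with $l(\Delta_s,\Delta_t')\neq0$), it does nothing whenever $[\Pi_i,\Delta_s]=0$. Lemma~\ref{Lyas} gives no link between the vanishing cycles that $\Pi_i$ meets and those that meet the half-cycles $\overline{\Delta_t'}$.

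\textbf{What is missing: a case split, then propagation along the Dynkin diagram.}

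\emph{First case.} Suppose $\Pi_i$ maps to zero in $H_{N-1}(A\cap X,\partial D)$. This is automatic when $df(0)\neq0$, where Lemma~\ref{Lyas} is not available anyway. Since $\Pi_i\neq0$ is detected by its pairings with the basis $\{\Delta_s,\overline{\Delta_t'}\}$ of $H_{N-1}(A\cap X,S)$, and all pairings with the $\Delta_s$ vanish, some $[\Pi_i,\tau(\Delta_t')]$ is nonzero. You then iterate $\gamma_t$ directly.

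\emph{Second case.} Otherwise $J=\{s:[\Pi_i,\Delta_s]\neq0\}$ is nonempty, and $\mho=\{s:\ l(\Delta_s,\Delta_t')\neq0\text{ for some }t\}$ is nonempty by Lemma~\ref{Lyas}. Assume all $[\Pi_i,\tau(\Delta_t')]=0$, since otherwise you are already done.
\begin{itemize}
\item The paper takes a shortest path $\Delta_r\Delta_{s_1}\dots\Delta_{s_m}\Delta_s$ in the support of the Dynkin diagram from $J$ to $\mho$.
\item It applies $\gamma_s\circ\gamma_{s_m}\circ\dots\circ\gamma_r$ to $\Pi_i$. By minimality, each operator adds the next cycle of the chain with a nonzero coefficient.
\item Among the added cycles only $\Delta_s$ meets $\overline{\Delta_t'}$, so the result has a nonzero tube coefficient, and $\gamma_t$ can then be iterated.
\end{itemize}

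\textbf{Alternative: your invariant-sublattice idea.} It can be made equally rigorous. Let $W$ be the subgroup of all $x$ with $[gx,\tau(\Delta_t')]=0$ for every monodromy element $g$ and every $t$. It is monodromy-invariant and saturated. Suppose $\Pi_i\in W$.
\begin{itemize}
\item For $s\in J$, the difference $\gamma_s\Pi_i-\Pi_i$ puts $i_*\Delta_s$ into $W$.
\item The identities $\gamma_{s'}(i_*\Delta_s)-i_*\Delta_s=\pm\langle\Delta_s,\Delta_{s'}\rangle\, i_*\Delta_{s'}$ spread this over the connected diagram, so every $i_*\Delta_s$ lies in $W$.
\item Taking $g=\operatorname{id}$ then gives $l(\Delta_s,\Delta_t')=0$ for all $s,t$, contradicting Lemma~\ref{Lyas}.
\end{itemize}
In either version only the statement of Lemma~\ref{Lyas} is needed, not the $B_2$-point argument from its proof.
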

	
	\textbf{Proof.}
	Consider first the case of odd $N$.
	
	Let us choose a distinguished system of paths on the line $L_{\tilde{\lambda}}$ with the base point $X=X_{0}$ and define a basis $\{\Delta_{s}, \tau(\Delta_{t}')\}$ of the homology group $H_{N-1}(A\cap X\setminus S)$ (see Proposition~\ref{baza}).
	
	If the image of the cycle $\Pi_i$ under the epimorphism $H_{N-1}(A\cap X\setminus S, \partial D)\rightarrow H_{N-1}(A\cap X, \partial D)$ (see~\eqref{cd}) is zero (which is always satisfied in the case where $df(0)\neq0$, see also Example~\ref{newex}), then there exists an element $\tau(\Delta_t')$ for $\Pi_i$ such that $\operatorname{Var}_{\gamma_t}\Pi_i\neq 0$. Then the variation operator $\operatorname{Var}_{\gamma_t}^r$ corresponding to an $r$-fold bypass along the loop $\gamma_t$ acts on $\Pi_i$ according to the formula:
	$$\operatorname{Var}_{\gamma_t}^r(\Pi_i)=(-1)^{\frac{N(N+1)}{2}} r[\Pi_i, \tau(\Delta_t')]\tau(\Delta_t')$$ 
	(see Lemma~\ref{Picard} and Remark~\ref{rem}). By consecutively applying the loop $\gamma_t$ to $\Pi_i$, we obtain the infinite ramification of the volume function from \eqref{cd0.5}, Proposition~\ref{Izol}, and Lemma~\ref{Vol1}. 
	
	If $\Pi_i\neq0\in H_{N-1}(A\cap X, \partial D)$ (which is possible only in the case where $df(0)=0$), then it may happen that $[\Pi_i, \tau(\Delta_{t}')]=0$ holds for all $t$. Let $\{\Delta_{j}\}_{j=1}^{q}$ be the set of all $\Delta_s$ (after a renumbering) for which $[\Pi_i, \Delta_s]\neq0$.
	
	Let us denote by $\mho$ the set of those $\Delta_s$ for each of which there exists at least one element $\tau(\Delta_t')$ along with the condition $[\Delta_s, \tau(\Delta_t')]\neq0$. By virtue of Lemma~\ref{Lyas}, the set $\mho$ is non-empty.
	
	If $\{\Delta_{j}\}_{j=1}^{q}\cap\mho\neq\varnothing$, then let us consider any cycle $\Delta_s\in\{\Delta_{j}\}_{j=1}^{q}\cap\mho$. First, we act on $\Pi_i$ by the loop $\gamma_{s}$, and then we sequentially act on the result by the loop $\gamma_t$ (the very one from the definition of $\mho$). Again, by virtue of formulas \eqref{PL}, \eqref{cd0.5}, and Lemma~\ref{Vol1}, the volume function is infinitely ramified.
	
	On the other hand, if $\{\Delta_{j}\}_{j=1}^{q}\cap\mho=\varnothing$, then we consider the shortest path $\Delta_{r}\Delta_{s_{1}}\Delta_{s_{2}}\dots\Delta_{s_{m}}\Delta_s$ in the support of the Dynkin diagram (p.~\pageref{otmb}) of the basis $\{\Delta_s\}$ between the set $\{\Delta_{j}\}_{j=1}^{q}$ and the set $\mho$, originating at the vertex $\Delta_{r}$ for some $r\in\{1, \dots, q\}$ and terminating at the vertex $\Delta_s\in\mho.$ We act on $\Pi_i$ by the composition of operators $\gamma_s\circ\gamma_{s_m}\circ\dots\circ\gamma_{s_1}\circ\gamma_r$. As a result of the action of the $(k+1)$-th operator, the cycle $\Delta_{s_{k}}$ is added to the intermediate result with a non-zero coefficient, since in this chain each cycle intersects only with the adjacent ones. By consecutively applying the loop $\gamma_t$ to the final result, we complete the proof. 
	
	In the case of even $N$, see the proof of Lemma~\ref{VV} in Sec.~\ref{kasal}. $\Box$
	
	\begin{Relation}\label{Rem}
		The proof of Theorem~\ref{o} is repeated without changes for any function $\sum_{i=1}^k n_iV_{i}$, $n_i\in\mathbb{R}$ and the corresponding cycle $\sum_{i=1}^k n_i\Pi_i$, provided that it is non-trivial: a)~in the homology group $H_{N-1}(A\cap X\setminus S, \partial D; \mathbb R)$, b)~in the homology group $H_{N-1}(A\cap X, \partial D; \mathbb R)$. $\Box$
	\end{Relation}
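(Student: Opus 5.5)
The plan is to treat the combination $V=\sum_{i=1}^k n_iV_i$ by linearity throughout. Integration over a relative cycle is linear, so $V(X)=\int_{\Omega(X)}\omega$ with $\Omega=\sum n_i\Omega_i$ taken in real coefficients. The group $\pi_1(B\setminus\Sigma)$ acts linearly on $H_{N+1}(D\setminus S, A\cup X\cup\partial D\setminus S;\mathbb R)$. The isomorphisms \eqref{1}, the variation diagram \eqref{cd0.5}, the generalized Picard--Lefschetz formula \eqref{PL} and Remark~\ref{rem} are all additive and hold after tensoring with $\mathbb R$. Hence the variations of $\Omega$ are the real linear combinations $\sum n_i\operatorname{Var}(\Omega_i)$, and under \eqref{1} they correspond to the variations of $\Pi=\sum n_i\Pi_i$. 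I would first state this reduction explicitly: everything in the proof of Theorem~\ref{o} that concerns a single $\Pi_i$ only uses its class (in $H_{N-1}(A\cap X\setminus S,\partial D)$ or in its image $H_{N-1}(A\cap X,\partial D)$), together with the pairings $[\Pi_i,\alpha_j]$.

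Next I would rerun the case analysis of the proof of Theorem~\ref{o} with $\Pi$ in place of $\Pi_i$, real coefficients allowed. In case a), when the image of $\Pi$ in $H_{N-1}(A\cap X,\partial D;\mathbb R)$ vanishes but $\Pi\ne0$ in $H_{N-1}(A\cap X\setminus S,\partial D;\mathbb R)$, I would use the upper row of \eqref{cd}: $\Pi$ then comes from a nonzero tube-type class, and non-degeneracy of the pairing with the tubes gives a $t$ with $[\Pi,\tau(\Delta_t')]\ne0$. The $r$-fold variation $\operatorname{Var}^r_{\gamma_t}\Pi=\pm r[\Pi,\tau(\Delta_t')]\tau(\Delta_t')$ then has integral $\pm r[\Pi,\tau(\Delta_t')]\int_{\tau}\omega$. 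By Lemma~\ref{Vol1} this is nonzero (for generic $X$, since the integral is not identically zero), so infinitely many values arise. This argument uses only that the coefficient is a nonzero real number, not an integer.

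If instead $\Pi\ne0$ in $H_{N-1}(A\cap X,\partial D;\mathbb R)$, non-degeneracy of the pairing with $H_{N-1}(A\cap X;\mathbb R)$ gives some $\Delta_s$ with $[\Pi,\Delta_s]\ne0$. From there the chain argument through the support of the Dynkin diagram to the set $\mho$ (nonempty by Lemma~\ref{Lyas}), followed by iteration of $\gamma_t$, goes through verbatim. At each step a cycle is added with a nonzero real coefficient, because each cycle in a shortest path meets only its neighbours. For even $N$, case b) is referred to the argument of Lemma~\ref{VV}, which I would also check is linear in the class.

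The main point to verify is exactly the one the hypotheses a) and b) address. For a single $\Pi_i$, nontriviality was automatic from the isomorphism \eqref{1}, but a combination may vanish, and its nonvanishing must be assumed in the appropriate group. The second point is that the dual-basis and non-degeneracy arguments use integral unimodularity, as in Lemma~\ref{baza}. These remain valid over $\mathbb R$ because the pairings in \eqref{cd} are perfect over $\mathbb Z$ and the groups are free, so they stay perfect after $\otimes\mathbb R$. I expect no other obstacle.
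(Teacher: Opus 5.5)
Your proposal is correct and matches the paper's approach. The paper also reruns the proof of Theorem~\ref{o} (and of Lemma~\ref{VV} for even $N$) by linearity, and imposes nontriviality of $\sum n_i\Pi_i$ as a hypothesis because it no longer follows from the isomorphism \eqref{1}. Your extra checks (exactness and perfect pairings surviving $\otimes\mathbb R$, and the Picard--Lefschetz steps needing only nonzero real coefficients) are what ``repeated without changes'' leaves implicit.
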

	
	For even $N$, in the case of a simple singularity $f|_{\hat S}$, the condition $\Pi_i\neq0\in H_{N-1}(A\cap X, \partial D)$ is indispensable.
	
	\begin{example}\label{b2} 
		Consider a versal deformation of a singularity $f$ of type $B_{2}^+$ in the form $x^2+y^2+\sum_{j=1}^{N-2}z_{j}^2+\lambda_1x+\lambda_0$, where the boundary $\hat{S}$ is defined by the condition $x=0$ (see Example~\ref{po}). Let $\lambda_0>0$ and $\lambda_0<\lambda_1^2/4$. Then the unique cycle $\Pi$ is a vanishing cycle realized by a sphere that does not intersect $\hat{S}$ (see Fig.~\ref{F0}). For even $N$, the class of $\Pi$ is zero in $H_{N-1}(A\cap X, \partial D)$. The vanishing cycle of the group $H_{N-2}(A\cap X\cap S)$ yields the infinite-sheetedness of function \eqref{2} only for odd $N$. Therefore, for even $N$, all 4 connected components of the complement to $\Sigma_{\mathbb{R}}$ (see Fig.~\ref{B2'}) are quasilacunas: if $\lambda_0<0$, the set $\operatorname{Re}A\cap X$ is connected and intersects the boundary, and if $\lambda_0>\lambda_1^2/4$, then $\operatorname{Re}A\cap X=\varnothing$.
	\end{example}
	
	\begin{example}\label{A2k}
		Consider a purely boundary singularity $f=x-(y-\varepsilon x^2)^{2k}-\sum_{j=1}^{N-2} z_{j}^2$ of type $A_{2k-1}$, where the boundary $\hat{S}$ is given by the condition $x=0$. Under the perturbation $\tilde{f}=f-\varepsilon'$ ($\varepsilon'>0$), the unique cycle $\Pi$ lies in the domain $x>0$ (see Fig.~\ref{F0}). Let $f_{\tilde{\lambda}}$ be a real morsification of $f$ close to $\tilde{f}$. Then, for even $N$, any bypasses around the critical values of $f_{\tilde{\lambda}}|_{\hat{S}}$ yield only a finite ramification of function \eqref{2}, since the singularity $f|_{\hat{S}}$ is simple (see Lemma~\ref{simpl}). However, for odd $N$, sequentially bypasses around the maximal critical value of $f_{\tilde{\lambda}}|_{\hat{S}}$ realize an infinite ramification (see the proof of Proposition~\ref{Pred7}).
	\end{example}
	
	\begin{figure}[H]
		\center{\includegraphics[width=1\linewidth]{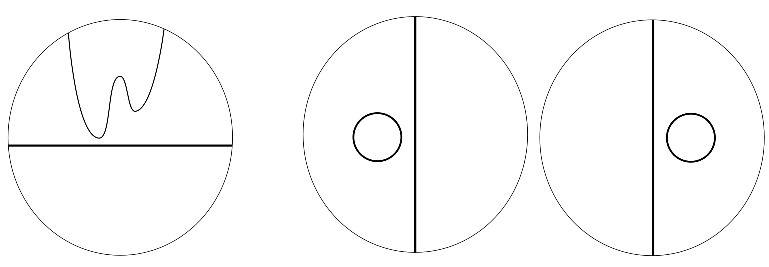}}
		\caption{Examples of generic fibers of $A_{3}$ (left) and $B_{2}^+$ (right)}
		\label{F0}	
	\end{figure}
	
	\begin{lemm}\label{simpl}
		Let $N$ be even, the singularity $f|_{\hat{S}}$ be simple, and $\Pi_i=0\in H_{N-1}(A\cap X, \partial D)$. Then function $\eqref{2}$ has a finite ramification in $B$.
	\end{lemm}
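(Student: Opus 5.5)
By exactness of the upper row of diagram \eqref{cd}, the hypothesis $\Pi_i=0\in H_{N-1}(A\cap X,\partial D)$ means that $\Pi_i$ lies in the image of the Leray tube operator
$$\tau:H_{N-2}(A\cap X\cap S,\partial D)\to H_{N-1}(A\cap X\setminus S,\partial D).$$
So we can write $\Pi_i=\tau(\nu)$ for some relative class $\nu$ of the boundary Milnor fiber, and the plan is to read off the whole monodromy orbit of $\Pi_i$ from the monodromy of the ordinary singularity $f|_{\hat S}$ acting on $\nu$. By the generalized Zariski theorem, $\pi_1(B\setminus\Sigma)$ is generated by the simple loops $\gamma_s$, $\gamma_t$ of a distinguished system on the line $L_{\tilde\lambda}$. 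It therefore suffices to understand how these two kinds of generators act on $\tau(\nu)$.

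First, the loops $\gamma_s$ around critical values of $p|_A$ act trivially on $\tau(\nu)$. The variation $\operatorname{Var}_{\gamma_s}$ is computed by localizing at the Morse point $x_\theta\notin S$. A tube can be realized inside an arbitrarily thin tubular neighbourhood $T_{\varepsilon'}$ of $A\cap X\cap S$, which avoids $O(x_\theta)$, so the localization is zero. Equivalently, as in \eqref{yy} and Remark~\ref{rem}, $\operatorname{Var}_{\gamma_s}\tau(\nu)=\pm\langle\tau(\nu),\Delta_s\rangle\Delta_s=0$.

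Second, the loops $\gamma_t$ around critical values of $p|_{A\cap S}$ commute with $\tau$, both for the monodromy and for the variation (this is the extended diagram built from \eqref{cd1}). Hence $\gamma_t(\tau(\nu))=\tau(\gamma_t\nu)$, where $\gamma_t$ on the right-hand side is the classical monodromy of $f|_{\hat S}$ on $H_{N-2}(A\cap X\cap S,\partial D)$. Combining the two steps, the orbit of $\Pi_i$ under $\pi_1(B\setminus\Sigma)$ is the image under $\tau$ of the orbit of $\nu$ under the monodromy group of $f|_{\hat S}$.

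Third, I show this last orbit is finite. The function $f|_{\hat S}$ depends on $N-1$ variables, and $N-1$ is odd because $N$ is even. Its middle homology $H_{N-2}$ therefore carries a symmetric intersection form, and since $f|_{\hat S}$ is simple its monodromy group is a finite Weyl group $A_k,D_k,E_6,E_7,E_8$ (\cite{Norm}). On the relative group $H_{N-2}(A\cap X\cap S,\partial D)$, which is Poincaré dual to the absolute one, the monodromy acts by the contragredient representation (the monodromy is realized by diffeomorphisms fixed near $\partial D$). Its image is thus again finite, so the orbit of $\nu$ is finite.

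Finally, the chain of isomorphisms \eqref{1} commutes with monodromy, so the orbit of $\Omega_i$ in $H_{N+1}(D\setminus S,A\cup X\cup\partial D\setminus S)$ is finite. Consequently the integral of $\omega$ over all continuations of $\Omega_i$ takes only finitely many values, which is exactly finite ramification of $V_i$ in $B$.

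The step I expect to need the most care is the first one: the vanishing of $\operatorname{Var}_{\gamma_s}$ on relative tubes $\tau(\nu)$ with $\nu$ not compact. The formulas of Section~\ref{PLF} are stated mainly for absolute classes. I would justify this step by the localization argument above, checking that the thin tube stays away from the vanishing region of $\Delta_s$ uniformly along the loop $\gamma_s$, using the transversality of $A\cap X_{c'}$ to $S$ for all $c'\in B_\delta$.
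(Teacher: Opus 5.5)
Your Steps 1--3 are sound. Step 1 is even automatic: $\gamma_s$ is null-homotopic in $B\setminus\Sigma^{f|_{\hat S}}$ and monodromy commutes with $\tau$, so $\gamma_{s*}\tau(\nu)=\tau(\nu)$ with no localization needed. The gap is in the last step, which does not follow as written. The integral of $\omega$ is not a function on $H_{N+1}(D\setminus S,A\cup X\cup\partial D\setminus S)$, because $\omega$ does not vanish on $\partial D$. The continuation of $V_i$ along a loop $\gamma$ equals $V_i+\int_{\operatorname{Var}_\gamma\Omega_i}\omega$, and $\operatorname{Var}_\gamma\Omega_i$ lies in the absolute group $H_{N+1}(D\setminus S,A\cup X\setminus S)\simeq H_{N-1}(A\cap X\setminus S)$ (diagram \eqref{cd0.5}). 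A finite orbit in the relative group only tells you that $i_*\operatorname{Var}_\gamma\Pi_i$ ranges over a finite set. The map $i_*$ (whose matrix is the matrix of generalized intersections) need not be injective for even $N$; for $B_2$ in Example~\ref{po} its matrix has zero determinant. Suppose some $\gamma$ had $\operatorname{Var}_\gamma\Pi_i\neq0$ but $i_*\operatorname{Var}_\gamma\Pi_i=0$. Then $\gamma$ fixes $\Pi_i$ in relative homology, yet $\operatorname{Var}_{\gamma^r}\Pi_i=r\operatorname{Var}_\gamma\Pi_i$ for all $r$. That would give infinitely many values of the integral despite your finite orbit.

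The missing ingredient is exactly what the paper's proof turns on. The intersection form of the simple singularity $f|_{\hat S}$ in the odd number $N-1$ of variables is definite (elliptic), hence non-degenerate. Therefore $i_*:H_{N-2}(A\cap X\cap S)\to H_{N-2}(A\cap X\cap S,\partial D)$ is injective. With this your argument closes directly. Since variation commutes with the tube operator, $\operatorname{Var}_\gamma\Pi_i=\tau(\operatorname{Var}_\gamma\nu)$. Moreover $\operatorname{Var}_\gamma\nu$ is determined by $i_*\operatorname{Var}_\gamma\nu=\gamma_*\nu-\nu$, which ranges over a finite set by your Step 3. Hence the set of variations of $\Pi_i$ is finite, and by \eqref{cd0.5} and Proposition~\ref{Izol} so is the set of variations of $\Omega_i$. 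The paper runs the same idea by contradiction. It picks $\gamma$ acting trivially on $H_{N-2}(A\cap X\cap S)$ with $\operatorname{Var}_\gamma\Pi_i\neq0$ and shows $i_*\operatorname{Var}_\gamma\Pi_i=0$ by pairing with the half-cycles $\overline{\Delta'}$. It then concludes that $\tau^{-1}\operatorname{Var}_\gamma\Pi_i$ lies in the kernel of the intersection form, which is impossible. Your exactness reformulation $\Pi_i=\tau(\nu)$ is a clean way to organize this, but only once that injectivity is stated and used.
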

	
	\textbf{Proof.} Suppose that function \eqref{2} is infinitely ramified; then the set of variations of the cycle $\Omega_i$ (and therefore, of the cycle $\Pi_i$) under the action of the group $\pi_1(B_{\delta}\setminus\Sigma)$ is infinite, and integral \eqref{2} over the cycles of this set takes an infinite number of values. By assumption, the group $\pi_1(B_{\delta}\setminus\Sigma^{f})$ ``does not act'' on the cycle $\Pi_i$, and the monodromy group of a simple singularity $f|_{\hat{S}}$ of an odd number of variables is finite (see~\cite{Norm}). Then there must exist an element $\gamma\in\pi_1(B_{\delta}\setminus\Sigma^{{f}|_{\hat{S}}})$ that acts trivially on the group $H_{N-2}(A\cap X\cap S)$, and for which $\operatorname{Var}_{\gamma}\Pi_i\neq0$ holds. However, in this case, for any vanishing cycle $\Delta'\in H_{N-2}(A\cap X\cap S)$, one has:
	$$[\Pi_i, \tau(\Delta')]=\langle\Pi_i, \overline{\Delta'}\rangle=\langle\gamma_*\Pi_i, \gamma_*\overline{\Delta'}\rangle=\langle\Pi_i+i_*\operatorname{Var}_{\gamma}\Pi_i, \overline{\Delta'}\rangle=\langle\Pi_i, \overline{\Delta'}\rangle+\langle i_*\operatorname{Var}_{\gamma}\Pi_i, \overline{\Delta'}\rangle,$$
	since $\operatorname{Var}_{\gamma}\overline{\Delta'}=0$. It follows from this that $i_*\operatorname{Var}_{\gamma}\Pi_i=0$. This implies that $\tau^{-1}\operatorname{Var}_{\gamma}\Pi_i\in\operatorname{Ker}\langle\cdot,\cdot\rangle_{A\cap X\cap S}$. Since the intersection form of a simple singularity of an odd number of variables is elliptic (see~\cite{Norm}), we arrive at a contradiction. $\Box$
	
	\begin{theorem}\label{criteria}
		(Criteria for local finite-sheetedness)
		\begin{enumerate}	
			\item Let $N$ be odd and the germ $f$ satisfy Proposition~\ref{parab}. Then $\Gamma$ is a quasilacuna if and only if $\{\Pi_{i}\}=\varnothing$. 
			
			\item Let $N$ be even. If $\Gamma$ is a quasilacuna, then the classes of all cycles $\Pi_{i}$ in the homology group $H_{N-1}(A\cap X, \partial D)$ are zero. 
			a)~If the singularity $f|_{\hat{S}}$ is simple, then the converse is also true. 
			b)~If both inertia indices of the quadratic form of the singularity $f|_{\hat{S}}$ are non-zero and $\{\Pi_{i}\}\neq\varnothing$, then the converse is false.
		\end{enumerate}
	\end{theorem}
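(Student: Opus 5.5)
Most of Theorem~\ref{criteria} follows from Theorem~\ref{o} and Lemma~\ref{simpl}. The new work is the counterexample direction 2b).

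\textbf{Item 1.} Let $N$ be odd. By Theorem~\ref{o}a), every $V_i$ is infinitely ramified. So $\Gamma$ can be a quasilacuna only if no $V_i$ is defined on it, that is, only if $\{\Pi_i\}=\varnothing$. Conversely, if $\{\Pi_i\}=\varnothing$, the condition of Definition~\ref{quasi} holds vacuously.

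\textbf{Item 2, main implication.} Let $N$ be even. If some $\Pi_i\neq0$ in $H_{N-1}(A\cap X,\partial D)$, then Theorem~\ref{o}b) makes $V_i$ infinitely ramified, so $\Gamma$ is not a quasilacuna. Taking the contrapositive gives the claim.

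\textbf{Item 2a).} If $f|_{\hat S}$ is simple and all classes $\Pi_i$ vanish in $H_{N-1}(A\cap X,\partial D)$, then Lemma~\ref{simpl} applies to each $V_i$. Hence every $V_i$ is finitely ramified, and $\Gamma$ is a quasilacuna.

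\textbf{Item 2b).} Here I must exhibit a component $\Gamma$, with $\{\Pi_i\}\neq\varnothing$, on which all $\Pi_i$ vanish in $H_{N-1}(A\cap X,\partial D)$ but some $V_i$ is still infinitely ramified. I would reverse the argument of Lemma~\ref{simpl}. For even $N$, the form $\langle\cdot,\cdot\rangle$ on $H_{N-2}(A\cap X\cap S)$ is symmetric in an odd number of variables. When both inertia indices are nonzero, the monodromy group of $f|_{\hat S}$ is infinite and the form is indefinite.

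Take $\Pi_i$ whose image in $H_{N-1}(A\cap X,\partial D)$ is zero. By the diagram \eqref{cd}, $\Pi_i$ then comes from the Leray-tube image of $H_{N-2}(A\cap X\cap S,\partial D)$, so $\Pi_i=\tau(\nabla')$ for some $\nabla'\in H_{N-2}(A\cap X\cap S,\partial D)$. Its variations under loops in $\pi_1(B\setminus\Sigma^{f|_{\hat S}})$ are tubes $\tau(\operatorname{Var}_\gamma\nabla')$. Since $\{\Pi_i\}\neq\varnothing$, I may assume $\nabla'\neq0$. Non-degeneracy of the pairing then gives a vanishing cycle $\Delta_t'$ with $\langle\nabla',\Delta_t'\rangle\ne0$.

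The next step is to use indefiniteness to find an element of infinite order, for example a product of two reflections in vanishing cycles $\Delta_a',\Delta_b'$ with $|\langle\Delta_a',\Delta_b'\rangle|\geqslant2$, or a Coxeter-type element. The plan is to show that the orbit $\{\operatorname{Var}_{\gamma^r}\nabla'\}$ grows linearly or exponentially in $H_{N-2}(A\cap X\cap S)$. The intersection numbers of these variations with the cycles $\overline{\Delta'}$ would then be unbounded.

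Passing to integrals, the classes $\tau(\operatorname{Var}_{\gamma^r}\nabla')$ are integer combinations of the distinguished tube basis. By Proposition~\ref{Izol} and the residue formula \eqref{vych}, the integrals of $\omega$ over the basis tubes are not identically zero (Lemma~\ref{Vol1}). For even $N$ they blow up like half-integer powers of $\varepsilon$ (the even-$N$ computation after \eqref{eq}). I expect these integrals to be linearly independent over $\mathbb{Z}$ for generic $\tilde\lambda$, because each has its own branch point $\theta'_t$. Then unbounded coefficients give infinitely many values.

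\textbf{Main obstacle.} The hardest step is producing a concrete $\Gamma$ with $\{\Pi_i\}\neq\varnothing$ and all classes zero in $H_{N-1}(A\cap X,\partial D)$. This must work for every $f$ of the stated type, or the statement must be read as ``generally false'', in which case one explicit example suffices. I would first build it for the simplest case, the purely boundary germ $x_1-g(x_2,\dots,x_N)$ with $g$ indefinite non-simple, for instance $T_{p,q,r}$ or $X_9$ (cf.\ Example~\ref{A2k}). There $H_{N-1}(A\cap X)=0$ automatically, so the vanishing condition holds, and a real perturbation with a compact oval gives $\{\Pi_i\}\neq\varnothing$. The general case would then be reached by a deformation to this configuration inside the versal base. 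The second obstacle is the independence of the integrals, which I would settle by looking at their local asymptotics near distinct $\theta'_t$.
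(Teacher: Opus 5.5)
Items 1, 2 and 2a) match the paper's argument (Theorem~\ref{o} and Lemma~\ref{simpl}). In 2b) you have misread what must be shown. $\{\Pi_i\}\neq\varnothing$ is a hypothesis on $\Gamma$, and the claim is that any such $\Gamma$ whose classes all vanish in $H_{N-1}(A\cap X,\partial D)$ fails to be a quasilacuna. So no example has to be constructed, and the paragraph on models $x_1-g$ and deformations to them is unnecessary. Your opening step already covers every such $\Gamma$. By \eqref{cd}, $\Pi_i=\tau(\nabla')$ with $\nabla'\neq0$, because $\Pi_i\neq0$ in $H_{N-1}(A\cap X\setminus S,\partial D)$ as the image of $\Omega_i$ under \eqref{1}. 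Hence $k=[\Pi_i,\tau(\Delta_t')]=\pm\langle\nabla',\Delta_t'\rangle\neq0$ for some $t$.

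The genuine gap is the passage from homology to integrals. You want to deduce infinitely many values from $\mathbb Z$-linear independence of the tube integrals $I_t=\int_{\tau(\text{\CYRSH}'_t)}\omega$, justified by their distinct branch points. That local argument only shows that if $\sum_t c_tI_t\equiv0$, then $\sum_t c_t\text{\CYRSH}'_t$ is invariant under every local monodromy. In other words, it lies in the kernel of the intersection form of $f|_{\hat S}$. For non-simple $f|_{\hat S}$ this kernel can be non-zero, and unbounded coefficient growth along kernel directions is compatible with finitely many values. This is exactly the parabolic phenomenon noted after Theorem~\ref{Th}. Your growth claim for $\operatorname{Var}_{\gamma^r}\nabla'$ under one chosen infinite-order $\gamma$ is likewise unproved for the specific $\nabla'$.

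The paper avoids both points by running the argument of Theorem~\ref{Th} for $f|_{\hat S}$ ($N-1$ variables, symmetric form) with the residue form:
\begin{enumerate}
\item Quotient the real span of the $\text{\CYRSH}'_j$ by the monodromy-invariant subspace of contours with identically zero integral.
\item By irreducibility and Lemma~\ref{Vol1}, this subspace contains no vanishing contour, so it lies in the kernel.
\item The quotient therefore carries a reflection group whose form still has both inertia indices non-zero, and this group is infinite by the classification.
\item Hence the orbit of $\text{\CYRSH}'_t$ gives infinitely many distinct integrals. By \eqref{vych}, and since loops around $\Sigma^f$ act trivially on tubes, so does the orbit of $\tau(\text{\CYRSH}'_t)$ under $\pi_1(B\setminus\Sigma)$.
\item For every $g\in\pi_1(B\setminus\Sigma)$, the difference $g\gamma_t\Omega_i-g\Omega_i=\pm k\,g\,\tau(\text{\CYRSH}'_t)$ is a difference of two elements of the orbit of $\Omega_i$. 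Finite-sheetedness of $V_i$ would then allow only finitely many values on this orbit, a contradiction.
\end{enumerate}
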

	
	\textbf{Proof.} Everything except 2b) follows from Theorem~\ref{o} and Lemma~\ref{simpl}, while the latter follows from Theorem~\ref{Th} proved in Sec.~\ref{kasal} and from Lemma~\ref{Vol1}. $\Box$ 
	
	Consider separately the case of a purely boundary singularity.
	\begin{PREDLOZHENIE}\label{Pred7}
		Let $N$ be odd, $df(0)\neq 0$, and Proposition~\ref{parab} hold. Then the set of non-quasilacunas is non-empty (and consists of a single element) if and only if the function $f|_{\hat S}$ has an extremum at the origin.
	\end{PREDLOZHENIE}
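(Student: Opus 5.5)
Since $df(0)\neq0$, we have $H_{N-1}(A\cap X)=0$ and $\Sigma^f=\varnothing$. The monodromy is then generated by the loops $\gamma_t$ around the critical values of $f_{\tilde\lambda}|_{\hat S}$, and the distinguished basis of $H_{N-1}(A\cap X\setminus S)$ consists only of the tubes $\tau(\Delta_t')$ (the map $h_1$ in \eqref{Lere} is an isomorphism). By the first part of the proof of Theorem~\ref{o}, for odd $N$ every nonzero $\Pi_i$ has some $t$ with $[\Pi_i,\tau(\Delta_t')]\neq0$. Iterating $\gamma_t$ then gives unbounded variations, and Lemma~\ref{Vol1} together with \eqref{cd0.5} turns them into infinitely many values of $V_i$. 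The same conclusion is recorded in criterion 1 of Theorem~\ref{criteria}. So a component $\Gamma$ fails to be a quasilacuna if and only if $\{\Pi_i\}\neq\varnothing$ there, i.e. iff $\operatorname{Re}\{f_{\tilde\lambda}=0\}\cap D$ has a connected component (a compact oval) disjoint from $\hat S$, and the statement reduces to a real topological question.

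Step one is to normalize. Since $df(0)\neq0$ and $df|_{\hat S}(0)=0$, we have $\partial f/\partial x_1(0)\neq0$, so by an $\hat S$-preserving change of coordinates $f=\pm x_1+g(x_2,\dots,x_N)$, with $g=f|_{\hat S}$ having an isolated critical point. The zero set of $f_{\tilde\lambda}$ is then, locally in $D$, the graph $x_1=h_{\tilde\lambda}(x')$ of a function close to $\mp g$. Its components are connected pieces of a graph over the $x'$-space, hence each is diffeomorphic to an open subset of the ball $D\cap\hat S$. Components not meeting $\hat S$ are exactly the pieces of the graph lying in $\{x_1\neq0\}$ that do not reach $\partial D$, i.e. the bounded components of $\{h_{\tilde\lambda}>0\}$ or of $\{h_{\tilde\lambda}<0\}$ inside the ball. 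I should double-check that regions adjacent to $\partial D$ correspond to noncompact contours in the sense of $\Omega_i$; I expect them to be excluded as non-absolute, relative pieces.

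Step two settles both directions. If $g$ has an extremum at $0$, say a minimum, the linear family contains $h=-g+\varepsilon$ with $\varepsilon>0$ small. There $\{h>0\}$ is a small ball around $0$ disjoint from $\partial D$, which gives a nonempty $\Pi$ (cf. Example~\ref{A2k}), so a non-quasilacuna exists. Uniqueness would follow from showing that, for a versal linear deformation, at most one chamber of $\operatorname{Re}B\setminus\Sigma_{\mathbb R}$ carries a compact component. The plan is to use the structure of a non-extremal/extremal critical point together with the contractibility of chambers, but this is the step I regard as shakiest. If instead $g$ has no extremum at $0$, I will argue that for every small real $\tilde\lambda$ each connected component of $\{h_{\tilde\lambda}>0\}$ and of $\{h_{\tilde\lambda}<0\}$ in the ball reaches $\partial D$. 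Suppose a component $U$ were compactly contained. Then $h_{\tilde\lambda}$ would have an interior extremum on $\overline U$. As $\tilde\lambda\to0$, bounded sublevel components shrink into arbitrarily small neighbourhoods of $0$, because by transversality of spheres to the level sets near $\partial D$ the region between radii $\delta'$ and $\varepsilon$ contains no critical points. I would then use a limiting and semicontinuity argument: the zero-level topology of $g$ near $0$ is the cone over the link, and for $g$ non-extremal both $\{g>0\}$ and $\{g<0\}$ near $0$ meet every small sphere in sets that, by the conic structure theorem, connect to $\partial D$. A compact oval of $h_{\tilde\lambda}$ would force a local extremum of $g$ in the limit.

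The main obstacle is this last limiting argument, which must exclude ovals for perturbations, not just for $g$ itself. A non-extremal $g$ such as $x^3$-type or a saddle can still have morsifications with local extrema, and then a small compact oval can appear around a local max of $-h$ whose value sits just across $0$. So the claim as I have phrased it may be false, and I must reexamine what exactly is counted. Possibly the point is that such an oval also yields $[\Pi,\tau(\Delta_t')]\neq0$ only for the extremum case. Alternatively, "non-quasilacuna" must be read for the specific linear deformation $\Lambda_f$, whose real discriminant is the dual front of $\{g=c\}$. For a stable germ satisfying Proposition~\ref{parab}, compact ovals for linear perturbations arise exactly from elliptic points near $0$, which exist iff $g$ has an extremum. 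I would try to prove this via the Hessian decomposition in the proof of Proposition~\ref{parab}.
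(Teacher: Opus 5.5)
There is a genuine gap: Step one misidentifies the cycles $\Pi_i$, and everything after it inherits the error. By definition, $\Omega_i$ is a connected domain in $\mathbb R^{N+1}\cap D$ bounded by $\partial D$, $X_0$ and $\operatorname{Re}A$ and not adjacent to $S$. So $\Pi_i$ consists of connected components of $\operatorname{Re}A\cap X_0$ that do not meet $S$; these are relative cycles in $H_{N-1}(A\cap X_0\setminus S,\partial D\setminus S)$ and are allowed to end on $\partial D$. You instead count components of $(\operatorname{Re}A\cap X_0)\setminus\hat S$ that avoid $\partial D$ (``compact ovals''). The piece of the graph over a relatively compact component $U$ of $\{h_{\tilde\lambda}\neq0\}$ has closure meeting $\hat S$ along $\partial U$, so it is not a component of $\operatorname{Re}A\cap X_0$ at all.

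In the normal form $f=\pm x_1+\varphi(x')$, the real zero set of every small perturbation is a single graph $x_1=h_{\tilde\lambda}(x')$ over a neighbourhood of the origin in $\hat S$. It is therefore connected and reaches $\partial D$. Hence $\{\Pi_i\}$ is either empty or equal to this whole graph, and it is non-empty exactly when $f_{\tilde\lambda}|_{\hat S}$ has no zero in $D\cap\hat S$. Under your reading there would never be any $\Pi_i$, and the proposition would fail in the extremum case.

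This is also why your example sits in the wrong chamber. For $h=-g+\varepsilon$ with $g$ having a minimum, $h$ vanishes on $\{g=\varepsilon\}$, the graph crosses $\hat S$, and there is no $\Pi$; that chamber is trivially a quasilacuna. The non-quasilacuna is the chamber of $h=-g-\varepsilon$, where the whole graph lies in $\{x_1<0\}$. Compare the paper's $\Pi=D\cap\operatorname{Re}\{f=\varepsilon'\}\subset\{x_1>0\}$ for a maximum, and Example~\ref{A2k}. Your worry about morsifications creating local extrema and ovals is an artifact of the same misreading.

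With the correct reading the argument is elementary, and this is what the paper's short proof uses. Since $0$ is the only real critical point of $\varphi$ in a small ball, $\varphi$ has a strict constant sign on the sphere $\partial(D\cap\hat S)$ if $0$ is an extremum, and takes both signs there otherwise. By continuity the same holds for $f_{\tilde\lambda}|_{\hat S}$ for all small real $\tilde\lambda$.

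Without an extremum, every $f_{\tilde\lambda}|_{\hat S}$ therefore vanishes somewhere in the ball. The connected graph then meets $\hat S$, and $\{\Pi_i\}=\varnothing$ for every $\Gamma$.

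With, say, a maximum, $\{\Pi_i\}\neq\varnothing$ exactly where $f_\lambda|_{\hat S}<0$ on all of $D\cap\hat S$. For a deformation of the form \eqref{obsch} this is the region $\lambda_0<m(\lambda')$ below the graph of the continuous function $m(\lambda')=-\max_{D\cap\hat S}f'_{\lambda'}|_{\hat S}$. This region is connected with boundary in $\Sigma_{\mathbb R}$, hence a single component, and it is not a quasilacuna by Theorem~\ref{o}.

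Your opening reduction to $\{\Pi_i\}\neq\varnothing$ via Theorem~\ref{o} and criterion~1 of Theorem~\ref{criteria} is fine. It is Steps one and two that must be redone, and the detour through elliptic points and the Hessian decomposition from Proposition~\ref{parab} is not needed.
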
 
	
	\textbf{Proof.} Any germ of such a singularity is $\mathcal{R}_{\hat S}$-equivalent to a germ of the form $f=x_1+\varphi(x_2, \dots, x_N)$, where $d\varphi(0)=0$. Then the set $\operatorname{Re}A\cap X$ is connected, and if the function $\varphi$ has no extremum, then $\{\Pi_i\}=\varnothing$ for all $\Gamma$.
	
	Let the function $\varphi$ have a maximum at the origin. Then the cycle $\Pi=D\cap\operatorname{Re}\{f=\varepsilon'>0\}$ is well-defined, and by Theorem~\ref{o}, the component $\Gamma$ containing the function $f-\varepsilon'$ is not a quasilacuna.
	
	Let us explicitly specify the obstructing element \(\tau(\Delta')\) (such that \([\Pi, \tau(\Delta')]\neq0\)). Choose a generic point $\lambda'\in\partial\Gamma$ close to zero. The function $f_{\lambda'}|_{\hat{S}}$ has a non-degenerate real critical point of maximum with value zero. Let us denote by $\Delta'\in H_{N-2}(A\cap X_{\varepsilon'}\cap S)$ the cycle vanishing at this point. Then, for the cycle $\Pi$ given by the set $\operatorname{Re}\{f_{\lambda'}=\varepsilon'\}\cap D$, one has $[\Pi, \tau(\Delta')]=\pm1$.
	
	Indeed, near a simple tangency with the boundary, the cycle $\Pi$ in local Morse coordinates is given by the equation $x_1=z_2^2+\dots+z_{N}^2+\varepsilon',$ and the cycle $\Delta'$ is realized by the sphere $\{(\operatorname{Im}z_{2})^2+\dots+(\operatorname{Im}z_{N})^2=\varepsilon', x_1=0\}$. Let $\pi:A\cap X_{\varepsilon'}\rightarrow\hat{S}$ be the natural projection. Then the half-cycle $\overline{\Delta'}$ is given by the set $\pi^{-1}(\{(\operatorname{Im}z_{2})^2+\dots+(\operatorname{Im}z_{N})^2\leqslant\varepsilon'\}),$ and we obtain $[\Pi, \tau(\Delta')]=\langle\Pi, \overline{\Delta'}\rangle=\langle\pi(\Pi), \pi(\overline{\Delta'})\rangle=\langle\operatorname{Re}\hat{S}, \{(\operatorname{Im}z_{2})^2+\dots+(\operatorname{Im}z_{N})^2\leqslant\varepsilon'\}\rangle=\pm1$. $\Box$ 
	
	In the case of even $N$ and a simple purely boundary singularity, all components are quasilacunas by Theorem~\ref{criteria}.
	
	\begin{example}
		Consider the parabolic singularity $x_1+y^4+z^4+Q$, where $Q$ is a non-degenerate quadratic form of an odd number of additional variables with a zero negative inertia index. It can be shown that the component $\Gamma$ containing the function $x_1+y^4+z^4+Q+\varepsilon$, $\varepsilon>0$, is not a quasilacuna. For a small Morse perturbation of this function within $\Gamma$, the cycle $\Pi$ is well-defined and there exists an element $\tau(\Delta')$ such that $[\Pi, \tau(\Delta')]\neq0$, see the proof of Proposition~\ref{Pred7}. This element is not obstructing in the same sense, since its corresponding monodromy operator is of order $2$ due to the evenness of $N$. However, the analytic continuation of the function $\int_{\tau(\text{\CYRSH}'(X))}\omega$ for the corresponding element $\tau(\text{\CYRSH}')$ is infinitely-valued at the point $X_0$ (in Sec.~\ref{kasal} this is proved for the Euclidean volume form on $S$ and the cycle $\text{\CYRSH}'$), which implies the required result (see the proof of Theorem~\ref{Th}). 
	\end{example}
	
	\begin{PREDLOZHENIE}\label{K-K}
		The numbers of quasilacunas of the miniversal deformations of the form \eqref{obsch} of $\mathcal{K}_{\hat S}$-equivalent singularities coincide.
	\end{PREDLOZHENIE}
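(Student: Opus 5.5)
The plan is to compare the two miniversal deformations through the homotopy and $\mathcal{K}_{\hat S}$-equivalence that relates them. First, reduce to the following claim: if $g=h\cdot(f\circ\mathcal{G})$ with $h(0)\neq0$ real and $\mathcal{G}$ a boundary-preserving real diffeomorphism, then a miniversal deformation of $g$ of the form \eqref{obsch} is, near zero, diffeomorphic to one of $f$. This diffeomorphism of bases carries $\Sigma$ to $\Sigma$ and $\Sigma_{\mathbb R}$ to $\Sigma_{\mathbb R}$. It also identifies, fiberwise, the pairs $(A\cap X\setminus S,\partial D)$ together with their real parts $\operatorname{Re}A\cap X$.

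The diffeomorphism $\mathcal{G}$ is harmless. Composing with it preserves the boundary, the zero level sets, their real parts and the critical points on $\hat S$. It therefore transports the whole configuration, including the cycles $\Omega_i$, $\Pi_i$, the distinguished bases and the monodromy action. So it suffices to treat multiplication by $h$. Since $h(0)\neq0$ and $h$ is real, we may assume $h>0$ near $0$ after replacing $f$ by $-f$. The sign change $f\mapsto -f$ corresponds to $\lambda\mapsto$ its image under $c\mapsto -c$, which maps components of the complement to components.

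Join $h$ to $1$ by the path $h_s=(1-s)+sh$, $s\in[0,1]$. Each $h_s$ is positive near $0$. For each $s$, $h_sf$ has the same zero set as $f$, and the same holds for its restriction to $\hat S$. Choose the miniversal deformation $h_s\cdot(f+\sum\lambda_i\varphi_i)$. This is $\mathcal{K}_{\hat S}$-versal, and by the remark in Sec.~\ref{simplust}, together with K.~Saito's argument when $f\in\nabla_{f|_{x_1}}$, it is $\mathcal{R}_{\hat S}$-versal as well. In any case it induces a versal deformation of the form \eqref{obsch} for $h_sf$ up to a diffeomorphism of bases. Multiplying by a positive function does not change the zero level set of $f_\lambda$. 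Nor does it change whether that level set is singular or non-transversal to $\hat S$: at a zero of $f_\lambda$ we have $d(h_sf_\lambda)=h_s\,df_\lambda$. The discriminants $\Sigma$ and $\Sigma_{\mathbb R}$ therefore coincide for all $s$ in these coordinates. So do the real fibers $\operatorname{Re}A\cap X$, the contours $\Omega_i$ (up to the fiberwise identification of graphs), and the cycles $\Pi_i$.

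It remains to check that finite-sheetedness of each $V_i$ is preserved. By Theorem~\ref{o}, Lemma~\ref{simpl} and Theorem~\ref{criteria}, finite-sheetedness is governed by the orbit of $\Pi_i$ under $\pi_1(B\setminus\Sigma)$ acting on $H_{N-1}(A\cap X\setminus S,\partial D)$, together with nonvanishing of the integrals (Lemma~\ref{Vol1}). Along the homotopy $s$, Thom's isotopy theorem identifies these groups and the monodromy actions continuously. Hence the orbit structure of each $\Pi_i$ is the same, and the numbers of quasilacunas agree.

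The main obstacle is the last step. Finite-sheetedness is defined through the values of integrals of $\omega$, not just through homology orbits, and the integrals themselves change under the equivalence. I would handle it by showing that whether the orbit of $\Omega_i$ is finite is a topological invariant. When the orbit is infinite, Lemma~\ref{Vol1} and the explicit Picard--Lefschetz arguments in the proof of Theorem~\ref{o} give infinitely many integral values, and this uses only the homological data; in the even-$N$ non-simple case it would use Theorem~\ref{Th}. When the orbit is finite, the number of values is trivially finite. It would also need to be checked that the base diffeomorphism matches local components $\Gamma$ bijectively near the origin, which follows since $\Sigma_{\mathbb R}$ is literally unchanged.
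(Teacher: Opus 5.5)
Your topological part is the paper's. The miniversal deformations of $\mathcal{K}_{\hat S}$-equivalent germs are $\mathcal{K}_{\hat S}$-equivalent, $f'_\lambda=\psi_\lambda\cdot f_{\chi(\lambda)}\circ\mathcal{G}_\lambda$. So $\chi$ matches $\Sigma$, $\Sigma_{\mathbb R}$, the components $\Gamma$, the pairs $(A\cap X\setminus S,\partial D)$ with their real parts, the cycles $\Pi_i,\Omega_i$ and the monodromy. Theorem~\ref{criteria} is a complete criterion in three cases: odd $N$ (under Proposition~\ref{parab}), even $N$ with $f|_{\hat S}$ simple, and even $N$ with indefinite form of $f|_{\hat S}$ (item 2b, via Theorem~\ref{Th}). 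In those cases being a quasilacuna is a topological condition and you are done. This is exactly how the paper settles odd $N$, and it covers every later use of the proposition (Lemma~\ref{ust1}, Remark~\ref{ZaM}).

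Your family $h_s\cdot F$ adds nothing to this. It is not of the form \eqref{obsch}, since $\lambda_0$ becomes $h_s\lambda_0$. Its $\mathcal{R}_{\hat S}$-versality is only justified when $f\in\nabla_{f|_{x_1}}$. And since its zero sets do not depend on $s$, its only effect is to replace $V_i$ by $\int_{\Omega_i}h_s\,\omega$, via $(c,x)\mapsto(h_sc,x)$.

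The gap is your general claim that finite-sheetedness of $V_i$ is equivalent to finiteness of the orbit of $\Omega_i$. Theorem~\ref{Th} needs both inertia indices to be non-zero. So for even $N$ with $f|_{\hat S}$ parabolic (semi-definite form), nothing you cite turns an infinite orbit into infinitely many values. By the remark after Theorem~\ref{Th}, finite-sheetedness there is decided by whether the kernel of the intersection form coincides with the subgroup of contours whose integral vanishes identically. That is a property of the integrand, and the integrand is exactly what the equivalence changes: $\omega$ is replaced by $\frac{\det\mathcal{J}'}{h^{N+2}}\,\omega$. Your $\mathcal{G}$-step alone already introduces the factor $J\mathcal{G}/g^{N+2}$, so it is harmless only topologically. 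Likewise Lemma~\ref{Vol1} is a statement about the integrand (and, for odd $N$, about non-parabolicity), not homological data. Hence your reduction cannot reach this case.

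The paper does not try to make finite-sheetedness topological. For even $N$ it does two things:
\begin{itemize}
\item it transports the basis of Proposition~\ref{Izol} by $\mathcal{J}:(c,x)\mapsto(c\,\psi_{\tilde\lambda}^{-1},\mathcal{G}_{\tilde\lambda}(x))$ along the homotopy $h_t$ from $\psi_{\tilde\lambda}$ to $\pm1$;
\item it compares $\int_{\mathcal{J}_*C}\omega$ with $\int_{C}\frac{\det\mathcal{J}'}{h^{N+2}}\,\omega$ for each basic contour $C$, the factor being close to a non-zero constant.
\end{itemize}
The homotopy is what carries the distinguished paths on the pencil $c=\mathrm{const}$, a line that $\chi$ does not preserve. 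The paper's treatment of this comparison is short, but it is an argument about the integrals themselves, which is what your proposal lacks in the parabolic case.
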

	
	\textbf{Proof.} Since the miniversal deformations of two $\mathcal{K}_{\hat S}$-equivalent germs $f$ and $f'$ are $\mathcal{K}_{\hat S}$-equivalent (see~\cite{AVG}, Sec.~8.6), i.e., $f'_{\lambda}(x)=\psi_{\lambda}(x)f_{\chi(\lambda)}(\mathcal{G}_{\lambda}(x))$, then together with the diffeomorphism $\chi$ of the parameter space $\mathbb{R}^{\mu}$ inducing one such deformation from another, there are defined boundary-preserving diffeomorphisms $\mathcal{G}_{\lambda}$ of the space $(\mathbb{C}^N, \mathbb{R}^N)$ depending smoothly on $\lambda\in\mathbb{R}^{\mu}$, which determine diffeomorphisms of the zero-level sets of the functions $f'_{\tilde{\lambda}}$ and $f_{\chi(\tilde{\lambda})}$, and consequently of the triples $(A\cap X\cup\partial D, (A\cap X\setminus S)\cup\partial D, \partial D)$ (both real and complex). Thus, the homology groups for $f'_{\tilde{\lambda}}$ and $f_{\chi(\tilde{\lambda})}$ of any of the types participating in diagram \eqref{cd} are isomorphic, from which the statement of the proposition for odd $N$ follows immediately by Theorem~\ref{criteria}. The diffeomorphism of the space $(\mathbb{C}^{N+1}, \mathbb{R}^{N+1})$ given by the formula $\mathcal{J}:(c,x)\mapsto(c\cdot(\psi_{\tilde{\lambda}}(x))^{-1}, \mathcal{G}_{\tilde{\lambda}}(x))$ defines a diffeomorphism of the surfaces $(A, \operatorname{Re}A)$ --- the graphs of the functions $f_{\chi(\tilde{\lambda})}$ and $f'_{\tilde{\lambda}}$. Let us fix a distinguished system of paths on the complex line with the coordinate $c$ for the function $f_{\chi(\tilde{\lambda})}(x)$ and define its basis $\{\text{\CYRSH}_j\}$ from Proposition~\ref{Izol}. Consider the homotopy $h(t,x)=t(\pm1-\psi_{\tilde{\lambda}}(x))+\psi_{\tilde{\lambda}}(x)$, $t\in[0,1]$, connecting the germ of the function $\psi_{\tilde{\lambda}}$ at the origin with the germ of the constant map from $\mathbb{C}^N$ to $\pm1$ (depending on the sign of the number $\psi_{\tilde{\lambda}}(0)$). If necessary, we perturb this homotopy so that at each moment $t$ the function $h_t(x)f_{\chi(\tilde\lambda)}(\mathcal{G}_{\tilde\lambda}(x))$ and its restriction to the boundary are strictly Morse. Under the action of this homotopy, the fixed system of paths transforms into a system of paths for the function $f'_{\tilde\lambda}(x)$ determining the basis $\{\mathcal{J}_*\text{\CYRSH}_j\}$. Then the sets of variations of the cycles $\Omega_i$ and $\mathcal{J}_*\Omega_i$ are isomorphic, and $\int_{\mathcal{J}_*\text{\CYRSH}_j}\omega=\int_{\text{\CYRSH}_j}\frac{\operatorname{det}\mathcal{J}'}{h^{N+2}}\omega$, where the Jacobian $\operatorname{det}\mathcal{J}'$ and the function $h$ (defined by the condition $\mathcal{G}_{\tilde\lambda}:x_1\mapsto x_1\cdot h$) take values near the origin in small neighborhoods of non-zero constants. Finally, the multiplication by $-1$ (in the case where $\psi_{\tilde\lambda}(0)<0$) is analyzed trivially. The statement of the proposition is proved for any $N$. $\Box$
	
	For examples, see Remark~\ref{ZaM}.
	
	\subsection{The generalized local even Petrovsky cycle}\label{Petrov}
	
	\begin{definition}\label{obobsch}
		Suppose that $\operatorname{Re}A_{\tilde\lambda}\cap X\cap S=\varnothing$ holds. The \textbf{generalized local even Petrovsky class} is the class of the canonically oriented cycle $\Pi=\operatorname{Re}A_{\tilde\lambda}\cap X$ in the homology group $H_{N-1}(A_{\tilde\lambda}\cap X\setminus S, \partial D)$.
	\end{definition}
	The canonical orientation means that at any point of $\Pi$, the set of vectors tangent to $X$ ($\operatorname{grad}f_{\tilde{\lambda}}$, the orienting frame of the cycle $\Pi$) determines an orientation that coincides with the fixed positive orientation of $X\simeq\mathbb{R}^N$. This cycle is related to the problem of the ramification of the total volume cut off by a hyperplane on one side of an unbounded domain in a Euclidean space.
	
	Let us denote by $\Omega$ the localized integration contour in $D$ corresponding to $\Pi$, and by $V(X)$ the function $\int_{\Omega(X)}\omega$.
	
	\begin{definition}\label{slow}
		A component $\Gamma$ of the complement in $\Theta\cap\mathbb{R}^l$ to $\Sigma_{\mathbb{R}}(F)$ is called a \textbf{weak} quasilacuna if, for any $\tilde{\lambda}\in\Gamma$, the ramification of the analytic continuation in $B$ of the function $V$ is finite-sheeted (in the case where the cycle $\Pi$ is not defined or $\Pi=\varnothing$, we set $V\equiv0$). A weak quasilacuna is called \textbf{strict} if the cycle $\Pi$ is defined and non-empty.
	\end{definition}
	
	From Corollary~\ref{Rem}, we obtain:
	\begin{PREDLOZHENIE}
		Suppose that Proposition~\ref{parab} holds. Then, for odd $N$, the set of strict weak quasilacunas is empty for any deformation. $\Box$
	\end{PREDLOZHENIE}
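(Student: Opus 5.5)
The statement to prove is: under Proposition~\ref{parab} and for odd $N$, no deformation has a strict weak quasilacuna. The plan is to reduce it directly to Corollary~\ref{Rem}, case a). Take any deformation $F$ of the form \eqref{obsch} and any component $\Gamma$ of the complement to $\Sigma_{\mathbb R}(F)$ on which the generalized Petrovsky cycle $\Pi$ of Definition~\ref{obobsch} is defined and non-empty. We must show that $V(X)=\int_{\Omega(X)}\omega$ is infinitely ramified in $B$.

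\emph{Step 1.} Write $\Pi$ as a combination of the cycles $\Pi_i$. When $\operatorname{Re}A_{\tilde\lambda}\cap X\cap S=\varnothing$, every connected component of $\operatorname{Re}A\cap X$ misses $S$. Hence the region of $\mathbb R^{N+1}\cap D$ cut off on the side toward which $\operatorname{grad}f_{\tilde\lambda}$ points is a union of the domains $\Omega_i$. The contour is therefore $\Omega=\sum n_i\Omega_i$ with $n_i\in\{0,\pm1\}$, the signs fixed by comparing the canonical coorientation with the induced orientations. Applying the isomorphisms \eqref{1}, we get $\Pi=\sum n_i\Pi_i$ and $V=\sum n_iV_i$.

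\emph{Step 2.} Check that $\Pi$ is non-trivial in $H_{N-1}(A\cap X\setminus S,\partial D;\mathbb R)$. The isomorphism \eqref{1} carries $\Omega$ to $\Pi$, so it suffices that $\Omega\neq0$. Since $\Pi$ is non-empty, $\Omega$ is a non-empty real $(N+1)$-chain with positive Euclidean volume. To make this rigorous, pair $\Omega$ with a dual cycle: the chain $\Omega$ is locally finite with respect to $\partial D$, and a small real transversal point-disk (a fiber of the tube through an interior point) has intersection number $\pm1$ with $\Omega$. Equivalently, each component of $\Pi$ is a compact oriented real $(N-1)$-manifold with a coorientation. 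It is non-zero in the Borel--Moore homology of the open complex manifold, because it meets a suitable real $(N-1)$-dimensional imaginary half-disk transversally once, in the manner of the proof of Proposition~\ref{Pred7}.

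\emph{Step 3.} Apply Corollary~\ref{Rem} a) to $\sum n_i\Pi_i$. This is Theorem~\ref{o} a) for odd $N$, which requires only Proposition~\ref{parab}; it gives that $V=\sum n_iV_i$ is infinitely ramified in $B$, so $\Gamma$ is not a weak quasilacuna. Lemma~\ref{Vol1} supplies the non-vanishing of the tube integrals under the non-parabolicity hypothesis, and the remark after it guarantees that hypothesis for generic $\tilde\lambda$ under Proposition~\ref{parab}.

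The main obstacle is Step 2: showing that the real cycle is non-zero over $\mathbb R$ in the relative group. I would use the duality pairing of diagram \eqref{c'd}, choosing an element of $H_{N-1}(A\cap X,S\cup\partial D)$ localized near a point of $\Pi$ that has intersection number $\pm1$ with it. Everything else is quoted directly.
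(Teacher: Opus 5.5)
Your Steps 1 and 3 are exactly the paper's argument: the proposition is stated there as an immediate consequence of Corollary~\ref{Rem}\,a), applied to $V=\sum n_iV_i$ and $\Pi=\sum n_i\Pi_i$.

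There is a small inaccuracy in Step 1. The union of the $\Omega_i$ is the side of $X$ \emph{not} containing $\operatorname{Re}\hat S\cap D$, which lies entirely in $\{f_{\tilde\lambda}>0\}$ or entirely in $\{f_{\tilde\lambda}<0\}$. This need not be the side into which $\operatorname{grad}f_{\tilde\lambda}$ points; the gradient only fixes the sign.

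The genuine gap is Step 2. The argument you sketch for $\Pi\neq0\in H_{N-1}(A\cap X\setminus S,\partial D;\mathbb R)$ does not work, and its Borel--Moore version aims at a statement that is false.
\begin{itemize}
\item Positive Euclidean volume says nothing about the class of $\Omega$. The group in \eqref{1} is relative to $\partial D$ and $\omega|_{\partial D}\neq0$, so $\int\omega$ is not defined on its classes.
\item A small transversal disk through an interior point is not a cycle of the dual group, so its intersection number with $\Omega$ is not a homological invariant.
\item At the level of $\Pi$ you pair with the wrong group. By \eqref{c'd}, $H_{N-1}(A\cap X,S\cup\partial D)$ is dual to the absolute group $H_{N-1}(A\cap X\setminus S)$. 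The group $H_{N-1}(A\cap X\setminus S,\partial D)$ is instead dual to $H_{N-1}(A\cap X,S)$, which is spanned by the $\Delta_s$ and the half-cycles $\overline{\Delta_t'}$.
\item These are compact chains ending on $S$, and none of them can be localized near a point of $\Pi$, since $\Pi$ stays away from $S$. The half-disk in Proposition~\ref{Pred7} exists only because there $\Pi$ passes right next to a Morse critical point of $f_{\lambda'}|_{\hat S}$ on $S$.
\end{itemize}

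More decisively, the target you name is false in general. Every tube $\tau(\Delta_t')$ can be pushed into a neighbourhood of $S$ disjoint from $\Pi$. Hence non-vanishing of $\Pi$ in the Borel--Moore group $H_{N-1}(A\cap X,S\cup\partial D)$ is equivalent to $\Pi\neq0\in H_{N-1}(A\cap X,\partial D)$. That fails for the non-empty torus of Example~\ref{newex} ($N=3$), which is a non-trivial formal lacuna of $f$. There, $\Pi$ is non-zero in $H_{N-1}(A\cap X\setminus S,\partial D)$ only because some $[\Pi,\tau(\Delta_t')]=\langle\Pi,\overline{\Delta_t'}\rangle\neq0$. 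This is a global effect of $S$ that no local transversal can see; any local argument would equally ``prove'' that non-trivial formal lacunas do not exist.

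What you need is what the paper uses. $\Pi$ is the image, under the isomorphism \eqref{1}, of the class of a real domain not adjacent to $S$. The paper records such images as automatically non-zero when it introduces the $\Pi_i$. The group is free of rank $\mu$ by \eqref{cd}, so non-vanishing persists over $\mathbb R$. Cite that instead of a local intersection count.
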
 
	
	\begin{example}\label{newex} 
		Consider the perturbation $f_{\tilde\lambda}-z^2$ of the singularity $x^4+y^4-z^2$, where the function $f_{\tilde\lambda}(x,y)$ is defined in Example~\ref{Oo}. The set $\Pi=\operatorname{Re}\{f_{\tilde\lambda}-z^2=0\}$ lies in the domain $x>0$ and is homeomorphic to a two-dimensional torus. The component of the complement to $\Sigma_{\mathbb R}^f$ (the tabular versal deformation) containing $\tilde\lambda$ is an even formal lacuna (i.e., $\Pi=0\in H_{N-1}(A\cap X, \partial D)$), see~\cite{VVA}, Ch.~VI,~\S2. After introducing the boundary $\hat S:x=0$, the corresponding component of the complement to $\Sigma_{\mathbb R}=\Sigma^f_{\mathbb R}\cup\Sigma^{f|_{\hat S}}_{\mathbb R}$ is not a weak quasilacuna by Theorem~\ref{o}. 
	\end{example}
	
	\begin{figure}[H]
		\center{\includegraphics[width=0.5\linewidth]{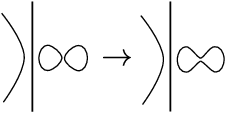}}
		\caption{Example for Proposition~\ref{DB} --- bifurcation of levels for a perturbation of a type $B_5$ germ (see Sec.~\ref{quasisimpl}) for $N=2$. In both cases, the intersection number of $\Pi$ with the cycle vanishing at the saddle point is equal to $\pm2$}
		\label{F34}	
	\end{figure}
	
	The conditions on the signature given below are analogous to the {\em Davydova--Borovikov condition} from the theory of hyperbolic PDEs (see~\cite{VVA}, Ch.~VI,~\S 1.4).
	
	\begin{PREDLOZHENIE}\label{DB}
		Let $\lambda'\in\Sigma_{\mathbb R}^{f}$ be a generic point of the discriminant close to zero, $\theta$ be a Morse critical point of the function $f_{\lambda'}$ with value zero, and $D\cap\operatorname{Re}\{f_{\lambda'}=0\}\cap S=\varnothing$ hold. 
		a)~Let $N$ be even. Then, if the Morse indices of the point $\theta$ are odd, both components containing the functions $f_{\lambda'}+\varepsilon$ and $f_{\lambda'}-\varepsilon$ are not weak quasilacunas. 
		b)~Let $N$ be odd. Then, if the positive Morse index of the point $\theta$ is even, the cycle $\Pi$ defined for the function $f_{\lambda'}+\varepsilon$ is non-zero in the homology group $H_{N-1}(A\cap X, \partial D)$; if it is odd, the same holds for the function $f_{\lambda'}-\varepsilon$.
	\end{PREDLOZHENIE}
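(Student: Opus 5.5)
Both parts reduce to a local computation near $\theta$: we need $[\Pi,\Delta]$, where $\Delta$ is the cycle of $H_{N-1}(A\cap X)$ vanishing at $\theta$ along a short path from $X_{\pm\varepsilon}$ to the critical value. The plan is to compute $\langle\Pi,\Delta\rangle$ in local Morse coordinates and then feed it into the monodromy machinery already set up.

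Since $D\cap\operatorname{Re}\{f_{\lambda'}=0\}\cap S=\varnothing$, the cycle $\Pi$ is defined for both $f_{\lambda'}\pm\varepsilon$, and near $\theta$ it is the real level set of the standard form $c=\theta+z_1^2+\dots+z_p^2-z_{p+1}^2-\dots-z_N^2$ (after possibly changing the sign of $c$). Here $p$ is the positive Morse index.

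\textbf{Local intersection number.} Write the real quadric as $\{|z_+|^2-|z_-|^2=\pm\varepsilon\}$. On the side where it equals $S^{p-1}\times\mathbb R^{N-p}$, the vanishing sphere is $\{z_+\ \text{real},\ z_-\ \text{imaginary}\}$. I will compute $\langle\Pi,\Delta\rangle$ the way Vassiliev computes local Petrovsky classes (\cite{VVA}, Ch.~VI, \S1--2): $\Pi$ meets $\Delta$ along $S^{p-1}\times\{0\}$ if that set is discrete, otherwise one perturbs. The expected answer is $\langle\Pi,\Delta\rangle=\pm(1+(-1)^{p})$ or $\pm(1+(-1)^{N-p})$, depending on the side. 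Equivalently, $\Pi$ restricted to the Morse neighbourhood is the fibre of $\mathbb R^{N-p}\times S^{p-1}$, and the antipodal map on $S^{p-1}$ contributes the two points with signs $1$ and $(-1)^{p}$. For even $N$ with odd indices both sides give $\pm2$, as illustrated in Fig.~\ref{F34}. For odd $N$ exactly one side gives $\pm2$: the side $+\varepsilon$ when $p$ is even, and the side $-\varepsilon$ when $p$ is odd. The main obstacle is keeping careful track of orientations, including the coorientation by $\operatorname{grad}f$ and the skew- versus symmetric form, to get the parity pattern exactly right. I would first fix the computation for $N=1,2$ by a picture and then stabilize using Gabrielov's formulas (\cite{GABRIEL}).

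\textbf{Part b).} A nonzero pairing $\langle\Pi,\Delta\rangle\neq0$ with an absolute class $\Delta\in H_{N-1}(A\cap X)$ immediately gives $\Pi\neq0$ in $H_{N-1}(A\cap X,\partial D)$. This uses the left column of \eqref{cd}, i.e.\ the pairing $H_{N-1}(A\cap X,\partial D)\otimes H_{N-1}(A\cap X)\to\mathbb Z$.

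\textbf{Part a).} The same argument shows $\Pi\neq0$ in $H_{N-1}(A\cap X,\partial D)$ on both sides. Then Theorem~\ref{o}b) applied to the single function $V$, via Corollary~\ref{Rem} with $\Pi$ in place of $\Pi_i$, gives infinite ramification of $V$ on both sides. Hence neither component is a weak quasilacuna. I would check that Corollary~\ref{Rem}b) indeed covers the Petrovsky cycle $\Pi$, since $\Pi$ is a sum of the $\Pi_i$ with coefficients $\pm1$.
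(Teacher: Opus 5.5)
Your overall architecture is the right one, and it is what the paper delegates to \cite{VVA}. You pair $\Pi$ with the cycle $\Delta$ vanishing at $\theta$ and obtain b) from the pairing $H_{N-1}(A\cap X,\partial D)\otimes H_{N-1}(A\cap X)\to\mathbb Z$. You obtain a) because $r$ loops around the critical value at $\theta$ add $\pm r\langle\Pi,\Delta\rangle$ copies of the vanishing contour, whose integral is not identically zero by Lemma~\ref{Vol1} (as in Theorem~\ref{o}\,b) and Corollary~\ref{Rem}).

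The gap is in the local computation, which is the entire content of the proposition: the values you write down are wrong. The correct values are as follows.
\begin{itemize}
\item On the side $Q=+\varepsilon$ (the function $f_{\lambda'}-\varepsilon$), the local real level set is $S^{p-1}\times\mathbb R^{N-p}$ and $\langle\Pi,\Delta\rangle=\pm\chi(S^{p-1})=\pm(1+(-1)^{p-1})$.
\item On the side $Q=-\varepsilon$ (the function $f_{\lambda'}+\varepsilon$), it is $\mathbb R^{p}\times S^{N-p-1}$ and $\langle\Pi,\Delta\rangle=\pm\chi(S^{N-p-1})=\pm(1+(-1)^{N-p-1})$.
\end{itemize}
This is Vassiliev's Theorem~1, p.~304, the same Euler-characteristic formula the paper quotes in \eqref{q} and Lemma~\ref{Petra}.

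Your $\pm(1+(-1)^{p})$ and $\pm(1+(-1)^{N-p})$ have the opposite parity. For even $N$ and odd $p$ they vanish on both sides, so part a) would not follow. For odd $N$ they put the non-zero value on the wrong side. The parity pattern you then announce matches the proposition but contradicts your own formula. A quick test: for $p=N$ and $Q=+\varepsilon$ the local real level set is the vanishing sphere itself. So the number must be $\pm\langle\Delta,\Delta\rangle$, i.e.\ $0$ for even $N$ and $\pm2$ for odd $N$, whereas $1+(-1)^N$ gives the reverse.

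Orientation bookkeeping cannot repair this, since whether $|\langle\Pi,\Delta\rangle|$ is $0$ or $2$ does not depend on any orientation convention; the count itself is off. Near $\theta$, for $Q=+\varepsilon$, the two cycles are $\Pi=\{(x_+,x_-)\in\mathbb R^{p}\times\mathbb R^{N-p}:|x_+|^2-|x_-|^2=\varepsilon\}$ and $\Delta=\{(x_+,iy_-):x_+\in\mathbb R^p,\ y_-\in\mathbb R^{N-p},\ |x_+|^2+|y_-|^2=\varepsilon\}$. They meet cleanly along $S^{p-1}\times\{0\}$ with excess bundle $i\,TS^{p-1}\cong TS^{p-1}$. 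Hence the intersection number is $\pm e(TS^{p-1})[S^{p-1}]=\pm\chi(S^{p-1})$.

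Concretely, push $\Delta$ off inside the fibre in the direction $i\,v$, with $v$ the gradient of a height function on $S^{p-1}$. This leaves two transverse points whose signs are, up to a common factor, $1$ and $(-1)^{p-1}$ (the indices of a minimum and a maximum), not $1$ and $(-1)^{p}$. With this correction the rest of your argument goes through.

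Gabrielov's stabilization formulas are also not the right tool for this step. They describe distinguished bases, whereas the topology of the real cycle $\Pi$ depends on the sign of each added square. The clean-intersection computation above works directly for every $N$.
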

	
	\textbf{Proof} see in~\cite{VVA}. $\Box$
	
	\begin{Relation}
		Suppose that for $\tilde\lambda\in\Theta\setminus\Sigma_{\mathbb R}$, the condition $D\cap\operatorname{Re}\{f_{\tilde\lambda}=0\}\cap S=\varnothing$ holds and there exists a path in $\Theta\setminus\Sigma^f_{\mathbb R}$ connecting the functions $f_{\tilde\lambda}$ and $f_{\lambda'}+\varepsilon$ (or $f_{\lambda'}-\varepsilon$). Then statements a) and b) remain valid if the corresponding function is replaced by $f_{\tilde\lambda}$. $\Box$
	\end{Relation}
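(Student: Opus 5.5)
The statement is the Corollary: if $f_{\tilde\lambda}$ is joined to $f_{\lambda'}\pm\varepsilon$ by a path in $\Theta\setminus\Sigma^f_{\mathbb R}$, and $D\cap\operatorname{Re}\{f_{\tilde\lambda}=0\}\cap S=\varnothing$, then conclusions a) and b) of Proposition~\ref{DB} carry over from $f_{\lambda'}\pm\varepsilon$ to $f_{\tilde\lambda}$.

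The plan is to transport the Petrovsky cycle along the given path $\zeta\colon[0,1]\to\Theta\setminus\Sigma^f_{\mathbb R}$ and show that the relevant class is carried along unchanged. That class is the image of $\Pi$ in $H_{N-1}(A\cap X,\partial D)$, and it is what governs both statements. Along $\zeta$ the real zero level $\operatorname{Re}\{f_{\zeta(t)}=0\}\cap D$ never acquires a real singular point, since we avoid $\Sigma^f_{\mathbb R}$. By Thom's isotopy theorem applied to the real Milnor fibers, as in the proof of the invariance proposition for $\Pi_i$ in Sec.~\ref{j}, these real hypersurfaces form a locally trivial family. Its boundary part on $\partial D$ is trivialized by transversality. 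Hence the canonically cooriented cycle $\operatorname{Re}\{f_{\zeta(t)}=0\}$ moves by isotopy. Its class in $H_{N-1}(A\cap X,\partial D)$ is carried by the Gauss--Manin transport over the path $\zeta$ in the complement of $\Sigma^f$. The path crosses only $\Sigma^{f|_{\hat S}}_{\mathbb R}$, possibly many times, and only the ordinary fibre $A\cap X$ is involved, not the pair with $S$. So the transported class equals the Petrovsky class of $f_{\tilde\lambda}$.

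For b) this already suffices. The class for $f_{\lambda'}\pm\varepsilon$ is non-zero by Proposition~\ref{DB}. Transport along $\zeta$ is an isomorphism of $H_{N-1}(A\cap X,\partial D)$, so the class for $f_{\tilde\lambda}$ is non-zero as well.

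For a) I would route the argument through the mechanism of Proposition~\ref{DB}, rather than through its conclusion. For even $N$ with odd Morse indices, that mechanism gives $\langle\Pi,\Delta_\theta\rangle=\pm2$, where $\Delta_\theta$ is the vanishing cycle at $\theta$. In particular $\Pi\neq0$ in $H_{N-1}(A\cap X,\partial D)$. Both the intersection number and the non-triviality are invariant under the transport, after carrying $\Delta_\theta$ along $\zeta$ as well. Applying Theorem~\ref{o}b) to $f_{\tilde\lambda}$ then shows that $V$ is infinitely ramified, so the component of $f_{\tilde\lambda}$ is not a weak quasilacuna.

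The step I expect to be the main obstacle is justifying the transport when $\zeta$ crosses the real boundary discriminant $\Sigma^{f|_{\hat S}}_{\mathbb R}$. There the real level set may become tangent to $S$, and $\Pi$ is then not even defined in $H_{N-1}(A\cap X\setminus S,\partial D)$. The plan handles this by working throughout in $H_{N-1}(A\cap X,\partial D)$, which forgets $S$, and by using the hypothesis at the endpoint $f_{\tilde\lambda}$ only to make $\Pi$ well-defined there, as required by Definition~\ref{obobsch}. I would need to check that the orientation convention given by $\operatorname{grad} f$ is preserved under the isotopy, which holds because $\operatorname{grad} f\neq0$ along the family.
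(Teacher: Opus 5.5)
Your strategy is the natural one, and it is what the paper's bare $\Box$ relies on. The only thing to carry from $f_{\lambda'}\pm\varepsilon$ to $f_{\tilde\lambda}$ is the non-triviality of $\Pi$ in $H_{N-1}(A\cap X,\partial D)$. That group does not see $S$, so crossings of $\Sigma^{f|_{\hat S}}$ cost nothing; the obstacle you singled out is in fact the harmless one. For a) you are right to use the mechanism, $\langle\Pi,\Delta_\theta\rangle=\pm\chi(S^{r-1})$ or $\pm\chi(S^{N-r-1})$, i.e.\ $\pm2$ on both sides when $N$ is even and $r$ odd. The conclusion of Proposition~\ref{DB}~a) cannot be transported, because the component changes along $\zeta$. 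But the result you then need is Corollary~\ref{Rem}~b), the version of Theorem~\ref{o}~b) for the total cycle $\Pi$, which is what governs $V$ and hence weak quasilacunas. Carrying $\Delta_\theta$ along $\zeta$ is unnecessary.

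The gap is in the transport step itself. The path $\zeta$ avoids only the real discriminant $\Sigma^f_{\mathbb R}$, not $\Sigma^f\cap\mathbb R^l$. As the paper notes in Sec.~\ref{kr}, these differ at the parameters where $f_\lambda$ has a pair of complex conjugate non-real critical points on the zero level. At such a point $\zeta(t_0)$ the complex fibre $A\cap X$ is singular. So your sentence ``carried by the Gauss--Manin transport over the path $\zeta$ in the complement of $\Sigma^f$'' is false as written, for three reasons:
\begin{itemize}
\item the local system $H_{N-1}(A\cap X,\partial D)$ is not defined at $\zeta(t_0)$;
\item the real isotopy of $\operatorname{Re}\{f_{\zeta(t)}=0\}$ alone does not identify the complex homology groups on the two sides;
\item the two ways of pushing $\zeta$ off into the complex domain differ by the local monodromy.
\end{itemize}
For a versal $F$ this set has real codimension $2$ and a generic $\zeta$ misses it. For an arbitrary deformation of the form \eqref{obsch} it can be a hypersurface that paths in $\Theta\setminus\Sigma^f_{\mathbb R}$ are forced to cross; for $x_1^2+x_2^4+\lambda_2x_2^2+\lambda_0$ it is the arc $\lambda_0=\lambda_2^2/4$, $\lambda_2>0$.

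The missing observation is that the real cycle stays away from the two non-real singular points. Take small conjugate balls $U,\bar U$ around them. Outside these balls the family of fibres is locally trivial over a whole complex neighbourhood of $\zeta(t_0)$, with a conjugation-equivariant, hence real-part-preserving, trivialization. Therefore the transport of $\Pi$ along any push-off is again the real cycle; equivalently, its variation at such a crossing vanishes. This is the remark the paper makes after \eqref{skachok} about imaginary conjugate critical points. With it inserted, b) follows as you say, and a) follows from Corollary~\ref{Rem}~b).
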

	
	By the Poincaré--Lefschetz duality, the class of the cycle $\Pi$ in the homology group $H_{N-1}(A\cap X\setminus S, \partial D)$ is determined by the vector of its intersection numbers with the vanishing cycles of the group $H_{N-1}(A\cap X, S)$. Further, we generalize Vassiliev's formulas obtained in Theorem 3 of~\cite{VVA}, Ch.~VI,~\S 1. Before that, an auxiliary problem is solved for the cycle $\breve\Pi=\operatorname{Re}\{f_{\tilde{\lambda}}(x_1^2, x_2, \dots, x_N)=0\}\cap D$ --- its intersection numbers with the antisymmetric basic cycles are computed (see Sec.~\ref{Arncyc}).
	
	Let us take a generic real perturbation $f_{\tilde{\lambda}}$ and its corresponding function $\breve{f}_{\tilde{\lambda}}\equiv f_{\tilde{\lambda}}(x_1^2, x_2, \dots, x_N)$, and consider the fibrations $p$ and $p\circ\sigma$ (let us define the surfaces $A_{\tilde{\lambda}}$ and $\breve{A}_{\tilde{\lambda}}$, etc., see Fig.~\ref{F167} on p.~\pageref{F167}). We choose a distinguished system of paths on $L_{\tilde{\lambda}}$ (see Fig.~\ref{systpath}) in a canonical way (in what follows, we follow~\cite{VVA}, Ch.~VI,~\S 1.6):
	\begin{enumerate}
		\item The paths connecting the real critical values of the function $\breve f_{\tilde\lambda}$ to zero pass through the domain $\operatorname{Im}\geqslant0$ such that the imaginary critical values do not lie between these paths and the real axis; the initial segment of each such path runs along the real axis towards smaller values.
		
		\item The paths connecting the complex conjugate critical values to zero are also complex conjugate and do not intersect the $\operatorname{Re}$ axis anywhere except at the origin.
		
		\item The paths enter the point $0$ with different tangents.
	\end{enumerate}
	
	\begin{figure}[H]
		\center{\includegraphics[width=0.8\linewidth]{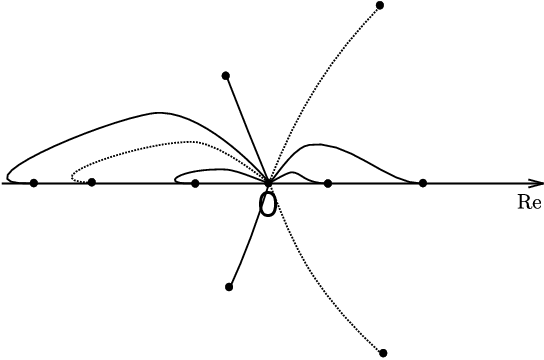}}
		\caption{Distinguished system of paths for a real boundary singularity. The dashed lines denote the paths corresponding to the function $f_{\tilde{\lambda}}|_{\hat S}$}
		\label{systpath}	
	\end{figure}

We will determine by this system of paths: 1)~a basis $\{\overline{\alpha}_{\nu_i}\}\subset H_{N-1}(A\cap X, A\cap X\cap S)$ consisting of vanishing cycles and half-cycles; 2)~an antisymmetric basis $\{\beta_{\nu_i}\}\subset H^-$ of long and short cycles (see Sec.~\ref{Arncyc}); 3)~a distinguished basis $\{\alpha_{\nu_i}\}\subset H_{N-1}(A\cap X\setminus S)$ of vanishing cycles and tubes (see Proposition~\ref{baza}). The subscript attached to each cycle denotes its critical value (here we depart from the notation of Sec.~\ref{PLF} and switch to one more convenient for the present problem).

a) The vanishing cycles of the group $H_{N-1}(A\cap X)$ are oriented in the following canonical way:
\begin{enumerate}
	\item Let a cycle $\alpha_\nu\equiv\alpha_\nu(0)$ vanish at a critical point of the function $f_{\tilde\lambda}$ with value $\nu\in\mathbb R$. By the Morse lemma, in local coordinates $c,z$ near the critical point, the equation of the surface $A$ is of the form:
	\begin{equation}\label{vid}
		c=z_1^2+\dots+z_r^2-z_{r+1}^2-\dots-z_N^2+\nu.	
	\end{equation} 
	Then, for a small $\varepsilon>0$, in a generic hyperplane section $A\cap X_{\nu-\varepsilon}$, the cycle $\alpha_\nu(\nu-\varepsilon)$ is realized by a sphere of radius $\sqrt{\varepsilon}$ in the $N$-plane spanned in $\mathbb{C}^N\simeq\mathbb{R}^{2N}$ by the set of vectors:
	\begin{equation}\label{posl}
		i\partial/\partial z_1, \dots, i\partial/ \partial z_r, \partial/ \partial z_{r+1}, \dots, \partial/ \partial z_N.
	\end{equation} 
	
	We orient this plane as follows: the sequence \eqref{posl} defines a positive orientation if and only if the sequence $\partial/\partial z_1, \dots, \partial/ \partial z_N$ positively orients $\mathbb{R}^N=\operatorname{Re}\mathbb{C}^N.$ Finally, we define the orientation of the vanishing sphere by the condition that at any of its points, the set of tangent vectors ($\operatorname{grad}f_{\tilde\lambda}$, the orienting frame) determines a positive (just defined) orientation of this plane. 
	
	{\em Remark.} By $\operatorname{grad}f_{\tilde\lambda}$ we mean $\operatorname{grad}\operatorname{Re}f_{\tilde\lambda}$.
	
	\begin{PREDLOZHENIE}
		This orientation of the cycle $\alpha_{\nu}$ (defined by transporting the cycle $\alpha_\nu(\nu-\varepsilon)$ over the distinguished path) does not depend on the choice of local coordinates in which the equation of the surface $A$ is of the form \eqref{vid}. $\Box$ 
	\end{PREDLOZHENIE}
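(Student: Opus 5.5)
The orientation of the vanishing sphere depends on three choices: the Morse coordinates $z$, the plane \eqref{posl} with its orientation, and the coorientation by $\operatorname{grad}f_{\tilde\lambda}$. I will check that each choice is invariant under a change of coordinates, reducing everything to a linear-algebra statement about the group of real linear transformations preserving the quadratic form of signature $(r,N-r)$.

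\textbf{Step 1: reduce to a linear change.} Take two real local coordinate systems $z$ and $w$ in which $A$ has the form \eqref{vid}. Both are real, since the critical value is real and the critical point is real. The transition map $w=\phi(z)$ is a real-analytic germ preserving the quadratic form $Q(z)=z_1^2+\dots+z_r^2-z_{r+1}^2-\dots-z_N^2$. I shrink the sphere by rescaling: in the family $\phi_t(z)=\phi(tz)/t$, $t\in(0,1]$, every $\phi_t$ preserves $Q$, and $\phi_t\to d\phi(0)$ as $t\to0$. For $\varepsilon$ small, the two realizations of $\alpha_\nu(\nu-\varepsilon)$ are isotopic in the fiber, and this isotopy respects the orienting data. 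Hence it suffices to treat a linear $L=d\phi(0)\in O(r,N-r)$ with real entries.

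\textbf{Step 2: the linear case.} For real $L$ preserving $Q$, the complexification maps the plane $P_z=i\mathbb R^r\oplus\mathbb R^{N-r}$ to a plane that is homotopic to $P_w$ inside the family of maximal planes on which $\operatorname{Re}Q$ is negative definite. Rather than argue this homotopy directly, I will use the connected components of $O(r,N-r)$. The group has at most four components, detected by $\det L_{++}$ and $\det L_{--}$ of the diagonal blocks.

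On the identity component the claim follows by continuity: the sphere $\{Q=-\varepsilon\}\cap P$ varies continuously and its orientation cannot jump. For a representative of each remaining component I take a reflection in a positive coordinate $z_j$ ($j\le r$), a reflection in a negative one, or their composition, and compute directly.

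Take the reflection $z_1\mapsto -z_1$. It reverses the orientation of $\mathbb R^N$, so the rule attached to \eqref{posl} also reverses the chosen orientation of the plane. At the same time, the map $i\partial/\partial z_1\mapsto -i\partial/\partial z_1$ reverses the plane's orientation as a set. The two sign changes cancel. The gradient coorientation is intrinsic to $f_{\tilde\lambda}$, so the orientation of the sphere is preserved.

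The same two cancelling sign changes occur for a reflection in a negative coordinate $z_j$, $j>r$. Compositions of reflections preserve the orientation as well, and this covers all four components.

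\textbf{Step 3: remaining issues.} I must also check that the gradient coorientation is compatible along the isotopy of Step 1. This holds because $\operatorname{grad}\operatorname{Re}f_{\tilde\lambda}$ is a coordinate-free object and stays transverse to the sphere inside the plane.

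I expect Step 1 to be the main obstacle. The point is to justify that the nonlinear Morse chart can be deformed to its linear part through $Q$-preserving charts, while keeping the small sphere inside a Milnor ball. Step 2 is then a finite sign check.
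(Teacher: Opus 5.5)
Your argument is correct. The paper gives no proof of this proposition: it is marked as evident, with the orientation convention taken over from Vassiliev's canonical orientation of real vanishing cycles. So your write-up fills a gap rather than competing with a different route. The rescaling $\phi_t(z)=\phi(tz)/t$ preserves $Q$ by homogeneity and tends to $L=d\phi(0)\in O(r,N-r)$, so it does reduce everything to linear changes of coordinates. Step 1, which you flagged as the main obstacle, is routine: the oriented spheres $\phi_t^{-1}(\Sigma_\varepsilon)$ stay in a ball of radius $O(\sqrt{\varepsilon})$ uniformly in $t$. The continuity argument on the identity component, together with the reflection checks, is then a valid finite verification.

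Three small remarks.

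\emph{The component bookkeeping can be avoided.} The orientation of the plane $P=iV_+\oplus V_-$ is the push-forward of the fixed orientation of $\mathbb{R}^N$ under the map that is multiplication by $i$ on $V_+=\operatorname{span}(\partial/\partial z_1,\dots,\partial/\partial z_r)$ and the identity on $V_-$. So it depends only on the splitting, not on the bases chosen in $V_\pm$; your reflection computation is exactly this observation. Moreover $V_-$ is the $Q$-orthogonal complement of $V_+$. Maximal $Q$-positive subspaces are graphs of linear maps $\mathbb{R}^r\to\mathbb{R}^{N-r}$ of norm $<1$, so they form a convex, hence connected, family. A single continuity argument therefore covers all four components of $O(r,N-r)$ at once.

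\emph{The gradient is not coordinate-free.} In Step 3, $\operatorname{grad}\operatorname{Re}f_{\tilde\lambda}$ depends on the metric, and in another chart it need not be tangent to the curved plane. What is intrinsic, and what you actually use, is the coorientation of the sphere inside $P$ toward increasing $f|_P$. This is the inward direction, since $f$ restricted to $P$ is $\nu$ minus a positive definite quadratic form.

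\emph{Reality of the charts is a hypothesis.} It is a standing assumption of the convention, since otherwise the $\partial/\partial z_j$ would not span $\mathbb{R}^N$. It does not follow from the critical point and critical value being real, as your Step 1 suggests.
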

	
	\item We choose the orientations of the cycles vanishing over complex conjugate paths such that the involution of complex conjugation acting on the homology maps them into each other preserving the orientations.
\end{enumerate}

b) Near a critical point of the function $f_{\tilde{\lambda}}|_{\hat S}$ with value $\nu\in\mathbb R$, equation \eqref{vid} takes the form $c=\pm x_1+z_2^2+\dots+z_r^2-z_{r+1}^2-\dots-z_N^2+\nu$.
We orient the half-cycle $\overline{\alpha}_{\nu}(\nu-\varepsilon)$ near the boundary point by the set (the outward normal $\pm\partial/\partial x_1$, the orienting frame of the cycle $\partial\overline{\alpha}_{\nu}$) provided that the vanishing cycle $\partial\overline{\alpha}_{\nu}\in H_{N-2}(A\cap X\cap S)$ is canonically oriented.

c) Let us introduce canonical orientations of the vanishing cycles of the group $H_{N-1}(\breve A\cap X)$. The pairs of cycles $\overline{\alpha}_{\nu}^{\operatorname{I}}$ and $(-\overline{\alpha}_{\nu}^{\operatorname{II}})$ vanishing with the same critical value $\nu\in\mathbb R$ form in sum the antisymmetric long cycles $\beta_{\nu}=\overline{\alpha}_{\nu}^{\operatorname{I}}-\overline{\alpha}_{\nu}^{\operatorname{II}}$, while $\sigma_*(\overline{\alpha}_{\nu}^{\operatorname{I}})=\sigma_*(\overline{\alpha}_{\nu}^{\operatorname{II}})=\overline{\alpha}_{\nu}$ holds. Indeed, the transformation $x_1\mapsto-x_1$ maps two real Morse coordinate systems near the critical points into each other with a reversal of orientation, but identifies the vectors $\operatorname{grad}\breve{f}_{\tilde\lambda}$. The component $\overline{\alpha}_{\nu}^{\operatorname{I}}$ vanishes at the critical point with $\operatorname{arg}{x_1}\in\{0, \pi/2\}$. Analogously, a short cycle $\beta_{\nu}$ with $\nu\in\mathbb R$ decomposes into the sum $\overline{\alpha}_{\nu}^{\operatorname{I}}-\overline{\alpha}_{\nu}^{\operatorname{II}}$, where the component half-cycle $\overline{\alpha}_{\nu}^{\operatorname{I}}$ has $\operatorname{arg}I'(\nu-\varepsilon)\in\{0, \pi/2\}$ (see Sec.~\ref{Rassl}).

d) Item 2 of a) applies to both b) and c).

e) The orientations of the tubes are induced by the orientations of the half-cycles via the composition of taking the boundary and taking the tube.  

Now let us proceed to the computations.

Consider the canonically oriented cycle $\breve{\Pi}(0)=\operatorname{Re}\breve{A}_{\tilde{\lambda}}\cap X_0$. It follows from Vassiliev's jump theorem (see~\cite{VVA}, Theorem 2, p. 305) that if the critical points of the function $\breve{f}_{\tilde{\lambda}}$ in the fiber over the value $\nu\in\mathbb R$ are real, then the classes of the cycles $\breve{\Pi}(\nu-\varepsilon)$ and $\mu_{*}\breve{\Pi}(\nu+\varepsilon)$ in the homology group $H_{N-1}(\breve{A}\cap X_{\nu-\varepsilon}, \partial D)$, where $\mu(t)\equiv\{\nu+\varepsilon e^{it}, t\in[0,\pi]\}$, differ by the class of the antisymmetric cycle $\beta_{\nu}$ vanishing over the point $\nu$:
\begin{equation}\label{skachok}
	\breve{\Pi}(\nu-\varepsilon)=\mu_{*}\breve{\Pi}(\nu+\varepsilon)+i_*\beta_{\nu}(\nu-\varepsilon).
\end{equation}
If there is a pair of imaginary conjugate critical points in the fiber over a real value $\nu$, then, of course, $\breve{\Pi}(\nu-\varepsilon)=\mu_{*}\breve{\Pi}(\nu+\varepsilon)$.

From Theorem 3 of~\cite{VVA}, p. 288, and \eqref{skachok}, we obtain:
\begin{PREDLOZHENIE}
	Suppose that a short (resp. long) cycle $\beta_{\nu}$ vanishes at a real critical point (resp. a pair of real critical points) with a positive critical value $\nu$. Then
	\begin{equation}\label{Petr}
		\langle\breve{\Pi}, \beta_{\nu}\rangle=\langle\breve{\Pi}(\nu-\varepsilon), \beta_{\nu}(\nu-\varepsilon)\rangle+\sum_i\langle\beta_{\nu_i}, \beta_{\nu}\rangle,
	\end{equation}
	where the sum is taken over all real critical values between $0$ and $\nu$ that are attained at real critical points. $\Box$
\end{PREDLOZHENIE}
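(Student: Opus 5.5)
The plan is to follow the argument that produces Vassiliev's Theorem 3 (Ch.~VI, \S1 of~\cite{VVA}), using the jump formula \eqref{skachok} as the only geometric input. Take the canonical distinguished system of paths described above and fix a positive real critical value $\nu$, attained at a real point (or at a pair of real points, for a long cycle). Move the base point from $X_0$ to $X_{\nu-\varepsilon}$ along the real axis. The path will be slightly indented into $\operatorname{Im}\geqslant0$ near each intermediate real critical value $\nu_i\in(0,\nu)$, and these indentations are exactly the half-turns $\mu(t)=\{\nu_i+\varepsilon e^{it}\}$ that occur in \eqref{skachok}. The canonical path for $\beta_\nu$ runs along the real axis towards smaller values, with its initial segment on that axis. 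This lets me regard the transport of $\beta_\nu(\nu-\varepsilon)$ back to $X_0$ as transport along this indented path, reversed. Intersection indices in $\breve A\cap X_c$ are invariant under parallel transport. So it is enough to follow $\breve\Pi(c)$ as $c$ decreases from $\nu-\varepsilon$ to $0$ and to record how its class changes.

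Step one is to check what happens away from critical values. On a real segment $[a,b]$ that contains no real critical values of $\breve f_{\tilde\lambda}$, the real level set $\operatorname{Re}\breve A\cap X_c$ deforms by an isotopy, so the canonically cooriented cycle $\breve\Pi(c)$ is carried into itself by the Gauss--Manin connection. Imaginary critical values do not lie between the paths and the real axis, which is condition 1 of the canonical system. Hence passing over a real value $\nu_i$ at which only a pair of complex conjugate critical points occurs contributes nothing, by the remark after \eqref{skachok}. Step two is to handle each real critical value $\nu_i\in(0,\nu)$ attained at real points: crossing it along the upper half-circle changes the class by $i_*\beta_{\nu_i}$, according to \eqref{skachok}. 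Iterating over all such $\nu_i$ expresses $\breve\Pi(0)$, transported to $X_{\nu-\varepsilon}$, as
\[
\breve\Pi(\nu-\varepsilon)+\sum_i \beta_{\nu_i}(\nu-\varepsilon),
\]
with the usual transported meaning of each term. Pairing with $\beta_\nu(\nu-\varepsilon)$ and using invariance of intersection numbers gives \eqref{Petr}. The pairing between $H_{N-1}(\breve A\cap X,\partial D)$ and $H_{N-1}(\breve A\cap X)$ is compatible with $i_*$, so the term $\langle i_*\beta_{\nu_i},\beta_\nu\rangle$ is the ordinary intersection number $\langle\beta_{\nu_i},\beta_\nu\rangle$.

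The main obstacle is checking that the transported cycles $\beta_{\nu_i}$ obtained in this way really are the elements of the distinguished antisymmetric basis defined by the canonical paths, with the canonical orientations c). Step one shows the indented real path is homotopic, in the complement of the critical values, to the concatenation of the canonical paths. To verify this I will argue that each canonical path for $\nu_i$ runs first along the real axis towards smaller values and then through $\operatorname{Im}\geqslant0$, with no imaginary critical values enclosed. I will check the orientation and sign conventions by comparing with the local model \eqref{vid}, applied after the lift $\sigma$, in the way it is done in~\cite{VVA}, p.~288. In the short-cycle case the same check uses the half-cycle description through $I'$ and the convention $\operatorname{arg}I'\in\{0,\pi/2\}$.
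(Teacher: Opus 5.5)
Your proposal is correct and follows the paper's route. The paper obtains \eqref{Petr} by citing Theorem~3 of~\cite{VVA} together with the jump formula \eqref{skachok}, and you have unpacked exactly that argument: transport $\breve\Pi$ along the real axis indented into $\operatorname{Im}\geqslant0$, add $i_*\beta_{\nu_i}$ at each real critical value attained at real points, then pair with $\beta_\nu(\nu-\varepsilon)$. The orientation check you call the main obstacle is already built into \eqref{skachok}, because the cycle appearing there is the canonically oriented one, and its transport along the canonical path is by definition the basis element $\beta_{\nu_i}$.
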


\begin{remark}
	Everywhere below, where a real critical value $\nu$ is mentioned, we will assume $\nu>0$. The case $\nu<0$ is studied analogously (see Theorem 3 of~\cite{VVA}, p. 288). 
\end{remark}
	
	From Theorem 1, p. 304 of~\cite{VVA}, it follows that:
	\begin{equation}\label{q}
		\langle\breve{\Pi}(\nu-\varepsilon), \beta_{\nu}(\nu-\varepsilon)\rangle=(-1)^{\frac{(N-2)(N-1)}{2}}\chi(S^{N-r-1}),
	\end{equation} 
	if $\beta_{\nu}$ is a short cycle, and 
	\begin{equation}\label{w}
		\langle\breve{\Pi}(\nu-\varepsilon), \beta_{\nu}(\nu-\varepsilon)\rangle=2(-1)^{\frac{(N-2)(N-1)}{2}}\chi(S^{N-r-1}),
	\end{equation}
	if $\beta_{\nu}$ is a long cycle. Here, $\chi$ is the Euler characteristic, and $r$ is the positive Morse index (see \eqref{vid}) of the corresponding real critical point (or of any of the pair of real critical points, whose indices obviously coincide).
	
	Let us now return to the original problem --- the computation of the class of the cycle $\Pi=\operatorname{Re}A\cap X$ in the homology group $H_{N-1}(A\cap X\setminus S, \partial D)$.
	
	\begin{lemm}\label{Petra}
		Suppose that the critical value $\nu$ of the function $f_{\tilde{\lambda}}$ is positive. Then 
		\begin{equation}\label{sk}
			[\Pi, \alpha_{\nu}]=\langle\Pi_{ev}(\nu-\varepsilon), \alpha_{\nu}(\nu-\varepsilon)\rangle_{A\cap X_{\nu-\varepsilon}}+\sum_i[\alpha_{\nu_i}, \alpha_{\nu}],
		\end{equation}
		where $\Pi_{ev}(\nu-\varepsilon)$ is the class of the cycle $\operatorname{Re}A\cap X_{\nu-\varepsilon}$ in the homology group $H_{N-1}(A\cap X_{\nu-\varepsilon}, \partial D)$, the summation runs over all real critical values of the function $f_{\tilde{\lambda}}$ between $0$ and $\nu$, and the bilinear form $[\cdot,\cdot]$ is defined in Proposition~\ref{bilin} on p.~\pageref{bilin}. The value $\langle\Pi_{ev}(\nu-\varepsilon), \alpha_{\nu}(\nu-\varepsilon)\rangle_{A\cap X_{\nu-\varepsilon}}$ is computed by Theorem 1, p. 304 of~\cite{VVA} from the positive Morse index of the critical point with value $\nu$.
	\end{lemm}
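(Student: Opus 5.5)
We adapt the jump argument behind \eqref{Petr} (Theorem~3 of~\cite{VVA}, Ch.~VI) to the non-proper fibre $A\cap X\setminus S$. The left-hand side is, by Proposition~\ref{bilin}, the pairing $\langle\Pi,\overline{\alpha}_\nu\rangle_{A\cap X\setminus S}$ of the class $\Pi\in H_{N-1}(A\cap X\setminus S,\partial D)$ with the relative cycle $\overline{\alpha}_\nu\in H_{N-1}(A\cap X,S)$ determined by the canonical path. Since $\nu$ is a critical value of $f_{\tilde\lambda}$, $\alpha_\nu$ is an honest vanishing sphere and $\overline{\alpha}_\nu=\alpha_\nu$. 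The canonical path runs along the real axis from $0$ to $\nu$, passing the intermediate real critical values through small upper half-circles $\mu(t)=\{\nu_i+\varepsilon e^{it}\}$. We transport everything along this path to the fibre $X_{\nu-\varepsilon}$. The intersection index is invariant under simultaneous transport of both cycles, so it suffices to track how the real cycle $\operatorname{Re}A\cap X_c$ changes as $c$ moves along the path.

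The key step is a jump formula in the non-proper fibre, analogous to \eqref{skachok}. Since $\operatorname{Re}A\cap X\cap S=\varnothing$ is assumed (the cycle $\Pi$ is defined), away from real critical values of $f_{\tilde\lambda}|_{\hat S}$ the real cycles $\operatorname{Re}A\cap X_c$ avoid $S$. They therefore form a continuous family in $H_{N-1}(A\cap X_c\setminus S,\partial D)$ along real segments. At each real critical value $\nu_i$ we would prove
$$\Pi(\nu_i-\varepsilon)=\mu_*\Pi(\nu_i+\varepsilon)+i_*\alpha_{\nu_i}(\nu_i-\varepsilon),$$
where $\alpha_{\nu_i}$ is either a vanishing sphere or, for a critical value of $f_{\tilde\lambda}|_{\hat S}$, the tube $\tau(\alpha'_{\nu_i})$.

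For critical points of $f_{\tilde\lambda}$ the jump is local, supported in a Morse ball away from $S$, and Vassiliev's jump theorem applies verbatim. For boundary critical points we would use the local model $c=\pm x_1+\sum\pm z_j^2+\nu_i$ together with the branched cover $\sigma$. The jump of $\breve\Pi$ is the short cycle $\beta_{\nu_i}=\overline{\alpha}^{\operatorname{I}}-\overline{\alpha}^{\operatorname{II}}$ by \eqref{skachok}. Pushing it down by $\sigma$ and removing $S$, the difference of the two half-cycles localized in $A\cap X\setminus S$ is the tube around $\partial\overline{\alpha}$. Here the orientation convention e) should give the correct sign.

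Pairing the jump formula with $\overline{\alpha}_\nu$ transported to $X_{\nu_i-\varepsilon}$, and using Proposition~\ref{bilin} again, each crossing contributes $[\alpha_{\nu_i},\alpha_\nu]$. We then iterate from $0$ up to $\nu-\varepsilon$. Imaginary critical values lying near the real axis contribute nothing, as in the remark after \eqref{skachok}. The final term is $\langle\Pi(\nu-\varepsilon),\alpha_\nu(\nu-\varepsilon)\rangle$. Because $\alpha_\nu$ is a sphere, this pairing factors through the epimorphism of \eqref{cd} to $H_{N-1}(A\cap X_{\nu-\varepsilon},\partial D)$, giving the $\Pi_{ev}$ term, which is evaluated by Theorem~1, p.~304 of~\cite{VVA}.

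The main obstacle is the boundary jump: checking that the image under $\sigma$ of the short-cycle jump is exactly $+\tau(\alpha'_{\nu_i})$ in $H_{N-1}(A\cap X\setminus S,\partial D)$ with the canonical orientations. We would verify this in the explicit local model, as in Fig.~\ref{u} for $N=2$, and extend to all $N$ by stabilization.
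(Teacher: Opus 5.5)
Your final formula is right, and your last step, that pairing with the sphere $\alpha_\nu$ factors through the epimorphism $H_{N-1}(A\cap X\setminus S,\partial D)\to H_{N-1}(A\cap X,\partial D)$ of diagram~\eqref{cd}, is exactly the key point of the paper's proof. The gap is in the machinery you put before it: a jump formula in the non-proper fibre at every real critical value.

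\textbf{The failing step.} Your assertion that $\operatorname{Re}A\cap X_c$ avoids $S$ away from the critical values of $f_{\tilde\lambda}|_{\hat S}$ is false. The hypothesis $\operatorname{Re}A\cap X\cap S=\varnothing$ concerns only $c=0$. Once $c$ passes a value $\hat\nu\in(0,\nu)$ attained by $f_{\tilde\lambda}|_{\hat S}$ at a real critical point (e.g.\ a local minimum of $f_{\tilde\lambda}$ on $\operatorname{Re}\hat S$), the real level set crosses $S$ along a small sphere. From then on $\operatorname{Re}A\cap X_c$ is not a class in $H_{N-1}(A\cap X_c\setminus S,\partial D)$. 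Consequently:
\begin{itemize}
\item the term $\mu_*\Pi(\hat\nu+\varepsilon)$ in your jump formula is undefined;
\item every later jump at a critical value of $f_{\tilde\lambda}$, computed in that group, is undefined as well.
\end{itemize}
There is no canonical real class on the far side. Continuing through the upper or the lower half-circle gives chains pushed off $S$ in opposite directions, and these differ by $i_*\operatorname{Var}$, a multiple of the tube (Remark~\ref{rem}).

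\textbf{The tube mechanism.} Your way of producing the tube also fails. Since $\beta_{\nu_i}$ is $\rho$-anti-invariant and $\sigma\circ\rho=\sigma$, one has $\sigma_*\beta_{\nu_i}=0$. Moreover $\breve\Pi=\sigma^{-1}(\Pi^+)$ sees only the part of $\Pi$ in $x_1>0$, so \eqref{skachok} does not descend to a jump of $\Pi$.

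\textbf{The repair.} Use your final observation at the outset, which is what the paper does. By Proposition~\ref{bilin} and \eqref{cd},
$[\Pi,\alpha_\nu]=\langle\Pi,\overline{\alpha}_\nu\rangle=\langle\Pi_{ev}(0),\alpha_\nu\rangle_{A\cap X}$.
This depends only on the image of $\Pi$ in $H_{N-1}(A\cap X,\partial D)$, so the boundary can be forgotten. In the proper fibres $A\cap X_c$ the critical values of $f_{\tilde\lambda}|_{\hat S}$ are not critical at all:
\begin{itemize}
\item the real cycle varies continuously in $H_{N-1}(A\cap X_c,\partial D)$ across them, even after it starts meeting $S$;
\item the half-circles around them can be straightened in $B_\delta\setminus\Sigma^{f}$ without changing $\alpha_\nu\in H_{N-1}(A\cap X)$.
\end{itemize}
The identity is then exactly Vassiliev's formula VI.5 of~\cite{VVA} for the boundaryless singularity, with $\langle\alpha_{\nu_i},\alpha_\nu\rangle_{A\cap X}=[\alpha_{\nu_i},\alpha_\nu]$ by Definition~\ref{intmatr}. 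Boundary critical values contribute nothing, consistent with $[\tau(\cdot),\Delta]=0$.

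The paper's ``in other words'' remark says the same in your language: the transport over $\hat\nu$ can be chosen to agree with $\operatorname{Re}A\cap X_{\hat\nu+\varepsilon}$ outside a thin tubular neighbourhood of $S$, which the sphere $\alpha_\nu$ avoids. So your ``main obstacle'', fixing the sign of the boundary jump, never needs to be resolved.
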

	
	\textbf{Proof.} Since the value $[\Pi, \alpha_{\nu}]=\langle\Pi, \alpha_{\nu}\rangle_{A\cap X}$ depends only on the classes of these cycles in the groups $H_{N-1}(A\cap X, \partial D)$ and $H_{N-1}(A\cap X)$ (see~\eqref{cd}), the boundary can be neglected, and formula \eqref{sk} follows immediately from formula VI.5, p. 288 of~\cite{VVA}. 
	
	In other words, consider the transport of the cycle $\Pi$ along the distinguished path $\gamma_{\nu}$ to the fiber $X_{\nu-\varepsilon}.$ Let $\hat{\nu}<\nu$ be the smallest positive critical value of the function $f_{\tilde{\lambda}}|_{\hat{S}}$. Transports over the critical values $\nu_i$ of the function $f_{\tilde{\lambda}}$ for $\nu_i<\hat{\nu}$ are described by Vassiliev's jump theorem (\cite{VVA}, Theorem 2, p. 305), and the transport of the cycle $\Pi( \hat{\nu}-\varepsilon)$ along the arc $\mu^{-1}$ over the point $\hat{\nu}$ can be chosen so that the result coincides with the set $\operatorname{Re}A\cap X_{\hat{\nu}+\varepsilon}$ outside a tubular neighborhood of $A\cap X_{\hat{\nu}+\varepsilon}\cap S$ in $A\cap X_{\hat{\nu}+\varepsilon}.$ From this, it follows that $\langle\mu^{-1}_{*}\Pi(\hat{\nu}-\varepsilon), \alpha_{\nu}(\hat{\nu}+\varepsilon)\rangle_{A\cap X_{\hat{\nu}+\varepsilon}\setminus S}=\langle\Pi_{ev}(\hat{\nu}+\varepsilon), \alpha_{\nu}(\hat{\nu}+\varepsilon)\rangle_{A\cap X_{\hat{\nu}+\varepsilon}}$, and the statement follows by induction. $\Box$ 
	
	\begin{PREDLOZHENIE}\label{Petra'}
		Let $\nu$ and $\bar{\nu}$ be two complex conjugate critical values of the function $f_{\tilde{\lambda}}$. Then 
		\begin{equation}		
			|[\Pi, \alpha_{\nu}]|=|[\Pi, \alpha_{\bar{\nu}}]|\equiv[\alpha_{\nu}, \alpha_{\bar{\nu}}] \pmod 2.
		\end{equation}
	\end{PREDLOZHENIE}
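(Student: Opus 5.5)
The plan is to use the complex conjugation involution $\mathrm{conj}$ of the real fiber $A\cap X$, which preserves $S$ (the boundary is real). Since $\nu,\bar\nu$ are non-real, $\alpha_\nu,\alpha_{\bar\nu}$ are vanishing cycles of $H_{N-1}(A\cap X)$. By the canonical orientation rules a)2 and d), the distinguished paths $\gamma_\nu,\gamma_{\bar\nu}$ are complex conjugate, so $\mathrm{conj}_*\alpha_\nu=\alpha_{\bar\nu}$. The cycle $\Pi=\operatorname{Re}A\cap X$ is pointwise fixed by $\mathrm{conj}$. Conjugation acts on the complex $(N-1)$-manifold $A\cap X$ with orientation sign $(-1)^{N-1}$, and it preserves or reverses the canonical orientation of $\Pi$ by some fixed sign $\epsilon$. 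Hence
$$[\Pi,\alpha_{\bar\nu}]=\langle\Pi,\mathrm{conj}_*\alpha_\nu\rangle=\pm\langle\mathrm{conj}_*\Pi,\mathrm{conj}_*\alpha_\nu\rangle=\pm\langle\Pi,\alpha_\nu\rangle,$$
which gives the first equality of absolute values. Here, as in the proof of Lemma~\ref{Petra}, $[\Pi,\alpha]=\langle\Pi,\alpha\rangle_{A\cap X}$ for vanishing cycles $\alpha$, by \eqref{cd}.

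For the congruence I will work modulo $2$, where orientations disappear. Following the proof of Lemma~\ref{Petra} and Vassiliev's argument for Theorem~3 of~\cite{VVA} in the conjugate case, I transport $\Pi$ to a fiber near $\nu$ along $\gamma_\nu$. Because no real critical value is crossed by the path, the relevant local picture sees only the pair $\nu,\bar\nu$. The key step is the identity $\mathrm{conj}_*\Pi(X)\equiv\Pi(X)+\alpha_\nu+\alpha_{\bar\nu}\pmod 2$ in $H_{N-1}(A\cap X,\partial D;\mathbb Z_2)$, up to contributions from cycles orthogonal to $\alpha_\nu$.

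An alternative route avoids this identity. Take the real loop going around the conjugate pair, i.e.\ the composition $\gamma_{\bar\nu}\gamma_\nu$ of simple loops in appropriate order. It is homotopic in $B\setminus\Sigma$ to a loop along which $\operatorname{Re}A\cap X$ is transported to itself, since no real critical values lie inside and $\Pi$ is defined by real data. By the Picard--Lefschetz formula \eqref{PL}, this composition acts modulo $2$ as
$$\Pi\mapsto \Pi+\langle\Pi,\alpha_\nu\rangle\alpha_\nu+\bigl(\langle\Pi,\alpha_{\bar\nu}\rangle+\langle\Pi,\alpha_\nu\rangle\langle\alpha_\nu,\alpha_{\bar\nu}\rangle\bigr)\alpha_{\bar\nu}.$$
Invariance of $\Pi$ then forces $\langle\Pi,\alpha_\nu\rangle\equiv0$ and $\langle\Pi,\alpha_{\bar\nu}\rangle\equiv\langle\Pi,\alpha_\nu\rangle\langle\alpha_\nu,\alpha_{\bar\nu}\rangle$. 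That conclusion is inconsistent with the claimed result, so the loop is not trivial on $\Pi$.

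The correct setup is therefore Vassiliev's: moving $X$ along the real axis past $\operatorname{Re}\nu$ shows that $\mathrm{conj}$-transport of $\Pi$ differs from $\Pi$ by $\alpha_\nu\pm\alpha_{\bar\nu}$. Pairing this relation with $\alpha_\nu$, and using $\langle\alpha_\nu,\alpha_\nu\rangle\equiv0$ or $2\equiv0\pmod 2$ for both parities of $N$, gives $[\Pi,\alpha_\nu]+[\Pi,\alpha_{\bar\nu}]\equiv[\alpha_{\bar\nu},\alpha_\nu]$. Combined with $[\Pi,\alpha_\nu]\equiv[\Pi,\alpha_{\bar\nu}]$ from the first step, this yields the claim, since $2[\Pi,\alpha_\nu]\equiv0$.

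The main obstacle is this jump relation for a conjugate pair in the presence of the boundary $S$. I would justify it as in Lemma~\ref{Petra}: the boundary contributes nothing, because $[\Pi,\alpha]$ depends only on classes in $H_{N-1}(A\cap X,\partial D)$ and $H_{N-1}(A\cap X)$. The relation then reduces to Vassiliev's formula VI.5 (p.~288 of~\cite{VVA}) for conjugate critical values.
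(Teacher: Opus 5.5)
Your first step is fine. The conjugate paths and orientation rule a)2 give $\mathrm{conj}_*\alpha_\nu=\alpha_{\bar\nu}$, and $\Pi$ is pointwise fixed. Since $\mathrm{conj}$ multiplies intersection numbers in $A\cap X$ by $(-1)^{N-1}$, you get $[\Pi,\alpha_{\bar\nu}]=\pm[\Pi,\alpha_\nu]$.

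The congruence, however, is not established. The relation you finally derive is $[\Pi,\alpha_\nu]+[\Pi,\alpha_{\bar\nu}]\equiv[\alpha_{\bar\nu},\alpha_\nu]\pmod 2$. Combined with $[\Pi,\alpha_\nu]\equiv[\Pi,\alpha_{\bar\nu}]$, it does not give the claim: the left-hand side becomes $2[\Pi,\alpha_\nu]\equiv 0$, so what follows is $[\alpha_\nu,\alpha_{\bar\nu}]\equiv 0$, not $[\Pi,\alpha_\nu]\equiv[\alpha_\nu,\alpha_{\bar\nu}]$. That conclusion is false in general. For the real morsification $x^3+\lambda x+y^2$, $\lambda>0$, of an $A_2$ point, the two conjugate vanishing cycles satisfy $\langle\alpha_\nu,\alpha_{\bar\nu}\rangle=\pm1$. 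So the ``jump'' $\Pi\mapsto\Pi+\alpha_\nu\pm\alpha_{\bar\nu}$ under your unspecified ``conj-transport'' cannot be correct as stated.

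Your earlier ``key identity'' $\mathrm{conj}_*\Pi\equiv\Pi+\alpha_\nu+\alpha_{\bar\nu}$ is likewise inconsistent on the base fiber, where $\mathrm{conj}_*\Pi=\pm\Pi$, and you discard the loop argument yourself. What remains is the bare appeal to Vassiliev after neglecting the boundary as in Lemma~\ref{Petra}. That is precisely the paper's whole proof, though the relevant reference is item 3.3 on p.~308 of Vassiliev's book rather than formula VI.5. None of your own steps actually supports it.

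If you want a self-contained argument, no transport is needed. By Definition~\ref{obobsch}, $\Pi$ is the entire real locus of the fiber $A\cap X$ inside $D$. By \eqref{cd} and Definition~\ref{intmatr}, both $[\Pi,\alpha_\nu]$ and $[\alpha_\nu,\alpha_{\bar\nu}]$ are ordinary intersection numbers in $A\cap X$. Perturb the vanishing sphere $\alpha_\nu$ so that it is transverse to $\Pi$ and to its image $\mathrm{conj}(\alpha_\nu)$. At a real point this asks that its tangent plane be the graph of an invertible linear map $i\mathbb{R}^{N-1}\to\mathbb{R}^{N-1}$, which is a generic condition. The non-real points of $\alpha_\nu\cap\mathrm{conj}(\alpha_\nu)$ come in pairs $\{x,\bar x\}$, and the real ones are exactly the points of $\alpha_\nu\cap\Pi$. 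Hence $\langle\alpha_\nu,\alpha_{\bar\nu}\rangle\equiv\langle\Pi,\alpha_\nu\rangle\pmod 2$, which together with your first step is the statement.
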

	\textbf{Proof.} The statement follows from the proof of Theorem~\ref{Petra} and item 3.3 on p. 308 of~\cite{VVA}. $\Box$
	
	Now let $\nu$ be a critical value of the function $f_{\tilde{\lambda}}|_{\hat{S}}$. For the computations, we need to pass to the two-sheeted covering of the manifold $A\cap X$ branched along the boundary $\hat{S}:x_1=0$. Let us decompose the cycle $\Pi$ into the union $\Pi=\Pi^+\cup\Pi^-$, where $\Pi^+=\Pi\cap\{x_1>0\}$ and $\Pi^-=\Pi\cap\{x_1<0\}$. We define the cycle $\sigma^{-1}\Pi^+$ as the full preimage in the covering space, taken there with its canonical orientation. Then $\sigma^{-1}\Pi^+=\breve{\Pi}$ and $[\Pi^+,\alpha_{\nu_i}]\equiv 1/2\langle\breve{\Pi},\beta_{\nu_i}\rangle \pmod 2$ hold, see the proof of Proposition~\ref{pepe}.
	
	Let us introduce the following notation (see Fig.~\ref{Pep}):
	\begin{eqnarray}
		f_{\tilde{\lambda}}^+\equiv f_{\tilde{\lambda}},\\
		f_{\tilde{\lambda}}^-\equiv f_{\tilde{\lambda}}(-x_1,x_2,\dots,x_N),\\ 
		\breve{f}^{\pm}_{\tilde{\lambda}}\equiv f_{\tilde{\lambda}}^{\pm}(x_1^2,x_2,\dots,x_N),\\
		\label{p1}\breve{\Pi}_{\pm}\equiv\operatorname{Re}\{\breve{f}^{\pm}_{\tilde{\lambda}}=0\}\cap D,\\
		\Pi^{\pm}_{\pm} - \text{the cycle }\Pi^{\pm}\text{ defined for the function } f_{\tilde{\lambda}}^{\pm}.\label{p2}
	\end{eqnarray} 
	We also denote by $\alpha_{\nu_i}^{\pm},\overline{\alpha}_{\nu_i}^{\pm},\beta_{\nu_i}^{\pm}$ the corresponding basic cycles (defined for the functions $f_{\tilde{\lambda}}^{\pm}$) taken with their canonical orientation.
	
	\begin{figure}[H]
		\center{\includegraphics[width=0.4\linewidth]{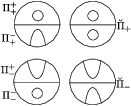}}
		\caption{Scheme of the cycles \eqref{p1}--\eqref{p2} for the function $f_{\tilde{\lambda}}=x_2^2+(x_1-\varepsilon)(x_1-2\varepsilon)(x_1+\varepsilon)$}
		\label{Pep}	
	\end{figure}
	
	\begin{remark}\label{zam}
		The involution $\rho:x_1\mapsto-x_1$ of the space $\mathbb{C}^N$ identifies the level sets of the functions $f^{+}_{\tilde{\lambda}}$ and $f^{-}_{\tilde{\lambda}}$ (preserving their complex orientations); however, the orientations of the images of the cycles of interest to us diverge from the canonical ones: $\rho_*(\overline{\alpha}_{\nu_i}^+)=-\overline{\alpha}_{\nu_i}^-$ for $\nu_i\in\mathbb{R}$ and $\rho_*(\Pi_+)=-\Pi_-.$
	\end{remark} 
	
	\begin{remark}\label{zamech}
		The level sets of the functions $\breve{f}^{+}_{\tilde{\lambda}}$ and $\breve{f}^{-}_{\tilde{\lambda}}$ are identified by the diffeomorphism $x_1\mapsto ix_1$ (or $x_1\mapsto -ix_1$) of the space $\mathbb{C}^N$. If a long cycle $\beta_{\nu_i}^+$ vanished at real critical points, then the cycle $\beta_{\nu_i}^-$ vanishes at imaginary conjugate points (and vice versa), while the critical points of short cycles remain in place.	
	\end{remark}
	
	\begin{PREDLOZHENIE}\label{chet}
		Let $\nu$ be a real critical value of the function $f_{\tilde{\lambda}}|_{\hat{S}}$. Then
		\begin{equation}\label{ol}
			[\Pi,\alpha_{\nu}]\equiv 1/2(\langle\breve{\Pi}_{+},\beta_{\nu}^+\rangle+\langle\breve{\Pi}_{-},\beta_{\nu}^-\rangle) \pmod 2.
		\end{equation}
	\end{PREDLOZHENIE}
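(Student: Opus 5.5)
We aim to reduce the statement to the mod-2 congruence already proved for the covering, applied separately to the two halves $\Pi^+$ and $\Pi^-$. First, by Proposition~\ref{bilin} and diagram \eqref{cd}, $[\Pi,\alpha_\nu]=\langle\Pi,\overline{\alpha}_\nu\rangle_{A\cap X\setminus S}$, where $\overline{\alpha}_\nu$ is the half-cycle of $H_{N-1}(A\cap X,S)$ defined by the canonical path to $\nu$. Since $\Pi$ does not meet $S$ (Definition~\ref{obobsch}), we split this intersection number additively:
$$[\Pi,\alpha_\nu]=\langle\Pi^+,\overline{\alpha}_\nu\rangle+\langle\Pi^-,\overline{\alpha}_\nu\rangle.$$
Both terms are well defined because $\Pi^\pm$ are closed in the complement of $S$ and have boundary only on $\partial D$.

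For the $\Pi^+$ term we use the identity $\sigma^{-1}\Pi^+=\breve\Pi_+$, together with the pairing argument of Proposition~\ref{pepe}. A half-cycle $\overline{\alpha}_\nu$ lifts to the short cycle $\beta_\nu^+=\overline{\alpha}_\nu^{\operatorname{I}}-\overline{\alpha}_\nu^{\operatorname{II}}$, which is the full preimage of the path $I'_\nu$ under $x_1\mapsto x_1^2$. Each transverse intersection point of $\Pi^+$ with $\overline{\alpha}_\nu$ lies away from $x_1=0$, so it has exactly two preimages in $\breve A\cap X$. The involution $\rho$ preserves the complex orientation, so these two preimages are counted with the same sign. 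This gives $\langle\breve\Pi_+,\beta_\nu^+\rangle\equiv 2\langle\Pi^+,\overline{\alpha}_\nu\rangle\pmod 4$, and hence $\langle\Pi^+,\overline{\alpha}_\nu\rangle\equiv\frac12\langle\breve\Pi_+,\beta_\nu^+\rangle\pmod 2$. This is precisely the congruence quoted just before the proposition.

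For the $\Pi^-$ term we apply the involution $\rho:x_1\mapsto-x_1$ of Remark~\ref{zam}. It carries the level set of $f^+_{\tilde\lambda}$ to that of $f^-_{\tilde\lambda}$, preserving the complex orientation, and it maps $\Pi^-$ onto the cycle $\Pi^+_-$ for $f^-_{\tilde\lambda}$. The same paths in the $c$-plane are used on both sides, since $\rho$ preserves the projection $p$. Therefore $\rho_*\overline{\alpha}_\nu=\pm\overline{\alpha}_\nu^-$. By Remark~\ref{zam} the orientations may differ from the canonical ones only by signs, and signs are irrelevant mod $2$. It follows that $\langle\Pi^-,\overline{\alpha}_\nu\rangle\equiv\langle\Pi^+_-,\overline{\alpha}^-_\nu\rangle\pmod 2$. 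Applying the covering argument of the previous paragraph to $f^-_{\tilde\lambda}$ then gives $\equiv\frac12\langle\breve\Pi_-,\beta^-_\nu\rangle\pmod 2$. Adding the two congruences yields \eqref{ol}.

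The main obstacle is the step asserting that the half-cycle $\overline{\alpha}_\nu$ transported by $\rho$ is indeed $\pm\overline{\alpha}^-_\nu$. The half-cycle is defined through the path $I'_\nu$ in $\mathbb C_{x_1}$, and $\rho$ reflects this path to $-I'_\nu$. We must check that $-I'_\nu$ is the path that the construction of Sec.~\ref{Rassl} produces for $f^-$ along the same distinguished path in the $c$-plane. This should follow from the fact that the critical values of $x_1$ on the fibres of $f^-$ are exactly the negatives of those for $f^+$. We also need that any ambiguity, such as the non-removable intersections in Example~\ref{Xx9} or the path dependence of Remark~\ref{ed}, changes intersection numbers only by even amounts on the covering. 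This is again guaranteed by the paired counting away from $x_1=0$, so the congruence holds for any admissible choice.
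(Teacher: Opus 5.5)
Your proposal is correct and is essentially the paper's proof: split $[\Pi,\alpha_\nu]=\langle\Pi^+_+,\overline{\alpha}^+_\nu\rangle+\langle\Pi^-_+,\overline{\alpha}^+_\nu\rangle$, handle the first term by the paired counting in the branched cover $\sigma$ as in Proposition~\ref{pepe}, and use $\rho$ to move the second term to $\langle\Pi^+_-,\overline{\alpha}^-_\nu\rangle$ before applying the covering argument again. The only differences are minor. The paper tracks the two sign changes $\rho_*\Pi^-_+=-\Pi^+_-$ and $\rho_*\overline{\alpha}^+_\nu=-\overline{\alpha}^-_\nu$ from Remark~\ref{zam}, which cancel, whereas you discard signs mod $2$. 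Also, your ``main obstacle'' is not a real one: $\rho$ commutes with the projection $p$, hence with transport along the same path in the $c$-plane, so $\rho_*\overline{\alpha}^+_\nu=\pm\overline{\alpha}^-_\nu$ holds directly.
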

	\textbf{Proof.} $[\Pi,\alpha_{\nu}]=[\Pi^+_{+}+\Pi^-_{+},\alpha_{\nu}^+]=[\Pi^+_{+},\alpha_{\nu}^+]+[\Pi^-_{+},\alpha_{\nu}^+]=\langle\Pi^+_{+},\overline{\alpha}^{+}_{\nu}\rangle+\langle\Pi^-_{+},\overline{\alpha}^{+}_{\nu}\rangle\equiv 1/2\langle\sigma^{-1}\Pi^+_{+},\sigma^{-1}\overline{\alpha}^{+}_{\nu}\rangle+\langle\rho_*\Pi^-_{+},\rho_*\overline{\alpha}^{+}_{\nu}\rangle=1/2\langle\breve{\Pi}_{+},\beta_{\nu}^+\rangle+\langle-\Pi^+_{-},-\overline{\alpha}_{\nu}^-\rangle=1/2\langle\breve{\Pi}_{+},\beta_{\nu}^+\rangle+\langle\Pi^+_{-},\overline{\alpha}_{\nu}^-\rangle\equiv 1/2\langle\breve{\Pi}_{+},\beta_{\nu}^+\rangle+1/2\langle\sigma^{-1}\Pi^+_{-},\sigma^{-1}\overline{\alpha}_{\nu}^-\rangle=1/2(\langle\breve{\Pi}_{+},\beta_{\nu}^+\rangle+\langle\breve{\Pi}_{-},\beta_{\nu}^-\rangle) \pmod 2$. $\Box$
	
	Let us denote $r^+\equiv\langle\breve{\Pi}_{+}(\nu-\varepsilon),\beta_{\nu}^+(\nu-\varepsilon)\rangle$ and $r^-\equiv\langle\breve{\Pi}_{-}(\nu-\varepsilon),\beta_{\nu}^-(\nu-\varepsilon)\rangle.$ The numbers $r^{\pm}$ are computed by formula \eqref{q}.
	
	\begin{lemm}\label{Petra1} 
		Suppose that the critical value $\nu$ of the function $f_{\tilde{\lambda}}|_{\hat{S}}$ is positive. Then
		\begin{equation}\label{form}
			[ \Pi, \alpha_{\nu} ] \equiv \frac{r^+ + r^-}{2} + \sum_i [ \alpha_{\nu_i}, \alpha_{\nu} ] \pmod 2,
		\end{equation}
		where the bilinear form $[\cdot,\cdot]$ is defined in Proposition~\ref{bilin} on p.~\pageref{bilin}, and the sum runs over all real critical values of the functions $f_{\tilde{\lambda}}$ and $f_{\tilde{\lambda}}|_{\hat{S}}$ from $0$ to $\nu$.
	\end{lemm}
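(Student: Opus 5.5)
We follow the scheme of the proof of Lemma~\ref{Petra}: transport $\Pi$ along the distinguished path $\gamma_\nu$ from $X_0$ to the fiber $X_{\nu-\varepsilon}$ and record every jump of its class. By Proposition~\ref{bilin}, $[\Pi,\alpha_\nu]=\langle\Pi,\overline{\alpha}_\nu\rangle_{A\cap X\setminus S}$, with $\overline{\alpha}_\nu$ the half-cycle vanishing along $\gamma_\nu$. Since the paths are canonical (their initial segments run along the real axis), this pairing is invariant under parallel transport. We therefore compute it in $X_{\nu-\varepsilon}$, pairing the transported $\Pi$ with the half-cycle localized near the critical point of $f_{\tilde\lambda}|_{\hat S}$ with value $\nu$. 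The whole computation is done mod $2$. This lets us ignore orientation signs, and in particular the non-canonical dependence of $[\cdot,\cdot]$ on the system of paths described in Remark~\ref{ed} and Example~\ref{Xx9}.

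First, the jump terms. As $c$ moves from $0$ to $\nu-\varepsilon$, the path passes (via the half-turn arcs $\mu$) over the real critical values $\nu_i$ of $f_{\tilde\lambda}$ and of $f_{\tilde\lambda}|_{\hat S}$ lying in $(0,\nu)$. At a value of $f_{\tilde\lambda}$, Vassiliev's jump theorem (Theorem 2, p.~305 of~\cite{VVA}) gives, exactly as in the proof of Lemma~\ref{Petra}, that the class of the real cycle changes by $\pm i_*\alpha_{\nu_i}$. At a value of $f_{\tilde\lambda}|_{\hat S}$, we claim the jump is $\pm i_*\tau(\Delta'_{\nu_i})$. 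To prove this, we lift to the double cover and use \eqref{skachok} for $\breve\Pi_\pm$: the jump of $\breve\Pi_+$ is the short cycle $\beta^+_{\nu_i}$, which projects under $\sigma$ to twice the half-cycle. Alternatively, work locally: near a simple tangency with the boundary the real level set acquires or loses a small sphere (or cap) around the tangency point, and that cycle is homologous to the tube over the vanishing sphere of $f|_{\hat S}$. Pairing each jump with $\overline{\alpha}_\nu$ then gives the terms $[\alpha_{\nu_i},\alpha_\nu]$ mod $2$. What remains is the local term $[\Pi(\nu-\varepsilon),\alpha_\nu(\nu-\varepsilon)]$.

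Second, the local term; this is where Proposition~\ref{chet} enters. Split $\Pi(\nu-\varepsilon)=\Pi^+\cup\Pi^-$. Apply the mod-$2$ halving argument of Proposition~\ref{pepe}: $\sigma^{-1}\Pi^+=\breve\Pi_+$, and the preimage of the half-cycle is the short cycle $\beta^+_\nu$. Apply the involution $\rho$ of Remark~\ref{zam} to turn the $\Pi^-$ part into the $+$ part for $f^-_{\tilde\lambda}$. Together these give
\[
[\Pi(\nu-\varepsilon),\alpha_\nu(\nu-\varepsilon)]\equiv\tfrac12\bigl(\langle\breve\Pi_+(\nu-\varepsilon),\beta^+_\nu\rangle+\langle\breve\Pi_-(\nu-\varepsilon),\beta^-_\nu\rangle\bigr)=\tfrac{r^++r^-}{2}\pmod 2,
\]
and $r^\pm$ are evaluated by \eqref{q}. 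Adding the jump contributions from the first step yields \eqref{form}.

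The main obstacle is the jump at the boundary critical values. Formula \eqref{skachok} lives on the double cover and is stated only for long and short antisymmetric cycles, and by Remark~\ref{zamech} a real long cycle for $f^+$ becomes an imaginary pair for $f^-$. So one must check carefully that the sum $\tfrac12(\text{jump of }\breve\Pi_+ + \text{jump of }\breve\Pi_-)$ reduces mod $2$ to a single $[\alpha_{\nu_i},\alpha_\nu]$. For this we would compare both sides via Proposition~\ref{pepe} (items 2 and 3), which is valid precisely mod $2$; that is why the statement is only a congruence.
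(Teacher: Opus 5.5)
\textbf{There is a genuine gap in your first step.} You decompose the transported class as ``real level set at $X_{\nu-\varepsilon}$ plus jumps'', but this decomposition is not defined in $H_{N-1}(A\cap X_c\setminus S,\partial D)$. The generalized Petrovsky class exists only because $\operatorname{Re}A\cap X_0\cap S=\varnothing$, and this property is lost as $c$ moves along $(0,\nu)$. Near a real critical point of $f_{\tilde\lambda}|_{\hat S}$ with value $\nu_i$ one has $c-\nu_i=\pm x_1+Q(z)$. So $\operatorname{Re}A\cap X_c$ is the smooth graph $\pm x_1=c-\nu_i-Q(z)$ for every $c$, and no sphere or cap is born, contrary to your ``alternative'' argument. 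What changes is the trace $\{Q(z)=c-\nu_i\}$ on $S$. It is non-empty on at least one side of $\nu_i$, and on both sides if $Q$ is indefinite. In particular, at $c=\nu-\varepsilon$ the real level set meets $S$ exactly where $\overline{\alpha}_\nu$ is localized, unless $Q$ is positive definite.

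Once this happens, $\operatorname{Re}A\cap X_c$ is no longer a class in $H_{N-1}(A\cap X_c\setminus S,\partial D)$. Hence the following are all undefined:
your jump terms at the boundary critical values;
the local term $[\Pi(\nu-\varepsilon),\alpha_\nu(\nu-\varepsilon)]$;
the splitting $\Pi(\nu-\varepsilon)=\Pi^+\cup\Pi^-$ borrowed from Proposition~\ref{chet}, which needs $\Pi\cap S=\varnothing$ for $\Pi^\pm$ to be cycles.

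The scheme of Lemma~\ref{Petra} does not transfer. There, the transport over a boundary critical value agrees with $\operatorname{Re}A\cap X_c$ only outside a tubular neighborhood of $S$. That is harmless because $\alpha_\nu$ is an absolute vanishing cycle of $f_{\tilde\lambda}$, so only the class in $H_{N-1}(A\cap X,\partial D)$ matters. The half-cycle $\overline{\alpha}_\nu$ does enter that neighborhood, so the behavior of the transported cycle near $S$ is exactly what you would need to control. Separately, $\sigma_*$ kills anti-invariant cycles: $\sigma_*\beta^+_{\nu_i}=\overline{\alpha}_{\nu_i}-\overline{\alpha}_{\nu_i}=0$. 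The two levels are related through preimages, not push-forwards.

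\textbf{The fix is to reverse the order of your two steps, which is what the paper does.}
First, apply Proposition~\ref{chet} at the base fiber $X_0$, where $\Pi^\pm$ are honest cycles. This gives $[\Pi,\alpha_\nu]\equiv\frac12\bigl(\langle\breve\Pi_+,\beta^+_\nu\rangle+\langle\breve\Pi_-,\beta^-_\nu\rangle\bigr)\pmod 2$.
Second, do the transport on the double covers. There $\breve\Pi_\pm(c)$ is a genuine cycle for every real $c$, since nothing is removed, so \eqref{Petr} gives $\langle\breve\Pi_\pm,\beta^\pm_\nu\rangle=r^\pm+\sum_i\langle\beta^\pm_{\nu_i},\beta^\pm_\nu\rangle$.
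Third, use Remarks~\ref{zam} and~\ref{zamech} to match the two sums. A real critical value of $f_{\tilde\lambda}|_{\hat S}$ contributes a short cycle to both sums. A real critical point of $f_{\tilde\lambda}$ gives a real long cycle for exactly one of $\breve f^\pm_{\tilde\lambda}$, according to the sign of $x_1$ there. This yields $\frac{r^++r^-}{2}+\frac12\sum_j\langle\beta_{\nu_j},\beta_\nu\rangle+\sum_i\langle\beta_{\nu_i},\beta_\nu\rangle$, with $\nu_j$ long and $\nu_i$ short.
Finally, items~2 and~3 of Proposition~\ref{pepe} turn $\frac12\langle\beta_{\nu_j},\beta_\nu\rangle$ and $\langle\beta_{\nu_i},\beta_\nu\rangle$ into $[\alpha_{\nu_j},\alpha_\nu]$ and $[\alpha_{\nu_i},\alpha_\nu]$ modulo $2$.

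Your ``main obstacle'' paragraph is essentially this bookkeeping. It has to be the proof itself, not a check on a downstairs jump formula that is not well posed.
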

	
	\textbf{Proof.} By formula \eqref{Petr}, we have:
	\begin{equation}
		\langle\breve{\Pi}_{+}, \beta_{\nu}^+\rangle = r^+ + \sum_i \langle \beta^+_{\nu_i}, \beta^+_{\nu} \rangle,
	\end{equation}
	\begin{equation}
		\langle\breve{\Pi}_{-}, \beta_{\nu}^-\rangle = r^- + \sum_i \langle \beta^-_{\nu_i}, \beta^-_{\nu} \rangle,
	\end{equation}
	where both sums are taken over the real critical values from $0$ to $\nu$ attained at real critical points. Taking into account Remarks~\ref{zam} and \ref{zamech}, from Proposition~\ref{chet} we obtain the formula:
	$$[ \Pi, \alpha_{\nu} ] \equiv \frac{r^+ + r^-}{2} + 1/2 \sum_j \langle \beta_{\nu_j}, \beta_{\nu} \rangle + \sum_i \langle \beta_{\nu_i}, \beta_{\nu} \rangle \pmod 2.$$ 
	It remains to apply Proposition~\ref{pepe}. $\Box$
	
	\begin{remark}
		If Lemma~\ref{hard} holds, the congruences modulo $2$ in \eqref{ol} and \eqref{form} can be replaced by exact equalities, see also Remark~\ref{auth}.
	\end{remark}
	
	\begin{Relation}\label{PuRe}
		Let $f$ be a real purely boundary singularity, and let the function $f|_{\hat{S}}$ have a minimum at the origin. We choose a perturbation $f_{\tilde{\lambda}}$ close to the function $f+\varepsilon$ and satisfying the condition $\operatorname{Re}\{f_{\tilde{\lambda}}|_{\hat{S}}=0\}\cap D = \varnothing.$ Then, for any real critical value $\nu$ of the function $f|_{\hat{S}}$, we have:
		\begin{equation}
			[ \Pi, \alpha_{\nu} ] = \frac{r^+ + r^-}{2} + \sum_i [ \alpha_{\nu_i}, \alpha_{\nu} ].
		\end{equation}
	\end{Relation}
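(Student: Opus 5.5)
The plan is to upgrade the congruence of Lemma~\ref{Petra1} to an exact equality. Both reductions modulo $2$ in that lemma come from Proposition~\ref{pepe} and Proposition~\ref{chet}, so the task is to show that, in the purely boundary case with $f|_{\hat S}$ having a minimum at the origin, each of these comparisons is an honest equality of integers.

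First, since $df(0)\neq0$, we have $H_{N-1}(A\cap X)=0$ and $\mu(f)=0$. Hence the distinguished basis of $H_{N-1}(A\cap X\setminus S)$ consists only of tubes $\tau(\Delta_t')$, and $[\tau(\Delta_i'),\tau(\Delta_j')]=(-1)^N\langle\Delta_i',\Delta_j'\rangle$. By Corollary~\ref{Pure}, item 3 of Lemma~\ref{hard} holds for an arbitrary distinguished basis: $\langle\beta_i',\beta_j'\rangle=\pm\langle\Delta_i',\Delta_j'\rangle$ with an explicit sign. Combining this with the convention that the sum in \eqref{form} is computed for $f^+$ and $f^-$ with the canonical orientations of Remarks~\ref{zam}, \ref{zamech}, I would show that the term $\tfrac12\sum\langle\beta^+_{\nu_j},\beta^+_\nu\rangle+\tfrac12\sum\langle\beta^-_{\nu_j},\beta^-_\nu\rangle$ equals $\sum_i[\alpha_{\nu_i},\alpha_\nu]$ exactly. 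For the purely boundary case all $\beta$'s are short, and $\breve f$ coincides (up to $\mathcal K$-equivalence) with the stabilization $f|_{\hat S}\pm x_1^2$. The short cycles of $\breve f^{\pm}$ are then the vanishing cycles of $f|_{\hat S}\pm x_1^2$, and Gabrielov's stabilization formula gives the intersection numbers exactly.

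Second, I would make the step in Proposition~\ref{chet} exact, namely the claim that $\langle\Pi^+,\overline\alpha_\nu\rangle=\tfrac12\langle\sigma^{-1}\Pi^+,\sigma^{-1}\overline\alpha_\nu\rangle$. The hypothesis that $f|_{\hat S}$ has a minimum at $0$ and that $\operatorname{Re}\{f_{\tilde\lambda}|_{\hat S}=0\}\cap D=\varnothing$ means that $\Pi$ does not meet $S$. Hence $\Pi=\Pi^+\sqcup\Pi^-$ with each part lying strictly on one side. Normalizing $f=x_1+\varphi$ with $\varphi\geqslant0$ (as in the proof of Proposition~\ref{Pred7}) and taking $f_{\tilde\lambda}$ near $f+\varepsilon$, the real zero set lies entirely in $x_1<0$, so $\Pi^+=\varnothing$ on one of the two sheets. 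The covering $\sigma$ restricted over $\Pi^-$ is then a trivial double cover whose two sheets contribute intersection points with identical signs, as in the proof of Proposition~\ref{pepe}. Since no sign cancellation can occur, the factor $\tfrac12$ is exact and no parity ambiguity remains.

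Third, I would plug these equalities into the derivation of Lemma~\ref{Petra1}, using \eqref{Petr} and \eqref{q} (which are already exact), to obtain the stated identity.

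The main obstacle is the sign bookkeeping in the second step: checking that the orientations fixed in items b)--e) make the two sheets of $\sigma^{-1}\Pi^\pm$ and of $\sigma^{-1}\overline\alpha_\nu$ meet with the same sign, rather than opposite signs. I would verify this locally near a Morse critical point of $f_{\tilde\lambda}|_{\hat S}$ in the model $c=\pm x_1+\sum\pm z_j^2+\nu$, and then transport it along the canonical paths, whose real initial segments keep the sheet labeling I/II consistent.
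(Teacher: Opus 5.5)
Your overall plan, making the congruence of Lemma~\ref{Petra1} exact via Corollary~\ref{Pure}, is the paper's. However, step~1 leaves out the one point that the paper's proof actually consists of.

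Corollary~\ref{Pure} does not give $\langle\beta'_i,\beta'_j\rangle=\pm\langle\Delta'_i,\Delta'_j\rangle$ with a harmless fixed sign. It gives $\langle\beta_{\nu_i},\beta_{\nu_j}\rangle=\operatorname{sgn}(j-i)(-1)^{N-1}\langle\partial\overline{\alpha}_{\nu_i},\partial\overline{\alpha}_{\nu_j}\rangle$, where the sign depends on the order of the two paths in the numbering. By contrast, Definition~\ref{intmatr} gives $[\alpha_{\nu_i},\alpha_\nu]=(-1)^N\langle\partial\overline{\alpha}_{\nu_i},\partial\overline{\alpha}_{\nu}\rangle$. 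So a term of the sum is reproduced exactly only when $\gamma_\nu$ precedes $\gamma_{\nu_i}$.

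What guarantees this for every term is the geometry of the canonical system. The path $\gamma_\nu$ runs in $\operatorname{Im}\geqslant0$ above all real critical values in $(0,\nu)$ and above their paths, so it enters $0$ with the largest argument. With the numbering by decreasing arguments it therefore gets the smallest index, hence $\operatorname{sgn}(j-i)=-1$ throughout and $-(-1)^{N-1}=(-1)^N$. Without this observation, ``with an explicit sign'' and ``I would show \dots\ exactly'' leave the identity unproved. The paper's proof is precisely this computation. Re-invoking Gabrielov's stabilization formula only reproduces Corollary~\ref{Pure}, sign factor included.

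Your step~2 is not a valid argument either. That the two lifts of an intersection point contribute with the same sign is exactly why Proposition~\ref{pepe} yields only a congruence mod~$2$.

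To see what is really needed, pass to $f^-_{\tilde\lambda}$ by $\rho$ as in Proposition~\ref{chet}; for $f^+_{\tilde\lambda}$ the lift of $\Pi^-$ lies over $y_1\in i\mathbb R$ and is not $\breve\Pi$. Write $\breve\Pi_-=P-\rho_*P$, where $P$ is the real sheet that $\sigma$ maps onto $\Pi^+_-$. Write also $\beta^-_\nu=\overline{\alpha}^{\operatorname{I}}_\nu-\rho_*\overline{\alpha}^{\operatorname{I}}_\nu$. Since $\rho$ preserves the complex orientation, one gets
$\tfrac12\langle\breve\Pi_-,\beta^-_\nu\rangle=\langle P,\overline{\alpha}^{\operatorname{I}}_\nu\rangle-\langle P,\rho_*\overline{\alpha}^{\operatorname{I}}_\nu\rangle$, while
$\langle\Pi^+_-,\overline{\alpha}^-_\nu\rangle=\langle P,\overline{\alpha}^{\operatorname{I}}_\nu\rangle+\langle P,\rho_*\overline{\alpha}^{\operatorname{I}}_\nu\rangle$.

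So exactness is equivalent to the vanishing of the cross-sheet term. That is, the sheet carrying $P$ must be the sheet $\overline{\alpha}^{\operatorname{I}}_\nu$ at every intersection point, a condition of the same type as $\langle\overline{\Delta'_t}^{\operatorname{I}},\Delta_s^{\operatorname{II}}\rangle=0$ in item~2 of Lemma~\ref{hard}. Triviality of $\sigma$ over $\Pi$ says nothing about this. Your last paragraph senses the issue but misplaces it as orientation bookkeeping, when it is a sheet-labelling question.

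The paper does not argue this step in the corollary. It relies on the remark after Lemma~\ref{Petra1}, that the congruences become equalities once Lemma~\ref{hard} holds, which in the purely boundary case is Corollary~\ref{Pure}. If you keep step~2 as a separate claim, it needs an honest sheet-matching argument along the path $I'_\nu$.
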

	
	\textbf{Proof.} By Corollary~\ref{Pure}, for arbitrary short cycles $\beta_{\nu_i}$ and $\beta_{\nu_j}$, we have $\langle\beta_{\nu_i}, \beta_{\nu_j}\rangle = \operatorname{sgn}(j-i)(-1)^{N-1}\langle\partial\overline{\alpha}_{\nu_i}, \partial\overline{\alpha}_{\nu_j}\rangle$. The path $\gamma_{\nu}$ has the smallest index under the canonical numbering in decreasing order of the arguments of entry into the origin. Then $\sum_i \langle \beta_{\nu_i}, \beta_{\nu} \rangle = (-1)(-1)^{N-1}\sum_i \langle \partial\overline{\alpha}_{\nu_i}, \partial\overline{\alpha}_{\nu} \rangle = (-1)^N\sum_i \langle \partial\overline{\alpha}_{\nu_i}, \partial\overline{\alpha}_{\nu} \rangle = \sum_i [ \alpha_{\nu_i}, \alpha_{\nu} ]$ by Definition~\ref{intmatr}. $\Box$
	
	\begin{PREDLOZHENIE}
		Let $\nu$ and $\overline\nu$ be two complex conjugate critical values of the function $f_{\tilde\lambda}|_{\hat S}$. Then:
		\begin{equation}
			|[ \Pi, \alpha_{\nu} ]| = |[ \Pi, \alpha_{\bar{\nu}} ]| = |\langle \overline{\alpha}_{\nu}, \overline{\alpha}_{\overline\nu} \rangle_{A\cap X\setminus S}|.
		\end{equation}
	\end{PREDLOZHENIE}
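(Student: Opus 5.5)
The plan is to imitate the proof of Proposition~\ref{Petra'}, replacing ordinary vanishing cycles by half-cycles and using the real structure. By Proposition~\ref{bilin}, $[\Pi,\alpha_{\nu}]=\langle i_*\Pi,\overline{\alpha}_{\nu}\rangle_{A\cap X\setminus S}$, and similarly for $\bar\nu$. The canonical system of paths (Fig.~\ref{systpath}) joins $\nu$ and $\bar\nu$ to $0$ by complex conjugate paths. By item d) of the orientation conventions, complex conjugation $\mathrm{conj}$ on $A\cap X$ (which preserves $S$ and $\partial D$, since $f_{\tilde\lambda}$ and $\hat S$ are real) carries $\overline{\alpha}_{\nu}$ to $\overline{\alpha}_{\bar\nu}$. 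The cycle $\Pi=\operatorname{Re}A\cap X$ is pointwise fixed by $\mathrm{conj}$. Hence $\mathrm{conj}_*\Pi=\pm\Pi$, and $\langle \Pi,\overline{\alpha}_{\bar\nu}\rangle=\pm\langle\Pi,\overline{\alpha}_{\nu}\rangle$. The sign comes from how $\mathrm{conj}$ acts on the complex orientation of the $(N-1)$-dimensional fibre, namely $(-1)^{N-1}$. This gives the first equality $|[\Pi,\alpha_\nu]|=|[\Pi,\alpha_{\bar\nu}]|$.

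For the second equality, I would deform $\Pi$ into the fibre over $\nu$ along the path $\gamma_\nu$, as in the proof of Lemma~\ref{Petra}. Take the family of real cycles $\operatorname{Re}A\cap X_c$ and transport it along the conjugate pair of paths. The difference $\Pi-\mathrm{conj}_*(\text{transported }\Pi)$ is computed by the Picard--Lefschetz--Pham formula of Remark~\ref{rem} for the loop formed by $\gamma_\nu$ followed by $\gamma_{\bar\nu}^{-1}$, on which no other critical values lie. The key input is the analogue of item 3.3 on p.~308 of~\cite{VVA}: since the critical points over $\nu$ and $\bar\nu$ are non-real, $\Pi$ does not jump, and its localisation near these points is captured by the conjugate pair. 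Concretely, I expect the relation $\Pi\equiv \pm(\alpha_\nu\pm\alpha_{\bar\nu})$ modulo cycles orthogonal to both $\overline{\alpha}_\nu$ and $\overline{\alpha}_{\bar\nu}$. I would derive it by transporting $\Pi$ to $X_{\operatorname{Re}\nu}$ and viewing the pair of conjugate Morse points of $f_{\tilde\lambda}|_{\hat S}$ in the local model $c=\pm x_1+Q(z)+\nu$.

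Pairing this relation with $\overline{\alpha}_\nu$, and using $\langle\alpha_\nu,\overline{\alpha}_\nu\rangle=[\alpha_\nu,\alpha_\nu]=(-1)^N\langle\Delta'_\nu,\Delta'_\nu\rangle$, together with the symmetry supplied by conjugation, yields $|[\Pi,\alpha_\nu]|=|\langle\overline{\alpha}_\nu,\overline{\alpha}_{\bar\nu}\rangle_{A\cap X\setminus S}|$. Here the pairing of a half-cycle with a tube is interpreted as in Proposition~\ref{bilin}.

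The main obstacle is this local step: showing that the relevant part of $\Pi$ equals exactly the combination of the two conjugate tubes, with the self-intersection terms cancelling. This is the place where the proof of Proposition~\ref{Petra'} was allowed to work only mod $2$. I would first verify it in the model $N=2$ (Fig.~\ref{u}), then pass to general $N$ by Gabrielov's stabilization formulas~\cite{GABRIEL}, and finally control signs using Remarks~\ref{zam} and~\ref{zamech}.
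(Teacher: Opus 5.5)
Your argument for the first equality is the paper's own: $\Pi$ is pointwise fixed by complex conjugation, conjugation carries $\overline{\alpha}_\nu$ to $\overline{\alpha}_{\bar\nu}$ with the canonical orientations, and it multiplies intersection numbers in the $(N-1)$-dimensional fibre by $(-1)^{N-1}$.

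The second equality, however, is not established, and your plan targets a different quantity. There are three problems.

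\begin{itemize}
\item The number $\langle\overline{\alpha}_{\nu},\overline{\alpha}_{\bar\nu}\rangle_{A\cap X\setminus S}$ is the intersection number of two \emph{half-cycles}, counted at their intersection points off $x_1=0$; this is how the notation is used in Example~\ref{Xx9}. It is not the tube--half-cycle pairing $[\alpha_{\bar\nu},\alpha_\nu]=(-1)^N\langle\partial\overline{\alpha}_{\bar\nu},\partial\overline{\alpha}_{\nu}\rangle_{A\cap X\cap S}$ of Proposition~\ref{bilin}, which only sees the boundary. The two differ in general: in Example~\ref{Xx9} the boundary cycles are disjoint, while the half-cycles meet once off $S$.
\item The relation $\Pi\equiv\pm(\alpha_\nu\pm\alpha_{\bar\nu})$ modulo the annihilator of $\overline{\alpha}_\nu,\overline{\alpha}_{\bar\nu}$ is only ``expected''. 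Modulo that annihilator, a class is determined exactly by its pairings with $\overline{\alpha}_\nu$ and $\overline{\alpha}_{\bar\nu}$. So postulating the relation amounts to postulating a formula for exactly the numbers $[\Pi,\alpha_\nu]$ and $[\Pi,\alpha_{\bar\nu}]$ you are trying to compute.
\item Even granting the relation, pairing with $\overline{\alpha}_\nu$ produces the term $[\alpha_\nu,\alpha_\nu]=(-1)^N\langle\partial\overline{\alpha}_\nu,\partial\overline{\alpha}_\nu\rangle$. This equals $\pm2$ for even $N$, and nothing in your plan removes it.
\end{itemize}

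More generally, transport-and-Picard--Lefschetz arguments of this kind yield only congruences modulo $2$, as in Proposition~\ref{Petra'}. They do not see the exact equality in absolute value.

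The missing idea is to use the fibration by the coordinate $x_1$ (Sec.~\ref{Rassl}) instead of monodromy. The half-cycles $\overline{\alpha}_\nu$ and $\overline{\alpha}_{\bar\nu}$ can be realized over complex conjugate paths $I'_\nu$ and $I'_{\bar\nu}$ in $\mathbb{C}_{x_1}$. These paths enter the origin and cross the real axis transversally, while $\Pi\subset\mathbb{R}^N$ lies over the real axis. Hence the intersections of $\Pi$ with $\overline{\alpha}_\nu$ sit over the real crossings of $I'_\nu$. The intersections of the two half-cycles off $x_1=0$ sit over the intersection points of the two conjugate paths. The paper compares these intersection points directly, using a general-position condition: at the intersection points off $x_1=0$, the tangent spaces of the half-cycles contain no real lines of $\mathbb{R}^N$. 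This local comparison over the real axis of $\mathbb{C}_{x_1}$, not a Picard--Lefschetz computation, is what gives the second equality.
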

	
	\textbf{Proof.} The first equality follows from the invariance of the cycle $\Pi$ under complex conjugation. The half-cycles $\overline{\alpha}_{\nu}$ and $\overline{\alpha}_{\overline\nu}$ can be defined by complex conjugate paths $I'_{\nu}$ and $I'_{\overline\nu}$ in $\mathbb C_{x_1}$ entering the origin (see Sec.~\ref{Rassl}), which intersect the $\operatorname{Re}$ axis transversally (and possibly originate on it). In general position, the tangent spaces to the half-cycles at their intersection points outside $x_1=0$ do not contain any straight lines from $\mathbb R^N$, which implies the second equality. $\Box$
	
	\subsection{Quasilacunas of versal deformations of simple boundary singularities}\label{quasisimpl}
	
	In this subsection, we enumerate the quasilacunas for the classification of simple real boundary singularities introduced by V.~I.~Arnold (see~\cite{ARN}). 
	
	In the case where $df(0)=0$, it is sufficient to consider the following germs $f(x,y)$ and their miniversal deformations $F(x,y,\lambda)$ (and their stabilizations):
	\begin{table}[h]
		\centering
		\begin{tabular}{|l|l|l|}
			\hline
			$B_{k}^\pm$ & $x^{k}\pm y^2$ & $x^{k}\pm y^2+\lambda_{k-1}x^{k-1}+\lambda_{k-2}x^{k-2}+\dots+\lambda_{1}x+\lambda_{0}$ \\ \hline
			$C_{k}^\pm$ & $xy\pm y^{k}$ & $xy\pm y^{k}+\lambda_{k-1}y^{k-1}+\lambda_{k-2}y^{k-2}+\dots+\lambda_{1}y+\lambda_{0}$ \\ \hline
			$F_{4}^{\pm}$ & $\pm x^2+y^3$ & $\pm x^2+y^3+\lambda_{3}x+\lambda_{2}y+\lambda_{1}xy+\lambda_{0}$ \\ \hline  
		\end{tabular}
		\label{tablo}
	\end{table}
	\noindent The boundary $\hat{S}$ is everywhere defined by the condition $x=0$. 
	
	\begin{lemm}\label{l3} 
		Let $f_{\tilde\lambda}$ be an arbitrary real perturbation of a singularity stably equivalent to any of the singularities from list \eqref{tablo} except for $B_2^+$. Then each connected component of the set $\operatorname{Re}A_{\tilde{\lambda}}\cap X$ that does not intersect $\hat{S}$ realizes a non-zero class $\Pi_{i}\in H_{N-1}(A_{\tilde\lambda}\cap X, \partial D)$.
	\end{lemm}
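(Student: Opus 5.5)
A connected component $P$ of $\operatorname{Re}A_{\tilde\lambda}\cap X$ that does not meet $\hat S$ is a closed $(N-1)$-manifold sitting in the $N$-dimensional real level set. To show that $P\neq0$ in $H_{N-1}(A\cap X,\partial D)$, I will use Poincaré--Lefschetz duality (the pairing in the left column of diagram \eqref{cd}). It suffices to exhibit one absolute cycle in $H_{N-1}(A\cap X)$ whose intersection number with $P$ is non-zero. The natural candidate is a vanishing cycle $\alpha_\nu$ that dies at a real critical point of $f_{\tilde\lambda}$ lying inside the region bounded by $P$.

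\textbf{Reduction to the normal forms.} By stable equivalence and Proposition~\ref{K-K}, the reduction to the two-variable normal forms of list \eqref{tablo} is legitimate: stabilization by squares does not change whether $\Pi_i$ vanishes, by Gabrielov's formulas \cite{GABRIEL}. I will therefore work with $N=2$ for these forms and then check that the stabilization step is harmless. The reason is that adding $\pm z^2$ turns a real oval into a real sphere or hyperboloid-type component, and the Euler-characteristic formula \eqref{q}--\eqref{w} (Vassiliev's Theorem 1, p.~304 of \cite{VVA}) transforms accordingly. The real curves $\operatorname{Re}\{f_{\tilde\lambda}=0\}$ for $B_k$, $C_k$, $F_4$ in the ball are very constrained. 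For $B_k^\pm$ the zero set is $y^2=\mp p(x)$, where $p$ is a degree-$k$ polynomial. For $C_k$ it is the graph-like curve $x=-(\pm y^{k}+\dots)/y$, which has no compact components avoiding $x=0$ except through $y$-intervals. For $F_4$ it is a cubic-type curve.

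\textbf{Case analysis.} In each case I will list the possible compact components that avoid $\hat S$, namely the ovals. For $B_k^-$ (with $y^2=p(x)$, after sign normalization) these are the ovals lying over intervals where $p>0$, bounded by two consecutive real roots in $x>0$ or in $x<0$. Inside each such oval, $f_{\tilde\lambda}$ has a Morse extremum, and I take the cycle $\alpha_\nu$ vanishing there. Lemma~\ref{Petra}, the local term computed from \eqref{q}, shows $\langle P,\alpha_\nu\rangle=\pm1$ (or $\pm2$ in the odd-$N$ symmetric case), because the only real critical value between $0$ and $\nu$ along the path for this component is $\nu$ itself. Equivalently, for the extremum case one can argue directly: the vanishing sphere at an extremum is the imaginary sphere $\{\operatorname{Im}z\}$, and it meets the real oval transversally in exactly the two points predicted by $\chi(S^{N-r-1})$ with $r\in\{0,N\}$. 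For $C_k$ the only possible closed components are ovals enclosing a real extremum of $xy\pm y^k+\dots$ away from $x=0$, which reduce to an $A$-type picture. For $F_4$ a compact oval of $\pm x^2+y^3+\dots$ avoiding $x=0$ again encloses an extremum. The case $B_2^+$ is excluded precisely because the oval there encloses a minimum only in even $N$, and with stabilization the Euler characteristic term vanishes (compare Example~\ref{b2}).

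\textbf{Main obstacle.} The hard step is the parity bookkeeping after stabilization. The local term $\chi(S^{N-r-1})$ vanishes when $N-r-1$ is odd, so for some signs of the added squares the extremum argument gives zero. In those cases I will instead pair $P$ with a saddle-type vanishing cycle adjacent to the oval, as illustrated in Fig.~\ref{F34}, where the intersection number is $\pm2$. This again uses \eqref{sk} together with the explicit matrices of Lemma~\ref{BCF} to ensure that contributions from the other real critical values between $0$ and $\nu$ do not cancel. I expect this cancellation check, done uniformly in $k$ for $B_k$ and $C_k$, to be the main work. The key observation should be that in these simple cases a real oval contains at most a chain of real critical points forming an $A$-type subdiagram, whose entries are read off from \eqref{A}--\eqref{B}.
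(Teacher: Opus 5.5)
Your duality reduction is sound: $H_{N-1}(A\cap X,\partial D)$ and $H_{N-1}(A\cap X)$ are dually paired, so a component $P$ is non-zero iff some absolute cycle meets it with non-zero index. The paper only refers to an external case check here, so your argument has to stand on its own, and as written it does not.

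\textbf{Components that end on $\partial D$.} You assume every component avoiding $\hat S$ is a closed oval or sphere enclosing an extremum. For $C_k^\pm$ this fails outright. Write $f_{\tilde\lambda}=xy+g(y)$ with $g(0)=\lambda_0$. This function is Morse with a single critical point, which is not an extremum, so its zero set has no closed components at all. The components avoiding $\hat S$ are the branches $x=-g(y)/y$ over $y>0$ or $y<0$ (after stabilization, the sheets of a two-sheeted hyperboloid), and they end on $\partial D$; these are what make up $\Pi$ in the $C_{2k}$ case of Lemma~\ref{lala}. The outer arcs of $B_k$ are relative classes as well. For such components you must compute the pairing with the vanishing sphere, and the result is not automatic. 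A real component of one-sheeted type $S^{N-2}\times\mathbb R$ meets the vanishing sphere cleanly along an $S^{N-2}$, with index $\pm\chi(S^{N-2})$, which is $0$ for odd $N$. The lemma holds for $C_k$ only because such a component always contains the points $x=y=0$, $Q(z)=-\lambda_0$ of $\hat S$. Each sheet of the two-sheeted configuration, by contrast, meets the vanishing sphere transversally exactly once. Nothing in your plan captures this mechanism.

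\textbf{The stabilization step is false.} Adding squares does change whether a class vanishes, and $B_2^+$ itself shows it. Its oval is $0$ for $N=2$, but for $N=3$, $s=0$, the sphere is twice a generator of $H_2(A\cap X,\partial D)$. The real reason $B_2^+$ is excluded is that $\mu(f)=1$ and the real sphere is the vanishing sphere, whose self-intersection vanishes for even $N$. For definite $Q$, the relevant components are sphere bundles over components of $\{f\le0\}$ or $\{f\ge0\}$, not over components of the curve. Gabrielov's formulas concern intersection matrices of distinguished bases, and Proposition~\ref{K-K} concerns $\mathcal K_{\hat S}$-equivalence in fixed $N$; neither controls real cycles.

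\textbf{Your pairing candidates fail in general.} If the region bounded by $P$ contains a single critical point, then $P$ \emph{is} its vanishing cycle, not an imaginary sphere meeting it twice, so the pairing is $\langle P,P\rangle=0$ for even $N$. The fallback ``adjacent real saddle'' need not exist. Take $B_4^+$ with $p(x)=(x-s)^4+\epsilon(x-s)^2-\delta$, with $\epsilon,\delta>0$ small enough that $y^2=-p(x)$ is a single oval in $x>0$, perturbed slightly to separate the two non-real critical values. Then the only real critical point of $p(x)+y^2$ is the minimum inside the oval. What detects the oval is the lift of a path from a real root of $p$ to a non-real root, which meets it once at the branch point.

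Lemma~\ref{Petra} does not rescue this, for three reasons:
\begin{itemize}
\item it concerns the whole $\Pi$, which is defined only when $\operatorname{Re}A\cap X\cap S=\varnothing$, not a single component;
\item it sums over all real critical values in $(0,\nu)$, and one sign-interval of $p$ can contain several of them;
\item its local terms lie in $\{0,\pm2\}$, never $\pm1$.
\end{itemize}

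\textbf{A possible uniform repair.} Complete the square so that the ordinary fiber becomes a double cover (a bundle of affine quadrics after stabilization) of a line, branched at the roots of a one-variable polynomial: $p$ for $B_k$, a cubic for $F_4$, a quadratic for $C_k$. In the unstabilized or definite case, a component avoiding $\hat S$ is the real part over a segment ending at a real root. The vanishing cycle over an adjacent segment through that root, possibly ending at a non-real root, meets it exactly once. This construction fails only for $B_2^+$ and for components over root-free intervals, and in these families the latter always meet $\hat S$. You would still need to check that the remaining sign choices of $Q$ produce no components avoiding $\hat S$.
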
	
	
	\textbf{Proof.} see in~\cite{NIKIT} (Lemma 1). $\Box$
	
Thus, for all cases except for $B_2^+$, all quasilacunas are trivial (non-strict in the sense of Def.~\ref{slow}): a component $\Gamma$ is a quasilacuna if and only if $\{\Pi_i\}=\varnothing$ or $\operatorname{Re}A\cap X=\varnothing$.

	We denote the $r,s$-stabilization of the singularity $f$ by $f^{r,s}=f+Q$, where $Q(z)=\sum_{i=1}^{r}z_{i}^{2}-\sum_{j=r+1}^{r+s}z_{j}^{2}$ is a non-degenerate quadratic form of additional variables. 
	
	\begin{theorem}\label{perech}
		The numbers of quasilacunas for the miniversal deformations of simple boundary singularities of two variables from list \eqref{tablo} (if necessary, we replace the germ and its deformation by equivalent ones so that Proposition~\ref{parab} holds) are given in the following table:
		\begin{center}
			\begin{tabular}{|l|l|l|}
				\hline
				Name & Number of components & Number of quasilacunas \\ \hline
				$B_{2}^{+}$ & $4$ & $4$ \\ \hline
				$B_{2k}^{+}, k>1$ & $(k+1)^2$ & $2$ \\ \hline
				$B_{2k}^{-}$ & $(k+1)^2$ & $1$ \\ \hline
				$B_{2k+1}^{\pm}$ & $(k+1)(k+2)$ & $1$ \\ \hline  
				$C_{2k}^{\pm}$ & $(k+1)^2$ & $k^2$ \\ \hline 	
				$C_{2k+1}^{\pm}$ & $(k+1)(k+2)$ & $k^2+k$ \\ \hline 
				$F_{4}^\pm$ & $8$ & $4$ \\ \hline 
			\end{tabular}
		\end{center} 
		
		In the case of $B_{2}^+$, the answer is 2 for stabilizations with $s=0$ for odd $N$, and for all other stabilizations for any $N$, the answer is 4.
		
		In the cases of $B_{k}^+ (k\neq 2)$, $F_{4}^+$, $C_{2k}^+$ (respectively, $B_{k}^-$, $F_{4}^-$, $C_{2k}^-$), for all stabilizations with $s=0$ (respectively, $r=0$), the answer is the same as for $N=2$. In the case of $C_{2k+1}^{\pm}$, for all stabilizations with $r=0$ or $s=0$, the number of quasilacunas is equal to $(k+1)^2$. 
		
		In all cases, for any stabilization not specified above, all local components of the complement to $\Sigma_{\mathbb{R}}(F)$ are quasilacunas. 
	\end{theorem}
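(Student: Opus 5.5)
The plan is to combine the finite-sheetedness criteria of Theorem~\ref{criteria} with Lemma~\ref{l3} and Lemma~\ref{simpl}. This reduces the count of quasilacunas to a combinatorial enumeration of the local components $\Gamma$ of $\operatorname{Re}\Theta\setminus\Sigma_{\mathbb R}(F)$, sorted by the real topology of $\operatorname{Re}A_{\tilde\lambda}\cap X$. By Proposition~\ref{K-K} the count does not change under $\mathcal{K}_{\hat S}$-equivalence. We may therefore work with the tabular miniversal deformations. After the diffeomorphism of Proposition~\ref{parab}, Theorem~\ref{criteria}(1) applies for odd $N$. All the deformations in list~\eqref{tablo} are one-variable or nearly so in the essential variable, so the components are parametrized by real root configurations.

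\textbf{Step 1: count the components.} For $B_k$ the discriminant consists of the polynomials $p(x)=x^k+\dots+\lambda_0$ (with the sign $\pm y^2$) that have either a multiple real root (the $\Sigma^f$ part) or a root at $x=0$ (the $\Sigma^{f|_{\hat S}}$ part). A component is therefore determined by the number of positive and negative simple real roots of $p$, subject to the parity constraint of the degree. This gives $(k+1)^2$ components for $B_{2k}$ and $(k+1)(k+2)$ for $B_{2k+1}$. For $C_k$ one eliminates $x$, since $x=\mp y^{k-1}-\dots$ on the critical locus, and the same count applies. For $F_4$ the $8$ components are listed directly.

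\textbf{Step 2: identify the quasilacunas for $N=2$.} For $N=2$, which is even, Theorem~\ref{criteria}(2a) applies because $f|_{\hat S}$ is an $A$-singularity, hence simple. So $\Gamma$ is a quasilacuna if and only if every $\Pi_i=0$ in $H_{1}(A\cap X,\partial D)$. By Lemma~\ref{l3}, outside $B_2^+$ this happens exactly when no real component of the level curve avoids $\hat S$. For $B_k^\pm$ the real curve $\pm y^2=-p(x)$ has compact ovals over the intervals between consecutive roots where $\mp p>0$. One requires every such oval to meet $x=0$, or none to exist, and counts the root configurations that satisfy this; this should give $2$, $1$, $1$. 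For $C_k$ the curve is a graph over $y$ or a union of arcs, and the condition is that the arcs not adjacent to the boundary all vanish. The factor $k^2$ or $k^2+k$ then counts the free root placements on the boundary line. The case $B_2^+$ is Example~\ref{b2}.

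\textbf{Step 3: pass to stabilizations.} For odd $N$, Theorem~\ref{criteria}(1) makes $\Gamma$ a quasilacuna if and only if $\{\Pi_i\}=\varnothing$. Under an $r,s$-stabilization with $s=0$ (respectively $r=0$), the compact real components of $\{f^{r,s}_{\tilde\lambda}=0\}$ avoiding $\hat S$ are the suspensions of those in the appropriate sublevel region, so the emptiness condition depends on the signs. For even $N$ with a simple $f|_{\hat S}$, we again use Lemma~\ref{l3} via stable equivalence. When the quadratic form is indefinite, $\operatorname{Re}$ of a suspended oval becomes a non-compact or a homologically trivial product. In that case Proposition~\ref{DB} (Davydova--Borovikov) together with its corollary decides vanishing in $H_{N-1}(A\cap X,\partial D)$ for the ovals, and Example~\ref{b2} decides it for $B_2^+$.

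\textbf{Main obstacle.} The hardest part is the $C_{2k+1}$ answer $(k+1)^2$ in Step 3, and more generally establishing that every component is a quasilacuna for the remaining stabilizations. For these I would compute $[\Pi,\alpha_\nu]$ using Lemmas~\ref{Petra} and~\ref{Petra1} with the canonical system of paths, and check the resulting vanishing or non-vanishing configuration by configuration.
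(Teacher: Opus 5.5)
Your reduction is correct, and it is the one the paper sets up right after Lemma~\ref{l3}. By Theorem~\ref{criteria} (part~1 for odd $N$, once Proposition~\ref{K-K} lets you pass to a germ satisfying Proposition~\ref{parab}; part~2a for even $N$, since $f|_{\hat S}$ is always of type $A$) together with Lemma~\ref{l3}, outside $B_2^+$ a component is a quasilacuna iff $\{\Pi_i\}=\varnothing$. That is, iff every connected component of $\{f_{\tilde\lambda}\neq0\}\cap D$ meets $\hat S$. The case $B_2^+$ is settled by Example~\ref{b2} and Theorem~\ref{criteria}(1), and your component count in Step~1 is fine.

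\textbf{The gap is in the enumeration.} You state the criterion correctly in the first sentence of Step~2, but then replace it by one about \emph{compact} ovals only (``over the intervals between consecutive roots'', and again ``the compact real components'' in Step~3). The $\Pi_i$ are relative cycles in $H_{N-1}(A\cap X,\partial D)$: the domains $\Omega_i$ may abut $\partial D$, and Lemma~\ref{l3} applies to such arcs as well. Your rule gives wrong numbers:
\begin{itemize}
\item For $B_{2k+1}^+$, neither one-root configuration has a compact oval. But when the root $a_1$ is negative, the domain $\{y^2<-p(x)\}$ over $x<a_1$ avoids $\hat S$. So only the configuration with a positive root is a quasilacuna.
\item For $B_{2k}^-$, your rule accepts every two-root configuration. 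But the two unbounded positivity intervals of $p$ can never both contain $0$, so only the root-free configuration survives.
\end{itemize}
So ``this should give $2,1,1$'' is not what your stated criterion produces. For $C_k$, ``free root placements'' should likewise be replaced by the actual criterion. The discriminant is $\{\lambda_0=0\}\cup\{q\ \text{has a multiple real root}\}$, with $q(y)=F(0,y)$ and $\lambda_0=q(0)$. The real curve is the pair of graphs $x=-q(y)/y$ over $y>0$ and $y<0$. The domain containing the segment $\{y=0\}$ always meets $\hat S$. Each of the other two domains meets $\hat S$ iff $q$ takes the sign opposite to that of $q(0)$ on the corresponding half-axis. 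Counting root configurations of $q$ under this rule gives $k^2$ and $k^2+k$.

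\textbf{Step~3 and your ``main obstacle'' are misdirected.} Lemmas~\ref{Petra} and~\ref{Petra1} compute the generalized Petrovsky class $\Pi=\operatorname{Re}A\cap X$. This class is only defined when $\operatorname{Re}A\cap X\cap S=\varnothing$, which fails in most components here, and it concerns weak quasilacunas. Once the criterion above is available, no intersection numbers are needed. Proposition~\ref{DB} is unnecessary too: Lemma~\ref{l3} is already stated for all stabilizations, and for indefinite $Q$ there are no $\Pi_i$ at all.

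What Step~3 actually needs is elementary real topology. Take $F=G(x,y)+Q(z)$ with $Q$ positive definite.
\begin{itemize}
\item $\{F<0\}$ is a bundle of open balls over $\{G<0\}$. So its components, and whether they meet $\hat S$, are exactly those of $\{G<0\}$.
\item Every component of $\{F>0\}$ can be joined, through points with $z\neq0$, to points over $\{x=0\}$, so it meets $\hat S$.
\end{itemize}
The situation is symmetric for $Q$ negative definite. For indefinite $Q$, applying this to both signs shows that no component of $\{F\neq0\}$ avoids $\hat S$, so every local component of the complement of $\Sigma_{\mathbb R}(F)$ is a quasilacuna.

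This yields all the stabilization statements at once, including the one you single out. For $C_{2k+1}^+$ with $s=0$, only $\{G<0\}$ matters. When $q(0)<0$ it is connected and contains the origin. When $q(0)>0$, its two components both meet $\hat S$ iff $q$ has at least two positive roots; the negative half-axis is automatic in odd degree. The count is therefore $(k+1)(k+2)/2+k(k+1)/2=(k+1)^2$.
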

	
	\textbf{Proof.} see in~\cite{NIKIT} (Theorem 1). $\Box$
	
	\begin{remark}\label{ZaM}
		The numbers of quasilacunas for $\mathcal{K}_{\hat S}$-equivalent singularities coincide (see Proposition~\ref{K-K}): the transformation $x\mapsto-x$ in composition with multiplication by $-1$ determines a $\mathcal{K}_{\hat S}$-equivalence of the germs $B_{2k+1}^+$ and $B_{2k+1}^-$ ($C_{k}^+$ and $C_{k}^-$) in their normal form, and the germs $F_4^{+}$ and $F_4^{-}$ are $\mathcal{K}_{\hat S}$-equivalent via $y\mapsto-y$ and $\cdot(-1)$. 
	\end{remark}
	
	\begin{figure}[H]
		\center{\includegraphics[width=1\linewidth]{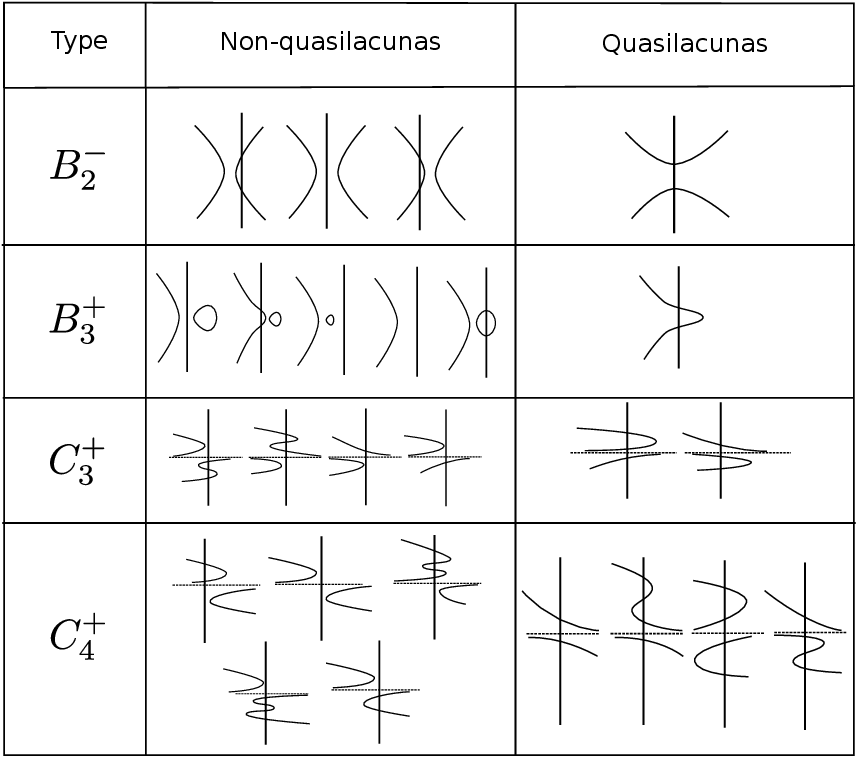}}
		\caption{Examples}
		\label{F18}	
	\end{figure}
	
	The pictures of generic fibers for $F_{4}$ can be found in the work of M.~A.~Gudiev~\cite{MAG}.
	
	The classification of purely boundary simple singularities reduces to the classification of ordinary simple singularities (see~\cite{Lyash}). The normal form of the germ $f$ in this case is written as $f=x_{1}+\varphi(x_{2},\dots,x_{N}),$ where $\varphi$ is the normal form (or its stabilization) of one of the following singularities: $A_{k}, D_{k}, E_{6}, E_{7}, E_{8}$. From Proposition~\ref{Pred7} on p.~\pageref{Pred7} and Lemma~\ref{simpl}, it follows that:
	
	\begin{PREDLOZHENIE}\label{dfneqo}
		For a miniversal deformation of the form \eqref{miniversal} of the germ $x\pm((y-\varepsilon x^2)^{2k}+Q)$ of a purely boundary singularity $A_{2k-1}$ with the boundary $x=0$, where $Q$ is a non-degenerate quadratic form of an odd number of variables with $s=0$, the unique component is not a quasilacuna (see Example~\ref{A2k}, Fig.~\ref{F0}). For all deformations of the form \eqref{obsch} of any other simple purely boundary singularity, all components are quasilacunas. For even $N$, an analogous component represents the unique non-trivial quasilacuna. $\Box$
	\end{PREDLOZHENIE}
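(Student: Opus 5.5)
The plan is to reduce everything to Proposition~\ref{Pred7}, Lemma~\ref{simpl} and Theorem~\ref{criteria}, using the normal form $f=x_1+\varphi(x_2,\dots,x_N)$ with $\varphi$ one of $A_k,D_k,E_6,E_7,E_8$ (stabilized). I will treat odd and even $N$ separately.

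\textbf{Odd $N$.} First I must ensure Proposition~\ref{parab} applies. If the normal form violates it, I replace the germ by an equivalent one; Proposition~\ref{K-K} ensures the quasilacuna count is unchanged. This is the role of the substitution $y\mapsto y-\varepsilon x^2$ in the statement.

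By Proposition~\ref{Pred7}, a non-quasilacuna exists, and is then unique, exactly when $\varphi$ has an extremum at the origin. So I must classify which simple $\varphi$ have a local extremum over $\mathbb R$:
\begin{itemize}
\item $D_k$, $E_6$, $E_7$, $E_8$ have sign-changing principal parts ($x^2y\pm y^{k-1}$, $x^3\pm y^4$, $x^3+xy^3$, $x^3+y^5$) and so are never extrema, even after stabilization;
\item $A_{2k}$ ($y^{2k+1}$) is not an extremum;
\item $A_{2k-1}$, $\pm(y^{2k}+Q)$, is an extremum precisely when the quadratic part $Q$ is definite.
\end{itemize}
Up to the overall sign $\pm$ and a renaming of variables, this is the case ``$s=0$''. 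The parity requirement on the number of variables of $Q$ comes from $N$ being odd: there are $N-2$ extra variables, and $N-2$ is odd.

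For this germ, the component containing $f-\varepsilon'$ (or $f+\varepsilon'$, depending on sign) is the non-quasilacuna, as in Example~\ref{A2k}. I then need to check that it is the unique component of the miniversal deformation \eqref{miniversal}. Since $df(0)\neq0$, the miniversal deformation of a purely boundary singularity is the miniversal deformation of $\varphi$ on $\hat S$, with $x_1$ a coordinate transverse to it. So $\Sigma=\Sigma^{f|_{\hat S}}$, and the local components are those of the complement of the discriminant of $\varphi$. For $A_{2k-1}$ with definite $Q$, these components are distinguished by the real critical-point configuration of $\varphi_\lambda$. I expect the main obstacle here: making precise ``the unique component'' and showing that in every other component $\{\Pi_i\}$ is empty or trivially handled. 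The idea is that the cycle $\Pi$ is bounded by the extremal behavior of $f|_{\hat S}$, so only the component where all real level sets avoid $\hat S$ carries a nonempty $\Pi$.

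\textbf{Even $N$.} Here $f|_{\hat S}=\varphi$ is simple and $df(0)\neq0$, so $H_{N-1}(A\cap X)=0$ and every $\Pi_i$ is zero there. Lemma~\ref{simpl} then gives finite ramification, so every component of every deformation of the form \eqref{obsch} is a quasilacuna. The analogous $A_{2k-1}$ component, the one where $\Pi$ is defined and nonempty, is therefore a strict weak quasilacuna. It is the unique one because, by the extremum analysis above, all other components have $\Pi$ undefined or empty.
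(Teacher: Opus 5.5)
Your proposal is correct and follows the paper's route. The paper derives the statement directly from Proposition~\ref{Pred7} for odd $N$, together with the observation that among simple $\varphi$ only $\pm(y^{2k}+Q)$ with $s=0$ has an extremum, and from Lemma~\ref{simpl} for even $N$; for the latter note that $\Pi_i=0$ is needed in the relative group $H_{N-1}(A\cap X,\partial D)$, which vanishes because $A\cap X$ is a ball when $df(0)\neq0$.

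The uniqueness you flag as the main obstacle is already the parenthetical clause of Proposition~\ref{Pred7}. Concretely, $\{\Pi_i\}\neq\varnothing$ exactly where $f_\lambda|_{\hat S}$ has constant sign on the real ball, and this is a convex, hence connected, set of parameters. The detour through Proposition~\ref{K-K} is unnecessary, since the statement already fixes the germ with $y-\varepsilon x^2$, and for non-extremal $\varphi$ the set $\{\Pi_i\}$ is empty in every component.
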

	
	\begin{lemm}\label{ust1}
		The numbers of quasilacunas for linear deformations of stable germs of types $B_k, C_k, F_4$ coincide with the numbers given in Theorem~\ref{perech}. 
	\end{lemm}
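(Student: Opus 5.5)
The plan is to reduce the linear deformation $\Lambda_f$ of a stable germ to the miniversal deformation \eqref{miniversal} of the normal form, and then compare quasilacunas on both sides.

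Stability (Definition~\ref{usto}) means that $\Lambda_f$ is $\mathcal{K}_{\hat S}$-versal. For simple germs this is the same as $\mathcal{R}_{\hat S}$-versality, because $f\in\nabla_{f|_{x_1}}$ (Sec.~\ref{simplust}). The parameter space of $\Lambda_f$ has dimension $N+1$, while the miniversal base has dimension $\mu$. So the versality equation \eqref{VerS} gives boundary-preserving diffeomorphisms $\mathcal{G}_\xi$ and a map $\varphi:(\mathbb R^{N+1},0)\to(\mathbb R^{\mu},0)$ which is a submersion. This is exactly the rank computation in the proof of Proposition~\ref{ust}, where $\operatorname{rk}d\varphi|_{\xi=0}=\mu$. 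Moreover $\varphi$ maps the line $\xi_0\mapsto$ const onto the $\lambda_0$-direction, since both deformations are of the form \eqref{obsch}.

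\textbf{Step 1: the discriminant and its complement pull back.} Because $\varphi$ is a submersion, $\Sigma_{\mathbb R}(\Lambda_f)=\varphi^{-1}(\Sigma_{\mathbb R}(F))$ locally. By the submersion theorem $\varphi$ is locally a projection $\mathbb R^{\mu}\times\mathbb R^{N+1-\mu}\to\mathbb R^{\mu}$. Hence local connected components of the complement correspond bijectively: each component of $\Theta\cap\mathbb R^{\mu}\setminus\Sigma_{\mathbb R}(F)$ pulls back to a single component, a product with a ball. Here I would use that the components are locally conical near $0$, so that a small ball pulls back to connected sets. The component counts in Theorem~\ref{perech} are therefore preserved.

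\textbf{Step 2: the quasilacuna property is preserved.} Fix $\tilde\xi$ and $\tilde\lambda=\varphi(\tilde\xi)$. The zero-level sets of $f_{\tilde\xi}$ and $f_{\tilde\lambda}$ are identified by the boundary-preserving diffeomorphism $\mathcal{G}_{\tilde\xi}$. So the cycles $\Pi_i$, the groups in \eqref{cd}, the distinguished bases of Proposition~\ref{Izol} and the monodromy actions correspond, as in the proof of Proposition~\ref{K-K}. Lines $L$ map isomorphically, and Zariski's theorem shows that both fundamental groups are generated by simple loops on the lines. The finiteness of the orbits of $\Omega_i$ is then the same on both sides.

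It remains to compare the integrals of $\omega$. The form is transported with the factor $J\mathcal G/g^{N+2}$, which is close to a nonzero constant. For odd $N$ this issue does not arise: Theorem~\ref{criteria}(1) characterizes quasilacunas purely by $\{\Pi_i\}=\varnothing$, which is topological, and Proposition~\ref{parab} holds for stable germs by Proposition~\ref{linvers}. For even $N$, simple $f|_{\hat S}$, Theorem~\ref{criteria}(2a) again gives a purely homological criterion, namely $\Pi_i=0\in H_{N-1}(A\cap X,\partial D)$. Since the germs are simple, both parities are covered by homological criteria and the analytic comparison is not needed.

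\textbf{Main obstacle.} The delicate point is Step 1 near the origin. One must ensure that the correspondence of local components is bijective, and not merely that every component of the smaller base pulls back to a nonempty set. For this I would invoke the contractibility and conical structure of the components, using Looijenga and Gudiev for $B_k,C_k$. For $F_4$ I would argue through the quasihomogeneity of the discriminant, which makes components invariant under the weighted $\mathbb R_{>0}$-action, so that their intersections with small balls are connected.
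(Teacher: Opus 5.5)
Your argument is correct, but it is organized differently from the paper's.

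\textbf{The paper's route.} It first takes $N=\mu-1$. There the linear deformation has exactly $\mu$ parameters, so it is itself a miniversal deformation of the form \eqref{miniversal} of the stable germ. The count then follows from Proposition~\ref{K-K}, applied to the $\mathcal{K}_{\hat S}$-equivalent pair (stable germ, normal form). The case $N>\mu-1$ is dismissed by observing that the stable front is a product with $\mathbb{R}^{N-\mu+1}$.

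\textbf{Reduction to the miniversal deformation.} You treat every $N$ at once through the submersion $\varphi$ that induces $\Lambda_f$ from the miniversal deformation. This is the same product structure, made explicit at the level of the inducing map. It also makes clear that you are comparing with the miniversal deformation of the $N$-variable stabilization, which is what Theorem~\ref{perech} tabulates.

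\textbf{Identification of quasilacunas.} This is the more substantial difference. Proposition~\ref{K-K} is stated for arbitrary isolated boundary singularities, so for even $N$ the paper must compare the integrals directly, using the factor $\det\mathcal{J}'/h^{N+2}$ and a homotopy of the multiplier to $\pm1$. You instead use that $f|_{\hat S}$ is of type $A_1$, $A_{k-1}$, $A_2$ for $B_k$, $C_k$, $F_4$. Then Theorem~\ref{criteria}(1) and (2a) make the quasilacuna property purely topological/homological. Proposition~\ref{parab} is supplied by Proposition~\ref{linvers} on the linear side and by the normalization built into Theorem~\ref{perech} on the miniversal side.

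This is shorter and avoids the analytic step. The price is that it works only because the boundary restrictions are simple, whereas the paper's route through Proposition~\ref{K-K} does not depend on that. It also makes your orbit-comparison paragraph via Zariski's theorem superfluous.

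\textbf{The ``main obstacle'' in Step 1.} This is not actually an obstacle. Take coordinates $(u,v)$ in which $\varphi(u,v)=u$, and a small ball that is round in these coordinates. The complement of $\varphi^{-1}(\Sigma_{\mathbb R}(F))$ in that ball fibres over $\{|u|<\epsilon\}\setminus\Sigma_{\mathbb R}(F)$, with ball fibres. The conic structure theorem makes the germs of components independent of the choice of such neighbourhoods. So you need neither the results of Looijenga and Gudiev nor the quasi-homogeneity of $F_4$.
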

	
	\textbf{Proof.} Let $N=\mu-1$, where $\mu$ is the Milnor number. Then the linear $\mu$-parameter deformation of a stable germ is its miniversal deformation in the form \eqref{miniversal}, and the statement of the lemma follows from Proposition~\ref{K-K} on p.~\pageref{K-K}. If $N>\mu-1$, then the stable wave front in $\mathbb{R}^{N+1}$ is diffeomorphic to the product of the front in $\mathbb{R}^{\mu}$ by $\mathbb{R}^{N-\mu+1}$, and the proof reduces to the case considered. $\Box$ 
	
	In conclusion, let us consider the generalized even Petrovsky cycle $\Pi$.
	
	\begin{lemm}\label{lala}
		If, for a real perturbation of a simple boundary singularity with $df(0)=0$ (except for $B_2^+$), the cycle $\Pi$ is defined and non-empty, then $\Pi\neq0\in H_{N-1}(A\cap X, \partial D)$.
	\end{lemm}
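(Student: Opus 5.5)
The plan is to reduce the statement to Lemma~\ref{l3}, which already asserts that every connected component of $\operatorname{Re}A_{\tilde\lambda}\cap X$ not meeting $\hat S$ realizes a non-zero class in $H_{N-1}(A_{\tilde\lambda}\cap X,\partial D)$. By Definition~\ref{obobsch}, the cycle $\Pi$ is defined precisely when $\operatorname{Re}A_{\tilde\lambda}\cap X\cap S=\varnothing$. Then every component of $\Pi$ avoids $\hat S$, so each component $\Pi^{(j)}$ is one of the cycles $\Pi_i$ of Lemma~\ref{l3}, taken with its canonical coorientation by $\operatorname{grad}f_{\tilde\lambda}$. Hence $\Pi=\sum_j\pm\Pi^{(j)}$. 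The task is to show that this sum cannot cancel in $H_{N-1}(A\cap X,\partial D)$.

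\textbf{Case of one component.} If $\Pi$ has a single component, the claim is immediate from Lemma~\ref{l3}. For the simple germs in the list \eqref{tablo} with $N=2$, I would enumerate the real level-set pictures of the miniversal deformations $B_k^\pm, C_k^\pm, F_4^\pm$. These are one-variable-like: for $B_k$ the curve $x^k\pm y^2+\dots=0$ is a union of ovals and arcs over intervals in $x$, and similarly for the others. The point of the enumeration is to check whether several compact ovals avoiding $x=0$ can occur together. Stabilizations would then be handled by Gabrielov's stabilization formulas (\cite{GABRIEL}), as in Lemma~\ref{BCF}, since the real picture changes by a suspension.

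\textbf{Case of several components.} Here I would separate the components by their intersection numbers with vanishing cycles, using the pairing \eqref{cd}: it suffices to find a $\Delta_s\in H_{N-1}(A\cap X)$ with $\langle\Pi,\Delta_s\rangle\neq0$. The candidate is a vanishing cycle $\Delta_s$ attached to a real critical point (extremum or saddle) lying inside, or adjacent to, exactly one oval $\Pi^{(j)}$. By Vassiliev's local computation (Theorem~1, p.~304 of \cite{VVA}, used in \eqref{sk}), this $\Delta_s$ has a non-zero intersection with that oval, of the form $\pm\chi(S^{N-r-1})$. For the parity cases where this Euler characteristic vanishes, I would instead use the jump formula \eqref{sk} with the intersection matrix $[\cdot,\cdot]$ (as in Fig.~\ref{F34}), where a saddle gives $\pm2$. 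Because distinct ovals are disjoint in $\mathbb R^N$ and the real critical points are localized, $\Delta_s$ meets the other ovals trivially. Consequently $\langle\Pi,\Delta_s\rangle=\pm\langle\Pi^{(j)},\Delta_s\rangle\neq0$.

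The main obstacle I expect is exactly this separation step in the multi-oval configurations, which occur for $B_{k}$ with large $k$ and for $C_k$. For these, the plan is to take the vanishing cycle at the innermost real extremum of $f_{\tilde\lambda}|_{\operatorname{Re}}$ enclosed by the chosen oval. If some stabilization makes that extremum contribute zero, I would fall back on Lemma~\ref{l3}'s own proof in \cite{NIKIT} applied componentwise, and check the signs against the Dynkin diagrams of Lemma~\ref{BCF}. $B_2^+$ is excluded since there $\Pi$ is a single sphere which is zero for even $N$ (Example~\ref{b2}).
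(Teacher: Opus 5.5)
Your one-component case is the same as the paper's: Lemma~\ref{l3} settles it. You also correctly identify the real issue, which is possible cancellation among the canonically cooriented components of $\Pi$. However, the separation step you propose to handle it does not hold.

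A vanishing cycle $\Delta_s\in H_{N-1}(A\cap X)$ is the Gauss--Manin transport of a local sphere along a complex path from its critical value $\nu$ to $0$. In the fiber $X_0$ it is not localized near its critical point, so the disjointness of the real components says nothing about $\langle\Pi^{(k)},\Delta_s\rangle$ for $k\neq j$. Formula \eqref{sk} makes this explicit. There, $[\Pi,\alpha_\nu]$ is a local term plus $\sum_i[\alpha_{\nu_i},\alpha_\nu]$ over all real critical values between $0$ and $\nu$. The local term is computed for the whole real level set in $X_{\nu-\varepsilon}$, not for one component of $\Pi$ in $X_0$. The summands may come from critical points attached to other components, or from extrema not enclosed by any oval. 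Nothing in your plan rules out cancellation in that sum.

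Two further problems follow. Extrema are useless in many parities; for $N=2$ both types give local term $0$. The saddle that does give $\pm2$ (Fig.~\ref{F34}) is adjacent to two components, not one, and the two contributions add only because of the coorientation by $\operatorname{grad}f_{\tilde\lambda}$ and the parity of the Morse indices. Your fallback of applying Lemma~\ref{l3} componentwise again ignores cancellation.

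The missing idea is to eliminate the intermediate terms by deformation rather than compute them. The class of $\Pi$ in $H_{N-1}(A\cap X,\partial D)$ is preserved along paths in $\Theta\setminus\Sigma^f_{\mathbb R}$; crossing $\Sigma^{f|_{\hat S}}$ does not matter for this group. So the paper moves inside the chamber to a generic point of its wall where two components of $\Pi$ collide at a Morse point with value $0$. For $B_k$, for example, one can lower the value of a saddle separating two ovals. The paper then invokes the corollary to Proposition~\ref{DB}, a Davydova--Borovikov type statement in which the pairing with the vanishing cycle along a short path is purely local. Its parity hypotheses are met because, for each $N>2$, only one stabilization admits a defined $\Pi$, and this fixes the Morse indices at the collision point. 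Your appeal to stabilization formulas does not address this parity dependence.

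The paper also first reduces to $B_k$ and $C_{2k}$. For $C_{2k+1}$ and $F_4$, the restriction $f_{\tilde\lambda}|_{\hat S}$ always has real zeros near the origin (on the $y$-axis it is a polynomial of odd degree), so $\Pi$ is never defined.
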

	
	\textbf{Proof.} It is sufficient to study certain morsifications of $B_k$ and the unique $C_{2k}$. In each case, for any $N>2$, there exists a unique stabilization (preserving the number of connected components) for which the cycle $\Pi$ is defined. If the cycle $\Pi$ is connected, then everything is already proved; if not, it is easy to specify a path in the component of the complement to $\Sigma_{\mathbb R}^f$ (running to a generic point of its boundary) along which two components of the cycle $\Pi$ collide. It remains to apply the corollary to Proposition~\ref{DB}. $\Box$
	
	\section{Ramification close to a tangent hyperplane}\label{kasal}
	Here $f, A, X, D, B, \text{\CYRSH}_j$ are the same as in Sec.~\ref{vetv}, except that there is no boundary and $df(0)=0$ is assumed to hold. The homomorphism $H_{N+1}(D, A\cup X\cup\partial D)\to H_{N-1}(A\cap X, \partial D)$ (cf.~\eqref{1}) is an epimorphism, and $H_{N+1}(D, A\cup X)\to H_{N-1}(A\cap X)$ is an isomorphism, see~\cite{VVA}, Prop. 3, p. 122. Let us denote by $\Omega_i$ any local integration contour (i.e., an oriented connected domain in $\mathbb R^{N+1}\cap D$ whose boundary lies in $\partial D\cup A\cup X$), by $\Pi_i$ its corresponding element of the group $H_{N-1}(A\cap X, \partial D)$, and by $V_i$ the volume function $\int_{\Omega_i(X)}\omega$.
	
	\begin{lemm}\label{VV} 
		Let $N$ be even. If $\Pi_i\neq0\in H_{N-1}(A\cap X, \partial D)$, then the function $V_i$ is infinitely ramified in $B$.
	\end{lemm}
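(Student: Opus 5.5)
Since $\Pi_i\neq0$ in $H_{N-1}(A\cap X,\partial D)$ and the intersection pairing $H_{N-1}(A\cap X,\partial D)\otimes H_{N-1}(A\cap X)\to\mathbb Z$ is non-degenerate (Poincaré--Lefschetz duality, cf.~\eqref{cd}), we fix a distinguished basis $\{\Delta_s\}$ and a vanishing cycle $\Delta_r$ with $\langle\Pi_i,\Delta_r\rangle\neq0$. For even $N$ the intersection form on $H_{N-1}(A\cap X)$ is skew-symmetric, so $\langle\Delta_r,\Delta_r\rangle=0$. The relative Picard--Lefschetz formula (the analogue of Remark~\ref{rem} without boundary) then gives
\[
\gamma_r^m(\Pi_i)=\Pi_i+m(-1)^{\frac{N(N+1)}{2}}\langle\Pi_i,\Delta_r\rangle\, i_*\Delta_r ,
\]
and the variation is additive in $m$. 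Lifting to integration contours by the analogue of diagram~\eqref{cd0.5} (the variation of $\Omega_i$ along $\gamma_r$ is $\pm\langle\Pi_i,\Delta_r\rangle$ times the vanishing contour $\text{\CYRSH}_r$, by the isomorphism $H_{N+1}(D,A\cup X)\simeq H_{N-1}(A\cap X)$), we obtain
\[
V_i\mapsto V_i+m\,c\int_{\text{\CYRSH}_r}\omega,\qquad c\neq0 .
\]

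It remains to check that $\int_{\text{\CYRSH}_r(X)}\omega\not\equiv0$ as a germ at $X_0$. This is item 1) of the proof of Lemma~\ref{Vol1}. Transported to a fiber near the critical value $\theta$, $\text{\CYRSH}_r$ becomes a cap whose integral has the expansion $\sum b_k(c-\theta)^{N/2+k+1}$, with leading coefficient proportional to $\operatorname{Vol}(B_1^N)J\mathcal G(0)\neq0$. By analyticity along the path, the function is non-zero near $X_0$. Hence the values $V_i+mcI$, $m\in\mathbb Z$, are pairwise distinct, and $V_i$ is infinitely ramified.

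The expected obstacle is that for even $N$ the monodromy $\gamma_r$ may act on the cycle $\text{\CYRSH}_r$ itself; for a cap it acts trivially, since the expansion has integer exponents. This is exactly why iterating $\gamma_r$ accumulates multiples of the same function rather than cancelling them. The odd-$N$ sign flip (half-integer exponents) is what breaks this argument there, which matches the restriction to even $N$. One also checks that $\langle\Pi_i,\Delta_r\rangle$ does not change under $\gamma_r$, which holds because $\langle\Delta_r,\Delta_r\rangle=0$.
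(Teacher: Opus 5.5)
Your proof is correct and follows essentially the paper's argument. Poincaré--Lefschetz duality gives a vanishing cycle $\Delta_r$ with $\langle\Pi_i,\Delta_r\rangle\neq0$; an $m$-fold bypass around its critical value adds $m(-1)^{\frac{N(N+1)}{2}}\langle\Pi_i,\Delta_r\rangle$ copies of the vanishing contour to $\Omega_i$; and item 1) of the proof of Lemma~\ref{Vol1} shows the integral over that contour is not identically zero. Your observation that $\langle\Delta_r,\Delta_r\rangle=0$ for even $N$, so that the iterates genuinely accumulate, makes explicit a point the paper leaves implicit.
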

	
	This statement is a special case of Vassiliev's theorem, see~\cite{VVA1}, Theorem 1. 
	
	\textbf{Proof.} By Poincaré duality, there exists a vanishing cycle $\Delta_j\in H_{N-1}(A\cap X)$ such that $\langle\Pi_i, \Delta_j\rangle\neq0$. Then, by the (generalized) Picard--Lefschetz formula, an $r$-fold bypass along a loop around the critical value for the cycle $\Delta_j$ adds the cycle $\text{\CYRSH}_j$ taken $(-1)^{\frac{N(N+1)}{2}} r\langle\Pi_i, \Delta_j\rangle$ times to the cycle $\Omega_i$. The statement now follows from Lemma~\ref{Vol1}. $\Box$  
	
	Now let $N$ be odd. In the case of simple singularities, the quadratic intersection form on the group $H_{N-1}(A\cap X)$ is sign-definite (\cite{Norm}). In this case, the monodromy group --- the image of the representation $\pi_1(B\setminus\Sigma)\rightarrow O(H_{N-1}(A\cap X), \langle\cdot,\cdot\rangle)$ --- is finite, and consequently, the orbit of the cycle $\Pi_i$ is also finite (cf. Lemma~\ref{simpl}).
	
	\begin{theorem}\label{Th}
		Let $N$ be odd, and let both inertia indices of the quadratic intersection form on the group $H_{N-1}(A\cap X)$ be non-zero. If $\Pi_i\neq0\in H_{N-1}(A\cap X, \partial D),$ then the function $V_i$ is infinitely ramified in $B$.
	\end{theorem}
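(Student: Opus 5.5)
We follow the scheme of Lemma~\ref{VV}, but the monodromy now acts through reflections. For odd $N$ the Picard--Lefschetz formula \eqref{PicL} turns every loop $\gamma_j$ into a reflection $\gamma_j(\Delta)=\Delta\pm\langle\Delta,\Delta_j\rangle\Delta_j$, where $\langle\Delta_j,\Delta_j\rangle=\pm2$. For a relative cycle $\Pi_i\in H_{N-1}(A\cap X,\partial D)$ we have $\gamma_j(\Pi_i)=\Pi_i+c_j\,i_*\Delta_j$ with $c_j=\pm\langle\Pi_i,\Delta_j\rangle$. The orbit of $\Pi_i$ therefore lies in $\Pi_i+i_*\Lambda$, where $\Lambda=H_{N-1}(A\cap X)$. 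By the isomorphism $H_{N+1}(D,A\cup X)\simeq H_{N-1}(A\cap X)$ and the diagram analogous to \eqref{cd0.5}, the orbit of $\Omega_i$ lies in $\Omega_i+\Lambda$, where $\Lambda$ is now spanned by the contours $\text{\CYRSH}_j$. Write the variation map as $\Omega_i\mapsto\Omega_i+v$, $v\in\Lambda$. Then $V_i$ takes the values $V_i+\int_v\omega$. Infinite ramification will follow once we show two things: the set $\mathcal V$ of reachable variations $v$ is infinite, and the linear functional $v\mapsto\int_{v(X)}\omega$ on $\Lambda$ takes infinitely many values on $\mathcal V$.

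\emph{Step 1: $\mathcal V$ is infinite.} Since $\Pi_i\ne0$, Poincaré duality gives some $\Delta_j$ with $\langle\Pi_i,\Delta_j\rangle\ne0$, so $\mathcal V\ne\{0\}$. The monodromy group $G$ acts on $\Lambda$ and acts affinely on the coset $\Pi_i+i_*\Lambda$. Because the form is indefinite (both inertia indices are nonzero), $G$ is infinite: this is the classical fact that the monodromy group is finite only for simple singularities (\cite{Norm}). We then need to show the affine orbit is infinite. Suppose instead that it is finite. A finite-index subgroup $G'$ then fixes $\Pi_i$, so for every $g\in G'$ the variation $\operatorname{Var}_g\Pi_i$ lies in $\operatorname{Ker} i_*$, which is the kernel of the intersection form. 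Using $\langle\Pi_i,\Delta\rangle$ as a nonzero linear functional $\ell$ on $\Lambda$, invariance of $\Pi_i$ under $G'$ forces $\ell(g\Delta-\Delta)=0$, that is, $\ell$ is $G'$-invariant. I would then exploit irreducibility. The Dynkin diagram is connected (Gabrielov), and $G$ is generated by reflections in an indefinite lattice, so its Zariski closure contains the full orthogonal group of $\Lambda/\operatorname{Ker}$. That group has no nonzero invariant vectors, and this remains true for any finite-index subgroup, since Zariski density passes to finite index in the identity component. This contradicts $\ell\ne0$ modulo the kernel. I expect this to be the main obstacle: establishing Zariski density, or at least the absence of invariant functionals, for finite-index subgroups of an indefinite reflection group. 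I would first try the structural result that the reflection group of a hyperbolic-type vanishing lattice is arithmetic and of finite index (Janssen, Ebeling). If that fails, I would use a direct argument: the powers of a product of two reflections $\gamma_a\gamma_b$ with $|\langle\Delta_a,\Delta_b\rangle|\ge2$ are of infinite order, and they already produce unboundedly growing variations of $\Pi_i$. The indefiniteness guarantees such a pair in some distinguished basis.

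\emph{Step 2: infinitely many integral values.} Restrict to $\Lambda_{\mathbb R}$, and let $K$ be the kernel of the functional $\phi(v)=\int_{v(X)}\omega$, viewed as an analytic function of $X$. Since $\phi(\text{\CYRSH}_j)\not\equiv0$ by Lemma~\ref{Vol1}, $\phi$ is nonzero on every $\text{\CYRSH}_j$. Moreover, $\phi$ is $G$-equivariant in the sense that continuation of $\phi(v)$ equals $\phi(gv)$. If $\phi(\mathcal V)$ were finite, all continuations of $V_i$ would span a finite-dimensional space. The span of $\mathcal V$ modulo $\ker\phi$ would then be finite-dimensional with finite image, so infinitely many elements of $\mathcal V$ would differ by elements of $\ker\phi\cap\Lambda$. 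That sublattice is $G$-invariant, since vanishing of an analytic function is preserved under continuation. By the irreducibility used in Step 1 (the connected Dynkin diagram together with \eqref{PicL}), a nonzero invariant sublattice containing any element not in the kernel of the form must contain some $\text{\CYRSH}_j$, contradicting Lemma~\ref{Vol1}. Hence it lies in the radical of the form, whose $G$-action is trivial. This is incompatible with the infinite growth produced in Step 1, since elements of $\mathcal V$ that differ by radical vectors have the same intersection numbers. Combining the two steps gives infinitely many values of $V_i$, proving the theorem.
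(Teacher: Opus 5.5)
Your Step 2 is sound. Lemma~\ref{Vol1} plus irreducibility force the kernel of $\phi\colon v\mapsto\int_{v(X)}\omega$ into the radical $\operatorname{Ker}$ of the intersection form, and this is exactly what lets the paper descend the form to the quotient by contours with identically zero integral. One point of phrasing: finiteness of $\phi(\mathcal V)$ gives that $\mathcal V$ lies in finitely many cosets of $\ker\phi\subseteq\operatorname{Ker}$, so $i_*\mathcal V$ is finite. The statement ``infinitely many elements differ by kernel vectors'' is by itself no contradiction.

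The gap is Step~1, on which your whole argument rests: the passage from ``$G$ is infinite'' to ``the orbit of $\Pi_i$ in $H_{N-1}(A\cap X,\partial D)$ is infinite''. You base it on Zariski density of $G$, and of its finite-index subgroups, in $O(\Lambda/\operatorname{Ker})$. You do not establish this, and you flag it yourself as the main obstacle. Even granted, $G'$-invariance of $\ell=\langle\Pi_i,\cdot\rangle$ contradicts density directly only if $\ell$ vanishes on $\operatorname{Ker}$. That is not automatic for a relative class, because $H_{N-1}(A\cap X,\partial D)$ and $H_{N-1}(A\cap X)$ are in perfect duality. The fallback is also unsupported. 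Nothing shows that indefiniteness yields a pair with $|\langle\Delta_a,\Delta_b\rangle|\geq2$ in some distinguished basis. Nor does anything show that $(\gamma_a\gamma_b)^n$ moves $\Pi_i$ unboundedly: when the Gram matrix of $\Delta_a,\Delta_b$ is degenerate, the growth is along $\Delta_a\pm\Delta_b$, which may lie in $\operatorname{Ker}$ and be killed by $i_*$.

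The missing idea, which is how the paper argues, is to look at differences of orbit elements instead of stabilizers. Take $j$ with $c=\pm\langle\Pi_i,\Delta_j\rangle\neq0$. For every $g$ in the monodromy group,
$g\gamma_j\Omega_i-g\Omega_i=g(\gamma_j\Omega_i-\Omega_i)=c\,g\text{\CYRSH}_j$.
If $V_i$ had finitely many branches, these differences would give finitely many functions. So the contours $g\text{\CYRSH}_j$ would lie in finitely many cosets of $\ker\phi\subseteq\operatorname{Ker}$, and the orbit of the image of $\Delta_j$ in $\Lambda/\operatorname{Ker}$ would be finite. This finiteness propagates along every edge of the connected Dynkin diagram via $g\Delta_k=\pm(g\gamma_k\Delta_j-g\Delta_j)/\langle\Delta_j,\Delta_k\rangle$. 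Hence the irreducible reflection group induced on $\Lambda/\operatorname{Ker}$ would be finite, and its invariant form would be sign-definite, contradicting the hypothesis on the inertia indices. This replaces your Step~1 entirely, needs no Zariski-density input, and never requires proving separately that the orbit of $\Pi_i$ is infinite.
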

	
	\textbf{Proof.} Let us quotient the group $H_{N+1}(D, A\cup X; \mathbb R)\simeq\mathbb{R}^{\mu(f)}$ formed by the classes of vanishing integration contours $\{\text{\CYRSH}_j\}$ by the subgroup of all elements yielding an identically zero integral. The monodromy group acts on this quotient group, preserving the descended intersection form (induced by the isomorphism $H_{N+1}(D, A\cup X)\simeq H_{N-1}(A\cap X)$). 
	
	This group is generated by reflections in the hyperplanes orthogonal to the cosets of the vanishing contours. Since it is irreducible (Corollary 6, p. 45 of~\cite{VVA}), according to the classification of groups generated by reflections (see~\cite{Burb}), it is finite only if the quadratic intersection form is sign-definite. It follows from the assumption that the analytic continuation of the function $\int_{\text{\CYRSH}_j(X)}\omega$ takes infinitely many values at the point $X_0$ for any vanishing cycle $\text{\CYRSH}_j$. 
	
	Suppose now that the function $V_i$ is finite-sheeted in $B$. By definition, this means that the integral of the form $\omega$ takes only a finite number of values on the set of variations of the cycle $\Omega_i$. Consequently, the integral also takes only a finite number of values on the set of differences between the elements of the orbit of the cycle $\Omega_i$.
	
	However, if $\langle\Pi_i, \Delta_j\rangle\neq0$, then by the generalized Picard--Lefschetz formula, the orbit of the cycle $(-1)^{\frac{N(N+1)}{2}}\langle\Pi_i, \Delta_j\rangle\text{\CYRSH}_j$ is contained in the specified set of differences. Since the integral takes infinitely many values on the elements of this orbit, we arrive at a contradiction. $\Box$
	
	The case of {\em parabolic} singularities, whose intersection form is semi-definite, remains uncovered. In this case if $\Pi_i\neq0\in H_{N-1}(A\cap X, \partial D)$ then the function $V_i$ is finite-sheeted $\iff$ the kernel of the intersection form on the group $H_{N+1}(D, A\cup X)$ coincides with the subgroup of elements yielding an identically zero integral.
	
	\begin{example}\label{Para}
		For the tabular versal deformation of a singularity of type $\pm X_9$, there exists a component of the complement to $\Sigma_{\mathbb R}$ which is a non-trivial formal lacuna (i.e., $\Pi\neq\varnothing$ and $\Pi=0\in H_{N-1}(A\cap X, \partial D)$), see Example~\ref{newex}. The cycle $\Pi$ realizes a non-zero element of the group $H_{N-1}(A\cap X)$ because it corresponds to a class $\Omega\in H_{N+1}(D, A\cup X)$ such that $V(X)=\int_{\Omega(X)}\omega\neq 0$. On the other hand, $\Pi$ lies in the kernel of the intersection form.
	\end{example}
	
	\section{Obstructions to the algebraic integrability of domains in $\mathbb{R}^{N+1}$}\label{integr}
	
	In this section, we formulate obstructions to the algebraicity of the volume function, which follow from the obtained results on ramification close to an asymptotic hyperplane. Let $\Xi\subset\mathbb R^{N+1}$ be an algebraic hypersurface, and let $\Upsilon$ be an arbitrary connected component of the set $\mathbb R^{N+1}\setminus\Xi$. Suppose that at least one of the two parts (possibly empty) into which a generic hyperplane $X_0$ divides the domain $\Upsilon$ does not extend to infinity. Then, near $X_0$, the volume function $V:(\mathbb{RP}^{N+1})^*\to\mathbb R$ is defined by the formula $V(X)=\int_{\Omega(X)}\omega$, where $\Omega(X)$ is the corresponding $(N+1)$-chain (for a compact $\Upsilon$ we choose either of the two) equipped with the positive orientation of $\mathbb{R}^{N+1}$, and $\omega$ is the Euclidean volume form. The domain $\Upsilon$ is called {\em locally algebraically integrable} near $X_0$ if the function $V$ is defined and is a branch of an algebraic function. A domain is called {\em algebraically integrable} if it is locally algebraically integrable for any $X$ for which the function $V(X)$ is defined, and if the germ of $V$ is non-zero at least at one point.
	
	\subsection{Integrability and quasilacunas}\label{intk}
	
	Let $a\in\overline{\partial\Upsilon}\cap S$ be a {\em simple} point (see Sec.~\ref{Pro}), let $f$ be the generating function for $(\overline\Xi, \overline\Xi\cap S)$ at this point, and let $X$ be a generic hyperplane close to the asymptotic one.
	
	The following proposition follows from the definitions, see Definition~\ref{slow}. 
	
	\begin{PREDLOZHENIE}
		If $\Upsilon$ is locally algebraically integrable near $X$, then the local component of the complement to $\Sigma_{\mathbb R}(\Lambda_f)$ containing $X$ is a strict weak quasilacuna.  
	\end{PREDLOZHENIE}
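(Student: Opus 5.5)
The plan is to unwind the definitions and check the three requirements of a strict weak quasilacuna (Definition~\ref{slow}) one at a time. Let $\Gamma$ be the local component of the complement to $\Sigma_{\mathbb R}(\Lambda_f)$ that contains $X$. Local algebraic integrability of $\Upsilon$ near $X$ gives two things: the volume function $V$ is defined near $X$, and its germ is a branch of an algebraic function. The first step is to match this global $V$ with the local function $V$ of Sec.~\ref{Petrov}. In the adapted coordinates at the simple point $a$, the form $\omega$ of Definition~\ref{Vol} is, up to a nonzero constant, the Euclidean volume form written in the chart where $S$ is given by $x_1=0$. The chain $\Omega(X)$ cut off from $\Upsilon$ by $X$ does not reach infinity. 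Its germ near $a$ is therefore the domain in $D$ lying between $\operatorname{Re}A$, $X$ and $\partial D$, and not adjacent to $S$. This is exactly the localized contour $\Omega$ that corresponds to $\Pi=\operatorname{Re}A\cap X$ (taken near $a$). So in particular $\operatorname{Re}A\cap X\cap S=\varnothing$ in $D$, and $\Pi$ is defined.

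The second step is to pass from algebraicity to finite ramification. We split the global integral as
\[
V=\int_{\Omega\cap D}\omega+\int_{\Omega\setminus D}\omega .
\]
The second summand is holomorphic in $X$ near the asymptotic hyperplane, because its contour stays away from $a$; it therefore does not change under the loops of $\pi_1(B\setminus\Sigma(\Lambda_f))$, which live in the small ball $B$. An algebraic function has only finitely many branches, so its continuations along these loops take finitely many values. Subtracting the single-valued part, the local integral $\int_{\Omega(X)}\omega$ also takes finitely many values over the orbit of $\Omega$. This is precisely local finite-sheetedness on $\Gamma$. The third step is to propagate this from $X$ to every point of $\Gamma$, using that $\Gamma$ is connected and the identification of classes along paths in $\Gamma$ (the proposition on invariance of $\Pi_i$ for linear deformations).

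The remaining requirement is non-emptiness of $\Pi$. Here we use that the germ of $V$ is nonzero and that $\Omega$ is nonempty near $a$. If $\Pi$ were empty, $\Omega\cap D$ would be empty or without boundary on $X$. I expect this step to be the main obstacle: one has to make sure that the localized piece of the cut-off domain is really bounded near $a$ by $\operatorname{Re}A\cap X$, and is not something degenerate (for instance $\operatorname{Re}A\cap X=\varnothing$ in $D$ with $\Omega$ only abutting $S$). I would settle this case using the hypothesis that the part cut off does not extend to infinity. That part cannot be adjacent to $S$ near $a$, so if it meets $D$ at all, its boundary in $D$ must include a nonempty piece of $\operatorname{Re}A\cap X$. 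If the part does not meet $D$, the statement is understood with $V\equiv0$; I would flag that the intended reading assumes the domain does meet the neighborhood of $a$.
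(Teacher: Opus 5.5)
Your route is the paper's own: the paper simply asserts that the proposition follows from Definition~\ref{slow}. However, the identification in your first step is not automatic, and it is exactly where the argument, and the statement read literally, can fail. In Definition~\ref{slow}, $\Pi$ is the whole of $\operatorname{Re}A\cap X$ in $D$, and $\Omega$ is the whole region of $D\cap\mathbb{R}^{N+1}$ between $\operatorname{Re}A$ and $X$; the domain $\Upsilon$ plays no role. The halves $\{x_1>0\}$ and $\{x_1<0\}$ of $D\setminus S$ are neighbourhoods of infinity in opposite directions of $\mathbb{R}^{N+1}$. So the two halves of one local side of $\operatorname{Re}A$ may belong to different components of $\mathbb{R}^{N+1}\setminus\Xi$. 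If $\Upsilon$ contains only one of them, $\Omega(X)\cap D$ is only the part of the contour over $\Pi\cap\{x_1>0\}$, a partial sum of the $\Omega_i$. Algebraicity of the volume cut from $\Upsilon$ then controls only the corresponding partial sum of the $V_i$, not $V$.

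This really occurs. For the two-sheeted hyperboloid $y_0^2-y_1^2-\dots-y_N^2=1$, every real point $a$ of $\overline\Xi\cap S$ is simple of type $B_2^+$ with $s=0$, and the interiors of the two sheets are precisely the two halves of the local side $\{c>f\}$. A hyperplane close to the asymptotic one, e.g.\ $y_0=(1-\epsilon)y_1+h$ with $0<\epsilon\ll h^2\ll 1$, cuts the upper sheet along an ellipsoid lying in $D$ and misses the lower sheet. Take $\Upsilon$ to be the interior of the lower sheet. Its cut-off part is empty and $V\equiv0$ is algebraic. Yet $\Pi\neq\varnothing$ is the section of the upper sheet, and for odd $N$ the corresponding component is not a weak quasilacuna (Example~\ref{b2}).

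Your fallback ``understood with $V\equiv0$'' does not help, because Definition~\ref{slow} sets $V\equiv0$ only when $\Pi$ is undefined or empty. Your extra assumption that $\Omega(X)$ meets $D$ does not help either once $\Pi$ can have components on both sides of $S$ (e.g.\ perturbations of $B_{2k}$, $k\geqslant2$). In that case the Step~1 identification is still false.

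So you must state explicitly the hypothesis that the paper's one-line justification uses tacitly (its ``total cut-off volume'' reading). Near $a$, the domain $\Upsilon$ must occupy a whole side of $\operatorname{Re}A$ on both sides of $S$, and, for strictness, $\Omega(X)\cap D\neq\varnothing$. Under this hypothesis your argument goes through:
\begin{itemize}
\item Boundedness of the cut-off part forces $f_{\tilde\lambda}|_{\hat S}$ to be one-signed on $\operatorname{Re}\hat S\cap D$, hence zero-free since $X\notin\Sigma_{\mathbb R}$, so $\Pi$ is defined.
\item $\Omega(X)\cap D$ is then the full contour of $\Pi$.
\item Your connectedness argument gives $\Pi\neq\varnothing$.
\end{itemize}

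One smaller point concerns Step~2. The fact that ``its contour stays away from $a$'' does not by itself make $\int_{\Omega(X)\setminus D}\omega$ single-valued on $B$. Part of its boundary lies on the real sphere $\partial D$, and any further non-transversality of the asymptotic hyperplane to $\overline\Xi\cup S$ away from $a$ would ramify it too. Assume, as the paper does tacitly, that there is no such non-transversality, and phrase the step via variations. Along any loop in $B\setminus\Sigma$, the variation of the global contour is supported in $D$ and equals the local variation. Hence the finitely many branches of the algebraic $V$ give finitely many values of the integrals over variations, which is exactly the paper's definition of local finite-sheetedness.
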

	
	\begin{theorem}\label{Vasvas}
		Let $\Upsilon$ be locally algebraically integrable near $X$, and let the boundary singularity of the generating function $f$ be stable and simple. Then this singularity can only belong to the class $\pm B_{2}^+(\pm(x^{2}+y^2+Q))$ or $\pm A_{2k-1}(x\pm (y^{2k}+Q))$, where the boundary is $x=0$, $Q$ is a non-degenerate quadratic form of an even number of additional variables with $i_-(Q)=0$. The numbers of strict weak quasilacunas are given below:
		\begin{table}[h]
			\centering
			\begin{tabular}{|l|l|}
				\hline
				$B_{2}^{+}$ & $2$ \\ \hline
				$A_{2k-1}$  & $1$ \\ \hline
			\end{tabular}
		\end{table}
	\end{theorem}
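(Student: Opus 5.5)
The plan is to combine the preceding proposition (local algebraic integrability near $X$ forces the component $\Gamma\ni X$ of the complement to $\Sigma_{\mathbb R}(\Lambda_f)$ to be a strict weak quasilacuna) with the classification results of Sec.~\ref{quasisimpl}. By stability, $\Lambda_f$ is $\mathcal{K}_{\hat S}$-versal. For simple singularities, $\mathcal{K}_{\hat S}$-versality coincides with $\mathcal{R}_{\hat S}$-versality (Sec.~\ref{simplust}). Hence, by Lemma~\ref{ust1} and Proposition~\ref{K-K}, the count of (weak) quasilacunas of $\Lambda_f$ equals that of the miniversal deformation of Arnold's normal form. The main task is therefore to list, among simple germs, those whose miniversal deformation has a component on which $\Pi$ is defined, non-empty, and the function $V$ is finite-sheeted.

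\textbf{Case $df(0)=0$.} I would first dispose of odd $N$. Stable germs satisfy Proposition~\ref{parab}, so the proposition following Definition~\ref{slow} (via Corollary~\ref{Rem}) shows that no strict weak quasilacuna exists; hence $N$ must be even. For even $N$, Lemma~\ref{lala} gives $\Pi\neq0\in H_{N-1}(A\cap X,\partial D)$ for every class except $B_2^+$. Theorem~\ref{o}b together with Corollary~\ref{Rem}b then yields infinite ramification, which rules out $B_k$ $(k\ne2)$, $C_k$ and $F_4$. For $B_2^{\pm}$ (meaning $B_2^+$ up to the overall sign, i.e.\ $\pm(x^2+y^2+Q)$), I would use Example~\ref{b2}. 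The sphere $\Pi$ exists only in the component $0<\lambda_0<\lambda_1^2/4$ taken for both signs of $\lambda_1$, giving two components, and $\Pi$ is null in $H_{N-1}(A\cap X,\partial D)$ exactly when the ellipsoid $\operatorname{Re}\{f=c\}$ is a vanishing sphere, i.e.\ when $i_-(Q)=0$. Finite-sheetedness then follows from Lemma~\ref{simpl}, because $f|_{\hat S}$ is Morse and hence simple. If instead $Q$ were indefinite, $\Pi$ would be non-compact, hence meet $\hat S$ or fail to exist. The evenness of the number of extra variables comes from $N$ being even, since $N=2+\dim Q$.

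\textbf{Case $df(0)\neq0$.} Here $f=x+\varphi$ with $\varphi$ simple of type $A,D,E$. The cycle $\Pi$ is defined and non-empty only if $\operatorname{Re}A\cap X$ avoids $\hat S$, which forces $f|_{\hat S}=\varphi$ to have a real extremum. Among simple singularities this happens only for $A_{2k-1}$ with a definite stabilization, $\varphi=\pm(y^{2k}+Q)$ with $i_-(Q)=0$ after fixing signs. For odd $N$, Proposition~\ref{Pred7} shows this component is not a quasilacuna, so again $N$ is even and $\dim Q$ is even. For even $N$, Proposition~\ref{dfneqo} (with Lemma~\ref{simpl}) shows it is the unique non-trivial quasilacuna, which gives the count $1$.

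The main obstacle I expect is justifying that the counts for stable linear deformations match those for the normal forms in the $B_2^+$ and $A_{2k-1}$ cases, i.e.\ that the strict weak quasilacunas are exactly the components identified above. For $B_2^+$ this also requires confirming that the two components with $\lambda_1\gtrless0$ are genuinely distinct in $\Lambda_f$. I would handle this via Lemma~\ref{ust1} and, for the purely boundary case, the stability from Definition~\ref{ust'}.
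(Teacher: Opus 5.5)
Your proposal is correct and follows the paper's route. The preceding proposition turns local integrability into the existence of a strict weak quasilacuna, and Lemma~\ref{ust1} passes to normal forms. Lemma~\ref{lala} with Theorem~\ref{o}(b) and Corollary~\ref{Rem} eliminates every class with $df(0)=0$ except $B_2^+$. Examples~\ref{b2} and \ref{A2k}, via Lemma~\ref{simpl} and Propositions~\ref{Pred7}, \ref{dfneqo}, handle the survivors; your odd-$N$ exclusion and the purely boundary case spell out what the paper's one-line proof compresses.

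One small correction concerns the $B_2^+$ case. There, $\Pi$ vanishes in $H_{N-1}(A\cap X,\partial D)$ because $N$ is even: the skew-symmetric form gives the vanishing sphere zero self-intersection. It does not vanish merely because it is a vanishing sphere, and for odd $N$ the same sphere has nonzero class. Definiteness of $Q$ plays a different role: it is what makes $\Pi$ compact and disjoint from $\hat S$, hence defined at all.
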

	
	\textbf{Proof.} The statement follows from Lemmas~\ref{ust1}, \ref{lala} and the analysis of the remaining cases, see Examples~\ref{b2}, \ref{A2k} and Fig.~\ref{F0}. $\Box$
	
	We call a real irreducible hypersurface $\Xi\subset\mathbb{C}^{N+1}$ \textit{algebraically integrable} if at least one connected component $\Upsilon$ of the set $\mathbb{R}^{N+1}\setminus\Xi$ is algebraically integrable.
	
	\begin{theorem}\label{Vas}
		If the hypersurface $\Xi$ is algebraically integrable, then: 
		a)~for even $N$, there are no non-degenerate parabolic simple points on $\overline{\Xi}$; 
		b)~for odd $N$, there are no non-degenerate simple points on $\overline{\Xi}$ at which the germ of the generating function for the pair $(\overline\Xi, \overline\Xi\cap S)$ is stable (or at least satisfies Proposition~\ref{parab}). 
	\end{theorem}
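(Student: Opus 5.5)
We argue by contradiction. Assume $\Xi$ is algebraically integrable, i.e.\ some component $\Upsilon$ of $\mathbb R^{N+1}\setminus\Xi$ has a volume function $V$ that is a branch of an algebraic function and is not identically zero. Assume also that $\overline{\Xi}$ has a point $a$ of the forbidden kind. The strategy is to realize a real hyperplane $X$ close to an asymptotic hyperplane at $a$ for which $V$ is defined. Since $V$ is algebraic, its analytic continuation is finite-sheeted globally, and hence locally in $B$. The local results of Sec.~\ref{vetv} then give infinite ramification, which is the contradiction.

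The first and most delicate step is to pass from the global function $V$ to the local functions $V_i$. Near $a$ we use adapted coordinates and the generating function $f$ of the pair $(\overline\Xi,\overline\Xi\cap S)$. For a generic real $X$ near the asymptotic hyperplane, the chain $\Omega(X)$ decomposes in $H_{N+1}(\mathbb{CP}^{N+1}\setminus S,\ldots)$ into two parts: a combination $\sum n_i\Omega_i$ of the local contours in $D$, and a part supported away from $D$. Variation along loops in $B\setminus\Sigma(\Lambda_f)$ affects only the local part, because the loops are small and the remaining discriminant components (other strata of $\overline\Xi\cup S$) are not met. Therefore finite-sheetedness of $V$ implies finite-sheetedness of $\sum n_iV_i$. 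The main obstacle is to ensure that $X$ can be chosen real, non-discriminant, and with $V$ defined. I would handle this by using the Euclidean form written as $\omega$ with its pole of order $N+2$ along $S$, and by moving $X$ slightly within the set where $V$ is defined. This is an open condition, and near $a$ it holds for one of the two sides of $X$ whenever $\partial\Upsilon$ reaches $a$. If $a\notin\overline{\partial\Upsilon}$, one has to argue instead that an algebraic $V$ must be locally algebraic at every asymptotic hyperplane of $\overline\Xi$, because the irreducibility of $\Xi$ propagates the ramification. I expect this reduction to need care. Here I would invoke the irreducibility of $\Xi$ together with the fact that the monodromy of the global integral is computed on all of $\overline\Xi$.

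Case b), $N$ odd. Here $f$ is stable, or at least satisfies Proposition~\ref{parab}. By Corollary~\ref{Rem}a), together with Theorem~\ref{o}a), any non-trivial combination $\sum n_i\Pi_i$ in $H_{N-1}(A\cap X\setminus S,\partial D;\mathbb R)$ yields infinite ramification. It remains to choose $X$ so that the local part is non-trivial. Choose $X$ on the side where a bounded piece of $\Upsilon$ is cut off near $a$. Then $\Pi$ is a non-empty generalized Petrovsky cycle, and the isomorphism \eqref{1} makes its class non-zero. This agrees with the proposition stated after Definition~\ref{slow}, which says there are no strict weak quasilacunas for odd $N$.

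Case a), $N$ even, $a$ a non-degenerate parabolic simple point. Then $df(a)=0$, and $f$ has an isolated non-Morse singularity. I would reduce this case to Lemma~\ref{VV} and Theorem~\ref{Vasvas}. For $X$ generic near the tangent hyperplane, every Petrovsky class that is non-zero in $H_{N-1}(A\cap X,\partial D)$ gives infinite ramification. Local algebraic integrability therefore forces the relevant component to be a strict weak quasilacuna. By Theorem~\ref{Vasvas} the singularity must then be $B_2^+$ or $A_{2k-1}$. The case $A_{2k-1}$ has $df\neq0$, so it is not a non-degenerate point. For $B_2^+$ the Hessian of $f$ is non-degenerate, so $a$ is not parabolic. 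This contradicts the assumption. For non-simple germs at parabolic points, I would instead use Proposition~\ref{DB}a) at a nearby Morse-degenerating point with odd indices, together with its corollary, to produce a non-weak-quasilacuna component adjacent to $X$.
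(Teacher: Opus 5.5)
Your proposal has a genuine gap, and it sits in the reduction step you yourself flag as delicate. Everything after that step needs a real non-discriminant $X$ with three properties: it lies near an asymptotic hyperplane at $a$, $V$ is defined at $X$, and the local part $\sum n_i\Omega_i$ of $\Omega(X)$ in $D$ is non-trivial. For odd $N$ non-trivial means in $H_{N-1}(A\cap X\setminus S,\partial D;\mathbb R)$; for even $N$ it means in $H_{N-1}(A\cap X,\partial D;\mathbb R)$. The hypotheses do not provide such an $X$.

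\begin{itemize}
\item The theorem excludes bad points anywhere on $\overline\Xi$, so $a$ need not lie on $\overline{\partial\Upsilon}$ at all.
\item Even when $a\in\overline{\partial\Upsilon}$, nothing forces the finite-volume side of a nearby $X$ at which $V$ is defined to contain any local domain $\Omega_i$. When $\{\Pi_i\}=\varnothing$ the component is a quasilacuna, and Theorem~\ref{o} and Corollary~\ref{Rem} give no obstruction.
\item Your case b) step ``choose $X$ on the side where a bounded piece of $\Upsilon$ is cut off near $a$'' therefore assumes what has to be shown. Likewise, ``irreducibility propagates the ramification'' names the needed ingredient but gives no mechanism.
\item Case a) has an extra defect. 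Theorem~\ref{Vasvas} is proved only for stable germs of simple singularity type. Here ``simple point'' only means that the boundary singularity is isolated, and no stability is assumed.
\item The fallback to Proposition~\ref{DB}a) needs a Morse point with odd indices on the zero level, adjacent to a component where $V$ is defined with non-empty $\Pi$. Again, nothing supplies this.
\end{itemize}

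A symptom of the problem is that parabolicity enters your argument only to strike $B_2^+$ off a list.

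The missing idea, which is how the paper argues, is to transport a \emph{jump} of $V$ rather than $V$ itself.

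\begin{itemize}
\item Take $X_{c'}$ tangent to $\partial\Upsilon$ at a non-singular non-parabolic point $x_{c'}$, with $V$ defined at $X_{c'\pm\varepsilon}$.
\item By Vassiliev's jump theorem, $W=V-V_\mu$ (continuation along the half-circle $\mu$ around $c'$) is algebraic. It equals the integral of $\omega$ over the contour vanishing at $x_{c'}$, which is non-zero.
\item Since $\Xi$ is irreducible, its non-singular non-parabolic points form a path-connected set. Moving along hyperplanes near $\operatorname{tang}(\Xi)$ therefore carries this contour to $\pm$ the contour vanishing at a Morse tangency point of the morsified generating function at $a$.
\item The reduction $\Delta\in H_{N-1}(A\cap X)$ of that contour is a basis element of $H_{N-1}(A\cap X\setminus S)$ by Lemma~\ref{baza}.
\item Lemma~\ref{monodromia}, the chain argument in the proof of Theorem~\ref{o}, and Lemma~\ref{Vol1} then give infinitely many values of the integral along the local monodromy orbit. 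This contradicts the algebraicity of $W$.
\end{itemize}

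This route also explains the hypotheses. Non-degeneracy gives $df(a)=0$, so $\Delta$ exists. For even $N$, parabolicity rules out the Morse cases $B_2$ and $C_k$, whose monodromy groups are finite. For odd $N$, Proposition~\ref{parab} is exactly what Lemma~\ref{Vol1} needs for the tube integrals.
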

	
	\begin{remark}
		This theorem is an analogue of Theorem 1, p. 188 of~\cite{VVA}. For odd $N$, the statement follows immediately from Proposition~\ref{parab} and Theorem 9, p. 181 of~\cite{VVA} stating that the closure of an integrable hypersurface does not contain {\em trivial} points (of type $B_2$).
	\end{remark}
	
	\textbf{Proof.} Suppose the contrary. Then, for a generic hyperplane $X$ close to the tangent hyperplane to $\overline{\Xi}$ at a simple point $a$, let us take an arbitrary vanishing cycle $\Delta$ of the group $H_{N-1}(D(a)\cap\overline{\Xi}\cap X)\equiv H_{N-1}(A\cap X)$. By Lemma~\ref{baza}, $\Delta$ represents a basic element of the group $H_{N-1}(A\cap X\setminus S)$. It follows from Lemma~\ref{monodromia}, Proposition~\ref{Izol}, and the proof of Theorem~\ref{o} that the orbit of the cycle $\Delta$ under the action of the local monodromy is infinite, and the analytic continuation of the function $\int_{\text{\CYRSH}(X)}\omega$ is infinitely-valued for the corresponding element $\text{\CYRSH}\in H_{N+1}(D\setminus S, A\cap X\setminus S)$. Let us denote by $X'$ a hyperplane very close to the critical one for the cycle $\Delta.$
	
	Let $X_{c'}$ be a point from the domain of definition of the function $V(X)$ corresponding to a tangency at a non-singular non-parabolic point on $\partial\Upsilon$. Let us consider a line $L'$ in $(\mathbb{RP}^{N+1})^*$ consisting of hyperplanes in the space $\mathbb{R}^{N+1}$ parallel to $X_{c'}$. The volume function is defined for both hyperplanes $X_{c'\pm\varepsilon}.$ Let us denote by $x_{c'}\in\Xi$ the point of tangency of $X_{c'}$ and $\Xi$. Then, by Vassiliev's jump theorem, the classes of the two integration contours in the group $H_{N+1}(\mathbb{C}^{N+1}, \Xi\cup X_{c'-\varepsilon})$ --- one of which is equal to $\Omega(X_{c'-\varepsilon})$, and the other is obtained by the Gauss--Manin transport of $\Omega(X_{c'+\varepsilon})$ along the path $\mu(\varphi)\equiv\{c'+\varepsilon e^{i\varphi}, \varphi\in[0,\pi]\}$ on the complexified line $L'$ --- differ by the class of the generator of the group $H_{N+1}(O(x_{c'}), O(x_{c'})\cap(\Xi\cup X_{c'-\varepsilon}))$ realized by a vanishing contour $\text{\CYRSH}_{c'}$. Let us consider the function $W(X)=V(X)-V_{\mu}(X)$ near the point $X_{c'-\varepsilon}$, where the function $V_{\mu}(X)$ is obtained by the analytic continuation of the function $V$ defined near $X_{c'+\varepsilon}$ along the path $\mu.$ From the integrability condition, it follows that the function $W(X)$ is algebraic. The value of the function $W$ at the point $X_{c'-\varepsilon}$ is equal to the oriented volume of the vanishing contour $\text{\CYRSH}_{c'}$ (see Lemma~\ref{Vol1}).
	
	Since the set of non-singular non-parabolic points of an irreducible hypersurface $\Xi$ is path-connected, there exists a path $\gamma\subset\operatorname{Reg}(\Xi\cup S)$ running near $\operatorname{tang}(\Xi)$ (see~\cite{VVA}, Ch.~III,~\S2) that connects the hyperplanes $X_{c'-\varepsilon}$ and $X'$. Under this transport, the element $\text{\CYRSH}_{c'}$ transforms into the element $\pm\text{\CYRSH}$. Considering the analytic continuation of the function $W$ along the path $\gamma$ and then along the orbit specified above, we arrive at a contradiction. $\Box$

\end{document}